\numberwithin{equation}{section}
\newcommand{\beq}{\begin{equation}}
  \newcommand{\eeq}{\end{equation}}
\colorlet{darkgreen}{green!50!black}
\newtheorem{thm}{Theorem}[section]
\newtheorem{prop}[thm]{Proposition}
\newtheorem{cor}[thm]{Corollary}
\newtheorem{lem}[thm]{Lemma}
\newtheorem{defn}[thm]{Definition}
\theoremstyle{definition}
\newtheorem{rem}[thm]{Remark}
\renewcommand{\ge}{\geqslant}
\renewcommand{\geq}{\geqslant}
\renewcommand{\le}{\leqslant}
\renewcommand{\leq}{\leqslant}
\def\emm#1,{{\em #1}}
\newcommand{\s}{\mathcal{S}}
\newcommand{\tx}{\tilde x}
\newcommand{\ty}{\tilde y}
\newcommand{\G}{\mathcal G_\mathcal R}
\newcommand{\R}{\mathcal{R}}
\newcommand{\bG}{\overline{\mathcal{G}}_{\mathcal{R}}}
\newcommand{\wmI}{\widehat{\mathcal I}}
\newcommand{\wD}{\widehat \Delta}
\newcommand{\wxi}{\widehat \xi}
\newcommand{\weta}{\widehat \eta}
\newcommand{\wgamma}{\widehat \gamma}
\def\P{\mathop{\hbox{\sf P}}\nolimits}
\let\phi=\varphi
\let\vareps=\varepsilon
\let\eps=\epsilon
\let\om=\omega
\def \an {{\sf{a}}}
\def \bn {{\sf{b}}}
\def \xn {{\sf{x}}}
\def \yn {{\sf{y}}}
\def \Xn {{\sf{X}}}
\def \Yn {{\sf{Y}}}
\def\cS{\mathcal{S}} 
\def\P1{\mathbb{P}^1}
\def\C{\mathbb{C}}
\def\Q{\mathbb{Q}}
\def\Z{\mathbb{Z}}
\def\N{\mathbb{N}}
\def\RR{\mathbb{R}}
\def\hphi{\widehat \phi}
\def\hp{\widehat p}
\DeclareMathOperator{\sgn}{sgn}
\DeclareMathOperator{\erf}{erf}
\DeclareMathOperator{\cotan}{cotan}
\DeclareMathOperator{\res}{res}
\DeclareMathOperator{\Res}{Res}
\title[On the stationary distribution of the SRBM: differential properties]
{On the stationary distribution of\\
  reflected Brownian  motion in a wedge:\\
  differential properties}
\author{M.\ Bousquet-M\'elou} 
\address{Laboratoire Bordelais de Recherche en Informatique, Universit\'e de Bordeaux, CNRS, 351 cours de la Lib\'eration, 33405 Talence Cedex, France} 
\email{mireille.bousquet-melou@u-bordeaux.fr}
\author{A.\ Elvey Price} 
\address{Institut Denis Poisson, Universit\'e de Tours, CNRS, Parc de Grandmont, 37200 Tours, France}
\email{andrew.elvey@univ-tours.fr}
\author{S.\ Franceschi} 
\address{T\'el\'ecom SudParis, Institut Polytechnique de Paris, 19 place Marguerite Perey, 91120 Palaiseau, France}
\email{sandro.franceschi@telecom-sudparis.eu}
\author{C.\ Hardouin} 
\address{Institut de Math\'ematiques de Toulouse, 118 route de Narbonne, 31062 Toulouse Cedex 9, France} 
\email{charlotte.hardouin@math.univ-toulouse.fr}
\author{K.\ Raschel} \address{Laboratoire Angevin de Recherche en Mathématiques, Universit\'e d'Angers, CNRS, 2 boulevard de Lavoisier, 49000 Angers, France}
\email{raschel@math.cnrs.fr}
\thanks{This project has received funding from the European Research Council (ERC) under the European Union's Horizon 2020 research and innovation programme under the Grant Agreement No.\ 759702, and from the ANR projects DeRerumNatura (ANR-19-CE40-0018), Combiné (ANR-19-CE48-0011) and RESYST (ANR-22-CE40-0002).}
\keywords{Reflected Brownian motion in a wedge; Stationary distribution; Laplace transform; Differentially algebraic functions; $q$-Difference equations; Decoupling function; Tutte's invariants; Conformal mapping}
\subjclass[2020]{Primary 60E10, 60J65 -- Secondary 12H05, 12H10, 30D05, 30F10, 39A06, 30C20}
\def\section{\@startsection{section}{1}%
  \z@{.7\linespacing\@plus\linespacing}{.5\linespacing}%
  {\normalfont\bfseries\scshape\centering}}
\def\subsection{\@startsection{subsection}{2}%
  \z@{.5\linespacing\@plus\linespacing}{.5\linespacing}%
  {\normalfont\bfseries\scshape}}
\def\subsubsection{\@startsection{subsubsection}{3}%
  \z@{.5\linespacing\@plus\linespacing}{-.5em}
  {\normalfont\bfseries\itshape}}
\begin{document}

\maketitle
\date{\today}

\begin{abstract}
  We consider the classical problem of determining the stationary  distribution of the semimartingale reflected Brownian motion (SRBM) in a two-dimensional wedge. 
  Under standard assumptions on the parameters of the model (opening of the wedge,  angles of the reflections,  drift), we study the algebraic and differential nature of the Laplace transform of this stationary distribution. We derive necessary and sufficient conditions for this Laplace transform to be rational, algebraic, differentially finite or more generally differentially algebraic. 
  These conditions are explicit linear dependencies between   the angles 
  of the model.

  A complicated integral expression  for this Laplace transform has recently been obtained by two authors of this paper.  In the differentially  algebraic case, we  provide  a  simple, explicit  integral-free expression in terms of a hypergeometric function. It specializes to  earlier expressions  in several classical cases:
  the skew-symmetric case, the   orthogonal reflections case and the sum-of-exponential densities  case (corresponding to the so-called Dieker-Moriarty conditions on the parameters). This paper thus closes, in a sense,  the quest of  all ``simple'' cases.

  To prove these results, we start from a functional equation that the Laplace transform satisfies, to which we apply tools from diverse horizons.
  To establish  differential algebraicity, a key ingredient is   Tutte's \emm invariant approach,, which originates in enumerative combinatorics. It allows us to express the Laplace transform (or its square) as a rational function of a certain  \emm canonical invariant,, a hypergeometric function in our context.  
  To establish differential transcendence,  we turn the functional equation into a  difference equation and apply Galoisian results on the
  nature of the solutions to such equations.
\end{abstract}


\setcounter{tocdepth}{1}  
\tableofcontents

\section{Introduction}
\label{sec:introduction}

We consider  an obliquely reflected Brownian motion in a two-dimensional convex wedge  with opening angle $\beta\in (0, \pi)$, defined by its drift $\widetilde \mu$, and two reflections angles $\delta$ and $\varepsilon$ in $(0, \pi)$ (Figure~\ref{fig:parameters}). The covariance matrix is taken to be the identity.

\begin{figure}[hbtp]
  \centering
  \includegraphics[scale=0.1]{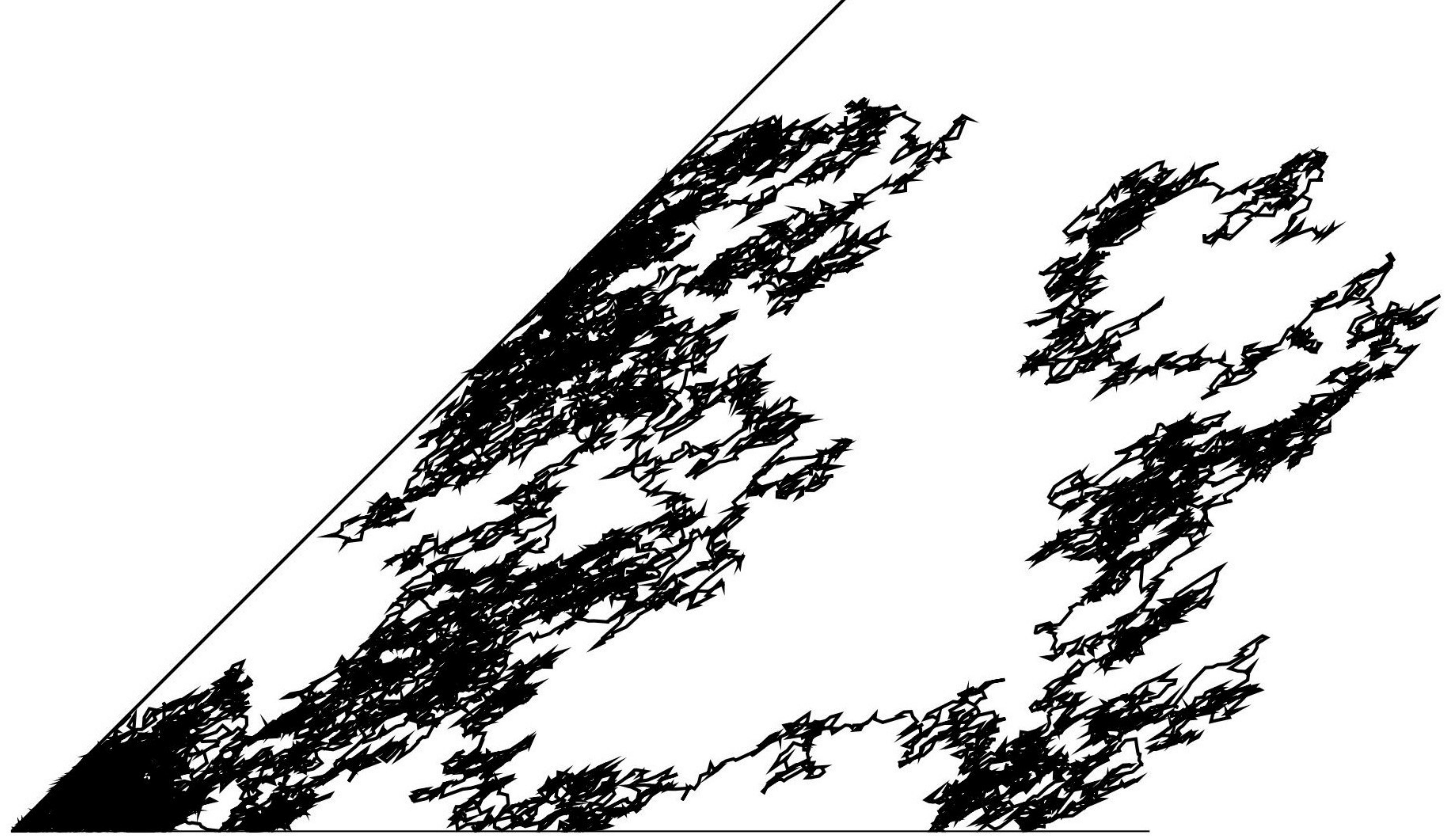}
  \hskip 10mm
  \includegraphics[trim=1.3cm 0.87cm 1cm 0.7cm, clip,scale=0.85]{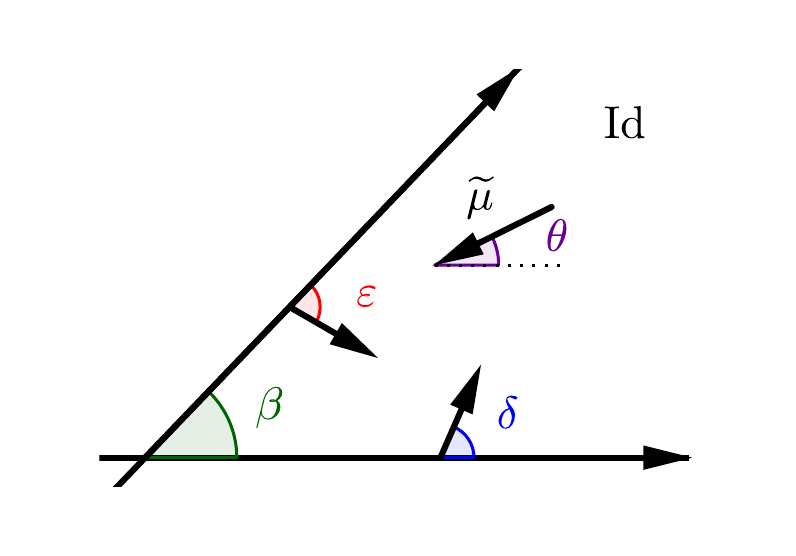}
  \caption{A trajectory of the reflected Brownian motion in a wedge, and the parameters $\beta$,  $\widetilde\mu$, $\theta$, $\delta$ and $\varepsilon$.}
  \label{fig:parameters}
\end{figure}

Since the introduction of reflected Brownian motion in the eighties~\cite{HaRe-81,HaRe-81b,varadhan_brownian_1985}, the mathematical community has shown a constant interest in this topic. Typical questions deal with  the recurrence of the process, the absorption at the corner of the wedge, the existence of stationary distributions... We refer  for more details to the introduction of \cite{franceschi_explicit_2017} and to
Figure~\ref{fig:frisealpha}.  The parameter $\alpha$ occurring there, also central in this paper, is:
\beq
\label{eq:def_alpha}
\alpha=\frac{\delta+\varepsilon-\pi}{\beta}.
\eeq
We further introduce the following refinement of $\alpha$:
\beq
\label{eq:def_alpha_i}
\alpha_1= \frac{2\varepsilon+\theta-\beta-\pi}\beta \qquad \text{and} \qquad
\alpha_2= \frac{2\delta-\theta-\pi}\beta,
\eeq
where  $\theta =\text{arg} (-\widetilde{\mu})\in (-\pi,\pi]$ as shown in Figure~\ref{fig:parameters}.  Note that $\alpha_1+\alpha_2=2\alpha-1$. These two numbers also play a key role in this paper, and it seems to be the first time that their importance is acknowledged.

\begin{figure}[hbtp]
  \centering
  \includegraphics[scale=0.13]{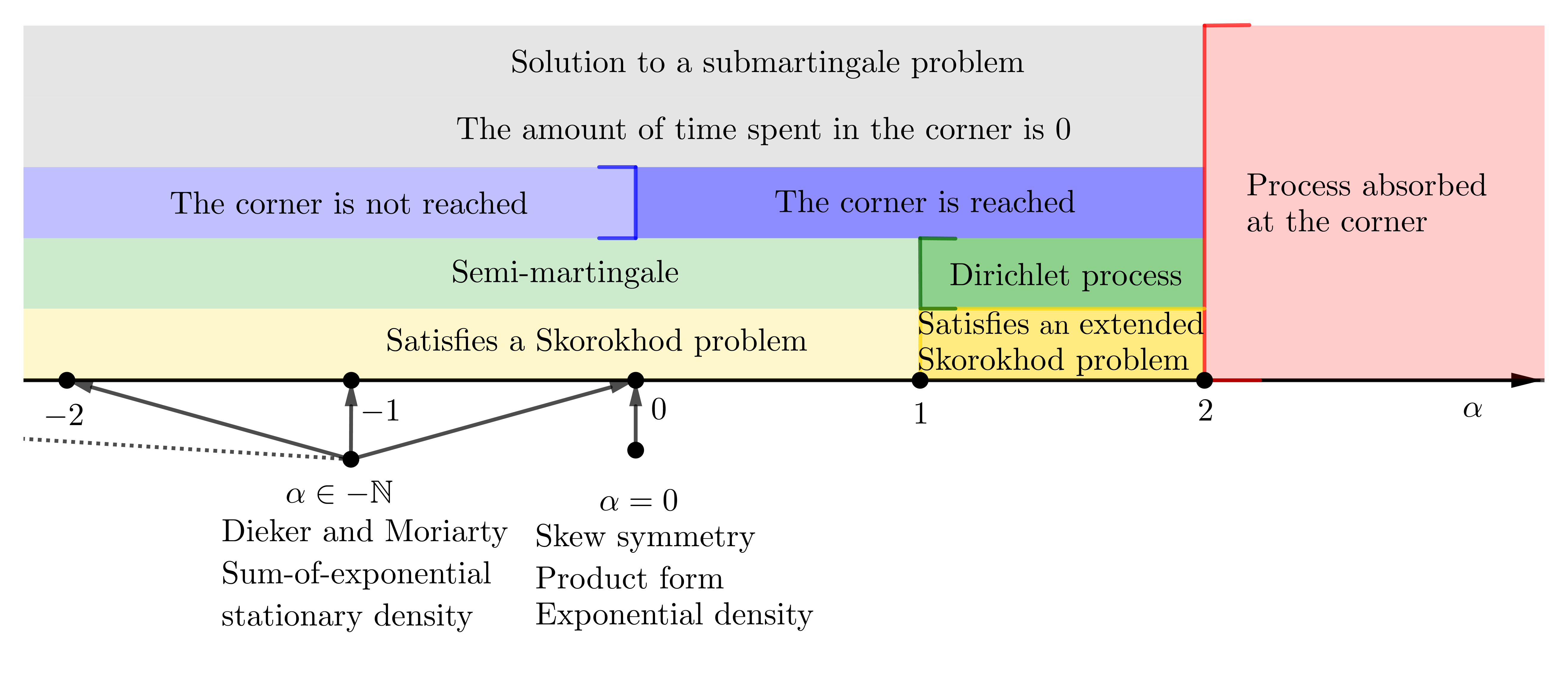}
  \caption{Properties of obliquely reflected Brownian motion in terms of $\alpha=\frac{\delta+\varepsilon-\pi}{\beta}$. Here are some references:  
    semimartingale property~\cite{Williams-85,reiman_boundary_1988,      taylor_existence_1993}; Skorokhod problem~\cite{HaRe-81b,      williams_semimartingale_1995}; submartingale problem~\cite{varadhan_brownian_1985};
    amount of time spent at the corner, accessibility of the corner and absorption~\cite{varadhan_brownian_1985};
    Dirichlet process and    extended Skorokhod problem~\cite{lakner_dirichlet_2016,      kang_dirichlet_2010};
    skew symmetry~\cite{HaRe-81,      harrison_multidimensional_1987}; sum-of-exponential stationary    density~\cite{DiMo-09}.}
  \label{fig:frisealpha}
\end{figure}

It is known~\cite{Williams-85} that the process is  a semimartingale (called \emph{semimartingale reflected Brownian motion}, SRBM for short) if and only if 
\beq
\delta+\varepsilon -\pi<\beta, \qquad \hbox{or equivalently}  \qquad
\alpha<1.
\label{eq:conditionRsemimart1}
\eeq
We assume this to hold in this paper.  We also assume that
\beq\label{theta-cond1}
0< \theta< \beta.
\eeq
The meaning of this condition will be clarified in Section~\ref{sec:normalize}; see~\eqref{eq:drift_negative0}.
Under the assumptions~\eqref{eq:conditionRsemimart1} and~\eqref{theta-cond1}, a stationary distribution exists if and only if
\beq
\beta-\varepsilon < \theta <\delta,
\label{eq:stationary_distribution_CNS1}
\eeq
and it is then unique; see~\cite[\S 3]{DiMo-09} and \cite{hobson_recurrence_1993}. We also assume this to hold.

\subsubsection*{Nature of the Laplace transform}
The main object of study in this paper is the Laplace transform $\Phi(x,y)$  of this two-dimensional stationary distribution. In a recent paper~\cite{franceschi_explicit_2017}, two of the authors gave a (complicated) closed form expression
for it, which involves integrals and various trigonometric and algebraic functions. However, it is known that when the parameter $\alpha$ is a non-positive integer (with an additional non-degeneracy condition),  the stationary density is a finite sum of exponentials, of the form
$\sum_{i} c_i e^{-a_i x- b_i y}$,
which implies that $\Phi(x,y)$ is a rational function in $x$ and $y$~\cite{DiMo-09}. This drastic simplification raises the following natural  question:
for which values of the parameters $\beta, \widetilde \mu, \delta$ and~$\varepsilon$ does the Laplace transform simplify? The case when it is rational being (mostly)  elucidated by~\cite{DiMo-09}, when is it an \emm algebraic, function of $x$ and $y$ (meaning that it satisfies a polynomial equation with coefficients in the field $\RR(x,y)$ of rational functions in $x$ and $y$)?
When is it \emm D-finite, (DF)? By this, we mean that it satisfies two linear differential equations with coefficients in $\RR(x,y)$, one in $x$ and one in~$y$. More generally, when is it \emm D-algebraic, (DA), that is, when does it satisfy a polynomial differential equation in $x$, and another in $y$? In other words, we want to \emm classify, the parameters of the semimartingale  reflected Brownian motion depending on whether, and where, the associated Laplace transform fits in the following  natural hierarchy of functions:
\beq\label{hierarchy}
\hbox{ rational } \subset \hbox{ algebraic } \subset  \hbox{ D-finite }
\subset \hbox{ D-algebraic}.
\eeq
A function that does not fit in  this hierarchy, that is, is not D-algebraic, is said to be \emm differentially transcendental, (or D-transcendental for short).

\renewcommand{\arraystretch}{1.5}
\begin{table}[h!]
  \begin{tabular}{|c|c|c|c|c|}
    \hline 
    & D-algebraic & D-finite &  Algebraic & Rational  \\ 
    \hline 
    $\beta/\pi \notin \Q$ & $\alpha \in \Z +\frac\pi \beta\Z$, {or} 
                  & $\alpha \in -\N_0 +\frac\pi \beta\Z$, {or} 
                             &  $\alpha \in -\N_0$, {or}  & $\alpha \in -\N_0$ 
    \\
    &{ $\{\alpha_1, \alpha_2 \} \subset \Z +\frac\pi \beta\Z$ }
                  &{ $\{\alpha_1, \alpha_2 \} \subset   \Z \cup \left(-\N+ \frac \pi\beta \Z\right)$}
                             &{ $\{\alpha_1, \alpha_2 \} \subset \Z $ } &\\
    \hline
    $\beta/\pi \in \Q$  & 
                          always & $\alpha \in \Z +\frac\pi \beta\Z$, {or} 
                             & $\alpha \in \Z +\frac\pi \beta\Z$, {or}  
                                          & $\alpha  \in  -\N_0$
    \\
    &&{ $\{\alpha_1, \alpha_2 \} \subset \Z +\frac\pi \beta\Z$ }
                  &{ $\{\alpha_1, \alpha_2\}\subset \Z +\frac\pi \beta\Z$ }&\\
    \hline      
  \end{tabular}
  \medskip
  \label{table:characterisation}
  \caption{Nature of the Laplace transform in terms    of $\alpha$, $\alpha_1$ and $\alpha_2$. We denote $\N_0:=\{0, 1, 2, \ldots\}$ and $\N:=\N_0\setminus\{0\}$. The condition $\alpha \in -\N_0$ can be rewritten as $\alpha\in \Z$ since we assume $\alpha <1$.}
\end{table}

\subsubsection*{Main results}
In this paper, we answer the above questions completely.
The necessary and sufficient conditions that we establish are summarized in Table~\ref{table:characterisation}. Note that they are remarkably compact, and geometric.
Observe the key role played by the parameters $\alpha$, $\alpha_1$ and $\alpha_2$ of~\eqref{eq:def_alpha},~\eqref{eq:def_alpha_i}, and in particular by the conditions
\beq\label{eq:CNS0}
\alpha \in \Z+ \frac \pi\beta \Z,\quad \hbox{or equivalently} \quad \delta+ \varepsilon \in \beta\Z+\pi\Z
\eeq
and
\beq\label{eq:CNS0double}
\{\alpha_1, \alpha_2 \} \subset \Z+ \frac \pi\beta \Z,\quad \hbox{or
  equivalently} \quad \{2\varepsilon +\theta, 2\delta-\theta\} \subset \beta\Z+\pi\Z.
\eeq 
We call them the \emm simple angle condition, and the \emm double angle condition,, respectively. When one of them holds, we give for $\Phi(x,y)$ a new, integral-free expression  in terms of the classical hypergeometric function ${_2F}_1$, from which D-algebraicity follows via  classical closure properties of DA functions.
Several explicit examples are given in Sections~\ref{sec:examples} and~\ref{sec:examples-double}.
In a sense,  this article closes the quest of  ``simple'' cases by finding and listing them all, and  providing
for them  unified and simple explicit expressions for the Laplace transform.

The algebraic and differential properties of the Laplace transform are reflected in various ways on the stationary distribution itself. Let us give two
examples, focussing, for simplicity, on the one-dimensional transform $\Phi(x,0)$ and the corresponding distribution, denoted by  $\nu$ here.
\begin{itemize}
\item {\em Moments.} If $\Phi(x,0)$ is DA, then the differential equation that it satisfies translates into a  recurrence relation for the moments $M_n$ of $\nu$. In general this relation has infinite order and its coefficients are polynomials in $n$; it becomes linear, and of finite order, as soon as $\Phi(x,0)$ is DF. An explicit example is worked out in Section~\ref{sec:DF-example}.
\item {\em Density.} If $\Phi(x,0)$ is DF, then the density of $\nu$ is DF as well. If $\Phi(x,0)$ is even rational, the density is a linear combination of terms $x^k e^{-ax}$, with $k\in \N_0$.
\end{itemize}

For the corresponding \emm discrete, problem, namely stationary distributions of discrete random walks in a wedge, a number of cases where similar simplifications occur are known: let us cite for instance the famous Jackson networks and their product form  distributions~\cite{Ja-57}, works of Latouche and Miyazawa \cite{LaMi-14} and Chen, Boucherie and Goseling~\cite{ChBoGo-15}, who obtain
necessary and sufficient conditions for the stationary distribution of random walks in the quadrant to be sums of geometric terms, and the results of Fayolle, Iasnogorodski and Malyshev in~\cite[Chap.~4]{FIM17}.
Nonetheless, it remains a challenge to find uniform criteria analogous to those that we obtain in the continuous setting. The same is true for the associated enumerative   problem, namely when one tries to understand the algebraic/differential nature of the generating function that \emm counts,  discrete walks in the quadrant~\cite{BeBMRa-16,BeBMRa-17,DHRS1}.

\subsubsection*{Tools} Let us now describe the ingredients of our proofs.
We find them to be surprisingly diverse, and we believe that one merit of this paper is to enrich the classical study of  reflected Brownian motion with two important new tools, namely Tutte's \emm invariant theory, and \emm difference Galois theory,. Let us give a few details. Our starting point is a linear functional equation,  established in~\cite{dai_reflecting_2011}, that characterizes the function $\Phi(x,y)$.
The proof of D-algebraicity (when~\eqref{eq:CNS0} or~\eqref{eq:CNS0double} holds) relies on Tutte's  invariant approach. Between 1973 and 1984, Tutte studied a functional equation that arises in the enumeration of properly colored triangulations~\cite{Tutte-95}, and has similarities with the equation defining $\Phi$. In order to solve it  (and  prove that its solution is D-algebraic), Tutte developed  an algebraic approach based on the construction of certain \emm invariants,. This approach has recently been fruitfully applied, first to other map enumeration problems~\cite{BeBM-11,BeBM-17}, and then in other contexts, such as the enumeration of walks confined to the first quadrant~\cite{BeBMRa-16,BeBMRa-17}, or avoiding a quadrant~\cite{mbm-kreweras-3-4}. A first application to
reflected Brownian motion is presented in~\cite{franceschi_tuttes_2016} in the case where $\beta=\delta=\varepsilon$. This is clearly a special  case of~\eqref{eq:CNS0}, corresponding to  orthogonal reflections on the boundaries once the wedge is deformed into a quadrant (see Section~\ref{sec:normalize}).
The present  paper goes much further than~\cite{franceschi_tuttes_2016} by  finding \emm the exact applicability of the invariant method, in the determination of the stationary distribution of the reflected Brownian motion.
This approach might  be applicable to other related problems,
such as computation of the Green function and the Martin boundary in the transient case.

The differential transcendence  result, proving that $\Phi(x,y)$ is \emm not, D-algebraic if $\beta/\pi \not \in \Q$ and neither~\eqref{eq:CNS0} nor~\eqref{eq:CNS0double} holds, also starts from the functional equation defining $\Phi(x,y)$, but relies on a completely different tool, namely \emm difference Galois theory,. Analogously to classical Galois theory, difference Galois theory builds a correspondence between the algebraic relations satisfied by the solutions of a linear functional equation and the algebraic dependencies between
the coefficients of this equation. Using this theory, one can reduce  the question of the D-transcendence to the study of the zeroes and poles of  an explicit rational function. Difference Galois  theory has recently been applied to the enumeration of {\em discrete} walks in the quadrant~\cite{DHRS1,DHRS0,DreyfHardtderiv}.  To our knowledge, this is the first time that it is applied to a {\em  continuous} random process such as SRBM.

\subsubsection*{Outline of the paper}
In Section~\ref{sec:prelim}, we define precisely the process under study and
its normalization to a quadrant.
We also give the functional equation that characterizes the Laplace transform $\Phi(x,y)$ (or more  precisely, the corresponding transform $\phi(x,y)$ on the  quadrant). We finally state our results in detail. In Section~\ref{sec:kernel} we study a bivariate polynomial, called the \emm kernel,,  involved in the functional equation. In Section~\ref{sec:invariants} we introduce the notion of \emm invariant,, and relate it to a boundary value problem satisfied by $\phi$.
In Section~\ref{sec:gluing}  we exhibit a simple invariant $w$, which is D-finite, explicit, and exists for all values of the parameters. Moreover, we prove that $w$ is  \emm canonical, in the sense that any invariant is   a rational function in $w$.
In Section~\ref{sec:existence-decoupling} we show how to construct an invariant involving $\phi$, provided a certain \emm decoupling function, exists. We then  show that such a function  exists if and only if one of the angle conditions~\eqref{eq:CNS0} or~\eqref{eq:CNS0double} holds. These two cases are then detailed, respectively, in Sections~\ref{sec:expression} and~\ref{sec:expression-double}. In particular, we obtain an expression of $\phi$ (and  $\Phi$) in terms of $w$, from which D-algebraicity follows. The {D-transcendence} condition is established in Section~\ref{sec:DT}.
Section~\ref{sec:rat} is devoted to the case $\beta/\pi  \in \Q$. 

A {\sc Maple} session, available on the first author's \href{https://www.labri.fr/perso/bousquet/publis.html}{webpage}~\cite{mbm-web}, supports most calculations of the paper.

\section{Preliminaries and main results}
\label{sec:prelim}

Let us begin with some basic notation. Recall that we denote by $\N_0:=\{0, 1, 2, \ldots\}$ the set of natural integers; and by $\N:=\N_0\setminus\{0\}$ the set of positive integers. We denote by $\RR_+$ (resp.\  $\RR_-$) the set of positive (resp.\ negative) real numbers.

\subsection{Semimartingale reflected Brownian motion (SRBM) in    a wedge}
\label{sec:normalize}

A simple linear transformation maps the reflected Brownian motion 
discussed in the introduction  (with covariance matrix the identity) 
onto a reflected Brownian motion in the first (non-negative) quadrant with
non-trivial covariance matrix. Most of the time we will work in the quadrant,  but  it will  sometimes be important to switch between these two representations, as some quantities are more simply computed or understood in one or the other of the two frameworks. To describe the quadrant normalization explicitly, we first need to give a precise definition of SRBM in the quadrant. 

We consider $(Z_t)_{t\geq0}$, an obliquely reflected Brownian motion in the first quadrant, of covariance~$\Sigma$, drift $\mu$ and reflection matrix $R$, where
\[
  \Sigma=\small\left(  \begin{array}{ll} \sigma_{11} & \sigma_{12} \\ \sigma_{12} & \sigma_{22} \end{array} \right),
  \quad
  \mu= \small\left(  \begin{array}{l} \mu_1 \\  \mu_2  \end{array} \right),
  \quad
  R=(R^1,R^2) =\small \left(  \begin{array}{cc} r_{11} & r_{12} \\ r_{21} & r_{22} \end{array} \right),
\] 
with $r_{11}>0$ and $r_{22}>0$.
The  columns $R^1$ and $R^2$ of the matrix $R$ represent the
directions in which the Brownian motion is reflected on the
boundaries; see Figure~\ref{fig:linear_transformation}, left. The so-called
\emm orthogonal reflections case, corresponds to $r_{12}=r_{21}=0$.

The process $Z_t$ exists as a semimartingale if and only if 
\beq
\det R >0 \quad \text{or} \quad ( r_{12}>0\text{ and } r_{21}>0).
\label{eq:conditionRsemimart}
\eeq
See~\cite{taylor_existence_1993,reiman_boundary_1988} for a proof of a
multidimensional version of this statement, and
\cite{williams_semimartingale_1995} for a general survey of the SRBM
in an orthant.
The reflected Brownian motion  may then be written as 
\[
  Z_t=Z_0 + B_t + \mu\cdot t + R\cdot\left(
    \begin{array}{l} L_t^1 \\  L^2_t
    \end{array} \right),\qquad \forall t\geq 0,
\] 
where $Z_0$ is an inner starting point,
$(B_t)_{t\geq0}$ is a Brownian motion with covariance $\Sigma$
starting from the origin, and $(L^1_t)_{t\geq0}$ (resp.\ $(L^2_t)_{t\geq0}$) is (up to a multiplicative constant) the local time on the $y$-axis (resp.\ $x$-axis).
The  process $(L^1_t)_{t\geq0}$ is  continuous and non-decreasing, starts
from~$0$, and increases only when the process $Z_t$ touches
the vertical boundary, which implies that for all $t\geq 0$, $\int_{0}^t \mathds{1}_{\{Z^1_s \neq 0  \}} \mathrm{d} L^1_s=0$. Of course, a similar statement holds for  $L^2_t$.

\begin{figure}[hbtp]
  \vspace{-5mm}
  \centering
  \includegraphics[scale=0.7]{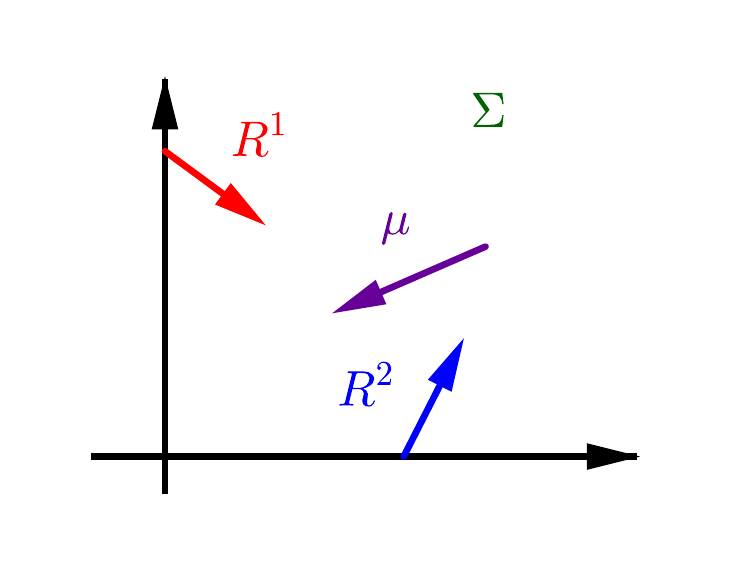} \hskip 10mm
  \includegraphics[scale=0.7]{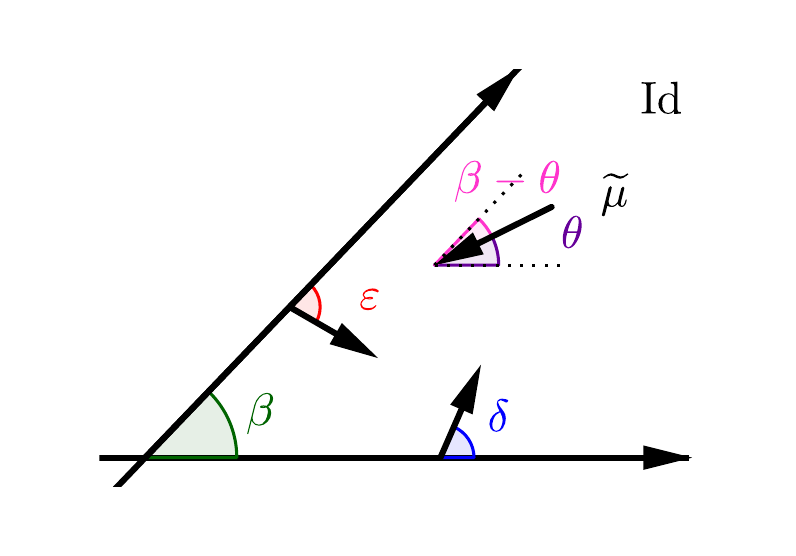}
  \vspace{-5mm}
  \caption{Transformation of the quadrant     into a wedge   of opening angle $\beta$. The new parameters    $\beta$, $\widetilde\mu$,  $\delta$ and $\varepsilon$ are     given by~\eqref{eq:beta},~\eqref{eq:mu} and~\eqref{eq:expression_delta_epsilon}, respectively.}
  \label{fig:linear_transformation}
\end{figure}

We now describe the linear transform that maps the Brownian motion in
the quadrant with   covariance matrix $\Sigma$
to a Brownian motion with covariance matrix the identity, confined to
a wedge  of opening $\beta$ (see Figure~\ref{fig:linear_transformation} and~\cite[App.~A]{franceschi_explicit_2017}). We take
\beq
\label{eq:beta}
\beta = \arccos
\left(-\frac{\sigma_{12}}{\sqrt{\sigma_{11}\sigma_{22}}}\right) \in
(0, \pi),
\eeq
so that   
\beq\label{sin-beta}
\sin \beta= \sqrt{\frac{\det\Sigma}{\sigma_{11}\sigma_{22}}}.
\eeq
Then we define a linear transformation $T$, which depends only on $\Sigma$,
\beq\label{T-def}
T= \left(
  \begin{array}{cc}
    \displaystyle     \frac 1 {\sin \beta} & \cot \beta\\
    0 & 1
  \end{array}
\right)
\left(
  \begin{array}{cc}
    \displaystyle    \frac 1 {\sqrt{\sigma_{11}}} & 0\\
    0 & \displaystyle  \frac 1 {\sqrt{\sigma_{22}}}
  \end{array}\right)
= \left(
  \begin{array}{cc}
    \displaystyle     \sqrt{\frac {\sigma_{22}}{\det\Sigma}} & -
                                                               \displaystyle          \frac{\sigma_{12}}{\sqrt{\sigma_{22}
                                                               \det\Sigma}}\medskip\\
    0 &\displaystyle \frac 1 {\sqrt{\sigma_{22}}}
  \end{array}
\right).
\eeq
This is easily inverted into
\beq\label{eq:Tinverse}
T^{-1} = 
\left(
  \begin{array}{cc}
    \displaystyle       {\sqrt{\sigma_{11}}} & 0\\
    0 &  \displaystyle {\sqrt{\sigma_{22}}}
  \end{array}\right)
\left(
  \begin{array}{cc}
    {\sin \beta} & -\cos \beta\\
    0 & 1
  \end{array}
\right)  =
\left(
  \begin{array}{cc}
    \displaystyle  \sqrt{\frac{\det\Sigma}{\sigma_{22}}} &\displaystyle \frac{\sigma_{12}}{\sqrt{\sigma_{22}}}\\
    0 &\displaystyle \sqrt{\sigma_{22}}
  \end{array}
\right) 
.
\eeq
Under the transformation $T$, the reflected Brownian motion $Z_t$  associated to $(\Sigma, \mu, R)$ becomes a Brownian motion with covariance matrix the identity in a wedge of angle $\beta$  with parameters $(\text{Id}, \widetilde \mu , \delta, \varepsilon)$. 
More explicitly, we first have $\widetilde\mu=T\mu$, that is:
\beq
\label{eq:mu}
\widetilde{\mu}_1=
\frac{\mu_1\sigma_{22}-\mu_2\sigma_{12}}{\sqrt{\sigma_{22}
    \det \Sigma}} 
\qquad
\text{and}
\qquad
\widetilde{\mu}_2= \frac{\mu_2}{\sqrt{\sigma_{22}}},
\eeq
so that, upon defining $ \theta:= \arg(-\widetilde \mu)\in (-\pi, \pi]$ (here we assume that $\mu \not = (0,0)$), 
\beq\label{eq:tan_theta}
\tan  \theta=\frac{\widetilde{\mu}_2}{\widetilde{\mu}_1}=\frac{\mu_2    \sqrt{\det\Sigma}}{\mu_1\sigma_{22}-\mu_2\sigma_{12}}    = \frac{\sin \beta}{\frac{\mu_1}{\mu_2}\sqrt{\frac{\sigma_{22}}{\sigma_{11}}} +\cos \beta}.
\eeq
More precisely,
\beq \label{theta_precise}
\theta 
=-\sgn (\widetilde{\mu}_2) \arccos \left(
  \frac{-\widetilde{\mu}_1}{\sqrt{\widetilde{\mu}_1^2+\widetilde{\mu}_2^2}}
\right)
=-\sgn ({\mu}_2) \arccos \left(\frac{\mu_2\sigma_{12}-\mu_1\sigma_{22}}{\sqrt{\sigma_{22}\Delta}}\right),
\eeq
where 
\beq\label{Delta-def}
\Delta:= |\widetilde \mu|^2 \det \Sigma= \mu_1^2\sigma_{22}-2\mu_1\mu_2\sigma_{12}+\mu_2^{2}\sigma_{11}.
\eeq
Observe that $\Delta$ is left invariant under diagonal  reflection of the quadrant model in the first diagonal. From the values of the trigonometric functions of $\beta$ and $\theta$, we also derive
\beq \label{eq:cosbetaminustheta}
\cos(\beta- \theta)=\frac{ \mu_1\sigma_{12}-\mu_2\sigma_{11} }{\sqrt{\sigma_{11}\Delta}}.
\eeq
Later we will assume that $\mu_1<0$ and $\mu_2 <0$, so that by~\eqref{theta_precise}, $\theta\in [0,\pi]$, and more precisely $\theta\in [0,\beta)$ by~\eqref{eq:tan_theta} (the function $\cotan$ is decreasing on $[0, \pi]$).  In this case, we see from~\eqref{theta_precise} and~\eqref{eq:cosbetaminustheta} that $\theta$ and $\beta-\theta$ play symmetric roles, and are exchanged under diagonal  reflection (while $\beta$ is unchanged).

The new reflection angles $\delta, \varepsilon\in(0,\pi)$ are given by:
\beq
\label{eq:expression_delta_epsilon}
\tan\delta=
\frac{\sin\beta}{\frac{r_{12}}{r_{22}}\sqrt{\frac{\sigma_{22}}{\sigma_{11}}}+\cos\beta} 
\qquad
\text{and}
\qquad
\tan\varepsilon=
\frac{\sin\beta}{\frac{r_{21}}{r_{11}}\sqrt{\frac{\sigma_{11}}{\sigma_{22}}}+\cos\beta},
\eeq
and are exchanged under  diagonal reflection; see~\cite[App.~A]{franceschi_explicit_2017}. 
Then one can prove that the semimartingale  conditions~\eqref{eq:conditionRsemimart} for the quadrant translate into   Conditions~\eqref{eq:conditionRsemimart1} for  the $\beta$-wedge; see Lemma~\ref{lem:equivcondition}~\ref{item:i}. The second result of Lemma~\ref{lem:equivcondition} states that Condition~\eqref{theta-cond1} is equivalent to the drift $\mu$ being negative:
\beq
\label{eq:drift_negative0}
\mu_1<0,\qquad \mu_2<0.
\eeq
This assumption is standard and appears for instance in~\cite{Foschini,foddy_1984,DiMo-09,franceschi_asymptotic_2016,franceschi_explicit_2017}.
We believe that it is possible to achieve a similar classification when this assumption does not hold -- but of course this would increase the number of cases.

\subsection{Invariant measures and Laplace transforms}
Assuming that Condition~\eqref{eq:conditionRsemimart} holds, the reflected Brownian motion in the quadrant has a stationary distribution if and only if~\cite{hobson_recurrence_1993}:
\beq
\label{eq:stationary_distribution_CNS}
\det R >0,
\quad r_{22} \mu_1 - r_{12}  \mu_2 < 0, \quad r_{11} \mu_2 - r_{21}  \mu_1 < 0,
\eeq
which strengthens the first part of~\eqref{eq:conditionRsemimart}. Assuming~\eqref{theta-cond1}, or equivalently~\eqref{eq:drift_negative0}, Condition~\eqref{eq:stationary_distribution_CNS} can be seen  to be equivalent to the $\beta$-wedge conditions~\eqref{eq:conditionRsemimart1} and~\eqref{eq:stationary_distribution_CNS1} combined; see Lemma~\ref{lem:equivcondition}~\ref{item:iii}.
From now on, we assume that~\eqref{eq:stationary_distribution_CNS} is
satisfied and we denote by $\Pi$ the stationary distribution, which is an invariant probability measure~\cite{harrison_brownian_1987}. Then it has a density $p_0$ relative to the Lebesgue measure on~$\RR_+^2$ \cite[Lem.~3.1]{williams_semimartingale_1995}. 
Moreover,  there exist two finite boundary measures $\nu_1$ and $\nu_2$
on the coordinate axes, defined, for $i=1,2$, by
\[
  \nu_i(\cdot)=\mathbb{E}_{\Pi}
  \left[ 
    \int_0^1 \mathbf{1}_{\{ Z_t\in\ \cdot\}} \mathrm{d}L^i_t
  \right].
\]
These measures may be considered as invariant measures (or stationary distributions) on the axes, see~\cite{harrison_brownian_1987}.
The measure $\nu_1$ (resp.\ $\nu_2$) has its support on the vertical (resp.\ horizontal) axis, where $z_1=0$ (resp.\ $z_2=0$). It has a density $p_1$ (resp.\ $p_2$) relative to the Lebesgue measure on $\RR_+$ \cite[Lem.~3.1]{williams_semimartingale_1995}.
Let $\varphi$ denote the Laplace transform of $\Pi$:
\[ 
  \varphi (x,y) = \mathbb{E}_{\Pi} [\exp{ ((x,y) \cdot  Z_t)}]
  = \iint_{{\RR}_+^2} e^{xz_1+yz_2}\, p_0(z_1,z_2)\mathrm{d} z_1\mathrm{d} z_2,
\] 
and let $\varphi_{1}$ and $\varphi_{2}$ the Laplace transforms  of $\nu_1$ and $\nu_2$:
\beq
\label{eq:Laplace_transform_boundary}
\varphi_1 (y) =\int_{{\RR}_+} e^{yz_2} 
{p_1(z_2)}\mathrm{d} z_2\quad\text{and}\quad
\varphi_2 (x) =\int_{{\RR}_+} e^{xz_1} 
{p_2(z_1)}\mathrm{d} z_1 .
\eeq
The measures $\nu_1$, $\nu_2$  are also bounded~\cite{harrison_brownian_1987} and thus these three Laplace transforms exist and are finite at least when $x$ and $y$ have non-positive real  parts.

It is known that  for all values of $x,y$ for which $\phi(x,y)$ 
is finite, the transforms $\phi_1(y)$ and $\phi_2(x)$ are finite as well, and
that $\phi(x,y)$ is a  linear combination of $\phi_1(y)$ and $\phi_2(x)$ with rational coefficients~\cite[Lem.~4.1]{dai_reflecting_2011}:
\beq  
\label{eq:functional_equation}
-\gamma (x,y) \varphi (x,y) =\gamma_1 (x,y) \varphi_1 (y) + \gamma_2 (x,y) \varphi_2 (x),
\eeq
where
\beq
\label{eq:def_gamma_gamma1_gamma2}
\left\{
  \begin{array}{rl}
    \gamma (x,y) =&\hspace{-2mm}
                    \displaystyle \frac{1}{2}  ( (x,y) \Sigma ) \cdot  (x,y)  +  (x,y) \cdot  \mu
                    =\frac{1}{2}(\sigma_{11} x^2+2\sigma_{12}xy + \sigma_{22} y^2 )
                    +
                    \mu_1x+\mu_2y, \smallskip
    \\
    \gamma_1 (x,y)=&\hspace{-2mm}\displaystyle  (x,y)    R^1 =r_{11} x + r_{21} y, \smallskip\\
    \gamma_2 (x,y)=&\hspace{-2mm} \displaystyle(x,y)  R^2    =r_{12} x + r_{22} y.
  \end{array}
\right.
\eeq
The polynomial $\gamma(x,y)$ is called the \emm kernel, of Equation~\eqref{eq:functional_equation}. By letting $x$ and/or $y$ tend to zero and noticing that $\phi(0,0)=1$,  we can conversely  express $\phi_1$ and $\phi_2$ in terms of $\phi$:
\beq
\label{eq:valuephi1O}
\phi_1(0)= \frac{\mu_1 r_{22}
  -\mu_2r_{12}}{r_{12}r_{21}-r_{11}r_{22}}, \qquad
\phi_2(0)= \frac{\mu_2 r_{11}
  -\mu_1r_{21}}{r_{12}r_{21}-r_{11}r_{22}},  
\eeq
and more generally,
\[
  r_{21}\phi_1(y)= -\left(\mu_2 + \sigma_{22}\,y/2\right) \phi(0,y)
  -r_{22}\frac{\mu_2 r_{11}
    -\mu_1r_{21}}{r_{12}r_{21}-r_{11}r_{22}},  
\]
and symmetrically for $\phi_2(x)$. Hence~\eqref{eq:functional_equation} can also be seen as a functional equation in $\phi$ only.

We can also relate the densities $p_1(z_2)$ and $p_2(z_1)$ to $p_0(z_1,z_2)$ as follows. First, the classical initial value formula gives
\[
  \lim_{x\to-\infty} x \phi(x,y)= -\int_{\RR_+} e^{y z_2} p_0(0,z_2)\mathrm{d}z_2.
\]
Then, by dividing~\eqref{eq:functional_equation} by $x$, we also obtain 
\[
  \frac{\sigma_{11}}{2} \lim_{x\to-\infty} x \phi(x,y)=-r_{11} \phi_1(y) =-r_{11} \int_{{\RR}_+} e^{yz_2} 
  {p_1(z_2)}\mathrm{d} z_2.
\]
By comparing the two limits, we find
\[
  r_{11}p_1(z_2)=  \frac{\sigma_{11}}{2} p_0(0,z_2),
\]
and analogously for $p_2(z_1)$.

Finally, the  Laplace transform $\phi(x,y)$ of $\Pi$ and the Laplace transform $\Phi(x,y)$ of the corresponding stationary distribution for the Brownian motion in the $\beta$-wedge   $T(\RR_+^2)$ (with $T$ given by~\eqref{T-def}) are  related by a linear change of variables: 
\beq\label{phi-Phi}
\phi (x,y)= \Phi((x,y)T^{-1}).
\eeq
This is proved in~\cite[Cor.~2]{franceschi_explicit_2017} when $\sigma_{11}=\sigma_{22}=1$ and still holds in our more general setting.  From this, and the above relations between $\phi$, $\phi_1$ and $\phi_2$, we see that determining the differential and algebraic nature of $\phi$ and $\Phi$ boils down to studying the nature of $\phi_1$ and $\phi_2$.

\begin{prop}\label{prop:nature-phi}
  The Laplace transform $\phi(x,y)$ is rational (resp.\ algebraic,
  D-finite, D-algebraic) if and only if $\phi_1$ and $\phi_2$ are
  rational  (resp.\ algebraic, D-finite, D-algebraic).  The same holds
  for the Laplace transform $\Phi(x,y)$.
\end{prop}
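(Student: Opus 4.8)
The plan is to exploit the functional equation~\eqref{eq:functional_equation} together with the two explicit substitutions already recorded in the excerpt, which express $\phi_1,\phi_2$ rationally in terms of $\phi$, and $\phi$ rationally in terms of $\phi_1,\phi_2$. The key structural fact is that each of the four classes in the hierarchy~\eqref{hierarchy} is closed under the rational operations at play here: sums, products, multiplication by elements of $\RR(x,y)$, and (for the passage from two variables to one) specialization of a variable. So the proof is essentially a matter of organizing these closure properties carefully in the two relevant directions.

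First I would prove the ``easy'' direction: if $\phi_1(y)$ and $\phi_2(x)$ are both in a given class $\mathcal C\in\{\text{rational},\text{algebraic},\text{D-finite},\text{D-algebraic}\}$, then so is $\phi(x,y)$. Here one uses the formula obtained by combining~\eqref{eq:functional_equation} with the expression of $\phi_1(y)$ in terms of $\phi(0,y)$ (and symmetrically $\phi_2(x)$ in terms of $\phi(x,0)$): solving, $\phi(x,y)$ is an explicit $\RR(x,y)$-linear combination of $\phi_1(y)$ and $\phi_2(x)$, namely $\phi(x,y)=-\bigl(\gamma_1(x,y)\phi_1(y)+\gamma_2(x,y)\phi_2(x)\bigr)/\gamma(x,y)$. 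A univariate function $\phi_1(y)$ in class $\mathcal C$, viewed as a function of $(x,y)$, is still in class $\mathcal C$ (trivially for rational/algebraic; for D-finite/D-algebraic it satisfies the same ODE in $y$ and the trivial ODE $\partial_x\phi_1=0$ in $x$); multiplying by the rational function $\gamma_i/\gamma$ and adding preserves $\mathcal C$ by the standard closure properties. This gives $\phi\in\mathcal C$. The converse direction is equally direct: from the displayed relation $r_{21}\phi_1(y)=-(\mu_2+\sigma_{22}y/2)\phi(0,y)-\text{const}$, if $\phi(x,y)\in\mathcal C$ then $\phi(0,y)\in\mathcal C$ (specialization $x=0$ preserves each class — again trivial for rational/algebraic, and for D-finite/D-algebraic one sets $x=0$ in the ODE in $y$, being careful that $x=0$ is not a pole of the relevant coefficients, which one checks from the explicit formulas), and then $\phi_1(y)$ is an affine-in-$y$ combination of $\phi(0,y)$, hence in $\mathcal C$; symmetrically for $\phi_2$. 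Then I would invoke~\eqref{phi-Phi}, $\phi(x,y)=\Phi((x,y)T^{-1})$, and note that $(x,y)\mapsto (x,y)T^{-1}$ is an invertible linear (in particular polynomial, with polynomial inverse) change of variables, which preserves every class in~\eqref{hierarchy}; hence $\Phi$ is in class $\mathcal C$ if and only if $\phi$ is, completing the proposition.

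The main obstacle, such as it is, is not conceptual but a matter of being scrupulous about two points. First, when specializing $x=0$ (or $y=0$) one must ensure that the differential equations or algebraic equations witnessing membership in $\mathcal C$ can actually be evaluated at $x=0$ — i.e.\ that one can clear denominators so that the leading coefficient does not vanish identically at $x=0$, or equivalently that $x=0$ is an ordinary point or at worst handled by the fact that $\phi$ is analytic there. This is routine since $\phi$ is a genuine (convergent) Laplace transform, analytic in a neighborhood of the origin, so no spurious singularity is introduced. Second, one should state precisely the closure properties being used: the rational functions form a field; algebraic functions over $\RR(x,y)$ form a field closed under the operations used; D-finite and D-algebraic functions (in the bivariate sense of satisfying an ODE in each variable separately) are closed under sum, product, multiplication by rational functions, algebraic substitution, and specialization of a variable to a point where the function is analytic. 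These are all standard — see, e.g., the references on D-finite and D-algebraic closure — and I would simply cite them. With these caveats, the proof is a short diagram-chase through~\eqref{eq:functional_equation}, the two inversion formulas, and~\eqref{phi-Phi}.
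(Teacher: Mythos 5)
Your proposal is correct and follows essentially the same route as the paper: express $\phi$ as an $\RR(x,y)$-linear combination of $\phi_1,\phi_2$ via~\eqref{eq:functional_equation}, recover $\phi_1,\phi_2$ from $\phi$ by the specializations $x=0$, $y=0$, pass to $\Phi$ by the invertible affine change of variables~\eqref{phi-Phi}, and conclude with the standard closure properties (ring structure, specialization, composition with affine maps) that the paper invokes with references. Your extra care about evaluating at $x=0$ (legitimized by analyticity of $\phi$ near the origin) is a welcome precision of a point the paper leaves implicit.
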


This proposition relies on various  properties of rational/algebraic/D-finite/D-algebraic functions: they include rational functions, form a ring, are closed by specialization of variables, by composition with an affine function... We refer to~\cite{lipshitz-diag,lipshitz-df,stanleyDF} for classical articles on D-finite functions, and to~\cite[Sec.~6.1]{BeBMRa-17} for a reference on D-algebraic functions.

\subsection{Main results}
\label{sec:main}

Recall  that $ r_{11} > 0$ and $r_{22} > 0$, and that  we work under the following additional assumptions:
\begin{itemize}
\item existence of a stationary distribution:
  \[ 
    \det R >0,
    \quad r_{22} \mu_1 - r_{12}  \mu_2 < 0, \quad r_{11} \mu_2 - r_{21}  \mu_1 < 0,
  \] 
\item negative drift in the quadrant model:
  \[ 
    \mu_1<0,\qquad \mu_2<0.
  \] 
\end{itemize}
In terms of the $\beta$-wedge, the angles $\beta$, $\delta$ and $\varepsilon$ are taken in $(0 ,\pi)$, and  the above combined conditions translate into: 
\beq\label{assumptions}
\delta - \pi < \beta-\varepsilon <\theta <\delta, \qquad  0  <\theta<\beta ,
\eeq
see Lemma~\ref{lem:equivcondition}~\ref{item:iii}. It seems that these equivalences were never formerly established in the SRBM literature.

We focus on $\phi_1(y)$, since the study of $\phi_2(x)$ is obviously symmetric. We distinguish two cases, depending on whether the angle
$\beta$ is a rational multiple of $\pi$, or not.

\begin{thm}
  \label{thm:main_diffalg}
  Assume that $\beta/\pi\notin \Q$. Then $\phi_1(y)$ is differentially algebraic over $\RR(y)$ if and only if one of the angle conditions~\eqref{eq:CNS0} or~\eqref{eq:CNS0double}  holds. The necessary and sufficient conditions for $\phi_1(y)$ to be D-finite, algebraic (over $\RR(y)$) or rational (over $\RR$) are those stated in the first line of Table~\ref{table:characterisation}.
\end{thm}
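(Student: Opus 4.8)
The plan is to reduce everything to the structure already announced in the paper: the functional equation \eqref{eq:functional_equation}, the canonical D-finite invariant $w$ of Section~\ref{sec:gluing}, and the decoupling-function dichotomy of Section~\ref{sec:existence-decoupling}. So the argument splits into a ``sufficiency'' half (the angle conditions imply D-algebraicity, with the finer statements about D-finite/algebraic/rational read off from an explicit formula) and a ``necessity'' half (if neither angle condition holds, then $\phi_1$ is D-transcendental). First I would set up the kernel curve: the zero set of $\gamma(x,y)$ is an elliptic curve (or rational when $\beta/\pi\in\Q$, but here $\beta/\pi\notin\Q$), and the Galois automorphisms $\xi,\eta$ obtained by swapping the two roots in $x$ and in $y$ generate a group whose action, pulled back through the uniformization, is translation by a quantity commensurable with $\pi/\beta$; since $\beta/\pi\notin\Q$ this translation is of infinite order. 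This is what makes the difference-Galois machinery bite. Then \eqref{eq:functional_equation} restricted to the kernel becomes a relation of the shape $\phi_1\circ\xi - \phi_1 = (\text{explicit rational function on the curve})$, i.e.\ a first-order inhomogeneous $q$-difference-type equation for $\phi_1$.

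For the \textbf{sufficiency} direction, I would invoke Sections~\ref{sec:expression} and~\ref{sec:expression-double}: under \eqref{eq:CNS0} the decoupling function exists, hence one builds an invariant involving $\phi$, and since $w$ is canonical one gets $\phi_1$ (or a simple algebraic modification of it) as a rational function of $w$ and of $y$; under \eqref{eq:CNS0double} the same works with $\phi_1^2$ (or the relevant square). Because $w$ is a $_2F_1$ and hence D-algebraic, and D-algebraic functions form a field closed under composition with algebraic functions and under taking algebraic roots, D-algebraicity of $\phi_1$ follows. The finer dichotomy is then an analysis of the explicit $_2F_1$: a $_2F_1$ with parameters in the arithmetic progression dictated by $\alpha$ (resp.\ $\alpha_1,\alpha_2$) is rational exactly when a numerator parameter is a non-positive integer matching the pattern, algebraic exactly in the Schwarz-list situations forced here by $\alpha\in\Z$ (resp.\ $\{\alpha_1,\alpha_2\}\subset\Z$), and D-finite but transcendental in the remaining $\frac\pi\beta\Z$-shifted cases; one then checks that composing with the algebraic change of variables coming from $w$ and from \eqref{phi-Phi} does not change membership in the hierarchy. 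I would organize this as: (i) write the formula; (ii) quote the classical classification of the differential/algebraic nature of $_2F_1$ in terms of its parameters; (iii) translate the parameter conditions back into the conditions on $\alpha,\alpha_1,\alpha_2$ listed in Table~\ref{table:characterisation}; (iv) note that $\beta/\pi\notin\Q$ is exactly what rules out the extra coincidences that occur in the rational-$\beta/\pi$ case, so the three columns stay distinct.

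For the \textbf{necessity} direction — the part I expect to be the main obstacle — I would argue by contraposition using difference Galois theory, following the strategy of \cite{DHRS1}. Suppose neither \eqref{eq:CNS0} nor \eqref{eq:CNS0double} holds; I want to show $\phi_1$ is not D-algebraic. The point is that $\phi_1$ satisfies a first-order linear difference equation $\sigma(\phi_1)=a\phi_1+b$ over the field of functions on the kernel curve, where $\sigma$ is the infinite-order automorphism identified above and $a,b$ are explicit. D-algebraicity of $\phi_1$ is then equivalent, by the Galoisian criterion (Hardouin–Singer-type results for $\sigma$-Picard–Vessiot extensions), to the solvability of an associated auxiliary \emph{telescoping} equation over the curve; concretely, one must decide whether a certain explicit rational function $r$ on the elliptic curve can be written as $\sigma(g)-g$ plus a correction accounting for the derivative direction. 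Non-solvability is detected by the residues and the non-principal parts of $r$ at its poles: if the sum of residues over any $\sigma$-orbit is nonzero, or more precisely if the relevant divisor is not a $\sigma$-difference of a principal divisor, no such $g$ exists. So the crux is a residue/divisor computation: I would locate the poles of the explicit coefficient function, compute their orbits under the infinite-order translation $\sigma$, and show that the obstruction vanishes precisely when one of the two angle conditions holds and is nonzero otherwise. The delicate points are (a) keeping track of the poles contributed by the boundary Laplace transforms versus those of the kernel, (b) handling the two-layer obstruction (one layer giving D-finiteness, a deeper one giving D-algebraicity beyond D-finite, which is where the \emph{square} and hence $\{\alpha_1,\alpha_2\}$ enter), and (c) making sure the passage between $\phi_1$ on the curve and $\phi_1(y)$ as a function of a single complex variable $y$ does not lose or create solutions. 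Once the residue computation is done, collecting the cases yields exactly the first line of Table~\ref{table:characterisation}, completing the theorem.
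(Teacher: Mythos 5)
Your skeleton does coincide with the paper's (sufficiency via decoupling functions and the canonical invariant $w$, necessity via a difference-Galois telescoping criterion), but several concrete steps of your plan would fail as written. First, the kernel curve here is \emph{not} elliptic: $\gamma(x,y)$ is quadratic and the curve $\cS$ has genus $0$ for all parameter values, with the explicit rational uniformization~\eqref{eq:uniformization}; the relevant automorphism is the rotation $s\mapsto e^{2i\beta}s$, of infinite order precisely because $\beta/\pi\notin\Q$ (the genus-$1$ picture you describe belongs to the discrete-walk setting of~\cite{DHRS1,FIM17}). Second, the equation obtained on the curve from the boundary condition~\eqref{eq:bound_cond_gen} is the \emph{homogeneous multiplicative} relation $\widetilde\phi_1(qs)=E(s)\widetilde\phi_1(s)$ of~\eqref{eq:BVP_qdiff}, not an inhomogeneous relation $\sigma(\phi_1)=a\phi_1+b$; accordingly the Galoisian criterion (Theorem~\ref{thm:abstractdiffalgcriteria}) is applied after a logarithmic-derivative reduction to $\partial E/E$, and the obstruction is the vanishing of the $q^{\Z}$-orbit divisor of the explicit rational function $E(s)$ of~\eqref{eq:definition_E} (Lemma~\ref{lem:Ecocyclecaracterization-v0}), not a residue computation for an inhomogeneous term on an elliptic curve. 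There is also no ``deeper layer'' distinguishing D-finite from D-algebraic at this stage: the exponent $m=2$ (hence $\alpha_1,\alpha_2$) enters simply because the orbit divisor of $E$ can vanish in two ways, $[s_1]=[s_2]$ or $s_1^2,s_2^2\in q^{\Z}$, both of which sit at the D-algebraicity level. Finally, your point (c) is a substantive prerequisite, not a formality: before any Galois theory can be applied one must continue the lifted function $\psi_1(\om)=\phi_1(y(e^{i\om}))$ meromorphically to all of $\C$ (Proposition~\ref{prop:psi1}, proved via the universal covering in Appendix~\ref{app:continuation}), since the original domain is not closed under the shift $\om\mapsto\om+2\beta$.

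The finer classification in your sufficiency step is also off target. Since $\beta/\pi\notin\Q$, the hypergeometric function $T_{\pi/\beta}$, hence $w$, is \emph{never} algebraic, so the Schwarz-list classification of ${_2F}_1$ by its parameters cannot produce the algebraic and rational columns of Table~\ref{table:characterisation}. In the paper these distinctions come instead from Proposition~\ref{prop:rational_Ta} (a rational function $S/R$ of $T_a$ is D-finite if and only if $a\in\Q$ or $R$ is constant), combined with the explicit determination of the degrees and roots of $S$ and $R$ by counting zeros and poles of the decoupling function inside $\G$ (Lemmas~\ref{lem:count},~\ref{lem:poles_roots_F},~\ref{lem:poles_roots_F-double}, leading to Theorems~\ref{thm:main} and~\ref{thm:main-double}). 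Algebraicity and rationality occur exactly when the $w$-dependence degenerates to a constant (giving $\alpha\in-\N_0$, resp.\ $\{\alpha_1,\alpha_2\}\subset\Z$), and D-finiteness corresponds to the denominator in $w$ being trivial (e.g.\ $r>0$, i.e.\ $\alpha\in-\N_0+\frac\pi\beta\Z$, and the analogous condition on $\alpha_1,\alpha_2$). As stated, your steps (ii)--(iii) would not recover the first line of Table~\ref{table:characterisation}.
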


We now move to the case where $\beta$ is a rational multiple of $\pi$.
\begin{thm}
  \label{thm:main_alg}
  If $\beta/\pi\in \Q$, then $\frac{1}{\phi_1}\frac{d 
    \phi_1}{d y}$, the logarithmic derivative of $\phi_1$, is
  D-finite over $\RR(y)$. In particular, $\phi_1$ is  differentially algebraic.  Moreover, $\phi_1$ is algebraic if and only if~\eqref{eq:CNS0} or~\eqref{eq:CNS0double} holds, and this is the only case where it is D-finite. Finally, $\phi_1$ is rational if and only if $\alpha\in - \N_0$.
\end{thm}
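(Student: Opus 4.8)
The plan is to treat the case $\beta/\pi \in \Q$ by reducing everything to the structure already developed for the invariant $w$ and the decoupling functions. Write $\beta/\pi = p/q$ with $\gcd(p,q)=1$. The key new feature is that the group of the model becomes finite (the associated conformal gluing/automorphism of the kernel curve has finite order, $2q$), so the boundary-value problem for $\phi_1$ on the relevant curve lives on a compact Riemann surface and the "invariant" machinery degenerates in a favorable way. Concretely, I would first recall from Section~\ref{sec:gluing} that the canonical invariant $w$ is D-finite and explicit, and note that when $\beta/\pi\in\Q$ the hypergeometric function underlying $w$ degenerates to an algebraic function: ${_2F}_1$ with parameters that are rational and whose monodromy group is finite, so $w$ itself is algebraic over $\RR(y)$ in this case. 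This is the first main step, and it rests on the classical Schwarz list / finiteness of monodromy for ${_2F}_1$.

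Next I would analyze $\phi_1$ via its logarithmic derivative. The functional equation~\eqref{eq:functional_equation}, restricted to the kernel curve $\{\gamma=0\}$, gives a relation of the form $\phi_1(Y_0(x)) = -\frac{\gamma_2(x,Y_0(x))}{\gamma_1(x,Y_0(x))}\phi_2(x)$, and combining the two branches (or equivalently using the finite-order automorphism) yields a multiplicative/additive functional equation for $\phi_1$ alone. Taking the logarithmic derivative turns this into a \emph{linear} (inhomogeneous) functional equation, whose inhomogeneous term is a rational function of $x$ composed with the algebraic function $w$, hence algebraic, hence D-finite. Because the relevant automorphism has finite order $2q$, iterating the equation $2q$ times closes it: $\frac{1}{\phi_1}\frac{d\phi_1}{dy}$ is expressed as a finite sum of algebraic functions, therefore D-finite over $\RR(y)$. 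Integrating, $\phi_1 = \exp(\text{(D-finite, actually with algebraic-function derivative)})$, so $\phi_1$ is D-algebraic. This gives the first two sentences of the theorem.

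For the finer classification, I would argue as follows. If~\eqref{eq:CNS0} or~\eqref{eq:CNS0double} holds, then a decoupling function exists (Section~\ref{sec:existence-decoupling}), so by Sections~\ref{sec:expression}--\ref{sec:expression-double} $\phi_1$ (or $\phi_1^2$) is a rational function of the invariant $w$; since $w$ is algebraic when $\beta/\pi\in\Q$, $\phi_1$ is algebraic, and a fortiori D-finite. Conversely, suppose $\phi_1$ is D-finite. Then its logarithmic derivative is D-finite, and $\frac{d}{dy}\log\phi_1$ being the ratio of a D-finite function and a D-finite function must in fact be algebraic (a D-finite function whose logarithmic derivative is D-finite has algebraic logarithmic derivative — one can invoke the classification of D-finite functions that are of the form $\exp\int(\text{D-finite})$, or use that $\phi_1$ has only finitely many singularities with at most algebraic local behaviour). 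From algebraicity of $\phi_1$ one then reads off, via the explicit local exponents of $\phi_1$ at the branch points of the kernel curve (which are governed by $\alpha_1,\alpha_2$ and $\pi/\beta$), that these exponents must lie in $\frac{1}{\text{(common denominator)}}\Z$ — and chasing the arithmetic this forces precisely~\eqref{eq:CNS0} or~\eqref{eq:CNS0double}. The rationality statement is the cleanest: $\phi_1$ rational $\iff$ $\nu_1$ has a density that is a combination of $x^ke^{-ax}$, which by~\cite{DiMo-09} happens exactly when $\alpha\in-\N_0$; alternatively, among the algebraic cases just identified, rationality forces all local exponents at the finite branch points to be nonnegative integers, and the condition $\alpha_1,\alpha_2\in\Z$ together with $\alpha<1$ collapses to $\alpha\in-\N_0$ (and one checks the double-angle case with $\alpha_1,\alpha_2\in\Z$ but $\alpha\notin-\N_0$ yields genuinely algebraic, non-rational $\phi_1$).

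The main obstacle I anticipate is the converse direction of the algebraicity criterion: showing that D-finiteness of $\phi_1$ \emph{forces} one of the angle conditions. Unlike the $\beta/\pi\notin\Q$ case — where difference Galois theory (Section~\ref{sec:DT}) supplies a clean dichotomy — here the relevant difference operator has finite order, so D-algebraicity is automatic and one cannot use transcendence of $\phi_1$ as a lever; one must instead extract the obstruction purely from the singularity structure / local exponents of $\phi_1$ on the kernel curve and show that incompatibility of these exponents with a single-valued (algebraic, resp.\ rational) function is equivalent to the negation of~\eqref{eq:CNS0} and~\eqref{eq:CNS0double}. Making this exponent bookkeeping rigorous — in particular ruling out "accidental" cancellations of branching — is the delicate point, and I expect it to require carefully tracking how $\alpha_1$, $\alpha_2$, and $\pi/\beta$ enter the local behaviour of $\phi_1$ at each of the four branch points of $\{\gamma=0\}$.
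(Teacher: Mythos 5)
There are two genuine gaps. First, your argument for the opening statement (D-finiteness of $\frac{1}{\phi_1}\frac{d\phi_1}{dy}$) does not work as described. The boundary/difference equation only tells you $\psi_1(\omega+2\beta)=E(e^{i\omega})\psi_1(\omega)$, so the logarithmic derivative $g$ satisfies $g(\omega+2\beta)=g(\omega)+\partial E/E$; when $q=e^{2i\beta}$ has finite order $n$, iterating $n$ times merely relates $g(\omega)$ to $g(\omega+2\pi m)$ (two points of the universal covering lying over the same $s$) and produces a telescoping consistency identity for $\sum_j (\partial E/E)(q^js)$ — it does \emph{not} express $g$ as a finite sum of algebraic functions. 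Indeed the functional equation cannot by itself pin down $\phi_1$ (multiplying by any invariant solution of the homogeneous problem preserves it), so no amount of iteration closes the system. The paper's proof of this statement is of a completely different nature: it starts from the explicit contour-integral formula of \cite{franceschi_explicit_2017} for $\phi_1$, which when $\beta/\pi\in\Q$ has the shape $\phi_1=g\,\exp(h)$ with $g$ algebraic (because $w$ is then algebraic) and $h$ D-finite (integrals of D-finite integrands along a curve are D-finite), whence $\phi_1'/\phi_1=g'/g+h'$ is D-finite. Without this external analytic input, the first (and logically prior) claim of the theorem is unproved in your proposal — and your converse direction also silently uses it, since ``$\phi_1$ D-finite $\Rightarrow$ $\phi_1'/\phi_1$ D-finite'' is not a closure property of D-finite functions.

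Second, the converse classification (``$\phi_1$ D-finite $\Rightarrow$ \eqref{eq:CNS0} or \eqref{eq:CNS0double}'') is only sketched, and the sketch is not the route the paper takes. You correctly identify the Singer-type step (if $g$ and $\exp\int g$ are D-finite then $g$ is algebraic, \cite{singer_1986}), but from algebraicity of the logarithmic derivative you then appeal to an uncarried-out ``local exponent bookkeeping'' at the branch points, which you yourself flag as the delicate point; ruling out accidental cancellations there is exactly what is missing, so this is a plan rather than a proof. The paper instead transfers algebraicity of $f=\partial\widetilde\phi_1/\widetilde\phi_1$ to the $s$-plane, uses $\zeta^*(f)=f+\partial E/E$ together with the minimal polynomial of $f$ to deduce $\partial E/E=\zeta^*(h)-h$ with $h$ rational, telescopes over the finite orbit to get $\prod_j E(q^js)=\text{const}$, evaluates at $0$ and $\infty$, invokes Hilbert's Theorem~90, and concludes via Lemma~\ref{lem:Ecocyclecaracterization-v0} that one of the angle conditions holds; algebraicity in that case then follows from Theorems~\ref{thm:main} and~\ref{thm:main-double}. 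Finally, for rationality your ``only if'' via \cite{DiMo-09} overstates that reference (it gives sufficiency under a non-degeneracy hypothesis); the clean argument is that rationality of $\phi_1$ together with the asymptotics \eqref{phi-asympt}, $\phi_1(y)\sim\kappa y^{\alpha-1}$, forces $\alpha\in\Z$, hence $\alpha\in-\N_0$ since $\alpha<1$.
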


When one of the angle conditions~\eqref{eq:CNS0} or~\eqref{eq:CNS0double} holds, we obtain an explicit expression of $\phi_1$.

\begin{thm}
  When~\eqref{eq:CNS0} or~\eqref{eq:CNS0double} holds, there exist an integer $m \in \{1,2\}$, and four polynomials $P(y)$, $Q(y)$, $R(z)$ and  $S(z)$ with real  coefficients such that
  \[
    \phi_1^m(y) =   \frac {Q(y)}{P(y)} \frac{S(w(y))}{R(w(y))},
  \]
  where $w(y)$ is an explicit D-finite function, defined in Section~\ref{sec:gluing} in terms of Gauss' hypergeometric function ${_2F}_1$ (see~\eqref{eq:definition_w-bis}).
\end{thm}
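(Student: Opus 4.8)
The plan is to combine the invariant machinery developed in Sections~\ref{sec:invariants}--\ref{sec:gluing} with the decoupling-function analysis of Section~\ref{sec:existence-decoupling}. Recall from Section~\ref{sec:gluing} that $w(y)$ is a \emph{canonical} invariant: it is D-finite, explicit in terms of ${_2F}_1$, and any other invariant is a rational function of $w$. The key point is that when~\eqref{eq:CNS0} holds (the simple angle condition, $m=1$), a decoupling function exists that allows one to build, out of $\phi_1$ and $\phi_2$, an invariant $I_\phi$ attached to the unknown $\phi$; when only~\eqref{eq:CNS0double} holds (the double angle condition, $m=2$), the same construction works but only after passing to a square, so the natural invariant is attached to $\phi_1^2$ (equivalently $\phi^2$). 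In both cases the canonicity of $w$ forces
\[
  I_\phi = F(w(y))
\]
for some rational function $F$ with real coefficients, because $I_\phi$ is an invariant and $w$ generates all invariants over $\RR$.

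First I would, in each of the two regimes, make explicit the invariant $I_\phi$ produced by the decoupling function: by construction (see Section~\ref{sec:existence-decoupling}) it has the form $I_\phi(y) = A(y)\,\phi_1^m(y) + B(y)$ on the relevant curve, where $A,B \in \RR(y)$ are rational functions obtained from the kernel $\gamma$, the reflection polynomials $\gamma_1,\gamma_2$, and the decoupling function, and $m=1$ under~\eqref{eq:CNS0}, $m=2$ under~\eqref{eq:CNS0double}. Then I would invoke the canonicity statement of Section~\ref{sec:gluing} to write $I_\phi = F(w)$ with $F = S_0/R_0$ a ratio of real polynomials. Solving for $\phi_1^m$ gives
\[
  \phi_1^m(y) = \frac{1}{A(y)}\Bigl(\frac{S_0(w(y))}{R_0(w(y))} - B(y)\Bigr)
  = \frac{Q(y)}{P(y)}\,\frac{S(w(y))}{R(w(y))},
\]
after clearing denominators: one sets $P(y)$ to absorb the denominator of $A(y)^{-1}$ together with $B(y)$'s denominator, $Q(y)$ the resulting numerator, and $R(z) = R_0(z)$, while $S(z)$ is the polynomial $S_0(z) - B\!\cdot\!R_0(z)$ reorganized so the $y$-dependence is separated from the $w(y)$-dependence. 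One must check that the $y$-dependent and $w$-dependent parts genuinely factor in this way — this follows because $A$ and $B$ are rational in $y$ alone while the only transcendental content sits in $w(y)$.

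The arithmetic bookkeeping to extract the four polynomials $P,Q,R,S$ with real coefficients is routine once the structural identity $I_\phi = F(w)$ is in hand; the genuinely delicate point is controlling $F$. A priori $F$ is only known to be rational in $w$; to turn "$\phi_1^m$ is a rational function of $w$ and $y$" into the clean product form stated, one needs that the degrees of $R_0, S_0$ are finite and that $F$ is real — this is where one uses that $w$ is real-valued on the relevant real interval, that $\phi_1$ has good boundary behaviour (finiteness and the value $\phi_1(0)$ from~\eqref{eq:valuephi1O}, analyticity in a half-plane), and that the invariant $I_\phi$ is therefore a \emph{bounded} meromorphic function on the domain, hence its expression in $w$ cannot be an arbitrary rational function but is pinned down up to finitely many parameters. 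I expect the main obstacle to be precisely this finiteness/reality argument for $F$: showing that the decoupling-function construction yields an invariant that is meromorphic with controlled poles on the whole relevant Riemann surface, so that canonicity applies and produces a genuine rational (not merely meromorphic-in-$w$) function $F$. Once that is secured, the statement — existence of $m\in\{1,2\}$ and of $P,Q,R,S$ — is immediate, with $m=1$ exactly under~\eqref{eq:CNS0} and $m=2$ under~\eqref{eq:CNS0double} (and $m=1$ as well when both hold).
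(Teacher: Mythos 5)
Your overall architecture --- a decoupling function, then an invariant built from $\phi_1^m$, then the canonicity of $w$ (Proposition~\ref{prop:inv}) to write that invariant as a rational function of $w$, then solving for $\phi_1^m$ --- is indeed the paper's route (Theorem~\ref{thm:decoupling}, Lemma~\ref{lem:Fphi1-invariant}, Corollary~\ref{cor:phi1m-expr}). But there is a genuine gap at exactly the step that delivers the stated product form. You posit that the invariant is \emph{affine} in $\phi_1^m$, namely $I_\phi(y)=A(y)\,\phi_1^m(y)+B(y)$ with $A,B\in\RR(y)$, and then claim that $\frac{1}{A(y)}\bigl(\frac{S_0(w(y))}{R_0(w(y))}-B(y)\bigr)$ can be rearranged into $\frac{Q(y)}{P(y)}\frac{S(w(y))}{R(w(y))}$ ``because $A$ and $B$ are rational in $y$ alone while the only transcendental content sits in $w(y)$''. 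That inference is false: an expression such as $S_0(w(y))-B(y)R_0(w(y))$ does not in general split as a product of a rational function of $y$ and a rational function of $w(y)$ (think of $w(y)-y$, with $w$ transcendental over $\C(y)$ when $\beta/\pi\notin\Q$). The affine shape you wrote down is the \emph{additive} decoupling of the discrete-walk setting; in this paper the decoupling is multiplicative (Definition~\ref{def:decoupling_function}, Lemma~\ref{lem:decoupling_function}): the boundary relation $\phi_1(\bar y)=G(y)\phi_1(y)$ combined with $G^m(y)=F(y)/F(\bar y)$ shows that $F\phi_1^m$ itself, with no additive term (so $B\equiv 0$), is an invariant. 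With that multiplicative invariant the factorization is immediate: canonicity gives $F\phi_1^m=(S/R)\circ w$, and since $F$ is an explicit rational function of $y$ with real roots and poles (Theorem~\ref{thm:decoupling}), one takes $Q/P=1/F$. Without establishing $B\equiv 0$ (or some substitute), your argument only yields that $\phi_1^m$ is a rational function of the pair $(y,w(y))$, which is strictly weaker than the theorem.

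A secondary, smaller point: to apply Proposition~\ref{prop:inv} you need the invariant to be meromorphic in a neighbourhood of $\bG$ with finitely many poles there and at most \emph{polynomial growth} at infinity --- not boundedness, as you write; an invariant that is bounded with no poles is in fact constant by the last assertion of that proposition. The required growth control comes from $\phi_1(y)\sim\kappa y^{\alpha-1}$ in Proposition~\ref{prop:BVP_Carleman_sketch} together with the rationality of $F$, and the pole control comes from the single possible pole of $\phi_1$ in $\bG$ plus the finitely many real poles of $F$; these are the precise ingredients replacing your ``boundedness'' heuristic.
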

We refer to Theorems~\ref{thm:main} and~\ref{thm:main-double} for an explicit description of these four polynomials.

\medskip

It follows from the above theorems that the Laplace transforms $\phi_1$ and $\phi_2$ are always of  the same nature, in the sense of the hierarchy~\eqref{hierarchy} -- and then of the same nature as $\phi(x,y)$ and $\Phi(x,y)$, by Proposition~\ref{prop:nature-phi}. Indeed, as already observed in Section~\ref{sec:normalize}, applying an $x/y$-symmetry to the quadrant model exchanges $\phi_1$ and $\phi_2$, leaves  the angle $\beta$  unchanged, and exchanges  $\delta$ and~$\varepsilon$, as well as $\theta$ and $\beta-\theta$.
This implies that the parameter $\alpha$ defined by~\eqref{eq:def_alpha} is unchanged, while the parameters $\alpha_1$ and $\alpha_2$ defined by~\eqref{eq:def_alpha_i} are exchanged. Since the angle conditions 
of  Table~\ref{table:characterisation} are expressed in terms of $\alpha$, $\alpha_1$ and $\alpha_2$ only, and are symmetric in $\alpha_1$ and $\alpha_2$,
the transforms  $\phi_1$ and $\phi_2$ will  always be  of the same nature.

Our results apply in particular to  three cases  in which Condition~\eqref{eq:CNS0} holds and the Laplace transform is known to take a particularly simple form (see Figure~\ref{fig:three_examples} for an illustration):
\begin{itemize}
\item The skew symmetric case   $\delta+\varepsilon=\pi$, or equivalently  
  $\alpha=0$, studied for instance in~\cite[\S 10]{HaRe-81},~\cite{harrison_multidimensional_1987} or~\cite{DaMi-13}, and in~\cite{kang_2014} for a class of problems with state-dependent drifts.
\item The (more general) Dieker and Moriarty case
  $     \alpha\in -\N_0$ 
  (see~\cite{DiMo-09}).
\item Orthogonal reflections in the quadrant model~\cite{franceschi_tuttes_2016},  corresponding to $r_{12}=r_{21}=0$, or equivalently to
  $\delta=\varepsilon=\beta$ (see~\eqref{eq:expression_delta_epsilon}).
\end{itemize}
The transform $\phi_1$ is rational in the first two cases, and we will see that $1/\phi_1$ is D-finite in the third one (Theorem~\ref{thm:main}). We review these cases in Section~\ref{sec:examples}, together with an algebraic example where $\beta=2\pi/3$, and finally a D-finite one, $\delta+\varepsilon+\beta=2\pi$,  for which we work out explicitly the recurrence relation satisfied by the moments of $\nu_1$. In Section~\ref{sec:examples-double} we present additional interesting cases, this time where the double angle condition holds, with an emphasis on models where $\phi_1$ is algebraic while the angle $\beta$ is not necessarily in $\pi\Q$. 
This happens in particular when $\alpha_1=\alpha_2=0$ (so that $\alpha=1/2$), and in this case we prove that  the density of the stationary distribution in the $\beta$-wedge, expressed in polar coordinates $(\rho,a)$, is 
\[
  \kappa '   \,
  \frac{\cos(\frac{\theta-a}2)}{\sqrt{\rho}}
  \exp\left(-2 | \widetilde \mu |
    \, \rho \cos^2\left(\frac{\theta-a}2\right)\right),
\]
where $|\tilde \mu|$ is given by~\eqref{Delta-def}, and $\kappa'$ is an explicit constant; see~\eqref{q0}.
A similar density has already been established by Harrison~\cite[Sec.~9]{harrison_78_diffusion} in a limit case.

\begin{figure}[hbtp]
  \centering
  \includegraphics[scale=1.2]{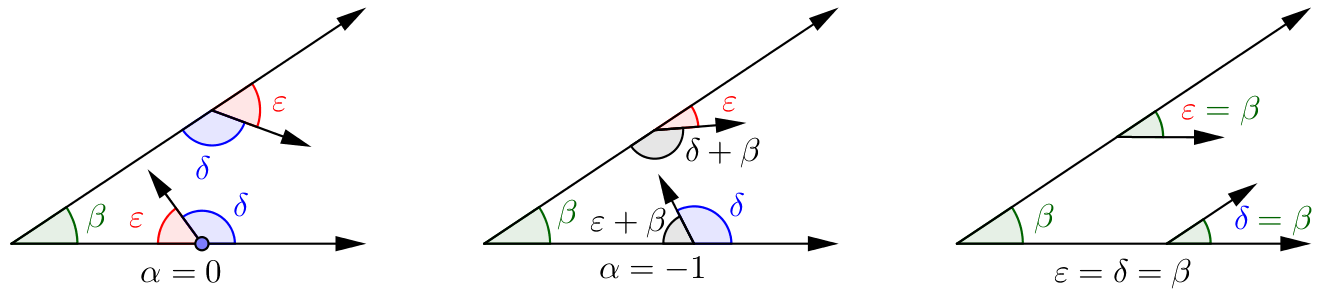}
  \caption{Three    interesting cases where the Laplace transform $\phi_1$ is D-algebraic. From left to right: skew symmetry, Dieker and    Moriarty condition (for $\alpha=-1$) and orthogonal reflections.}
  \label{fig:three_examples}
\end{figure}

\subsection{Homogeneities and normal forms}
\label{sec:normal}

The SRBM defined in the previous section involves nine real parameters (the $\sigma_{ij}$, $\mu_j$ and $r_{ij}$), but there are homogeneities between them that become visible when we move to the variables of the $\beta$-wedge. For instance, $\beta$ is unchanged if we multiply the $\sigma_{ij}$ by a positive scalar. Moreover, most quantities can now be written in many different ways, by mixing parameters from the quadrant and from the $\beta$-wedge. It will be convenient to use the $\beta$-parameters as much as possible,
keeping the quadrant parameters as prefactors only. We call \emm normal forms, such expressions. For instance, in the following identities, derived from Section~\ref{sec:normalize}, the right-hand sides are in normal form; The first two identities involve only $\Sigma$, the next three $\mu$ and $\Sigma$, and the final ones $R$ and~$\Sigma$:
\beq\label{normal-sigma}
  -\frac{\sigma_{12}}{\sqrt{\sigma_{11}\sigma_{22}}}= \cos\beta, \qquad
  \sqrt{\frac{\det \Sigma}{\sigma_{11}\sigma_{22}}}= \sin \beta,
\eeq
\beq\label{normal-mu-sigma}
  {\mu_1}\sqrt{\frac{\det \Sigma}{\sigma_{11}\Delta}}=-\sin(\beta-\theta), \qquad
  {\mu_2}\sqrt{\frac{\det \Sigma}{\sigma_{22}\Delta}}=-\sin(\theta),
  \qquad     \frac{\mu_1}{\mu_2} \sqrt{\frac{\sigma_{22}}{\sigma_{11}}}= \frac{\sin(\beta-\theta)}{\sin \theta}, 
\eeq
\beq\label{normal-R-sigma}
  \frac{r_{12}}{r_{22}} \sqrt{\frac{\sigma_{22}}{\sigma_{11}}}= \frac{\sin(\beta-\delta)}{\sin \delta}, \qquad
  \frac{r_{21}}{r_{11}} \sqrt{\frac{\sigma_{11}}{\sigma_{22}}}= \frac{\sin(\beta-\vareps)}{\sin \vareps}.
\eeq
The masses~\eqref{eq:valuephi1O}  of the measures $\nu_1$ and $\nu_2$ can now be written as
\beq\label{masses}
\phi_1(0)=  \frac 1 {r_{11}}\sqrt{\frac{\Delta}{\sigma_{22}}}
\frac{ \sin(\theta-\delta)\sin \vareps}{\sin(\beta-\delta-\varepsilon)\sin \beta },
\qquad
\phi_2(0)=  \frac 1 {r_{22}}\sqrt{\frac{\Delta}{\sigma_{11}}}
\frac{ \sin(\beta-\theta-\vareps)\sin \delta }{\sin(\beta-\delta-\varepsilon) \sin \beta}.
\eeq
This strategy, and the definition~\eqref{eq:def_gamma_gamma1_gamma2} of the kernel $\gamma(x,y)$, lead us to introduce normalized versions $\xn$ and $\yn$ of the variables $x$ and $y$ as well: we define them by
\beq\label{xy:normal}
x= \frac{\sqrt{\Delta \sigma_{22}}}{\det \Sigma}\,  \xn,  \qquad
y=  \frac{\sqrt{\Delta \sigma_{11}}}{\det \Sigma}\,  \yn.
\eeq
Then the kernel can be rewritten in normal form
\beq\label{gamma-normal}
  \gamma(x,y)=\frac \Delta{2\sin^2\beta \det \Sigma} \left( \xn^2+\yn^2-2\xn \yn \cos \beta - 2\xn \sin \beta \sin(\beta-\theta) - 2\yn \sin \beta\sin \theta\right).
\eeq
This gives the following normal forms for the other two polynomials of~\eqref{eq:def_gamma_gamma1_gamma2}:
\beq\label{gamma_i:normal}
\gamma_1(x,y)=r_{11} \frac{\sqrt{\Delta \sigma_{22}}}{\det \Sigma} \left( \xn+ \yn \, \frac{\sin (\beta-\vareps)}{\sin \vareps}\right),
\qquad
\gamma_2(x,y)=r_{22}\frac{\sqrt{\Delta \sigma_{11}}}{\det \Sigma} \left( \xn  \, \frac{\sin (\beta-\delta)}{\sin \delta}+ \yn\right).
\eeq
All identities of this subsection are implemented in our {\sc Maple} session.

\section{The kernel}
\label{sec:kernel}

In this section we  study the curve $\gamma(x,y)=0$, where $\gamma(x,y)$ is the quadratic polynomial defined in~\eqref{eq:def_gamma_gamma1_gamma2}.

\subsection{The kernel and its roots}

The roots of the kernel  $\gamma(x,y)$ (when solved for $x$, or for $y$) are algebraic functions $ X^\pm(y)$ and $ Y^\pm(x)$ defined by
\[
  \gamma( X^\pm(y), y)=\gamma(x,  Y^\pm(x))= 0.
\]
They can be expressed in closed form: 
\beq
\label{eq:definition_Theta_pm}
\left\{
  \begin{array}{l}
    X^\pm(y)=\dfrac{-(\sigma_{12}y+\mu_1)\pm\sqrt{y^2(\sigma_{12}^2-\sigma_{11}\sigma_{22})+2y(\mu_1\sigma_{12}-\mu_2\sigma_{11})+\mu_1^2}}{\sigma_{11}},
    \smallskip\smallskip
    \\
    Y^\pm(x)=\dfrac{-(\sigma_{12}x+\mu_2)\pm\sqrt{x^2(\sigma_{12}^2-\sigma_{11}\sigma_{22})+2x(\mu_2\sigma_{12}-\mu_1\sigma_{22})+\mu_2^2}}{\sigma_{22}},
  \end{array}\right.
\eeq
where we take the principal value of the square root on $\C\setminus(-\infty, 0]$.
Each of the discriminants (that is, the polynomials under the square roots) in~\eqref{eq:definition_Theta_pm} admits two roots, which are the branch points of the functions $ X^\pm$ and $ Y^\pm$. They are respectively given~by 
\beq
\label{eq:definition_theta_pm}
\left\{
  \begin{array}{l}
    \displaystyle y^\pm = \frac{(\mu_1\sigma_{12}-\mu_2\sigma_{11}) \pm
    \sqrt{
    \sigma_{11} \Delta       }}{\det\Sigma},\smallskip\smallskip\\
    \displaystyle x^\pm= \frac{(\mu_2\sigma_{12}-\mu_1\sigma_{22}) \pm \sqrt{
    \sigma_{22} \Delta           }}{\det\Sigma},
  \end{array}\right.
\eeq
with $\Delta$ defined by~\eqref{Delta-def}.
These expressions have more structure when we use the normal forms and normal variables introduced in Section~\ref{sec:normal}. If we 
write    the roots of the kernel  as:
\[
  X^\pm (y) = \frac{\sqrt{\Delta \sigma_{22}}}{\det \Sigma} \, \Xn^\pm(\yn), \qquad
  Y^\pm (x) = \frac{\sqrt{\Delta \sigma_{11}}}{\det \Sigma} \, \Yn^\pm(\xn),
\]
then  we have
\begin{alignat}{4}
  \Xn^\pm(\yn)&=\sin\beta \sin(\beta-\theta) & + \yn \cos\beta &\pm  \sin \beta \sqrt{(\yn-\yn^-)(\yn^+-\yn)}, \\
  \Yn^\pm(\yn)&=\sin \beta\sin\theta&+ \xn \cos\beta &\pm \sin \beta \sqrt{(\xn-\xn^-)(\xn^+-\xn)},
\end{alignat}
with
\beq\label{xyn-pm}
\xn^\pm =\frac{\det \Sigma}{\sqrt{\Delta \sigma_{22}}}\, {x^\pm} 
= {\cos \theta\pm 1}, \qquad
\yn^\pm =\frac{\det \Sigma}{\sqrt{\Delta \sigma_{11}}}\, {y^\pm} 
= \cos (\beta-\theta)\pm 1.
\eeq
Clearly, $\yn^+$ (and $y^+$) is positive and  $\yn^-$ (and $y^-$) is negative.
The branches $ X^\pm$ are thus analytic on $\C\setminus ((-\infty,y^-]\cup [y^+,\infty))$.  Similarly, the branches $ Y^\pm$ are analytic on $\C\setminus ((-\infty,x^-]\cup [x^+,\infty))$.

\begin{rem}\label{rem:complexconjugateroots}
  For   $x \in (-\infty, x^-] \cup [x^+, \infty)$,  the roots of $\gamma(x,y)=0$, solved for $y$, are complex conjugate. We still denote them $Y^\pm(x)$, but they are only defined up to conjugacy.
\end{rem}

\subsection{Parametrization of the curve $\boldsymbol{\gamma(x,y)=0}$}
\label{subsec:kernel}

It  will be very convenient to work with a rational uniformization (or parametrization) of the kernel, rather than 
with the variables $x$ and $y$. Let us introduce the curve
\[
  \mathcal S :=\{(x,y)\in (\C\cup\{\infty\})^2  : \gamma(x,y)=0 \},
\] 
which is a Riemann surface of genus $0$, see~\cite{franceschi_asymptotic_2016}. The following uniformization of $\mathcal S$ is  established in~\cite[Prop.~5]{franceschi_asymptotic_2016}:
\beq
\label{eq:definition_Riemann_sphere_2}
\mathcal S =\{(x(s),y(s)): s\in\C\cup\{\infty\}\}, 
\eeq
where
\beq
\label{eq:uniformization}
\left\{
  \begin{array}{l}
    x(s) =\displaystyle \frac{x^++x^-}{2}+\frac{x^+-x^-}{4}\left(s+\frac{1}{s}\right),\smallskip\\
    y(s) = \displaystyle\frac{y^++y^-}{2}+\frac{y^+-y^-}{4}\left(\frac{s}{e^{i\beta}}+\frac{e^{i\beta}}{s}\right).
  \end{array}\right.
\eeq
Recall that $x^\pm$ and $y^\pm$ are the branch points given by~\eqref{eq:definition_theta_pm}. 
In normal form,
\[
  x(s)=\frac{\sqrt{\Delta \sigma_{22}}}{\det \Sigma}\,  \xn(s),  \qquad
  y(s)=\frac{\sqrt{\Delta \sigma_{11}}}{\det \Sigma}\,  \yn(s), 
\]
with
\beq \label{param:normal}
\left\{
  \begin{array}{rll}
    \xn(s)&=\frac 1{2 
            }\left(s+e^{i\theta}\right)\left(1+ e^{-i\theta}/s\right)
    &=\frac 1{2
      }\left(2\cos \theta + s + 1 /s\right),\\
    \yn(s)&=\frac 1{2
            }\left(s+e^{i\theta}\right) \left(e^{-i\beta}+ e^{i(\beta-\theta)}/s\right)
    &= \frac 1{2
      }\left(2\cos (\beta-\theta) +e^{-i\beta} s + {e^{i\beta}}/ s\right). 
  \end{array}\right. 
\eeq
We will use repeatedly, and without mention, the fact that $x$ and $\xn$, or $x(s)$ and $\xn(s)$, and so on, only differ by a \emm positive, multiplicative factor. 
\begin{rem}\label{lem:realpointsmathcalS}
  Since  $\beta$ is real, the unit circle $|s|=1$ corresponds via the parametrization~\eqref{eq:uniformization} to the real points of $\cS$.
\end{rem}
The points $s=0$ and $s=\infty$ are sent to the unique  point at infinity of the surface $\mathcal S$.
We now introduce the transformations
\beq
\label{eq:elements_group}
\xi(s)=\frac{1}{s},\qquad \eta(s)=\frac{e^{2i\beta}}{s},\qquad \zeta(s)=\eta\xi(s)=e^{2i\beta}s.
\eeq
By construction, $\xi$ (resp.\ $\eta$) leaves $x(s)$ (resp.\
$y(s)$) invariant. By analogy with discrete models~\cite{BMM-10}, the
group $\langle \xi,\eta\rangle$ generated by $\xi$ and $\eta$ may
be called the \emm group of the model,. It is finite if and only if
$\zeta$ has finite order, \emm i.e.,,   if and only if $\beta/\pi\in\Q$.

Observe that for any $s$,  we have the following equality of  sets:
\beq \label{Ys}
\bigl\{ Y^+(x(s)), Y^-(x(s))\bigr\} = \bigl\{y(s), y(1/s)\bigr\}
\eeq
and analogously, 
\beq\label{Xs}
\bigl\{ X^+(y(s)), X^-(y(s))\bigr\} = \bigl\{x(s), x(e^{2i\beta}/s)\bigr\}.
\eeq
Also, it follows easily from~\eqref{eq:uniformization} that
\[
  x(1)=x^+,\quad
  x(-1)=x^-,\quad
  y(e^{i\beta})=y^+,\quad
  y(-e^{i\beta})=y^-.
\]
Having in mind that the index $1$ (resp.\ $2$) refers to $x$ (resp.\ $y$), we will denote accordingly
\beq\label{encombre}
s_1^+=1, \quad s_1^-=-1, \quad s_2^+=e^{i\beta}, \quad s_2^-=-e^{i\beta}.
\eeq
These    special points are shown  in  Figure~\ref{fig:ellipse_uniformization}.  The map $s\mapsto x(s)$ is $2$-to-$1$ from  $(-\infty, 0)$  onto $(-\infty, x^-]$, except at $s=-1$. It is $2$-to-$1$ from  $(0,
+\infty)$ onto $[x^+, +\infty)$, except at $s=1$. Similarly,
the map $s\mapsto y(s)$ is $2$-to-$1$ from  $e^{i\beta} \RR_{-}$
(resp.\ $e^{i\beta }\RR_{+}$)  onto $(-\infty, y^-]$ (resp.\ $[y^+,
+\infty)$), except at the point $-e^{i\beta}$ (resp.\  $e^{i\beta}$).

\begin{figure}[t]
  \vspace{-5mm}
  \centering
  \includegraphics[scale=0.8]{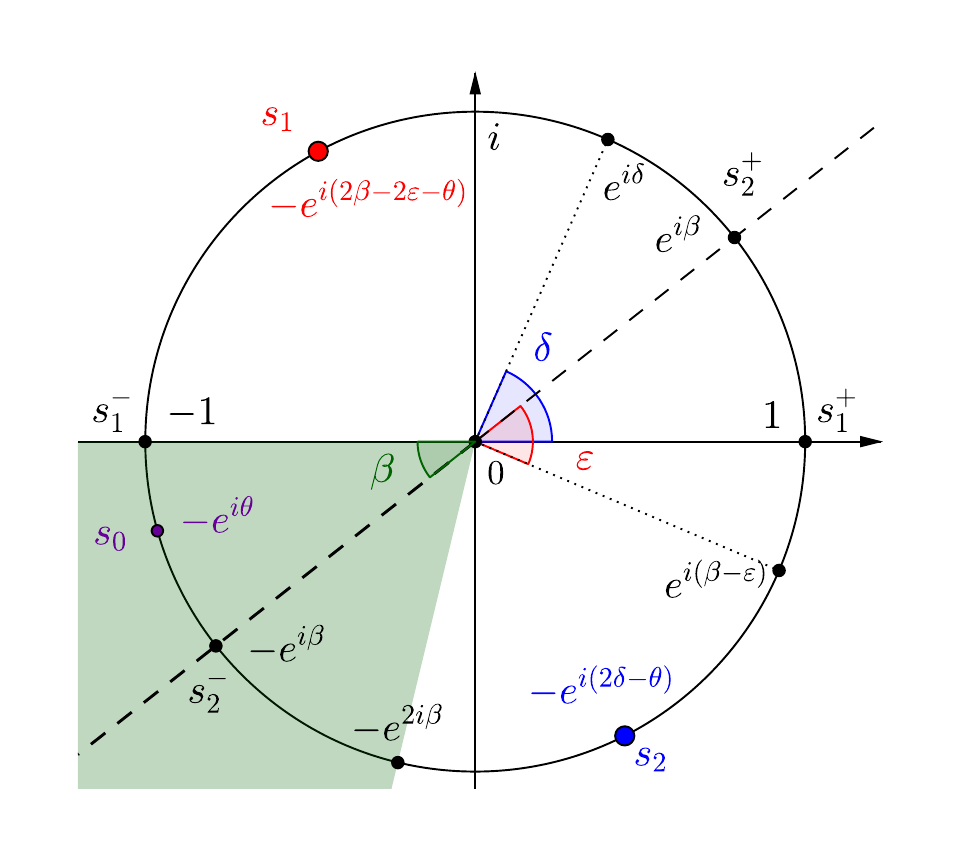}
  \vspace{-5mm}
  \caption{The complex $s$-plane, from which the uniformization~\eqref{eq:uniformization} of $\cS$ is expressed.}
  \label{fig:ellipse_uniformization}
\end{figure}

\medskip

We now establish a series of four basic lemmas that will be used later. The  first one follows directly from the normal forms~\eqref{param:normal}.
\begin{lem}
  \label{lem:s0}
  The pair of equations $x(s_0)=y(s_0)=0$ has a  unique solution, which is: 
  \[ 
    s_0=     -e^{i \theta},
  \] 
  where we recall that $\theta=\arg (-\widetilde \mu)$ is given by~\eqref{theta_precise}.
\end{lem}

Consider now, for $i\in\{1,2\}$, the following equation in $s$:
\beq
\label{eq:gamma_theta_rho_0}
\gamma_i(x(s),y(s))=0,
\eeq
where $\gamma_i(x,y)$ is the bilinear function given by~\eqref{eq:def_gamma_gamma1_gamma2}. Since $x(s)$ and $y(s)$ have degree $2$ in $s$, the above equation
is quadratic in $s$ and thus has two solutions.
One of them has to be $s_0$ (because $(x(s_0),y(s_0))=(0,0)$ obviously cancels $\gamma_i(x,y)$).  We denote the other solution by $s_i$.

\begin{lem}
  \label{lem:angles}
  The solutions $s_1$ and $s_2$ of~\eqref{eq:gamma_theta_rho_0} (distinct from $s_0$) satisfy:
  \[
    s_0s_1=e^{2i (\beta-\varepsilon)}
    \quad\text{and}\quad s_0s_2=e^{2i\delta},
  \]
  where the angles $\delta$ and $\varepsilon$ are defined by~\eqref{eq:expression_delta_epsilon}. That is,
  \[
    s_1= -e^{i (2\beta-2\varepsilon-\theta)} =e^{i\beta(1-\alpha_1)} \quad\text{and}\quad s_2=-e^{i(2\delta-\theta)} =e^{i\beta\alpha_2} ,
  \]
  where $\alpha_1$ and $\alpha_2$ are defined by~\eqref{eq:def_alpha_i}. In particular,
  \[
    \frac{s_1}{s_2}=e^{2i\beta(1-\alpha)}.
  \]  
  Moreover, $s_2 \neq -1$, and $s_1=-1$ if and only if $2\beta -2 \varepsilon-\theta=0$.
\end{lem}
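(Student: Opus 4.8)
The plan is to verify Lemma~\ref{lem:angles} by a direct computation using the normal forms, exploiting the fact that equation~\eqref{eq:gamma_theta_rho_0} is quadratic in $s$ with one known root $s_0=-e^{i\theta}$, so that the second root is recovered from the product of the roots via Vi\`ete's formula. First I would substitute the parametrizations~\eqref{param:normal} of $\xn(s)$ and $\yn(s)$ into the normal forms~\eqref{gamma_i:normal} of $\gamma_1$ and $\gamma_2$. Since the positive prefactors $r_{11}\sqrt{\Delta\sigma_{22}}/\det\Sigma$ and $r_{22}\sqrt{\Delta\sigma_{11}}/\det\Sigma$ do not affect the zero set, it suffices to work with $\xn(s)+\yn(s)\sin(\beta-\varepsilon)/\sin\varepsilon$ for $i=1$ and $\xn(s)\sin(\beta-\delta)/\sin\delta+\yn(s)$ for $i=2$. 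Clearing the common factor $\tfrac12(s+e^{i\theta})$ that appears in both $\xn(s)$ and $\yn(s)$ in the factored form of~\eqref{param:normal}, I would be left, for each $i$, with a linear expression in $s$ and $1/s$, hence (after multiplying by $s$) a genuine quadratic $a_i s^2 + b_i s + c_i$ whose two roots are $s_0$ and $s_i$; then $s_0 s_i = c_i/a_i$.

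For $i=1$, using $\xn(s) = \tfrac12(s+e^{i\theta})(1+e^{-i\theta}/s)$ and $\yn(s) = \tfrac12(s+e^{i\theta})(e^{-i\beta}+e^{i(\beta-\theta)}/s)$, the bracket factor is $(1+e^{-i\theta}/s) + \tfrac{\sin(\beta-\varepsilon)}{\sin\varepsilon}(e^{-i\beta}+e^{i(\beta-\theta)}/s)$; multiplying by $s$ gives a quadratic whose ratio of constant term to leading coefficient is
\[
  \frac{e^{-i\theta}+\frac{\sin(\beta-\varepsilon)}{\sin\varepsilon}e^{i(\beta-\theta)}}{1+\frac{\sin(\beta-\varepsilon)}{\sin\varepsilon}e^{-i\beta}}.
\]
Writing $\sin(\beta-\varepsilon)/\sin\varepsilon$ and combining the two terms in numerator and denominator over $\sin\varepsilon$, the numerator becomes $e^{-i\theta}(\sin\varepsilon + \sin(\beta-\varepsilon)e^{i\beta})$ and the denominator $\sin\varepsilon + \sin(\beta-\varepsilon)e^{-i\beta}$; using $\sin\varepsilon + \sin(\beta-\varepsilon)\cos\beta \mp i\sin(\beta-\varepsilon)\sin\beta$ and the product-to-sum identity $\sin\varepsilon + \sin(\beta-\varepsilon)\cos\beta = \cos(\beta-\varepsilon)\sin\beta$ (which follows from $\sin\varepsilon = \sin(\beta-(\beta-\varepsilon))$), this simplifies to $e^{-i\theta}\cdot\frac{\sin\beta\,e^{i(\beta-\varepsilon)}}{\sin\beta\,e^{-i(\beta-\varepsilon)}} = e^{-i\theta}e^{2i(\beta-\varepsilon)}$. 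Hence $s_0s_1 = e^{2i(\beta-\varepsilon)}$, and since $s_0=-e^{i\theta}$ we get $s_1 = -e^{i(2\beta-2\varepsilon-\theta)}$; plugging in $\alpha_1 = (2\varepsilon+\theta-\beta-\pi)/\beta$ gives $2\beta-2\varepsilon-\theta = \beta - (\beta+\pi) + (\beta - 2\varepsilon - \theta) + \pi $... more cleanly, $-e^{i(2\beta-2\varepsilon-\theta)} = e^{i\pi}e^{i(2\beta-2\varepsilon-\theta)} = e^{i(\pi+2\beta-2\varepsilon-\theta)} = e^{i\beta(1-\alpha_1)}$ since $\beta(1-\alpha_1) = \beta - (2\varepsilon+\theta-\beta-\pi) = \pi + 2\beta - 2\varepsilon - \theta$. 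The computation for $i=2$ is entirely analogous, with $\delta$ in place of $\beta-\varepsilon$ and the roles of the two coordinates swapped, yielding $s_0s_2 = e^{2i\delta}$, $s_2 = -e^{i(2\delta-\theta)} = e^{i\beta\alpha_2}$. The ratio $s_1/s_2 = e^{i\beta((1-\alpha_1)-\alpha_2)} = e^{i\beta(2-(\alpha_1+\alpha_2)-1)}\cdot(\pm 1)$; since $\alpha_1+\alpha_2 = 2\alpha-1$, we have $(1-\alpha_1)-\alpha_2 = 2 - (\alpha_1+\alpha_2) - 1$... checking: $(1-\alpha_1)-\alpha_2 = 1 - (\alpha_1+\alpha_2) = 1-(2\alpha-1) = 2-2\alpha = 2(1-\alpha)$, so $s_1/s_2 = e^{2i\beta(1-\alpha)}$ directly, consistent with the stated identity.

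For the final sentence, I would note $s_2 = -e^{i(2\delta-\theta)} = -1$ iff $2\delta-\theta \in 2\pi\Z$, i.e. $\theta = 2\delta \pmod{2\pi}$; but the assumptions~\eqref{assumptions} force $0<\theta<\delta<\pi$, hence $0 < 2\delta - \theta < 2\delta < 2\pi$ and $2\delta-\theta > \delta > 0$, so $2\delta-\theta$ lies strictly between $0$ and $2\pi$ and cannot be a multiple of $2\pi$; thus $s_2\neq -1$. Similarly $s_1 = -e^{i(2\beta-2\varepsilon-\theta)} = -1$ iff $2\beta-2\varepsilon-\theta\in 2\pi\Z$, and under~\eqref{assumptions} one checks $2\beta - 2\varepsilon-\theta$ ranges over an interval of length less than $2\pi$ containing $0$ only at the single point $2\beta-2\varepsilon-\theta=0$ (from $\beta-\varepsilon < \theta$ we get $2\beta-2\varepsilon-\theta < \beta-\varepsilon < \pi$... but the lower bound requires a touch more care using $\theta<\delta$ and $\delta-\pi<\beta-\varepsilon$: then $2\beta-2\varepsilon-\theta > 2(\delta-\pi) - \delta = \delta - 2\pi > -2\pi$), so the only integer multiple of $2\pi$ attainable is $0$, giving $s_1=-1 \iff 2\beta-2\varepsilon-\theta=0$. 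The only mildly delicate point is keeping the trigonometric simplifications honest — in particular recognizing $\sin\varepsilon + \sin(\beta-\varepsilon)\cos\beta = \cos(\beta-\varepsilon)\sin\beta$ — and bookkeeping the branch of the argument so that $-e^{i\psi}$ and $e^{i\beta(1-\alpha_1)}$ are literally equal and not merely equal up to $2\pi\Z$; this is routine, and all of it is confirmed by the {\sc Maple} session accompanying the paper.
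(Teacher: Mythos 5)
Your proposal is correct and follows essentially the same route as the paper: plug the parametrization~\eqref{param:normal} into the normal forms~\eqref{gamma_i:normal}, reduce via the identity $\sin\varepsilon+\sin(\beta-\varepsilon)\cos\beta=\sin\beta\cos(\beta-\varepsilon)$ (a step the paper delegates to its {\sc Maple} session), and then settle $s_2\neq-1$ and the characterization of $s_1=-1$ from the angle bounds in~\eqref{assumptions}, exactly as the paper does. The only blemish is the Vi\`ete bookkeeping: after removing the factor $(s+e^{i\theta})$ the remaining expression, multiplied by $s$, is linear (not quadratic), and the displayed ratio equals $-s_1=e^{-i\theta}s_0s_1$ rather than $s_0s_1$ itself — a harmless factor of $e^{-i\theta}$ that your final formulas for $s_1$, $s_2$ and $s_1/s_2$ nonetheless get right.
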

The points $s_0$, $s_1$ and $s_2$ are shown in Figure~\ref{fig:ellipse_uniformization}.
\begin{proof}
  When $i=1$,    Equation~\eqref{eq:gamma_theta_rho_0} reads, with the normal form~\eqref{gamma_i:normal},
  \[
    \xn(s)+ \yn(s) \frac{\sin(\beta-\vareps)}{\sin \vareps}=0,
  \]
  and the result follows using the expressions~\eqref{param:normal}    of $\xn(s)$ and $\yn(s)$, and basic trigonometry (see our {\sc Maple} session). The expression of $s_2$ is obtained similarly.

  Then, $s_2=-1$ would mean that $\theta=2\delta$ modulo $2\pi$, hence $\theta=2\delta$ because both angles $\theta$ and $\delta$ are in $(0, \pi)$. But this is not compatible with the condition $\delta>\theta$ coming from~\eqref{assumptions}. Similarly, $s_1=-1$ means that $2\beta-2\vareps-\theta=0$ modulo $2\pi$. Since the three angles are in $(0, \pi)$, and $\beta>\theta$ by~\eqref{assumptions}, this means that $2\beta-2\vareps-\theta=0$.
\end{proof}

We go on  with a simple property of the values $y(-1)$ and $y^+$.

\begin{lem}\label{lem:Ypositive}
  The value $ Y^\pm(x^-) =y(-1)$ lies in $(  0, y^{+})$.
\end{lem}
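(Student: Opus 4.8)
The value $Y^\pm(x^-) = y(-1)$ lies in $(0, y^+)$.

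**Proof plan.** The plan is to compute $y(-1)$ explicitly from the normal-form parametrization and then bound it. First I would evaluate $\yn(s)$ at $s = s_1^- = -1$ using the second line of~\eqref{param:normal}: since $e^{-i\beta}(-1) + e^{i\beta}/(-1) = -(e^{i\beta} + e^{-i\beta}) = -2\cos\beta$, we get
\[
  \yn(-1) = \tfrac12\bigl(2\cos(\beta-\theta) - 2\cos\beta\bigr) = \cos(\beta-\theta) - \cos\beta.
\]
Recalling from~\eqref{xyn-pm} that $\yn^+ = \cos(\beta-\theta) + 1$ and $\yn^- = \cos(\beta-\theta) - 1$, and that $y$ and $\yn$ differ by a positive multiplicative factor, it suffices to prove $0 < \yn(-1) < \yn^+$, i.e.\ $0 < \cos(\beta-\theta) - \cos\beta$ and $\cos(\beta-\theta) - \cos\beta < \cos(\beta-\theta) + 1$. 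The second inequality is immediate since $-\cos\beta < 1$ (as $\beta \in (0,\pi)$, indeed $\cos\beta \in (-1,1)$).

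For the first inequality, I would use the product-to-sum identity
\[
  \cos(\beta-\theta) - \cos\beta = 2\sin\!\Bigl(\beta - \tfrac\theta2\Bigr)\sin\!\Bigl(\tfrac\theta2\Bigr).
\]
Now invoke the standing assumptions~\eqref{assumptions}, namely $0 < \theta < \beta < \pi$. Then $\theta/2 \in (0, \pi/2)$, so $\sin(\theta/2) > 0$; and $\beta - \theta/2$ lies strictly between $\beta - \theta/2 > \beta - \beta/2 = \beta/2 > 0$ and $\beta - \theta/2 < \beta < \pi$, hence $\beta - \theta/2 \in (0,\pi)$ and $\sin(\beta - \theta/2) > 0$. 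Therefore the product is strictly positive, giving $\yn(-1) > 0$, and the lemma follows.

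**Main obstacle.** There is no real obstacle here; the only point requiring a little care is confirming that $y(-1)$ is indeed the relevant value of $Y^\pm(x^-)$. This is already settled by the discussion around~\eqref{Xs}: at $s = s_1^- = -1$ we have $x(-1) = x^-$, and by~\eqref{Ys} the set $\{Y^+(x(-1)), Y^-(x(-1))\}$ equals $\{y(-1), y(1/(-1))\} = \{y(-1), y(-1)\}$, so both branches coincide at the branch point $x^-$ and equal $y(-1)$. Thus the computation above applies verbatim, and the positivity of the two sine factors under~\eqref{assumptions} closes the argument.
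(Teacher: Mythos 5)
Your proof is correct and follows essentially the same route as the paper: reduce to the normal-form inequality $0<\yn(-1)=\cos(\beta-\theta)-\cos\beta<\yn^+=\cos(\beta-\theta)+1$ and conclude from $0<\theta<\beta<\pi$. The only cosmetic difference is that you justify positivity via the product-to-sum identity, whereas the paper simply regards it as immediate (cosine being decreasing on $(0,\pi)$ with $0<\beta-\theta<\beta$); both are fine.
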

\begin{proof}
  Thanks to the normal forms~\eqref{param:normal} and~\eqref{xyn-pm}, what we want to prove reads:
  \[
    0<    \yn(-1)=      \cos(\beta-\theta) - \cos\beta < \yn^+= \cos(\beta-\theta) +1.
  \]
  But this  is obvious since $0<\theta<\beta<\pi$.
\end{proof}

We finish with the  reformulation of  a key condition occurring in~\cite[Thm.~1]{franceschi_explicit_2017}.

\begin{lem}\label{lem:pole}
  We have $\gamma_1(x^-, Y^\pm (x^-))\geq 0$ (resp.\ $=0$) if and only if $2\beta -2\varepsilon -\theta\geq 0$ (resp.\ $=0$).
\end{lem}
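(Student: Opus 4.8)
The plan is to compute $\gamma_1(x^-, Y^\pm(x^-))$ explicitly using the normal forms, and then read off the sign. First I would use the parametrization: by Lemma~\ref{lem:Ypositive}, the point $(x^-, Y^\pm(x^-))$ is the image under the uniformization~\eqref{eq:uniformization} of $s = s_1^- = -1$, since $x(-1) = x^-$ and $y(-1) = Y^\pm(x^-)$ lies in $(0,y^+)$ (so we are picking out the branch value, not the conjugate pair of Remark~\ref{rem:complexconjugateroots}). Thus $\gamma_1(x^-, Y^\pm(x^-)) = \gamma_1(x(-1), y(-1))$, and by~\eqref{gamma_i:normal} this equals a positive multiple of
\[
  \xn(-1) + \yn(-1)\,\frac{\sin(\beta-\varepsilon)}{\sin\varepsilon}.
\]
From~\eqref{param:normal} we have $\xn(-1) = \cos\theta - 1 \le 0$ and $\yn(-1) = \cos(\beta-\theta) - \cos\beta > 0$ (as in the proof of Lemma~\ref{lem:Ypositive}).

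Next I would relate this quantity to the point $s_1$ of Lemma~\ref{lem:angles}. Recall $s_1$ is the second root (besides $s_0 = -e^{i\theta}$) of $\gamma_1(x(s),y(s)) = 0$, i.e.\ of $\xn(s) + \yn(s)\tfrac{\sin(\beta-\varepsilon)}{\sin\varepsilon} = 0$, and $s_1 = e^{i\beta(1-\alpha_1)} = -e^{i(2\beta - 2\varepsilon - \theta)}$. The function $s \mapsto \xn(s) + \yn(s)\tfrac{\sin(\beta-\varepsilon)}{\sin\varepsilon}$ is, after clearing the $1/s$ term, a real quadratic in $s$ on the relevant real locus, with roots $s_0$ and $s_1$; I would evaluate it at $s = -1$ (which lies on the unit circle, hence on the real locus of $\cS$ by Remark~\ref{lem:realpointsmathcalS}), and determine the sign from the position of $-1$ relative to the two roots $s_0, s_1$ on the unit circle. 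The condition $s_1 = -1$ is exactly $2\beta - 2\varepsilon - \theta = 0$ by Lemma~\ref{lem:angles}, which gives the ``$=0$'' case immediately. For the inequality, the sign of $\gamma_1(x(-1),y(-1))$ flips precisely as $s_1$ crosses $-1$, i.e.\ as $2\beta - 2\varepsilon - \theta$ crosses $0$; since the assumptions~\eqref{assumptions} confine $\theta$, $\beta$, $\varepsilon$ to $(0,\pi)$ with $\beta > \theta$, the value $2\beta - 2\varepsilon - \theta$ ranges over an interval on which the correspondence between its sign and the sign of the expression is monotone, giving the stated equivalence.

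Concretely, I expect the cleanest route is a direct trigonometric identity: substituting $\xn(-1) = \cos\theta - 1$ and $\yn(-1) = \cos(\beta-\theta) - \cos\beta$ and simplifying
\[
  \sin\varepsilon\bigl(\cos\theta - 1\bigr) + \sin(\beta-\varepsilon)\bigl(\cos(\beta-\theta) - \cos\beta\bigr)
\]
should collapse — via product-to-sum formulas — to a constant positive multiple of something like $\sin\bigl(\tfrac{2\beta - 2\varepsilon - \theta}{2}\bigr)$ times a manifestly positive factor (a product of sines of angles in $(0,\pi)$ coming from $\sin\varepsilon > 0$, $\sin\tfrac\theta2 > 0$, etc.). Once that identity is in hand, the sign is read off directly and both the inequality and the equality case follow at once. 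I would double-check this identity in the {\sc Maple} session, exactly as done for Lemma~\ref{lem:angles}.

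The main obstacle is the trigonometric simplification: making sure the leftover positive factor really is positive under assumptions~\eqref{assumptions} (in particular that no sine of an angle outside $(0,\pi)$ sneaks in), and correctly handling the branch choice so that $Y^\pm(x^-)$ is genuinely $y(-1)$ rather than its complex conjugate — but Lemma~\ref{lem:Ypositive} settles the latter, and careful bookkeeping of the half-angle formulas settles the former.
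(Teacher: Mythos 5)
Your proposal is correct and follows essentially the same route as the paper: identify $(x^-,Y^\pm(x^-))$ with $(x(-1),y(-1))$ via the uniformization, write $\gamma_1$ in the normal form~\eqref{gamma_i:normal}--\eqref{param:normal}, and simplify trigonometrically; the paper's identity is exactly the one you anticipate, namely the left-hand side equals $2\,\frac{\sin\beta}{\sin\varepsilon}\sin\frac{\theta}{2}\,\sin\frac{2\beta-2\varepsilon-\theta}{2}$, whose sign is read off because $\beta,\varepsilon,\theta\in(0,\pi)$ and $2\beta-2\varepsilon-\theta\in(-2\pi,2\pi)$. The auxiliary "position of $-1$ relative to the roots $s_0,s_1$" argument you sketch is not needed once the identity is in hand, just as in the paper.
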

\begin{proof}
  The condition reads $\gamma_1(x(-1), y(-1)) \ge 0$, or, using the normal forms~\eqref{gamma_i:normal} and~\eqref{param:normal},
  \[
    \left(\cos(\theta)-1\right)  + \frac{\sin(\beta-\vareps)}{\sin \vareps} \left( \cos(\theta-\beta)- \cos \beta\right) \ge 0.
  \]
  The left-hand sides rewrites as
  \[
    2\, \frac{\sin \beta}{\sin \vareps} \sin \frac \theta 2  \sin \frac{2\beta-2\vareps-\theta}2,
  \]
  and the result follows, because $\beta,\vareps,\theta\in (0, \pi)$ and $2\beta-2\vareps-\theta\in (-2\pi, 2\pi)$, as we have already used in the proof of  Lemma~\ref{lem:angles}.
\end{proof}

\subsection{An important curve}

Let us consider the following curve in $\C$:
\beq \label{eq:def_R}
\R =\{y\in\C : \gamma(x,y)=0   \text{ for some } x\in (-\infty,x^-]\}.
\eeq
By Remark~\ref{rem:complexconjugateroots}, the curve $\R$ is   symmetric
with respect to the real axis (Figure~\ref{fig:bvp}). Moreover, as shown in~\cite[Lem.~9]{baccelli_analysis_1987},    it is a branch of a hyperbola, which intersects the real axis at   the point $Y^-(x^-)=Y^+(x^-) \in (0, y^+)$
(see   Lemma~\ref{lem:Ypositive}). We further   introduce the domain~$\G$, which is the (open)   domain of  $\C$ containing $0$ and bounded by $\R$.
Finally, we  denote by $\bG=\G \cup \R$ the  closure of $\G$.

\begin{figure}[hbtp]
  \centering
  \vspace{-8mm}
  \includegraphics[scale=1.6]{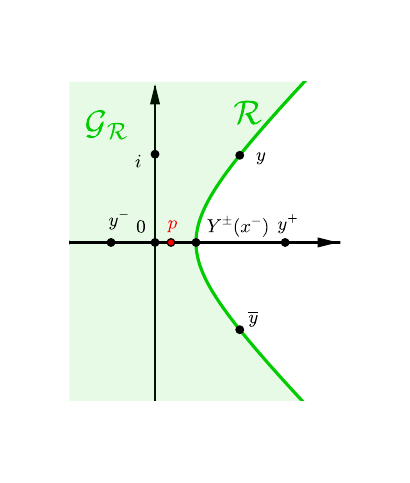}
  \includegraphics[scale=1.6]{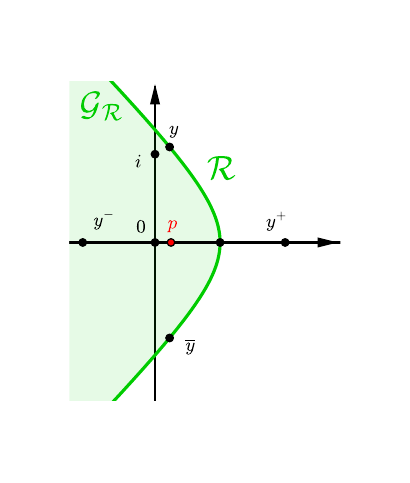}
  \vspace{-8mm}
  \caption{The curve $\R$, the domain $\G$ and the possible pole $p$ of $\phi_1$. The branch point $y^+$ (resp.\ $y^-$) lies outside (resp.\ inside) $\G$. On the left, $\beta>\pi/2$, while $\beta<\pi/2$ on the right.}
  \label{fig:bvp}
\end{figure}

The following lemma describes the links between the curve $\R$ and the parametrization~\eqref{eq:uniformization}.

\begin{lem}\label{lem:R-param}
  The curve $\R$ coincides with the set $\{ y(s) : s \in \RR_{-} \}$.  For $y=y(s)\in \R$, with $s\in \RR_-$, we have $\bar y=y(1/s)$. Moreover, $x(s)$ is the unique value $x\in (-\infty, x^-]$ such that $\{y, \bar y\}=\{Y^+(x),Y^-(x)\}$. Finally, $x(s)=X^-(y)=X^-(\bar y)$.
\end{lem}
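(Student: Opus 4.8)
The plan is to transfer everything to the $s$-plane via the uniformization~\eqref{eq:uniformization}, where all four assertions become elementary statements about the explicit parametrizations~\eqref{param:normal}. First I would recall from Remark~\ref{lem:realpointsmathcalS} that the unit circle $|s|=1$ parametrizes the real points of $\cS$, while the half-lines $e^{i\beta}\RR_-$ and $e^{i\beta}\RR_+$ are sent (2-to-1, except at $\pm e^{i\beta}$) onto $(-\infty,y^-]$ and $[y^+,+\infty)$ respectively, and $(-\infty,0)$ is sent onto $(-\infty,x^-]$ with $s=-1\mapsto x^-$. The defining condition for $y\in\R$ is that $\gamma(x,y)=0$ for some $x\le x^-$; by~\eqref{Xs}, writing $x=x(s)$ with $s\in\RR_-$, the two preimages $y$ with $\gamma(x(s),y)=0$ are $y(s)$ and $y(e^{2i\beta}/s)$. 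So I would show: as $s$ runs over $\RR_-$, the value $y(s)$ describes exactly $\R$, and $y(e^{2i\beta}/s)$ is its complex conjugate (these are the two complex-conjugate roots of Remark~\ref{rem:complexconjugateroots}).

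For the conjugation statement I would compute directly from~\eqref{param:normal}: for $s\in\RR_-$,
\[
  \overline{\yn(s)} = \tfrac12\left(2\cos(\beta-\theta) + e^{i\beta}s + e^{-i\beta}/s\right) = \yn(1/s),
\]
since $s$ and hence $1/s$ is real (so $\bar s = s$), which gives $\bar y = y(1/s)$; note $1/s = e^{2i\beta}/s \cdot e^{-2i\beta}$ but here the cleaner representative is $1/s$ because $\bar s=s$ — I should double-check that $1/s$ and $e^{2i\beta}/s$ give the same value of $y$, which holds because $y(\eta(s))=y(s)$ only when... actually $\eta(s)=e^{2i\beta}/s$ fixes $y$, so $y(e^{2i\beta}/s)=y(s)$, not $\bar y$; the correct pairing is that $\{Y^+(x(s)),Y^-(x(s))\}=\{y(s),y(1/s)\}$ by~\eqref{Ys}, and for real $s$ these two are complex conjugates. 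So $\bar y = y(1/s)$ follows from~\eqref{Ys} together with the direct computation above. Then that $x(s)$ is the \emph{unique} $x\in(-\infty,x^-]$ with $\{y,\bar y\}=\{Y^\pm(x)\}$ follows from injectivity of $s\mapsto x(s)$ on $\RR_-$ modulo the involution $s\mapsto 1/s$ (which here, on $\RR_-$, is a genuine 2-to-1 map onto $(-\infty,x^-]$ except at $-1$), combined with the fact that $y(s)$ and $y(1/s)$ already determine the unordered pair.

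For the last assertion, $x(s)=X^-(y)=X^-(\bar y)$: since $y\in\R$, both roots $X^\pm(y)$ lie on the real segment $(-\infty,x^-]$ (they are the complex-conjugate-free reals here) — wait, rather, $X^\pm(y)$ for $y\in\R$: one of them is $x(s)\le x^-$ and the other is $x(e^{2i\beta}/s)=x(\zeta^{-1}(1/s))$; I would check via~\eqref{param:normal} that for $s\in\RR_-$ we have $\xn(e^{2i\beta}/s)$ is also real and compare magnitudes to see which of $X^\pm(y)$ equals $x(s)$. Concretely, $X^-$ is the branch analytic on $\C\setminus((-\infty,y^-]\cup[y^+,\infty))$ that is determined by the principal square root in~\eqref{eq:definition_Theta_pm}; I would verify that on $\R$ (which sits in $0<\Re y<y^+$ roughly, away from the cuts) this principal branch selects exactly $x(s)$, and by symmetry of $\R$ under conjugation the same formula applies to $\bar y$. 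The main obstacle is purely bookkeeping: pinning down \emph{which} branch ($+$ or $-$) is picked out, i.e.\ matching the principal-value convention in~\eqref{eq:definition_Theta_pm} against the sign in the parametrization~\eqref{eq:uniformization} on the specific arc $s\in\RR_-$ — this requires tracking the square root carefully (or invoking continuity from the real branch point $s=-1$, where $x(-1)=x^-$ and $X^+(y(-1))=X^-(y(-1))=x^-$ by Lemma~\ref{lem:Ypositive}, and then following $X^-$ by continuity as $s$ decreases along $\RR_-$). Everything else is a direct substitution into~\eqref{param:normal}, and can be left to the {\sc Maple} session.
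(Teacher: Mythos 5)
Your plan for the first three assertions is essentially the paper's proof: parametrize $(-\infty,x^-]$ by $s\in\RR_-$, use~\eqref{Ys} (after your self-correction; \eqref{Xs} was indeed the wrong identity to invoke) to identify the two roots in $y$ as $\{y(s),y(1/s)\}$, and check by the direct computation from~\eqref{param:normal} that these are complex conjugates for real $s$. For uniqueness of $x$, the paper's route is slightly cleaner than yours: any admissible $x$ equals $X^\pm(y)$, hence by~\eqref{Xs} equals $x(s)$ or $x(e^{2i\beta}/s)$, and the latter cannot lie in $(-\infty,x^-]$ since $e^{2i\beta}\notin\RR_+$; your ``the $y$-pair determines the $s$-pair'' step would need the same kind of observation ($y(\sigma)=y(\sigma')$ iff $\sigma'\in\{\sigma,e^{2i\beta}/\sigma\}$, and $e^{2i\beta}/s\notin\RR_-$) spelled out, but it is workable.

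The genuine gap is the last assertion, $x(s)=X^-(y)$, which you explicitly leave as ``bookkeeping'' and for which both concrete routes you sketch fail. First, $x(e^{2i\beta}/s)$ is \emph{not} real for $s\in\RR_-$ in general: from~\eqref{param:normal} its imaginary part is $\tfrac12\sin(2\beta)\,(1/s-s)$ up to a positive factor, nonzero unless $\beta=\pi/2$ or $s=-1$, so there are no ``two real candidates whose magnitudes can be compared''. Second, continuity from $s=-1$ cannot by itself decide the branch, precisely because $X^+$ and $X^-$ coalesce at the branch point $y(-1)$; distinguishing them just off $s=-1$ is the very question. The missing idea (the paper's) is a sign argument: by~\eqref{eq:definition_Theta_pm}, $X^+(y)-X^-(y)$ is a principal square root (times $2/\sigma_{11}$), hence has non-negative real part, while~\eqref{eq:uniformization} gives
\[
\Re\bigl(x(e^{2i\beta}/s)-x(s)\bigr)=\frac{x^+-x^-}{4}\,(\cos 2\beta-1)\,(s+1/s)>0
\]
for $s\in\RR_-$, since $s+1/s\le -2$ and $\cos 2\beta<1$. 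This forces $x(e^{2i\beta}/s)=X^+(y)$ and $x(s)=X^-(y)$; the identity $x(s)=X^-(\bar y)$ then follows by applying the same argument to $1/s$ and using $x(1/s)=x(s)$. Without some argument of this kind your proof of the final claim does not go through.
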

\begin{proof}
  By definition of $\R$, the point $y$ lies on $\R$ if and only if there exists $x\in(-\infty,x^-]$ such that $\gamma(x,y)=0$, or equivalently such that $\{y, \bar y\}=\{Y^+(x),Y^-(x)\}$. As already observed in Section~\ref{subsec:kernel}, the map $s\mapsto x(s)$ sends $\RR_-$ surjectively to $(-\infty,x^-]$. Hence it is equivalent to say that there exists $s\in \RR_-$ such that $\{y, \bar y\}=\{Y^+(x(s)),Y^-(x(s))\}$, or equivalently that $\{y, \bar y\}= \{y(s), \bar y(1/s)\}$, by~\eqref{Ys}. This proves the first point of the lemma.

  Let us now take $y=y(s)\in \R$, with $s\in \RR_-$. The above argument shows that $\bar y=y(1/s)$. Moreover, $x(s)$ is one of the values $x\in (-\infty, x^-]$ such that $\gamma(x,y)=0$, or equivalently $\{y, \bar y\}=\{Y^+(x),Y^-(x)\}$. It remains to prove that any such value $x$ must be $X^-(y)$. For any such $x$, we have $\gamma(x,y)=0$, so that  $x=X^-(y)$ or $x=X^+(y)$.  That is to say, by~\eqref{Xs}, $x=x(s)$ or $x=x(e^{2i\beta}/s)$. Since $e^{2i\beta}\not \in \RR_+$, the value $x(e^{2i\beta}/s)$ cannot be in $(-\infty, x^-]$, hence $x=x(s)$. We still need to decide whether $x(s)$ is $X^-(y)$ or $X^+(y)$. As already mentioned, $\{X^-(y),X^+(y)\}=\{x(s),x(e^{2i\beta}/s)\}$. Moreover, it follows from~\eqref{eq:definition_Theta_pm} that $X^+(y)-X^-(y)$ is a square root, and hence has a non-negative real part. It thus suffices to show that $x(e^{2i\beta}/s)-x(s)$ has a positive real part to conclude.  By~\eqref{eq:uniformization},
  \[
    \Re\bigl(  x(e^{2i\beta}/s)-x(s)\bigr) =\frac{x^+-x^-}4 ( \cos(2\beta) -1) ( s+1/s) >0,
  \]
  which concludes the proof.
\end{proof}

\section{A boundary value problem -- Invariants}
\label{sec:invariants}

In this section  we introduce the notion of \emm invariant,, which is motivated by a boundary value problem satisfied by the function $\phi_1$, established in~\cite{franceschi_explicit_2017}. Recall the definitions of the curve $\R$ and  the domain $\G$ in the previous subsection.

\begin{prop}
  \label{prop:BVP_Carleman_sketch}
  The Laplace transform $\phi_1$ is meromorphic in 
  an open domain containing $\bG$. It  satisfies the boundary condition
  \beq 
  \label{eq:bound_cond_gen}
  \phi_1(\overline{y})=G(y)\phi_1({y}), \qquad\forall y\in \R,
  \eeq
  with
  \beq
  \label{def:G}
  G(y)=\frac{\gamma_1}{\gamma_2}( X^-(y),y)\frac{\gamma_2}{\gamma_1}( X^-(\overline{y}),\overline{y}),
  \eeq
  where $\gamma_1$ and $\gamma_2$ are the bivariate polynomials of~\eqref{eq:def_gamma_gamma1_gamma2}.
  
  The function $\phi_1$  has at most one  pole in $\bG$.
  This pole exists if and only if $2\beta -2\varepsilon -\theta\ge 0$. It is then  simple, and   its value  is
  \[
    p=y(s_1)=\frac{2r_{11}(\mu_1r_{21}-\mu_2r_{11})}{r_{11}^2\sigma_{22}-2r_{11}r_{21}\sigma_{12}+r_{21}^2\sigma_{11}}.
  \]
  The  pole coincides with  $Y^\pm(x^-)=y(-1)$ if and only if $2\beta -2\varepsilon -\theta= 0$, or equivalently $s_1=-1$.

  There exists a constant $\kappa \neq0$ such that, as $y\rightarrow \infty$ in $\G$, 

  \beq\label{phi-asympt}
  \phi_1(y)\underset{y\to \infty}{\sim} \kappa y^{\alpha-1},
  \eeq
  where $\alpha=(\delta+\varepsilon-\pi)/\beta$ is the key parameter introduced  in~\eqref{eq:def_alpha}.
\end{prop}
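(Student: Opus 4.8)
The plan is to prove the four assertions of Proposition~\ref{prop:BVP_Carleman_sketch} — meromorphic continuation, the boundary relation, the location and nature of the unique possible pole, and the asymptotics at infinity — by combining the functional equation~\eqref{eq:functional_equation} with the analysis of the kernel curve and its parametrization from Section~\ref{sec:kernel}, essentially re-deriving and transplanting the boundary value problem of~\cite[Thm.~1]{franceschi_explicit_2017} into our slightly more general (non-identity covariance) setting and in the language of the curve $\R$ and the domain $\G$.

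First I would establish the meromorphic continuation. We know $\phi_1(y)$ is analytic at least for $\Re y \le 0$ (it is a Laplace transform of a bounded measure on $\RR_+$). Solving~\eqref{eq:functional_equation} along the kernel by setting $x = X^-(y)$ kills the left-hand side, yielding
\beq\label{plan:decouple}
\gamma_1(X^-(y),y)\,\phi_1(y) + \gamma_2(X^-(y),y)\,\phi_2(X^-(y)) = 0,
\eeq
valid on the domain where $X^-(y)$ and $\phi_2(X^-(y))$ make sense. Since $X^-(y)$ maps a neighbourhood of $\bG$ into the left half-plane (one checks, via the normal forms~\eqref{param:normal} and Lemma~\ref{lem:R-param}, that $\Re X^-(y)\le 0$ there, with $X^-$ analytic off the cut $(-\infty,y^-]\cup[y^+,\infty)$, and $y^+\notin\bG$), the right term $-\gamma_2(X^-(y),y)\phi_2(X^-(y))/\gamma_1(X^-(y),y)$ provides an analytic — or meromorphic, where $\gamma_1(X^-(y),y)$ vanishes — continuation of $\phi_1$ to an open neighbourhood of $\bG$. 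The boundary relation~\eqref{eq:bound_cond_gen} then follows by writing~\eqref{plan:decouple} at both $y$ and $\bar y$ for $y\in\R$: by Lemma~\ref{lem:R-param}, $X^-(y)=X^-(\bar y)=x(s)\in(-\infty,x^-]$ is the \emph{same} real value, so dividing the two instances of~\eqref{plan:decouple} eliminates $\phi_2(X^-(y))$ and produces exactly $\phi_1(\bar y)=G(y)\phi_1(y)$ with $G$ as in~\eqref{def:G}. (One should note $\phi_2(X^-(y))$ is a common nonzero factor for generic $y\in\R$, and handle the removable exceptional points by continuity.)

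Next, the pole analysis. From the continued expression $\phi_1(y) = -\gamma_2(X^-(y),y)\phi_2(X^-(y))/\gamma_1(X^-(y),y)$, poles in $\bG$ can only occur where $\gamma_1(X^-(y),y)=0$ (the numerator being analytic on a neighbourhood of $\bG$). On the curve $\gamma=0$, the equation $\gamma_1=0$ together with $X^- = x(s)$ means $\gamma_1(x(s),y(s))=0$, whose solutions are $s_0$ and $s_1$ by the discussion preceding Lemma~\ref{lem:angles}; since $y(s_0)=0$ and $\gamma_2(0,0)\neq 0$... actually $\gamma_1(0,0)=0$ too, so $s_0$ gives a removable point (numerator and denominator both vanish, consistently with $\phi_1(0)$ being the finite mass~\eqref{masses}), leaving $s_1$ as the only candidate. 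This $y(s_1)=p$ lies in $\bG$ precisely when $s_1\in\RR_-$ is ``inside'', which by Lemma~\ref{lem:R-param} and the geometry of $\R$ amounts to $p$ lying between $0$ and the real intersection point $y(-1)$ of $\R$; one shows via Lemma~\ref{lem:pole} that $\gamma_1(x^-,Y^\pm(x^-))\ge 0$, i.e.\ $2\beta-2\varepsilon-\theta\ge 0$, is equivalent to $p$ being in $\bG$, with equality corresponding to $s_1=-1$, i.e.\ $p=y(-1)$ on the boundary. Simplicity of the pole follows because $\gamma_1(X^-(y),y)$ has a simple zero at $p$ (its derivative in $s$ at $s_1$ is nonzero since $s_1\neq s_0$, and $y'(s_1)\neq 0$ as $s_1\neq\pm e^{i\beta}$ by Lemma~\ref{lem:angles}). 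The explicit value of $p$ is then just $y(s_1)$ computed from~\eqref{eq:uniformization}, which after simplification (the {\sc Maple} session) gives the stated rational expression in the $r_{ij},\sigma_{ij},\mu_i$.

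Finally, the asymptotics~\eqref{phi-asympt}. As $y\to\infty$ in $\G$, the parametrization gives $y=y(s)$ with $s\to 0$ along $\RR_-$ (by Lemma~\ref{lem:R-param}, $\R$ and $\G$ correspond to $s\in\RR_-$ near $0$), so $y(s)\sim \frac{y^+-y^-}{4}\,\frac{e^{i\beta}}{s}$, and similarly $x(s)=X^-(y)\to\infty$ with $x(s)\sim \frac{x^+-x^-}{4}\,\frac1s$, hence $X^-(y)\sim c\, y$ for an explicit constant $c$ (equivalently $X^-(y)/y\to e^{-i\beta}\sqrt{\sigma_{22}/\sigma_{11}}$ in normalized terms). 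From~\eqref{plan:decouple}, $\phi_1(y) \sim -\dfrac{\gamma_2(X^-(y),y)}{\gamma_1(X^-(y),y)}\,\phi_2(X^-(y))$; the ratio $\gamma_2/\gamma_1$ tends to a nonzero constant, so the order of growth of $\phi_1(y)$ equals that of $\phi_2(x)$ as $x\to\infty$ along $X^-(\R)\subset(-\infty,x^-]$. One then feeds this into the \emph{symmetric} boundary relation for $\phi_2$ (or, equivalently, iterates the argument), which relates $\phi_2$ at $\infty$ back to $\phi_1$ at $\infty$; the composition of the two scalings introduces the factor $e^{2i\beta(1-\alpha)} = s_1/s_2$ (Lemma~\ref{lem:angles}), and self-consistency of $\phi_1(y)\sim\kappa y^{\alpha-1}$ forces exactly the exponent $\alpha-1$. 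Alternatively, and more cleanly, one invokes the closed-form integral expression of $\phi_1$ from~\cite[Thm.~1]{franceschi_explicit_2017} and reads off the behaviour at infinity directly. I expect the main obstacle to be the asymptotics: carefully tracking which branch of $X^-$ is selected along $\R$ near infinity, justifying that the estimate is uniform as $y\to\infty$ \emph{inside} $\G$ (not merely along $\R$), and pinning down that the exponent is $\alpha-1$ rather than something shifted — this is where the precise interplay between the two reflection conditions, encoded in $s_1$ and $s_2$, enters, and where appealing to~\cite{franceschi_explicit_2017} is the safest route.
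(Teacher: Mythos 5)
Your overall route is the same as the paper's: continue $\phi_1$ through the kernel relation $\gamma_1(X^-(y),y)\phi_1(y)+\gamma_2(X^-(y),y)\phi_2(X^-(y))=0$, read off the boundary identity by comparing $y$ and $\bar y$ via Lemma~\ref{lem:R-param}, locate the pole through $\gamma_1(x(s),y(s))=0$ and Lemmas~\ref{lem:angles} and~\ref{lem:pole}, and (as you concede for the asymptotics) fall back on \cite{franceschi_explicit_2017} for the growth exponent -- which is exactly what the paper does, citing its Proposition~19.

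There is, however, one concrete flaw in your continuation step. You assert that $X^-$ maps a neighbourhood of $\bG$ into the left half-plane and that the formula $\phi_1(y)=-\frac{\gamma_2}{\gamma_1}(X^-(y),y)\,\phi_2(X^-(y))$ therefore continues $\phi_1$ meromorphically on such a neighbourhood. This fails as stated: the branch cut $(-\infty,y^-]$ of $X^-$ lies \emph{inside} $\G$ (this is precisely the obstruction emphasized at the start of Section~\ref{sec:existence-decoupling}, where $F_0=\frac{\gamma_1}{\gamma_2}(X^-(y),y)$ is noted to inherit this cut), so the right-hand side is not even single-valued across that segment; moreover, on that cut $\Re X^\pm(y)=-(\sigma_{12}y+\mu_1)/\sigma_{11}$, which tends to $+\infty$ as $y\to-\infty$ whenever $\sigma_{12}>0$ (i.e.\ $\beta>\pi/2$), so the claim $\Re X^-(y)\le 0$ on all of $\bG$ is false in general -- the result of \cite{franceschi_explicit_2017} you implicitly invoke only gives $\Re X^-(y)<0$ on $i\RR\cup\bigl(\bG\cap\{\Re y\ge 0\}\bigr)$. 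The correct argument (the paper's) keeps the original Laplace-transform definition of $\phi_1$ on $D_1=\{\Re y<0\}$, uses the continuation formula only on a simply connected neighbourhood $D_2$ of $i\RR\cup\bigl(\bG\cap\{\Re y\ge 0\}\bigr)$, where $\Re X^-<0$, and glues the two on their overlap; since you already start from analyticity on $\{\Re y\le 0\}$, this is a repair rather than a change of method, but as written your step would break. Two smaller points: the equivalence between $\gamma_1(x^-,Y^\pm(x^-))\ge 0$ and the existence of the pole in $\bG$ is not a consequence of Lemma~\ref{lem:pole} alone (that lemma only translates the sign condition into the angle condition; the existence statement itself comes from \cite[Prop.~6]{franceschi_explicit_2017}), and your simplicity argument needs $s_1\ne\pm e^{i\beta}$, which is not in Lemma~\ref{lem:angles} but does follow from the standing inequalities~\eqref{assumptions} when $2\beta-2\varepsilon-\theta\ge 0$.
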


Note that   Condition~\eqref{eq:bound_cond_gen} is consistent with $G(\overline y)=1/G(y)$.

\begin{proof}
  The Laplace transform $\phi_1$ is clearly holomorphic on the domain $D_1:=\{y \in \C: \Re y<0\}$, with continuous limits on the boundary $i\RR$. Moreover, it is proved in~\cite[Lem.~5]{franceschi_explicit_2017} that for $y \in i\RR \cup \left(\bG\cap \{y : \Re y \ge 0\}\right)$, one has $\Re X^-(y)<0$. Hence this is true as well on a neighbourhood $D_2$ of this set, which we choose to be simply connected. Note that $D_1$ and $D_2$ intersect on some open set to the left of the line $i\RR$ (see Figure~\ref{fig:bvp}). Now on the simply connected domain $D_1\cup D_2$, which contains $\bG$ by construction, one can define $\phi_1$ meromorphically using
  \[
    \phi_1(y) = -\frac{\gamma_2(X^-(y),y)}{\gamma_1(X^-(y),y)}\phi_2(X^-(y))
  \]
  (see~\cite[Lem.~3]{franceschi_explicit_2017}).
  This proves the first statement of the proposition\footnote{We have reorganized slightly the arguments of~\cite{franceschi_explicit_2017} because the set defined in Lemma~3 of this reference is not open if $\beta <\pi/2$.}.
  
  The boundary identity~\eqref{eq:bound_cond_gen}  is established in~\cite[Prop.~6]{franceschi_explicit_2017}. In this proposition it is also stated that $\phi_1$ has a pole in $\bG$ if and only if $\gamma_1(x^-, Y^\pm (x^-))\geq 0$, and that this pole coincides with $Y^\pm(x^-)$ if and only if  $\gamma_1(x^-, Y^\pm (x^-))=0$. By Lemmas~\ref{lem:pole} and~\ref{lem:angles}, this gives the conditions stated in the proposition. The fact that the pole is simple comes again from~\cite[Prop.~6]{franceschi_explicit_2017}, and its value is given by~\cite[Eqs.~(18)--(19)]{franceschi_explicit_2017}. Finally, the asymptotic behaviour of $\phi_1$ comes from~\cite[Prop.~19]{franceschi_explicit_2017}.
\end{proof}

A boundary value problem like the one of Proposition~\ref{prop:BVP_Carleman_sketch} is said to be \emm homogeneous, if  the function $G$ occurring in~\eqref{eq:bound_cond_gen} is simply~$1$.  A solution is then called an \emm invariant,.

\begin{defn}
  \label{def:inv}
  A function $I$ from $\G$ to $\C$ is called  an \emm invariant, if it is meromorphic on a domain containing $\bG$ and satisfies the boundary (or invariant) condition
  \[
    I(y)=I(\overline{y}), \quad \forall y \in\mathcal{R}.
  \]
\end{defn}

There exists in the literature a stronger notion of invariant~\cite[Sec.~5]{BeBMRa-17}, where one requires  that $I(Y^+(x))=I(Y^-(x))$ for all $x$. This implies the  above boundary condition by taking $x\in (-\infty, x^-]$. 

In the following section we exhibit a simple, explicit and D-finite invariant $w$. In the next one, we show how to construct another invariant, this time involving the Laplace transform~$\phi_1$, provided one of the angle conditions~\eqref{eq:CNS0} or~\eqref{eq:CNS0double} holds.

\section{A canonical invariant}
\label{sec:gluing}

In this section we introduce a key invariant, denoted $w(y)$, and study its algebraic and differential properties. It is expressed in terms of an explicit hypergeometric function $T_a$, which generalizes the Chebyshev polynomial of the first kind, obtained for $a\in  \N_0$.

Below,  we use the words ``rational'', ``algebraic'', ``D-finite'' and ``D-algebraic'' without specifying whether we request the coefficients of the corresponding algebraic/differential equations to be  real or complex. The reason  is that  a function which, like $\phi_1(y)$, $T_a(y)$ or $w(y)$, is analytic in the neighborhood of a real segment, and takes real values on this segment, is, say, D-finite on $\RR(y)$ if and only if it is D-finite on $\C(y)$ (analogous statements hold for the other three classes of functions).

\subsection{A generalization of Chebyshev polynomials}
For $x\in\C\setminus (-\infty,-1]$ and $a\in \RR$, let us define
\[ 
  T_a(x)  ={_2F}_1 \Bigl(-a,a;\frac{1}{2};\frac{1-x}{2}\Bigr),
\] 
where ${_2F}_1$ is the classical Gauss hypergeometric function. In other words,  $T_a$ is the analytic continuation to $\C\setminus (-\infty,-1]$ of the following series, which converges  for $|1-x|<2$:
\[
  T_a(x)= \sum_{n\ge 0} \frac a{a+n} {a+n \choose 2n} 2^n (x-1)^n.
\]
When $a=m \in \N_0$,  the above sum ranges from $n=0$ to $a$, and $T_m$ is the classical Chebyshev polynomial. The function $T_a$ is D-finite for all values of~$a$, as the hypergeometric function itself. It satisfies the following differential equation:
\beq\label{de-Ta}
(1-x^2) T''_a(x)-xT_a'(x)+a^2T_a(x)=0.
\eeq
Other  useful expressions are
\beq\label{Ta-trig}
T_a(x)=   \cos \left(a \arccos x \right),
\eeq
(see~\cite[15.1.17]{abramowitz_stegun}) which is valid for $x$ in $\C\setminus ( (-\infty,-1] \cup [1, \infty))$,
and
\beq\label{Ta-alg}
T_a(x)=\frac{1}{2}
\Bigl(\bigl(x+\sqrt{x^2-1}\bigr)^a+\bigl(x-\sqrt{x^2-1}\bigr)^a\Bigr),
\eeq
(see~\cite[15.1.11]{abramowitz_stegun}) which is valid for $x$ in $\C\setminus  (-\infty,-1]$ (here we take $\sqrt{re^{it}}:= \sqrt r e^{it/2}$ for $t \in (-\pi, \pi]$ and $u^a: =\exp(a \log u)$ with the principal value of the logarithm on $\C\setminus \RR_-$). The latter expression shows that $T_a$ is algebraic when $a\in \Q$.   The Schwarz list~\cite{schwarz_1873} implies that $T_a$ is transcendental otherwise. This can be also proved by looking at the growth of $T_a(x)$ at infinity.

We will also use the fact  that both ${1-T_a}$ and ${1+T_a}$ are squares of D-finite functions. In fact, it follows from~\eqref{Ta-trig} and elementary trigonometry that 
\beq \label{sqrt_plus}
\sqrt{1+T_a}= \sqrt 2 \  T_{a/2},
\eeq
with the same domain of definition as $T_a$. Analogously, 
$\sqrt{1-T_a(x)}= \sqrt 2  \sin \left(\frac a 2 \arccos x \right)$, and this function  satisfies the same differential equation as $T_{a/2}$. Moreover,
\beq \label{sqrt_minus}
\frac 1 a   \sqrt{\frac{1-T_a(x)}{1-x}}=  {_2F}_1 \Bigl(\frac{-a+1}2,\frac{a+1}2;\frac{3}{2};\frac{1-x}{2}\Bigr)
\eeq
and this function is analytic on the same domain as $T_a$.   

Our final property deals with rational functions in $T_a$.

\begin{prop}\label{prop:rational_Ta}
  Let $S/R$ be an irreducible fraction with coefficients in $\C$. Then $(S/R)(T_a)$ is D-finite if and only if either $a \in \Q$ or $R$ is constant. 
\end{prop}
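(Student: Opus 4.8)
The plan is to prove both directions, the easy ``if'' part being essentially a closure property and the ``only if'' part being the substantive content.

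\textbf{The ``if'' direction.} If $a\in\Q$, then $T_a$ is algebraic by \eqref{Ta-alg} (equivalently by the Schwarz list), hence any rational function $(S/R)(T_a)$ is algebraic, hence D-finite. If instead $R$ is constant, then $(S/R)(T_a)=c\,S(T_a)$ is a polynomial in $T_a$; since $T_a$ is D-finite for every $a$ and D-finite functions form a ring (closed under the algebra operations), $S(T_a)$ is D-finite. This disposes of one implication with no difficulty.

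\textbf{The ``only if'' direction.} We must show that if $a\notin\Q$ and $R$ is non-constant (so $R$ has a genuine root $\rho\in\C$, and since the fraction is irreducible $S(\rho)\ne0$), then $(S/R)(T_a)$ is not D-finite. The strategy I would use exploits the singularity structure: a D-finite function has only finitely many singularities, all of them regular singular points of finite local exponent type, and in particular it cannot have ``too many'' singularities accumulating, nor singularities of transcendental (non-algebraic, non-logarithmic) local nature. The function $(S/R)(T_a)$ has a pole precisely at every point $x$ where $T_a(x)=\rho$. Now from \eqref{Ta-trig}, $T_a(x)=\rho$ amounts to $\cos(a\arccos x)=\rho$, i.e. $a\arccos x\in\arccos\rho+2\pi\Z$ or $-\arccos\rho+2\pi\Z$; equivalently $\arccos x\in \frac1a(\pm\arccos\rho+2\pi\Z)$. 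Because $a\notin\Q$, the set $\{\frac1a(\pm\arccos\rho+2\pi k):k\in\Z\}$ is infinite and its images under $\cos$ give infinitely many distinct solutions $x$ of $T_a(x)=\rho$ in $\C$. Thus $(S/R)(T_a)$ has infinitely many poles, which is impossible for a D-finite function (whose singularities are confined to the finitely many roots of the leading coefficient of its annihilating operator). This contradiction proves the claim. One must be slightly careful that these infinitely many preimages are genuinely poles and not cancelled: irreducibility of $S/R$ guarantees $S$ and $R$ share no root, so $S(\rho)\ne 0$ and each solution of $T_a(x)=\rho$ is a pole of $(S/R)(T_a)$ (of order equal to the multiplicity of $\rho$ as a root of $R$, which is positive). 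One should also check that distinct values of $k$ really give distinct $x$: since $\arccos$ is injective on its range and $\cos$ separates the relevant arguments — alternatively, use the entire-function growth of $T_a$ along a ray and the fact that $T_a$ is not periodic when $a\notin\Q$ — infinitely many distinct preimages indeed occur.

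\textbf{Main obstacle.} The delicate point is not the counting of preimages but making rigorous the assertion ``a D-finite function has finitely many singularities, hence finitely many poles.'' This is standard (a D-finite $f$ satisfies $p_r(x)f^{(r)}+\cdots+p_0(x)f=0$ with $p_i\in\C[x]$, and $f$ is holomorphic away from the zero set of $p_r$), so the real work is just verifying that the preimage set $T_a^{-1}(\rho)$ is infinite — which reduces to the elementary fact that for $a\notin\Q$ the arithmetic progression $\frac{2\pi}{a}\Z$ is dense mod $2\pi$ and in particular infinite modulo the symmetry $\arccos x\mapsto -\arccos x$. An alternative, perhaps cleaner, route to the same contradiction is via the algebraic characterization: $(S/R)(T_a)$ D-finite would force $T_a$ itself to satisfy an algebraic differential equation over $\C(x)$ of a constrained type, and combined with the known transcendence of $T_a$ for $a\notin\Q$ (Schwarz list) and a closer look at \eqref{de-Ta} one derives a contradiction; but the singularity-counting argument above is the shortest and most transparent, so that is the one I would write up.
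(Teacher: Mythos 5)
Your ``if'' direction is fine and matches the paper. The ``only if'' direction, however, rests on a claim that is false: for real $a$, the equation $T_a(x)=\rho$ has only \emph{finitely} many solutions in the domain $\C\setminus(-\infty,-1]$ on which $T_a$ is defined. The identity $T_a(\cos w)=\cos(aw)$ coming from \eqref{Ta-trig} is only valid for $w$ in the range of the principal $\arccos$, namely the strip $0<\Re w<\pi$. Of the infinitely many points of $\frac1a\left(\pm\arccos\rho+2\pi\Z\right)$, only about $a$ of them lie in that strip, because $a$ is real and division by $a$ rescales real parts; for any other such point $w$, the number $x=\cos w$ satisfies $T_a(x)=\cos\bigl(a\arccos(\cos w)\bigr)=\cos\bigl(a(\pm w+2\pi j)\bigr)$ for the integer $j$ that folds $\pm w$ back into the strip, and this is $\neq\rho$ in general --- ironically, the same irrationality of $a$ that makes your points $\cos w$ pairwise distinct is what prevents them from being $\rho$-points of $T_a$: they are $\rho$-points of \emph{other branches} of the multivalued continuation of $\cos(a\arccos x)$ around $x=-1$, not of the single-valued function $T_a$. (One can also see the finiteness from \eqref{Ta-alg} with the principal logarithm, consistently with the fact that for $a=m\in\N$ the polynomial $T_m$ takes every value exactly $m$ times.) Consequently $(S/R)(T_a)$ has only finitely many poles and the ``infinitely many singularities'' contradiction never materialises; note that $1/T_a$ with $a\notin\Q$ is precisely the kind of function the proposition declares non-D-finite, yet it has finitely many poles, so no pole count of the function itself can prove the statement. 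A repair along your lines would have to follow \emph{all} branches of the continuation around $x=-1$ (their number is infinite exactly when $a\notin\Q$, and their zeros and poles then occur at infinitely many distinct locations, contradicting that solutions of a linear ODE are holomorphic off the finite singular set of the equation), but that monodromy argument is genuinely different from, and more delicate than, what you wrote; your suggested safeguards (injectivity of $\arccos$, non-periodicity, growth along a ray) do not address this point.

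For comparison, the paper avoids singularity counting altogether. It first reduces the general case to $S$ constant by B\'ezout ($US+VR=1$ gives $U\cdot(S/R)+V=1/R$, so D-finiteness of $(S/R)(T_a)$ forces that of $1/R(T_a)$); then, with $R(T_a)$ and $1/R(T_a)$ both D-finite, the Harris--Sibuya theorem yields that $(R(T_a))'/R(T_a)$ is algebraic, and combining this with the first-order relation $T_a'(x)^2=a^2\frac{1-T_a(x)^2}{1-x^2}$ shows that a non-trivial rational function of $T_a$ is algebraic, hence $T_a$ itself is algebraic, hence $a\in\Q$ by the Schwarz list.
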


\begin{proof}
  If $a\in\mathbb Q$, we have seen that $T_a$ is algebraic, and then so is any fraction in $T_a$. If $R$ is a constant, then $(S/R)(T_a)$ is D-finite because D-finite functions form a ring.

  We now assume that $(S/R)(T_a)$ is D-finite and want to prove that either $a\in\mathbb Q$ or $R$ is a constant.

  Let us first suppose that $S$ is a constant. In this case,  both $R(T_a)$ and $1/R(T_a)$ are D-finite. By a result of Harris and Sibuya~\cite{HarrisSibuya}, the function $(R(T_a))'/R(T_a)$ is  algebraic.
  Assume that~$R$ is non-constant, and let us prove that $a\in \Q$. Since~$R$ is non-constant, there exist $\kappa\neq 0$,  $z_1,\ldots,z_n\in\mathbb C$ and positive integers $n,m_1,\ldots,m_n$ such that
  \begin{equation*}
    R(z) = \kappa\prod_{i=1}^n(z-z_i)^{m_i}.
  \end{equation*}
  With this notation, one has
  \begin{equation}
    \label{eq:to_square}
    \frac{(R(T_a(x)))'}{R(T_a(x))}=T_a'(x)\sum_{i=1}^n\frac{m_i}{T_a(x)-z_i}.
  \end{equation}
  In addition to the  second order   linear differential equation~\eqref{de-Ta}, the function $T_a(x)=\cos(a \arccos x)$ satisfies a first order non-linear differential equation:
  \begin{equation*}
    T_a'(x)^2={a^2}\frac{1-T_a(x)^2}{{1-x^2}}.
  \end{equation*}
  Since~\eqref{eq:to_square} is algebraic, we conclude that the function
  \begin{equation*}
    (1-T_a^2)\left(\sum_{i=1}^n\frac{m_i}{T_a-z_i}\right)^2
  \end{equation*}
  is also algebraic. 
  This function is a non-trivial fraction in $T_a$ (because of the multiple poles  in the denominator). This implies that $T_a$ is algebraic, so that $a \in \Q$.

  Let us now   consider the case of a non-constant polynomial $S$. The fraction $(S/R)(z)$ being irreducible in $\mathbb C(z)$, it comes from the classical B\'ezout theorem that there exist two polynomials~$U$ and $V$ such that $US+VR=1$. Dividing   by $R$, it follows that $U\frac{S}{R}+V=\frac{1}{R}$. This shows that if $(S/R)(T_a)$ is D-finite, then $1/R(T_a)$ should also be D-finite. We have just seen that this implies that $a\in \Q$ or $R$ is a constant.
\end{proof}

\subsection{The  invariant $\boldsymbol w$}
We now define a function $w$, which is analytic on $\C\setminus [y^+,\infty)$, by:
\beq
\label{eq:definition_w-bis}
w(y):=T_{\frac{\pi}{\beta}}\left(-\frac{2y-(y^+ +y^-)}{y^+ -y^-}\right).
\eeq
Using the normal variable $\yn$ of~\eqref{xy:normal} and the values $\yn^\pm$ in~\eqref{xyn-pm}, 
\[
  w(y)= T_{\frac{\pi}{\beta}}\left(-\yn +\cos(\beta-\theta)\right).
\]    
Note that when $y=y(s)$ is given by the parametrization~\eqref{eq:uniformization}, or equivalently $\yn=\yn(s)$ by~\eqref{param:normal}, the argument simplifies into
\[
  -\frac 1 2 \left(  {e^{-i\beta}} s+ {e^{i\beta}}/ s\right).
\]
In particular, we derive from~\eqref{Ta-alg} that for $s\in \C\setminus e^{i\beta} \RR_+$,
\beq\label{wys0}
w(y(s))=\frac 1 2 \left( \left( -\frac s{e^{i\beta}}\right)^{\pi/\beta} +\left( -\frac s {e^{i\beta}}\right)^{-\pi/\beta}\right),
\eeq  
if we define the $a$-th power on $\C\setminus \RR_-$ as before (with $a= \pi/\beta$). This can be rewritten as:
\beq\label{wys}
w(y(s))= -\frac{1}{2} \left((-s)^{\pi/\beta} + (-s)^{-\pi/\beta}\right) ,
\eeq
provided we now take the principal value of the logarithm on $\C\setminus e^{i\beta} \RR_-$.
In particular, $w(y(s))$ is real when $|s|=1$.

\smallskip
The function $w$ inherits the algebraic and differential properties of $T_{\pi/\beta}$.

\begin{prop}
  \label{prop:algebraic_nature_w}
  If $\frac{\pi}{\beta}\in\Z$, the function $w$ is a polynomial.
  If $\frac{\pi}{\beta}\in\Q\setminus \Z$, the function $w$ is  algebraic but irrational.
  If $\frac{\pi}{\beta}\notin\Q$, the function $w$ is D-finite  but not algebraic.

  The functions $\sqrt{1-w}$ and $\sqrt{1+w}$ are D-finite.

  Finally, a rational fraction in $w$, say $(S/R)(w)$, is D-finite if and only if either $\beta/\pi \in \Q$ or $S/R$ is a polynomial.
\end{prop}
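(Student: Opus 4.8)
The plan is to transfer everything through the identity $w(y) = T_{\pi/\beta}(-\yn + \cos(\beta-\theta))$, which exhibits $w$ as the composition of the hypergeometric function $T_{\pi/\beta}$ with the affine map $y \mapsto -\yn + \cos(\beta-\theta)$ (recall $\yn$ is itself affine in $y$). Since affine changes of variable preserve each class in the hierarchy~\eqref{hierarchy}, all the algebraic/differential properties of $w$ are exactly those of $T_{\pi/\beta}$, which have already been recorded in Section~\ref{sec:gluing}: from~\eqref{Ta-alg}, $T_a$ is a polynomial when $a \in \N_0$; it is algebraic but irrational when $a \in \Q \setminus \Z$ (the two conjugate branches $(x\pm\sqrt{x^2-1})^a$ are genuinely algebraic of degree $>1$, and one checks irrationality e.g.\ from the branch point at $x=1$, or from the Schwarz list); and it is D-finite but transcendental when $a \notin \Q$, again by~\eqref{de-Ta} and the Schwarz list~\cite{schwarz_1873}, or by the growth of $T_a$ at infinity. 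Setting $a = \pi/\beta$ and noting $\pi/\beta \in \Z \iff \pi/\beta \in \N$ (as $\beta \in (0,\pi)$ forces $\pi/\beta > 1$), this gives the first three assertions. For the middle claim, $\sqrt{1-w}$ and $\sqrt{1+w}$: apply~\eqref{sqrt_plus} and the sentence following it with $a = \pi/\beta$ and the same affine substitution; $\sqrt{1+T_a} = \sqrt 2\, T_{a/2}$ is D-finite as a hypergeometric function, and $\sqrt{1-T_a(x)} = \sqrt 2 \sin(\tfrac a2 \arccos x)$ satisfies the same differential equation as $T_{a/2}$, hence is D-finite; composing with an affine map preserves D-finiteness.

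For the last assertion — $(S/R)(w)$ is D-finite iff $\beta/\pi \in \Q$ or $S/R$ is a polynomial — I would again reduce to $T_{\pi/\beta}$: writing $S/R$ in lowest terms, the affine substitution $u = -\yn + \cos(\beta-\theta)$ turns $(S/R)(w)$ into a fraction $(\widetilde S/\widetilde R)(T_{\pi/\beta})$, still irreducible, and $\widetilde R$ is constant iff $R$ is constant iff $S/R$ is a polynomial. Now invoke Proposition~\ref{prop:rational_Ta} with $a = \pi/\beta$: $(\widetilde S/\widetilde R)(T_{\pi/\beta})$ is D-finite iff $\pi/\beta \in \Q$ or $\widetilde R$ is constant. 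Finally observe $\pi/\beta \in \Q \iff \beta/\pi \in \Q$. One small point of care: I should note that D-finiteness of $(S/R)(w)$ as a function on a real neighborhood is equivalent to D-finiteness over $\C(y)$ (stated at the start of Section~\ref{sec:gluing}), so that applying the complex-coefficient Proposition~\ref{prop:rational_Ta} is legitimate, and likewise that a nonconstant affine substitution does not create or destroy D-finiteness or algebraicity.

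I do not expect a serious obstacle here: the proposition is essentially a bookkeeping corollary of~\eqref{Ta-alg}, \eqref{de-Ta}, \eqref{sqrt_plus}, and Proposition~\ref{prop:rational_Ta}, together with closure of each class under affine substitution. The only place demanding genuine (if routine) verification is the claim "algebraic but irrational" for $\pi/\beta \in \Q \setminus \Z$ and "transcendental" for $\pi/\beta \notin \Q$; for the latter the cleanest argument is the growth estimate, namely that from~\eqref{Ta-alg} one has $|T_a(x)| \sim \tfrac12 |2x|^{a}$ as $x \to \infty$ along the reals, so $T_a$ cannot be algebraic unless $a \in \Q$ (an algebraic function has rational order of growth), and cannot be a polynomial unless $a \in \N_0$. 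That estimate, plus irreducibility of the minimal polynomial of $(x+\sqrt{x^2-1})^{p/q}$, is the most technical ingredient, and it has in effect already been asserted in the text preceding the proposition, so I would simply cite it.
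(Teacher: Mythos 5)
Your proposal is correct and follows essentially the same route as the paper, which states just before the proposition that ``$w$ inherits the algebraic and differential properties of $T_{\pi/\beta}$'' and leaves the transfer through the affine substitution implicit, relying exactly on~\eqref{Ta-alg},~\eqref{de-Ta},~\eqref{sqrt_plus},~\eqref{sqrt_minus}, the Schwarz list, and Proposition~\ref{prop:rational_Ta}. Your extra care about reducing $S/R$ to lowest terms before invoking Proposition~\ref{prop:rational_Ta}, and about real versus complex D-finiteness, is consistent with the remarks already made at the start of Section~\ref{sec:gluing}.
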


\medskip

We will express $\phi_1$ in terms of $w$, using the fact that $w$ is, in a certain sense, a \emm canonical invariant,. The following lemma proves that it is an invariant (in the sense of Definition~\ref{def:inv}), and its canonical properties are described in  Proposition~\ref{prop:inv}.

\begin{lem} \label{lem:w}
  The function $w$  is an invariant in the sense of Definition~\ref{def:inv}. More precisely, it satisfies the following properties:
  \begin{enumerate}[label={\rm(\arabic*)},ref={\rm(\arabic*)}] 
  \item it is analytic in     an open domain containing $\bG$, namely $\C\setminus  [y^+, \infty)$,
  \item it goes to infinity at infinity, with 
    \[
      w(y) \underset{y\to\infty}{\sim}\kappa y^{\frac{\pi}{\beta}}
    \]
    for some constant $\kappa \neq 0$,
  \item it is bijective from $\G$ to $\C\setminus (-\infty, -1]$,
  \item it satisfies the boundary condition
    \[
      w(y)=w(\overline{y}), \quad \forall y\in\R,
    \]
  \item it is $2$-to-$1$ from $\mathcal{R}\setminus\{Y^\pm(x^-)\}$ to $(-\infty,-1)$,
  \item\label{it6lem:w} around the point $Y^\pm(x^-)=y(-1)$, we have
    \[
      w\left(Y^\pm(x^-)-y \right) =-1+  \frac{2 \pi^2}{\beta^2 (y^+-y^-)^2\sin^2 \beta}\,  {y^2}   + O(y^3)
    \]
    as $y\rightarrow 0$.
  \end{enumerate}
  We will denote by $w^{-1}$ the analytic function from $\C\setminus(-\infty, -1]$ to $\G$ that maps a complex number to its unique preimage lying in $\G$.
\end{lem}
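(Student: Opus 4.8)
The plan is to prove the six properties of $w$ using the explicit formula~\eqref{eq:definition_w-bis} together with the parametrization~\eqref{eq:uniformization} and the analytic properties of $T_a$ with $a=\pi/\beta$ recalled in the previous subsection. First I would dispose of property~(1): since $T_{\pi/\beta}$ is analytic on $\C\setminus(-\infty,-1]$ by construction, $w$ is analytic wherever the argument $-\frac{2y-(y^++y^-)}{y^+-y^-}$ avoids $(-\infty,-1]$, that is, wherever $y$ avoids $[y^+,\infty)$; and Lemma~\ref{lem:Ypositive} (together with the description of $\R$ in Section~3.3) shows $y^+\notin\bG$, so $\C\setminus[y^+,\infty)$ is an open domain containing $\bG$. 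Property~(2) is immediate from the algebraic expression~\eqref{Ta-alg}: the dominant term of $T_a(z)$ as $z\to\infty$ is $\tfrac12(2z)^a$, hence $w(y)\sim \kappa y^{\pi/\beta}$ with $\kappa = \tfrac12\bigl(-\tfrac{4}{y^+-y^-}\bigr)^{\pi/\beta}\neq 0$.

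For the bijectivity statements~(3) and~(5), the key tool is the expression~\eqref{wys} for $w$ along the parametrization. Recall from Section~3.2 that $s\mapsto y(s)$ is a uniformization of $\cS$, that the unit circle $|s|=1$ maps to the real points (Remark~\ref{lem:realpointsmathcalS}), and that $\R=\{y(s):s\in\RR_-\}$ by Lemma~\ref{lem:R-param}. I would first identify $\G$ with an explicit region in the $s$-plane: since $\G$ is the bounded component of $\C\setminus\R$ containing $0=y(s_0)$ with $s_0=-e^{i\theta}$, one checks that $y(\cdot)$ maps $\{s : |s|=1,\ s\notin(-\infty,0]\}$ — equivalently $s=e^{it}$, $t\in(0,2\pi)$ — bijectively onto $\R$ minus its apex, wait, more carefully, onto... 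I would set $u=(-s)^{\pi/\beta}$: as $s$ ranges over the appropriate domain bounded by $\RR_-$, the quantity $-s$ ranges over $\C\setminus\RR_-$ and $u=(-s)^{\pi/\beta}$ ranges over the sector $\{\arg u\in(-\pi^2/(2\beta),\pi^2/(2\beta))\}$, wait this needs $\pi/\beta$ care. The cleaner route: use~\eqref{Ta-trig}, $w(y)=\cos\bigl(\tfrac{\pi}{\beta}\arccos(\cdots)\bigr)$, or better yet the Joukowski-type form~\eqref{wys}: $w(y(s))=-\tfrac12(v+v^{-1})$ with $v=(-s)^{\pi/\beta}$, so $w$ sends $\G$ bijectively to $\C\setminus(-\infty,-1]$ precisely because the map $v\mapsto -\tfrac12(v+v^{-1})$ is a biholomorphism from a suitable half-plane (or slit plane) onto $\C\setminus(-\infty,-1]$, and $\G$ corresponds via $y(\cdot)$ and $s\mapsto(-s)^{\pi/\beta}$ to that half-plane. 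The $2$-to-$1$ statement on $\R$ then follows because $\R\setminus\{Y^\pm(x^-)\}$ corresponds to $s\in\RR_-\setminus\{-1\}$, on which $(-s)^{\pi/\beta}$ ranges over $\RR_+\setminus\{1\}$ two-to-one... no, once-to-one, but $y(s)=y(1/s)$ identifies $s$ with $1/s$, and $-\tfrac12(v+v^{-1})=-\tfrac12(v^{-1}+v)$ is manifestly invariant under $v\leftrightarrow v^{-1}$, giving the $2$-to-$1$ map onto $(-\infty,-1)$.

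Property~(4), the invariant condition $w(y)=w(\bar y)$ for $y\in\R$, follows from Lemma~\ref{lem:R-param}: for $y=y(s)$ with $s\in\RR_-$ we have $\bar y=y(1/s)$, and~\eqref{wys} gives $w(y(s))=-\tfrac12((-s)^{\pi/\beta}+(-s)^{-\pi/\beta})=w(y(1/s))$ since the expression is symmetric under $s\mapsto 1/s$. Property~(6) is a local computation at $y=y(-1)=Y^\pm(x^-)$: here the parametrization has a branch point ($s=-1$ is critical for $y(\cdot)$), $w\circ y$ has a critical point, and the leading behaviour is quadratic. Concretely, $w(y)+1 = T_{\pi/\beta}(z)+1$ near $z=-1$ where $z=-\tfrac{2y-(y^++y^-)}{y^+-y^-}$; using $T_a(-1)=\cos(a\pi)=(-1)^a$... here I should instead expand $T_{\pi/\beta}$ near the argument value it takes at $y(-1)$, which from the normal form is $-\yn(-1)+\cos(\beta-\theta)=\cos\beta$ — wait, $w(y(-1)) = \cos(\tfrac{\pi}{\beta}\arccos(\cos\beta))$? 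No: from~\eqref{wys} at $s=-1$, $w(y(-1))=-\tfrac12((1)^{\pi/\beta}+(1)^{-\pi/\beta})=-1$. So I expand $w(y(-1)-\eta)$ in powers of $\eta$; since $dw/dy$ has a simple zero at $y(-1)$ (as $w(y)+1$ vanishes to order $2$ there), I write $w(y(-1)-\eta)=-1+c\eta^2+O(\eta^3)$ and compute $c$ from $w''$, or more slickly from $\tfrac{d^2}{d\eta^2}$ of the hypergeometric/trig form, yielding $c=\tfrac{2\pi^2}{\beta^2(y^+-y^-)^2\sin^2\beta}$. I expect the main obstacle to be the bijectivity claim~(3): carefully matching the domain $\G$ (defined geometrically via the hyperbola $\R$) with the explicit sector/half-plane in the $s$-variable, and verifying that $v\mapsto-\tfrac12(v+v^{-1})$ is genuinely a bijection from that region onto $\C\setminus(-\infty,-1]$ (injectivity requires ruling out the $v\leftrightarrow v^{-1}$ identification inside $\G$, which holds because $\G$ corresponds to $|v|$ on one side of $1$, or to a half of the $v$-plane). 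The last sentence, defining $w^{-1}$, is then just a restatement of~(3).
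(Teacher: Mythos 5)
Most of your sketch is sound and, for items (3)--(5), actually more self-contained than the paper, which simply quotes \cite{franceschi_tuttes_2016} for the bijectivity and the boundary condition; your treatment of (1), (2), (4) and (6) (analyticity of $T_{\pi/\beta}$ off $(-\infty,-1]$, the growth from~\eqref{Ta-alg}, the symmetry of~\eqref{wys} under $s\mapsto 1/s$ combined with Lemma~\ref{lem:R-param}, and the expansion of $T_{\pi/\beta}$ at $\cos\beta$ using $T'_{\pi/\beta}(\cos\beta)=0$, $T''_{\pi/\beta}(\cos\beta)=\pi^2/(\beta^2\sin^2\beta)$) matches what the paper does. However, two steps as written are genuinely wrong. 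First, in your argument for (5) you invoke ``$y(s)=y(1/s)$''. This identity is false: it is $x$ that is invariant under $s\mapsto 1/s$, while $y(s)=y(q/s)$; for $s\in\RR_-$, Lemma~\ref{lem:R-param} gives $y(1/s)=\overline{y(s)}$, a \emph{different} point of $\R$. So your explanation of why the map is $2$-to-$1$ does not parse: the correct mechanism is that $y$ is injective on $\RR_-$ (the only identification $s\leftrightarrow q/s$ leaves $\RR_-$), and the two \emph{distinct} conjugate points $y(s),y(1/s)\in\R$ are sent to the same value because $-\tfrac12(v+v^{-1})$ is invariant under $v\leftrightarrow 1/v$ -- which is essentially the paper's argument for (5).

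Second, the injectivity step in (3) is misidentified. By~\eqref{eq:caracG} the preimage of $\G$ under $y$ is the whole open wedge $\arg s\in(\pi,\pi+2\beta)$, which is stable under the involution $s\mapsto q/s$ (fixed point $-e^{i\beta}$, lying over $y^-\in\G$); hence $y$ is $2$-to-$1$ (branched) from that wedge onto $\G$, and $v=(-s)^{\pi/\beta}$ sweeps the \emph{entire} slit plane $\C\setminus\RR_+$, with the identification $v\leftrightarrow 1/v$ very much active inside the preimage of $\G$. So injectivity of $w$ on $\G$ cannot come from ``$\G$ corresponds to one side of $|v|=1$'' or ``to a half of the $v$-plane''. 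The repair is a quotient argument: the fibres of $w\circ y$ on the wedge (pairs $\{v,1/v\}$) coincide with the fibres of $y$ (pairs $\{s,q/s\}$), so $w$ descends to a well-defined map on $\G$, and it suffices to check that $v\mapsto-\tfrac12(v+v^{-1})$ is a bijection from a fundamental domain, say $\{\Im v>0\}\cup(-\infty,-1]$, onto $\C\setminus(-\infty,-1]$ (the upper half-plane maps bijectively onto $\C\setminus\bigl((-\infty,-1]\cup[1,\infty)\bigr)$ and the ray $(-\infty,-1]$ onto $[1,\infty)$). With these two corrections your route goes through; as it stands, the two claims above are errors, not just omitted details, and you yourself flagged (3) as the weak point.
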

\begin{proof}
  The first two points  follow from known properties of Gauss' hypergeometric function. The next two can be found in~\cite[Lem.~3.4]{franceschi_tuttes_2016}.

  The fifth point follows from~\eqref{wys}. Indeed, assume that $w(y_1)=w(y_2)$ with $y_1$ and $y_2$ in $\R$. By Lemma~\ref{lem:R-param}, there exist $t_1$ and $t_2$ in $\RR_-$ such that $y_i=y(t_i)$. By~\eqref{wys}, and the fact that $(-t_i)$ is a positive real number, we conclude that either $t_1=t_2$, or $t_1=1/t_2$. In the former case, $y_1=y_2$. In the latter one, we have $x:=x(t_1)=x(t_2)\in (-\infty, x^-]$, hence $y_1$ and $y_2$ are the two conjugate solutions of $\gamma(x,y)=0$.

  For the last point,  we first recall that $Y^\pm(x^-)=y(-1)$, so  that, by definition of the parametrization~\eqref{eq:uniformization},
  \[
    -\frac{2Y^\pm(x^-)-(y^+ +y^-)}{y^+ -y^-}=\cos \beta.
  \]
  In particular, $w(Y^\pm(x^-)) = T_{\pi/\beta}(\cos \beta)= \cos( \pi)=-1$ by~\eqref{Ta-trig}. More generally, by differentiating~\eqref{Ta-trig} twice, we obtain:
  \[
    T'_{\pi/\beta}(\cos \beta)= 0, \qquad    T''_{\pi/\beta}(\cos \beta)= \frac{\pi^2}{\beta^2\sin ^2 \beta} ,
  \]
  and the final property follows.
\end{proof}

Let us now explain in what sense the invariant $w$ is canonical.

\begin{prop}
  \label{prop:inv}
  Assume that $I$ is an invariant which has a finite number of poles in $\bG$ and grows at most  polynomially at infinity.
  Then there exist polynomials $R$ and $S$ in $\C[z]$ such that
  \[ 
    I=\frac{S \circ w}{R \circ w}.
  \] 
  Moreover,  an invariant with no pole in $\bG$ and a finite limit at infinity is  constant.
\end{prop}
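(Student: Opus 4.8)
The plan is to use the canonical bijection $w\colon \G\to \C\setminus(-\infty,-1]$ from Lemma~\ref{lem:w}\,(3) to transport an arbitrary invariant $I$ to a meromorphic function on $\C\setminus(-\infty,-1]$, and then to use the boundary condition to show that this transported function extends to a meromorphic function on the whole Riemann sphere $\C\cup\{\infty\}$, hence is rational. Concretely, set $F:=I\circ w^{-1}$, which is meromorphic on $\C\setminus(-\infty,-1]$ because $w^{-1}$ maps $\C\setminus(-\infty,-1]$ biholomorphically onto $\G$ (Lemma~\ref{lem:w}) and $I$ is meromorphic on a domain containing $\bG\supset \G$. The function $F$ has finitely many poles in $\C\setminus(-\infty,-1]$ since $I$ has finitely many poles in $\bG$. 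It remains to understand $F$ near the slit $(-\infty,-1]$ and near $\infty$.

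First I would show that $F$ extends meromorphically across the open slit $(-\infty,-1)$. Take a point $u_0\in(-\infty,-1)$. By Lemma~\ref{lem:w}\,(5), $w$ is $2$-to-$1$ from $\R\setminus\{Y^\pm(x^-)\}$ onto $(-\infty,-1)$: there are exactly two points $y_1,y_2\in\R$ with $w(y_1)=w(y_2)=u_0$, and by the proof of that lemma $y_2=\bar y_1$. A neighbourhood of $u_0$ in $\C$ is the image under $w$ of a neighbourhood $V_1$ of $y_1$ and a neighbourhood $V_2=\overline{V_1}$ of $y_2=\bar y_1$ in $\C\setminus[y^+,\infty)$ (using that $w$ is a local biholomorphism away from the ramification point $y(-1)$, since $w'$ vanishes only there — cf.\ Lemma~\ref{lem:w}\,\ref{it6lem:w}). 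On the side of $V_1$ lying in $\G$, $F$ is defined by $I\circ w^{-1}$; on the side in $V_2\cap\G$ one gets $I(\overline{w^{-1}(u)})$-type boundary values. The invariance $I(y)=I(\bar y)$ for $y\in\R$ forces the boundary values of $F$ from the two sides of the slit to agree, so by the Schwarz-type reflection / Morera argument $F$ extends holomorphically (meromorphically, allowing the finitely many poles) across $(-\infty,-1)$. Near the ramification point (the image $u=-1$ of $y(-1)$) one uses the local form in Lemma~\ref{lem:w}\,\ref{it6lem:w}: $w$ behaves like a square map there, so $w^{-1}(u)-y(-1)$ behaves like $\sqrt{u+1}$ times a unit, and $I$ being single-valued and meromorphic near $y(-1)$ means $I\circ w^{-1}$ is a single-valued meromorphic function of $u$ near $u=-1$ — the two $\sqrt{\cdot}$ branches get exchanged by $y\mapsto\bar y$ on $\R$ and $I$ identifies them. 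So $F$ extends meromorphically to a neighbourhood of $-1$ as well, hence to all of $\C$.

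Next I would handle the point at infinity. Since $I$ grows at most polynomially at infinity and $w(y)\sim \kappa y^{\pi/\beta}$ with $\kappa\neq0$ (Lemma~\ref{lem:w}\,(2)), we get $|F(u)|=|I(w^{-1}(u))|\le C|u|^{N}$ for some $N$ as $u\to\infty$; that is, $F$ has at worst a pole at $\infty$. A meromorphic function on $\C\cup\{\infty\}$ is rational, so $F=S/R$ for polynomials $R,S\in\C[z]$, which we may take coprime. Composing back, $I=F\circ w=(S\circ w)/(R\circ w)$ on $\G$, as claimed.

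For the last assertion: if $I$ has no pole in $\bG$ and a finite limit at infinity, then $F=I\circ w^{-1}$ is holomorphic on $\C$ (the extension across the slit introduces no poles since $I$ has none in $\bG$) and bounded at infinity, hence bounded on $\C\cup\{\infty\}$, hence constant by Liouville; therefore $I=F\circ w$ is constant. The main obstacle I anticipate is the careful treatment of the extension of $F$ across the slit, in particular at and near the ramification point $u=-1$: one must check that the two preimages on $\R$ are genuinely complex conjugates (this is in the proof of Lemma~\ref{lem:w}\,(5) via Lemma~\ref{lem:R-param}), that the invariance condition glues the two one-sided limits correctly, and that at $u=-1$ the square-root branching of $w$ is exactly compensated by the invariance so that no branch point of $F$ survives. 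Everything else is a routine application of Riemann's removable-singularity / reflection principle and Liouville's theorem.
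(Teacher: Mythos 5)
Your proposal is correct and takes essentially the same route as the paper's proof: compose $I$ with the inverse of the canonical bijection $w\colon\G\to\C\setminus(-\infty,-1]$, glue across the slit via the invariance condition together with a Morera/reflection argument (treating the ramification point $-1$ and the finitely many poles as isolated singularities with at most polynomial blow-up), control the behaviour at infinity through the polynomial growth of $I$ and of $w^{-1}$, and finish with Liouville for the constant case. The only cosmetic difference is the final packaging — you conclude ``meromorphic on the sphere, hence rational'', whereas the paper writes $I\circ w^{-1}=S/R$ with $R$ a polynomial vanishing at the finitely many poles and $S$ entire, and then invokes the extended Liouville theorem to see that $S$ is a polynomial.
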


\begin{proof}
  Let us consider the function $I\circ w^{-1}$, {where $w^{-1}$ is the analytic function  of  Lemma~\ref{lem:w}.} By composition, $I\circ w^{-1}$ is meromorphic on $\C\setminus (-\infty,-1]$.  Let us now take $z \in (-\infty, -1]$, and define $I\circ w^{-1}(z):=I(y)=I(\bar y)$, where $y$ and $\bar y$ are the two values of $\R$ such that $w(y)=z$. Hence $I\circ w^{-1}$ is now defined on $\C$. By Morera's theorem, $I\circ w^{-1}$ is analytic at $z \in (-\infty, -1]$, unless $y$ (and $\bar y$) is a pole of $I$. If $y_0\in \R$ is one of the  poles of $I$ then $z_0=w(y_0)$ is an isolated singularity of $I\circ w^{-1}$. Let  $\ell\in \N$ be such that $(w(y)-w(y_0))^\ell I(y)$ tends to $0$ as $y$ tends to $y_0$ or $\bar y_0$ in $\G$. Such an $\ell$ exists since $I$ is meromorphic and $w$ analytic in neighborhoods of $y_0$ and $\bar y_0$. If a sequence $(z_n)_n$ tends to $z_0$ in $\C\setminus(-\infty, -1]$, then for $n$ large enough,  $w^{-1}(z_n)$ is arbitrarily close to $y_0$ \emm or, arbitrarily close to $\bar y_0$ (since these are the only two pre-images of $z_0$ by $w$). Hence $(z_n-z_0)^\ell( I\circ w^{-1})(z_n)$ tends to $0$. This proves that $z_0$ is a pole of $I\circ w^{-1}$.

  The  function $I\circ w^{-1}$ is thus meromorphic on $\C$. Each of its poles is the image by $w$ of a pole of $I$ lying in $\bG$. Hence  $I\circ w^{-1}$ has finitely many poles, and can be written as $S/R$, where $R$ is a polynomial and $S$ is entire. Finally, since $w^{-1}$ has polynomial growth at infinity by Lemma~\ref{lem:w}, the same holds for  $S=R \cdot( I\circ w^{-1})$.  This implies,  by a standard  extension of Liouville's theorem, that $S$ is a polynomial.

  Finally, assume that $I$ has no poles in $\bG$ and a finite limit at infinity. The above paragraph shows that we can take $R$ to be a constant.
  By Lemma~\ref{lem:w}, $I(y)= S( w(y))$ grows as $y^{\mathrm{deg}(S)\frac{\pi}{\beta}}$ at infinity. Hence  $S$ has degree zero and is a constant.
\end{proof}

\section{Decoupling functions, and a second invariant}
\label{sec:existence-decoupling}

Let us now return to the inhomogeneous problem of Proposition~\ref{prop:BVP_Carleman_sketch}. A natural idea to transform it into an homogeneous one is to observe that the function $G$ in~\eqref{def:G} may be written as a ratio
\beq
\label{eq:G->F}
G(y) = \frac{F_0(y)}{F_0(\overline{y})},
\eeq
with $F_0(y)=\frac{\gamma_1}{\gamma_2}( X^-(y),y)$. Then the boundary condition~\eqref{eq:bound_cond_gen} rewrites as:
\[
  (F_0\cdot\phi_1)({y})=(F_0\cdot\phi_1)(\overline{y}), \quad \forall y \in\mathcal{R}.
\]
However, the function $F_0$ inherits, in general, the cut of $X^-$  on the half-line $(-\infty,y^-]$, which is contained in $\G$. Hence the function $F_0 \phi_1$ is \emm not,  an invariant in the sense of Definition~\ref{def:inv}, as it is \emm a priori, not meromorphic in a neighbourhood of $\bG$.

Solving the boundary value problem of Proposition~\ref{prop:BVP_Carleman_sketch} in full generality is  the main contribution of~\cite{franceschi_explicit_2017}, where an explicit expression for $\phi_1$ is obtained in terms of contour integrals.
However, these integrals are complicated, and do not give a handle to understand the exceptional parameters for which substantial simplifications may occur.
Our point in the present paper is different: we want  to characterize the cases for which the boundary condition~\eqref{eq:bound_cond_gen} may be transformed into  an homogeneous one, which is then easy to solve in terms of the   canonical  invariant of Section~\ref{sec:gluing}. This transformation relies on the notion of \emm decoupling functions,.

\subsection{Decoupling functions}
\label{sec:decoupling}

\begin{defn}
  \label{def:decoupling_function}
  Let $m$ be a positive integer. A quadrant model with parameters $\mu$, $\Sigma$ and $R$ is \emm $m$-decoupled, if there exist rational functions $F$ and $L$ such that 
  \[
    \left(\frac{\gamma_1}{\gamma_2}\right)^{\! m}(x,y) = \frac{F(y)}{L(x)}
  \] 
  whenever $\gamma(x,y)=0$. By this, we mean that  the following equivalent identities between algebraic functions hold:
  \beq\label{dec-Y}
  \left(\frac{\gamma_1}{\gamma_2}\right)^{\! m}(x,Y^+(x)) = \frac{F(Y^+(x))}{L(x)} , \qquad
  \left(\frac{\gamma_1}{\gamma_2}\right)^{\! m}(x,Y^-(x)) = \frac{F(Y^-(x))}{L(x)} .
  \eeq
  The functions $F(y)$ and $L(x)$ are then said to form a \emm decoupling pair, for the model, and more precisely an \emm $m$-decoupling pair,.
\end{defn}               

A few remarks are in order:
\begin{itemize}
\item First, the two identities of~\eqref{dec-Y} are equivalent because any rational relation between~$x$ and $Y^-(x)$ must hold as well with $x$ and $Y^+(x)$, by irreducibility of the quadratic polynomial $\gamma(x,y)$ (recall that $Y^\pm(x)$ are the two roots of this polynomial).
\item As will be seen in Theorem~\ref{thm:decoupling}, there may exist several decoupling pairs. This happens in particular when $\beta/\pi $ is rational.
\item In the enumeration of discrete walks confined to the
  quadrant~\cite{BeBMRa-16,BeBMRa-17}, decoupling pairs are defined as solving
  the equation $xy = F(y)+L(x)$ on a certain curve. In contrast, we have
  here a multiplicative  version of this notion.
\end{itemize}

We now relate the decoupling property to factorizations of $G(y)$ (or more generally $G^m(y)$) of the form~\eqref{eq:G->F}.  

\begin{lem}\label{lem:decoupling_function}
  A model is $m$-decoupled if and only if  the following equivalent assertions hold:
  \begin{itemize}
  \item There exists a rational function $F$ such that the following identity between algebraic function holds:
    \beq
    \label{eq:decoupling_function}
    \frac{\left(\frac{\gamma_1}{\gamma_2}\right)^{\! m}(x, Y^-(x))}{\left(\frac{\gamma_1}{\gamma_2}\right)^{\! m}(x, Y^+(x))}=\frac{F( Y^-(x))}{F( Y^+(x))}.
    \eeq
  \item There exists a rational function $F$ such that for all $y\in\R$, 
    \beq
    \label{eq:G->F_rational}
    G^{m}(y) = \frac{F(y)}{F(\overline{y})}.
    \eeq
  \end{itemize}

  Moreover, any rational function $F$ satisfying~\eqref{eq:decoupling_function} satisfies~\eqref{eq:G->F_rational}, and vice-versa.
  We call $F$ an \emm $m$-decoupling function,, and define 
  \[ 
    L(x) = \frac{F( Y^-(x))}{\left(\frac{\gamma_1}{\gamma_2}\right)^{\! m}(x, Y^-(x))}.
  \] 
  Then $L$ is a rational function in $x$ and $(F,L)$ is an $m$-decoupling pair in the sense of Definition~\ref{def:decoupling_function}.
\end{lem}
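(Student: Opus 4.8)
The plan is to prove the equivalence of the three conditions (being $m$-decoupled, \eqref{eq:decoupling_function}, \eqref{eq:G->F_rational}) by a short cycle of implications, and then verify the final assertion about $L$. The statement is essentially a bookkeeping lemma translating between the ``multiplicative decoupling'' identity on the kernel curve and the factorization of the boundary coefficient $G$; the content is all in carefully tracking which algebraic functions are being evaluated where. I would organize the argument as: $m$-decoupled $\Rightarrow$ \eqref{eq:decoupling_function} $\Rightarrow$ \eqref{eq:G->F_rational} $\Rightarrow$ $m$-decoupled, together with the observation that the same $F$ works throughout, and finally the explicit formula for $L$.

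First I would show $m$-decoupled $\Rightarrow$ \eqref{eq:decoupling_function}: given an $m$-decoupling pair $(F,L)$ satisfying \eqref{dec-Y}, divide the identity for $Y^-(x)$ by the one for $Y^+(x)$; the factor $L(x)$ cancels and we obtain exactly \eqref{eq:decoupling_function}. Conversely, given a rational $F$ satisfying \eqref{eq:decoupling_function}, I would \emph{define} $L(x)$ by the displayed formula $L(x) = F(Y^-(x)) / \left(\frac{\gamma_1}{\gamma_2}\right)^{\! m}(x, Y^-(x))$ and check two things: that $L$ is in fact a rational function of $x$ alone (not just an algebraic function), and that $(F,L)$ satisfies \eqref{dec-Y}. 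For rationality: $L(x)$ is \emph{a priori} an element of the quadratic extension $\RR(x)[Y^-(x)]$, but \eqref{eq:decoupling_function} says precisely that $L$ takes the same value whether we use the branch $Y^-$ or $Y^+$, i.e.\ $L$ is fixed by the Galois involution exchanging $Y^+$ and $Y^-$; hence $L \in \RR(x)$. The identity \eqref{dec-Y} for $Y^-$ holds by the very definition of $L$, and the one for $Y^+$ then follows by the irreducibility remark (first bullet after Definition~\ref{def:decoupling_function}), or equivalently by applying the Galois involution. This simultaneously establishes the ``Moreover'' clause and the concluding sentence of the lemma.

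Next I would prove the equivalence of \eqref{eq:decoupling_function} and \eqref{eq:G->F_rational}, which is just a reparametrization. Recall from \eqref{def:G} that $G(y)=\frac{\gamma_1}{\gamma_2}(X^-(y),y)\,\frac{\gamma_2}{\gamma_1}(X^-(\overline y),\overline y)$, and from Lemma~\ref{lem:R-param} that for $y\in\R$ with $y=y(s)$, $s\in\RR_-$, one has $\overline y = y(1/s)$ and $x(s)=X^-(y)=X^-(\overline y)$, with $\{y,\overline y\}=\{Y^+(x(s)),Y^-(x(s))\}$. Writing $x=x(s)\in(-\infty,x^-]$ and sorting which of $y,\overline y$ is $Y^+(x)$ and which is $Y^-(x)$, the quantity $G^m(y)$ becomes $\left(\frac{\gamma_1}{\gamma_2}\right)^{\!m}(x,Y^\mp(x))\big/\left(\frac{\gamma_1}{\gamma_2}\right)^{\!m}(x,Y^\pm(x))$, which is exactly the left-hand side of \eqref{eq:decoupling_function} (possibly inverted, matching the fact that swapping $y\leftrightarrow\overline y$ inverts both sides, consistent with the remark $G(\overline y)=1/G(y)$). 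So \eqref{eq:decoupling_function} holding as an identity of algebraic functions of $x$ is equivalent to \eqref{eq:G->F_rational} holding for all $y\in\R$, with the same rational function $F$; and here I would note that an identity of the form $A(Y^-(x))=B(Y^-(x))$ for rational $A,B$ which holds on the real arc $x\in(-\infty,x^-]$ propagates to an identity of algebraic functions, since the arc is infinite. The main (minor) obstacle is purely notational: keeping straight the branch labels $Y^+$ versus $Y^-$ and $y$ versus $\overline y$ across the parametrization, and making sure the ``for all $y\in\R$'' version and the ``identity of algebraic functions'' version are genuinely interchangeable — once that dictionary is fixed the proof is a few lines. I expect no serious difficulty; the cycle closes because each implication is either a cancellation, a definition, or the Galois-invariance argument that upgrades an algebraic function to a rational one.
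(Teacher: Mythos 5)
Your proposal is correct and follows essentially the same route as the paper: the ratio of the two identities in~\eqref{dec-Y}, the dictionary of Lemma~\ref{lem:R-param} between the curve identity and the boundary identity on $\R$, and a rationality argument for $L$. Your Galois-invariance step (an element of the quadratic extension fixed by the involution swapping $Y^+$ and $Y^-$ lies in $\RR(x)$) together with the propagation of the identity from the infinite arc $x\in(-\infty,x^-]$ is just a repackaging of the paper's explicit decomposition $F(Y^\pm)/(\gamma_1/\gamma_2)^m(x,Y^\pm)=L(x)+M(x)Y^\pm(x)$ with $M$ rational vanishing on that half-line, hence identically.
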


Note that Condition~\eqref{eq:decoupling_function} is left  unchanged upon exchanging $Y^+(x)$ and $Y^-(x)$.

\begin{proof}
  Let us first assume that the model is $m$-decoupled. We then obtain~\eqref{eq:decoupling_function} by taking the ratio of the two identities in~\eqref{dec-Y}.
  
  Now assume that  $F$ satisfies~\eqref{eq:decoupling_function}. Let $y\in \mathcal R$, and let $x=X^-(y)=X^-(\bar y)$ be the unique real number in $(-\infty, x^-]$ such that $\{y, \bar y\} =\{Y^+(x), Y^-(x)\}$ (Lemma~\ref{lem:R-param}). Writing~\eqref{eq:decoupling_function} for this pair $(x,y)$ gives~\eqref{eq:G->F_rational}, by definition~\eqref{def:G} of $G(y)$.

  Now assume that~\eqref{eq:G->F_rational} holds. As we have just observed, this means that~\eqref{eq:decoupling_function} holds for $x\in (-\infty, x^-]$. Since $Y^\pm(x)$ are the roots of a quadratic polynomial over $\RR(x)$, there exist rational functions $L(x)$ and $M(x)$ such that
  \[
    \frac{F( Y^\pm (x))}{\left(\frac{\gamma_1}{\gamma_2}\right)^{\! m}(x, Y^\pm(x))}
    = L(x) +M(x) Y^\pm(x) .
  \]
  Specializing this to $x\in (-\infty, x^-)$
  (using~\eqref{eq:decoupling_function}) shows that $M(x)=0$ on this half-line, and thus everywhere since $M$ is rational. Hence~\eqref{dec-Y} holds, and the model is decoupled with $(F,L)$ as a decoupling pair.
\end{proof}

The following simple observation underlines that decoupling functions yield invariants.

\begin{lem}
  \label{lem:Fphi1-invariant}
  If $F$ an $m$-decoupling function, then the product function $F\phi_1^m$ is an invariant in the sense of Definition~\ref{def:inv}.
\end{lem}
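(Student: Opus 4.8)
The plan is to verify directly that $F\phi_1^m$ satisfies the two requirements of Definition~\ref{def:inv}: meromorphy on a domain containing $\bG$, and invariance of its values under the conjugation map on $\R$. The second requirement is where the decoupling property does its work; the first is essentially inherited from Proposition~\ref{prop:BVP_Carleman_sketch} once we observe that $F$ is rational.

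First I would record that $F$ is a rational function of $y$, hence meromorphic on all of $\C$, and in particular on an open domain containing $\bG$. By Proposition~\ref{prop:BVP_Carleman_sketch}, $\phi_1$ is meromorphic on an open domain containing $\bG$, and therefore so is $\phi_1^m$ (powers of meromorphic functions are meromorphic). A product of two functions each meromorphic on an open domain containing $\bG$ is again meromorphic on such a domain (take the intersection of the two domains, which still contains $\bG$). This settles the analyticity/meromorphy condition of Definition~\ref{def:inv}.

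Next I would check the boundary condition. Fix $y\in\R$. By Proposition~\ref{prop:BVP_Carleman_sketch}, $\phi_1(\overline{y})=G(y)\phi_1(y)$, hence $\phi_1^m(\overline{y})=G^m(y)\,\phi_1^m(y)$. Since $F$ is an $m$-decoupling function, Lemma~\ref{lem:decoupling_function} (specifically~\eqref{eq:G->F_rational}) gives $G^m(y)=F(y)/F(\overline{y})$ for all $y\in\R$. Combining these,
\[
  F(\overline{y})\,\phi_1^m(\overline{y}) = F(\overline{y})\,G^m(y)\,\phi_1^m(y) = F(\overline{y})\,\frac{F(y)}{F(\overline{y})}\,\phi_1^m(y) = F(y)\,\phi_1^m(y),
\]
which is exactly the invariance relation $(F\phi_1^m)(\overline{y})=(F\phi_1^m)(y)$ for all $y\in\R$. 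Together with the meromorphy established above, this shows $F\phi_1^m$ is an invariant in the sense of Definition~\ref{def:inv}.

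There is no real obstacle here; the lemma is a direct bookkeeping consequence of the boundary value problem and the definition of a decoupling function. The only point deserving a word of care is that $F(\overline{y})$ might vanish or be infinite at some points of $\R$, so the division by $F(\overline{y})$ in~\eqref{eq:G->F_rational} should be read as an identity of meromorphic functions (equivalently, one clears denominators); the displayed chain of equalities above holds as an identity between meromorphic functions on a neighbourhood of $\R$, hence at every point where both sides are defined, and the removable-singularity principle takes care of the finitely many exceptional points. This is implicit in the statement of Lemma~\ref{lem:decoupling_function} and needs no separate argument.
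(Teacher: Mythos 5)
Your proof is correct and follows exactly the route the paper takes: the paper's proof of this lemma is the one-line remark that it follows directly from~\eqref{eq:G->F_rational} and Proposition~\ref{prop:BVP_Carleman_sketch}, and your write-up simply spells out that combination (meromorphy of $F\phi_1^m$ from rationality of $F$ plus Proposition~\ref{prop:BVP_Carleman_sketch}, and the boundary identity from $\phi_1(\overline y)=G(y)\phi_1(y)$ together with $G^m(y)=F(y)/F(\overline y)$). Nothing further is needed.
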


\begin{proof}
  This follows directly from~\eqref{eq:G->F_rational} and Proposition~\ref{prop:BVP_Carleman_sketch}.
\end{proof}

\subsection{The rational function $E(s)$}
In  Section~\ref{subsec:explicit-dec}, we will prove that decoupling functions exist if and only if one of the angle conditions~\eqref{eq:CNS0} or~\eqref{eq:CNS0double} holds (Theorem~\ref{thm:decoupling}). One important tool is   the rational parametrization~\eqref{eq:uniformization} of the kernel. In this subsection, we study the function $G(y(s))$.

Let us return to the function $G$ given by~\eqref{def:G}. By the definition of $s_1$ and $s_2$ given above Lemma~\ref{lem:angles}, there exist constants $c_1$ and $c_2$ such that
\[ 
  s\gamma_{1}(x(s),y(s))
  =c_1 (s-s_1)(s-s_0)    \quad\text{and}\quad    s \gamma_{2}(x(s),y(s))     
  =c_2(s-s_2)(s-s_0).
\] 
Let us introduce the following rational function:
\begin{align}
  E(s) 
  &=\frac{\gamma_1}{\gamma_2}(x(s),y(s))
    \frac{\gamma_2}{\gamma_1}(x(1/s),y(1/s)) \label{eq:definition_E_V0}
  \\
  &=\frac{s_2}{s_1}\frac{(s-s_1)(s-\frac{1}{s_2})}{(s-s_2)(s-\frac{1}{s_1})}. \label{eq:definition_E}
\end{align}

\begin{lem}\label{lem:GE}
  For $s\in (-\infty,0)$, we have
  \[ 
    G(y(s))=E(s),
  \] 
  where $E(s)$ is the above rational function.
\end{lem}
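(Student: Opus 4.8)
The plan is to unwind the definitions of $G$ and $E$ and check that they coincide on the negative real axis by using the parametrization. First I would recall that, by Lemma~\ref{lem:R-param}, for $s\in(-\infty,0)$ the value $x=x(s)$ lies in $(-\infty,x^-]$ and $x(s)=X^-(y(s))=X^-(y(1/s))$, while $\{y(s),y(1/s)\}=\{Y^+(x(s)),Y^-(x(s))\}$ are the two complex conjugate roots of $\gamma(x(s),\cdot)$. Since $s$ is real and negative, $y(1/s)=\overline{y(s)}$ (again by Lemma~\ref{lem:R-param}), so writing $y=y(s)$ we have $\overline y=y(1/s)$ and $X^-(\overline y)=x(s)$ as well.

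Then I would plug these into the definition~\eqref{def:G} of $G$:
\[
  G(y(s)) = \frac{\gamma_1}{\gamma_2}(X^-(y),y)\,\frac{\gamma_2}{\gamma_1}(X^-(\overline y),\overline y)
  = \frac{\gamma_1}{\gamma_2}(x(s),y(s))\,\frac{\gamma_2}{\gamma_1}(x(s),y(1/s)).
\]
Now $x(1/s)=x(s)$ because $\xi(s)=1/s$ leaves $x$ invariant (see~\eqref{eq:elements_group}), so the second factor equals $\frac{\gamma_2}{\gamma_1}(x(1/s),y(1/s))$, and the right-hand side is exactly the expression~\eqref{eq:definition_E_V0} defining $E(s)$. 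This proves $G(y(s))=E(s)$ on $(-\infty,0)$, and the equality~\eqref{eq:definition_E} of the two formulas for $E$ has already been recorded (it follows from the factorizations $s\gamma_1(x(s),y(s))=c_1(s-s_1)(s-s_0)$ and $s\gamma_2(x(s),y(s))=c_2(s-s_2)(s-s_0)$ stated just above).

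The only genuinely delicate point is to make sure the branch choices match: one must confirm that $X^-$ (the branch with the principal square root, analytic off $(-\infty,y^-]\cup[y^+,\infty)$) really is the branch selected by $x(s)$ for $s\in(-\infty,0)$, and that $y(s)$ and $y(1/s)$ are genuinely conjugate rather than merely an unordered pair $\{Y^+,Y^-\}$. Both facts are supplied by Lemma~\ref{lem:R-param} (in particular its last assertion $x(s)=X^-(y)=X^-(\overline y)$ and the statement $\overline y=y(1/s)$), so the argument is really just a careful bookkeeping of which root is which; no new estimate is needed. I would therefore present the proof as a three-line computation citing Lemma~\ref{lem:R-param} for the branch identifications and~\eqref{eq:elements_group} for $x(1/s)=x(s)$.
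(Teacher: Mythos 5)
Your proof is correct and follows exactly the paper's own argument: the paper likewise proves the lemma by specializing the definition~\eqref{def:G} of $G$ at $y=y(s)$, invoking Lemma~\ref{lem:R-param} for the identifications $\overline{y(s)}=y(1/s)$ and $x(s)=X^-(y(s))=X^-(\overline{y(s)})$, and using $x(s)=x(1/s)$ to recognize the expression~\eqref{eq:definition_E_V0} of $E(s)$. Your added care about the branch choices is exactly the bookkeeping the paper delegates to Lemma~\ref{lem:R-param}, so nothing further is needed.
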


\begin{proof} We start from the    definition~\eqref{def:G} of $G(y)$, with  $y=y(s)$, and apply Lemma~\ref{lem:R-param},  as well as  $x(s)=x(1/s)$. We thus obtain the lemma, with the expression~\eqref{eq:definition_E_V0} of $E(s)$.
\end{proof}

For ease of notation, we denote by $q$ the complex number 
\[ 
  q=e^{2i\beta}.
\] 
Note that the condition $\beta/\pi \in \Q$ translates   into the fact that $q$ is a root of unity.

Now assume that the model is $m$-decoupled, and take $s \in (-\infty,0)$. By Lemma~\ref{lem:GE}, $y(s)$ and $y(1/s)= \overline{y(s)}$ lie in $\R$. Hence by~\eqref{eq:G->F_rational}, there exists a rational function $F$ such that
\[
  G^m(y(s))=\frac{F(y(s))}{F(y(1/s))}.
\]
But by Lemma~\ref{lem:GE}, this is also $E^m(s)$. 
Hence  the rational fractions $E^m(s)$ and $\frac{F(y(s))}{F(y(1/s))}$, which coincide on $(-\infty,0)$ must be equal, which gives
\begin{align}
  E^m(s)& =\frac{F(y(s))}{F(y(1/s))} =\frac{F(y(s))}{F(y(qs))}, \qquad \text{ since } y(1/s)=y(qs), \label{H-dec-strong}\\
        &= \frac{H(s)}{H(qs)}, \hskip 10mm \quad \text{with } \quad H(s)=F(y(s)).\nonumber
\end{align}
It is thus natural to ask when the rational function $E^m$ can be written in the form $ \frac{H(s)}{H(qs)}$. This is answered by the following elementary lemma, which shows how Conditions~\eqref{eq:CNS0} and~\eqref{eq:CNS0double} naturally arise.

\begin{lem}\label{lem:Ecocyclecaracterization-v0}
  Let $E(s)$ be the rational function given by~\eqref{eq:definition_E}:
  \[
    E(s)=\frac{s_2}{s_1}\frac{(s-s_1)(s-\frac{1}{s_2})}{(s-s_2)(s-\frac{1}{s_1})}.
  \]
  The following statements are equivalent:
  \begin{enumerate}[label={\rm\roman*)},ref={\rm\roman*)}] 
  \item \label{item:i-eco} there exist $m \in \N $    and $H \in \C(s)^*$ such that $E^m(s)=\frac{H(s)}{H(qs)}$, 
  \item \label{item:ii-eco} the \emm elliptic divisor, of $E$ relative to $q$ (defined in Definition~\ref{defn:elliptic_divisor}) is zero,
  \item \label{item:iii-eco} one of the conditions~\eqref{eq:CNS0} and~\eqref{eq:CNS0double} holds; that is,
    \beq\label{conds-combined}
    \frac{s_1}{s_2} \in q^\Z \quad\text{ or }\quad \left( s_1^2 \in q^\Z \text{ and }  s_2^2 \in q^\Z \right).
    \eeq
  \end{enumerate}
  Moreover, one can take $m=1$ when~\eqref{eq:CNS0} holds, and $m=2$ when~\eqref{eq:CNS0double} holds.
\end{lem}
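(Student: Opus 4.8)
The plan is to prove the cycle of implications \ref{item:i-eco}$\Rightarrow$\ref{item:iii-eco}$\Rightarrow$\ref{item:i-eco}, postponing the role of \ref{item:ii-eco} (the elliptic divisor) until after Definition~\ref{defn:elliptic_divisor} has been stated, where the equivalence \ref{item:i-eco}$\Leftrightarrow$\ref{item:ii-eco} is presumably a special case of a general cocycle/telescoping criterion for $q$-difference equations. So the self-contained content here is the translation of the factorization $E^m(s)=H(s)/H(qs)$ into the arithmetic conditions on $s_1,s_2$ relative to $q$.

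First I would set up the bookkeeping. Write $E(s)=\frac{s_2}{s_1}\,\frac{(s-s_1)(s-1/s_2)}{(s-s_2)(s-1/s_1)}$; its divisor on $\mathbb{P}^1$ is $[s_1]+[1/s_2]-[s_2]-[1/s_1]$, a degree-zero divisor (the leading constant does not affect divisibility on $\mathbb{P}^1\setminus\{0,\infty\}$, and one checks $0,\infty$ are neither zeros nor poles). For any $H\in\mathbb{C}(s)^*$, the divisor of $H(s)/H(qs)$ is $\operatorname{div}(H)-\tau_q^*\operatorname{div}(H)$, where $\tau_q$ is multiplication by $q$; such divisors are exactly the ``boundaries'' for the $q$-shift action on $\operatorname{Div}^0(\mathbb{P}^1_{\mathbb C}\setminus\{0,\infty\})/(\text{principal})$. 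The equation $E^m(s)=H(s)/H(qs)$ therefore forces $m\bigl([s_1]+[1/s_2]-[s_2]-[1/s_1]\bigr)$ to be such a boundary. Grouping the points into $\tau_q$-orbits: two points $a,b\in\mathbb{C}^*$ lie in the same $\langle q\rangle$-orbit iff $a/b\in q^{\mathbb Z}$ (when $q$ is a root of unity, orbits are finite; otherwise infinite). The key combinatorial fact is that a degree-zero divisor supported on a single $\langle q\rangle$-orbit is a boundary iff it actually vanishes — because along an orbit $\{q^k a\}$ one has $\operatorname{div}(H)-\tau_q^*\operatorname{div}(H)$ giving, for a rational $H$, a finitely supported alternating sum whose coefficients telescope to zero; conversely any single point of nonzero multiplicity cannot be cancelled. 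Hence \ref{item:i-eco} holds iff, after grouping $\{s_1,1/s_2\}$ (multiplicity $+1$) and $\{s_2,1/s_1\}$ (multiplicity $-1$) into orbits, the total multiplicity on every orbit is zero — and moreover, $m$ times this must vanish, which (since multiplicities are integers) is equivalent to the divisor itself being a boundary, independent of $m\geq 1$; but the \emph{value} of $m$ enters only because $H$ must be a genuine rational function, i.e.\ the telescoping is automatic once orbit-sums vanish.

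Then comes the case analysis, which is where the two conditions \eqref{eq:CNS0} and \eqref{eq:CNS0double} emerge. The divisor $[s_1]+[1/s_2]-[s_2]-[1/s_1]$ has zero orbit-sums precisely in one of two ways: either (a) the positive point $s_1$ is in the same orbit as one of the negative points $s_2$ or $1/s_1$, and correspondingly $1/s_2$ is in the same orbit as the other — the case $s_1\sim 1/s_1$ forces $s_1^2\in q^{\mathbb Z}$, and then necessarily $1/s_2\sim s_2$ too, giving $s_2^2\in q^{\mathbb Z}$, which is \eqref{eq:CNS0double}; the case $s_1\sim s_2$ gives $s_1/s_2\in q^{\mathbb Z}$, and then automatically $1/s_2\sim 1/s_1$, which is \eqref{eq:CNS0}; or (b) both pairs happen to coincide as divisors, a degenerate subcase already covered. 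Conversely, assuming \eqref{eq:CNS0}, i.e.\ $s_1/s_2=q^k$, I would exhibit $H$ explicitly with $m=1$: take $H(s)=\prod_{j}(s-q^{j}s_2)$ over a suitable finite range (or, when $q$ is not a root of unity, a two-term ratio $H(s)=(s-s_2)/(s-1/s_1)$ suitably shifted) so that $H(s)/H(qs)$ picks up exactly the zero $s_1$ and the pole $s_2$, with the remaining points $1/s_1,1/s_2$ contributing the reciprocal orbit; the constant $s_2/s_1$ is then matched by normalizing $H$. Assuming \eqref{eq:CNS0double} one does the analogous construction but now needs to square, $m=2$: each of $s_1^2\in q^{\mathbb Z}$ and $s_2^2\in q^{\mathbb Z}$ means $s_1,1/s_1$ (resp.\ $s_2,1/s_2$) lie in a common orbit, but with the \emph{same} sign, so a single copy of the divisor has orbit-sum $+2$ (resp.\ $-2$) on the $s_1$-orbit (resp.\ $s_2$-orbit); these are boundaries, so $E$ itself is already of the form $H(s)/H(qs)$ — wait, one must be careful: $+2$ and $-2$ are even but the telescoping argument shows any \emph{even} (indeed any) orbit-sum-zero divisor is a boundary, so in fact $m=1$ would work here too. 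The reason the statement only claims $m=2$ is that writing down a rational $H$ with $m=1$ may require $q$ to be a root of unity or may introduce a sign/square-root obstruction in the leading constant $s_2/s_1$; with $m=2$ the constant $(s_2/s_1)^2$ is a square and the construction $H(s)=\bigl((s-s_2)(s-1/s_2)\bigr)/\bigl((s-1/s_1)(s-s_1)\bigr)$-type ansatz goes through cleanly. I would verify this constant-matching explicitly in the two cases.

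\textbf{Main obstacle.} The crux is the clean statement and proof of the telescoping/boundary criterion: that a degree-zero divisor on $\mathbb{P}^1_{\mathbb C}\setminus\{0,\infty\}$ is of the form $\operatorname{div}(H)-\tau_q^*\operatorname{div}(H)$ for some $H\in\mathbb{C}(s)^*$ if and only if its restriction to each $\langle q\rangle$-orbit has total multiplicity zero — together with the subtlety, highlighted by the ``one can take $m=1$ / $m=2$'' clause, of getting the \emph{leading constant} $s_2/s_1$ of $E$ (not just its divisor) to be realized as $H(s)/H(qs)$ evaluated at $\infty$, which is what forces the distinction between the two conditions. This is presumably handled by invoking the general $q$-difference machinery (the ``elliptic divisor'' of Definition~\ref{defn:elliptic_divisor}) for the equivalence \ref{item:i-eco}$\Leftrightarrow$\ref{item:ii-eco}, so that here I only need the explicit orbit computation \ref{item:ii-eco}$\Leftrightarrow$\ref{item:iii-eco} and the explicit $H$ for the ``moreover'' part; the genuinely hands-on work is writing down those $H$'s and checking the constants.
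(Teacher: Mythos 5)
Your route is essentially the paper's own: the proof in Appendix~\ref{sec:standardform} also reduces everything to the divisor of $E$ grouped along $q$-orbits (this grouping is exactly the elliptic divisor of Definition~\ref{defn:elliptic_divisor}), obtains \ref{item:i-eco}$\Rightarrow$\ref{item:ii-eco} from the trivial fact that quotients $H(s)/H(qs)$ have zero orbit-sums, performs the identical case analysis for \ref{item:ii-eco}$\Leftrightarrow$\ref{item:iii-eco}, and recovers \ref{item:i-eco} from a ``standard form'' lemma (Lemma~\ref{lem:caracelliptidivzero}) together with an evaluation at $s=0$ and $s=\infty$; the explicit $m=1$ function under~\eqref{eq:CNS0} is the one built later in Theorem~\ref{thm:decoupling} (see~\eqref{H-explicit}).

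Two local points in your sketch need repair. First, the sentence ``a degree-zero divisor supported on a single $\langle q\rangle$-orbit is a boundary iff it actually vanishes'' is false as stated: $[a]-[qa]$ is a nonzero boundary (take $H(s)=s-a$); the correct criterion, which is the one you in fact use afterwards, is that the \emph{sum} of the multiplicities on each orbit vanishes. Second, and more seriously, the leading constant cannot be ``matched by normalizing $H$'': a scalar factor cancels in $H(s)/H(qs)$, so the only constants reachable are integer powers of $q$, produced by monomial factors $s^j$ (or the Laurent factors of~\eqref{H-explicit}). The clean way to settle your hesitation about $m$ is to evaluate $E^m(s)=H(s)/H(qs)$ at $s=0$ and $s=\infty$: since $H(s)/H(qs)$ tends to an integer power of $q$ at both points while $E(0)=s_1/s_2$ and $E(\infty)=s_2/s_1$, the identity with exponent $m$ forces $(s_1/s_2)^m\in q^{\Z}$. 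Under~\eqref{eq:CNS0} this is no obstruction, and your product ansatz, corrected by a factor $s^j$, does give $m=1$; under~\eqref{eq:CNS0double} without~\eqref{eq:CNS0} it shows that $m=1$ is genuinely impossible (so your parenthetical ``$m=1$ would work here too'' is wrong at the level of functions, even though the divisor is a boundary), whereas $(s_1/s_2)^2=s_1^2/s_2^2\in q^{\Z}$ makes $m=2$ available. This is precisely how the paper concludes: after normalization the constant is $\pm1$, whence squaring.
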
 

This  lemma is proved in Appendix~\ref{sec:standardform}.

\medskip
\noindent{\bf Remarks}
\\
{\bf 1.} The two conditions of~\eqref{conds-combined} are just a convenient reformulation of Conditions~\eqref{eq:CNS0} and~\eqref{eq:CNS0double}, respectively. They directly follow from the values of $s_1$ and $s_2$ given in Lemma~\ref{lem:angles}.
\\
{\bf 2.}  The reader should not worry about the terminology \emm elliptic divisor,, as  Condition~\ref{item:iii-eco}
is a straightforward translation of Condition~\ref{item:ii-eco}. 
\\
{\bf 3.}  In Theorem~\ref{thm:decoupling} below, we construct $H(s)$ explicitly (in the form $F(y(s))$),
assuming that~\eqref{eq:CNS0} or~\eqref{eq:CNS0double} holds; see for instance~\eqref{H-explicit}.

\subsection{Explicit decoupling functions}
\label{subsec:explicit-dec}
We can now establish the equivalence between  the simple and double angle    conditions and the existence of decoupling functions, and provide explicit decoupling functions.

For $r\in \Z$ and $\sigma \in \C$, let us define a rational function $   F_{r,\sigma}$ by:
\beq\label{Fr-def}
F_{r,\sigma}(y) =
\begin{cases}
  \displaystyle    P_{r,\sigma}(y) =   \prod_{j=0} ^{r-1} (y- y(\sigma q^{-j})) & \text{ if } r\ge 0, \\
  \displaystyle    \frac 1 {Q_{r,\sigma}(y)} =    \prod_{j=1}^{|r|} \frac 1{\left( y-y(\sigma q^j)\right) } & \text{ if } r<0.
\end{cases}
\eeq
By convention, the empty product, obtained in the first line when $r=0$, is $1$.
We further define  $P_{r,\sigma}(y) = 1$ when $r<0$ and $Q_{r,\sigma}(y)=1$ when $r\ge 0$, so that we can write in full generality
\[
  F_{r,\sigma}=\frac{P_{r,\sigma}}{Q_{r,\sigma}}.
\]
Moreover, we note that for any $r$, we have $P_{r,\sigma q^r}= Q_{-r,\sigma}$ and $Q_{r,\sigma q^r}= P_{-r,\sigma}$, so that
\beq\label{F-sym}
F_{r,\sigma q^r}= \frac 1 {F_{-r,\sigma}}.
\eeq

For $e$ and $\eps$ in  $\{0, 1\}$, let us define the polynomial   $f_{e,\eps}$ by
\beq\label{f_e_eps-def}
f_{e,\eps} (y)=
\begin{cases}
  1 & \text{ if  } e=0,\\
  y-y\left((-1)^\eps \sqrt q\right ) & \text { if } e=1, 
\end{cases}
\eeq
with $\sqrt q =e^{i\beta}$. We hope that no confusion will arise between the integer $\eps\in \{0,1\}$ and the reflection angle $\vareps$. Equivalently, returning to the definition~\eqref{eq:uniformization} of $y(s)$:
\[
  f_{e,\eps} (y)=
  \begin{cases}
    1 & \text{ if  } e=0,\\
    y-y^+ & \text { if } e=1 \text{ and } \eps=0,\\
    y-y^- & \text { if } e=1 \text{ and } \eps=1 .
  \end{cases}
\]

When Condition~\eqref{eq:CNS0} holds, or equivalently, $s_1/s_2 \in q^\Z$ (see~\eqref{conds-combined}),  we choose $r\in \Z$ such that
\beq\label{r-def}
s_1/s_2=q^r.
\eeq
Analogously, when  Condition~\eqref{eq:CNS0double} holds, we choose integers $r_1$ and $ r_2$, and numbers $e_1, e_2, \eps_1, \eps_2$ in $\{0,1\}$, such that
\beq\label{ri-def}
s_1=(-1)^{\eps_1} {\sqrt q}^{\,e_1} q^{r_1} \quad \text{ and } \quad
s_2=(-1)^{\eps_2} {\sqrt q}^{\,e_2} q^{r_2}.
\eeq

The following theorem relates the decoupling property to the conditions satisfied by $E(s)$ in Lemma~\ref{lem:Ecocyclecaracterization-v0}.

\begin{thm}
  \label{thm:decoupling}
  There exists an $m$-decoupling pair   in the sense of Definition~\ref{def:decoupling_function} if and only if any of the following equivalent conditions  holds: 
  \begin{enumerate}[label={\rm(\arabic{*})},ref={\rm(\arabic{*})}]
  \item\label{it:xi_inv}
    there exists a     rational function $F$ such that $E^m(s) =\frac{F(y(s))}{F(y(1/s))}$,
  \item\label{it:delta_inv} 
    there exists a     rational function $H$ such that $E^m(s)=\frac{H(s)}{H(qs)}$,
  \item\label{it:angle_condition} 
    one of the angle conditions~\eqref{eq:CNS0} or~\eqref{eq:CNS0double} holds.
  \end{enumerate} 
  If these conditions hold,  the function $F$ of Assertion~\ref{it:xi_inv} is an   $m$-decoupling function in the sense  of   Lemma~\ref{lem:decoupling_function}.

  Moreover, when~\eqref{eq:CNS0} holds and $r$ satisfies~\eqref{r-def}, we can take
  \beq\label{first-choice}
  m=1 \quad \text{ and }  \quad F=  F_{r, s_1}, 
  \eeq
  where $F_{r, \sigma}$ is defined by~\eqref{Fr-def}.
  When~\eqref{eq:CNS0double} holds and the $r_i$, $\eps_i$, and $e_i$ satisfy~\eqref{ri-def}, we can take
  \beq\label{second-choice}
  m=2 \quad \text{ and }  \quad F=\left( \frac{F_{r_1, s_1}}{F_{r_2, s_2}}\right)^2 \cdot \frac{f_{e_1, \eps_1}}{f_{e_2, \eps_2}}.
  \eeq
\end{thm}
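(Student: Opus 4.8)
\textbf{Proof strategy for Theorem~\ref{thm:decoupling}.}
The equivalence of Assertions~\ref{it:xi_inv} and~\ref{it:delta_inv} is immediate from the substitution $H(s)=F(y(s))$, together with the identity $y(1/s)=y(qs)$ already recorded in~\eqref{H-dec-strong}; the only point to check is that $s\mapsto y(s)$ is generically $2$-to-$1$, so that a rational $F$ in $y$ corresponds to a rational $H$ in $s$ invariant under $s\mapsto q/s$, and conversely any such symmetric $H$ descends to a rational function of $y$. The equivalence of~\ref{it:delta_inv} and~\ref{it:angle_condition} is exactly Lemma~\ref{lem:Ecocyclecaracterization-v0}\,\ref{item:i-eco}$\Leftrightarrow$\ref{item:iii-eco}, using the values of $s_1,s_2$ from Lemma~\ref{lem:angles} to rephrase~\eqref{conds-combined} as~\eqref{eq:CNS0} or~\eqref{eq:CNS0double}. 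Finally, once a rational $F$ with $E^m(s)=F(y(s))/F(y(1/s))$ exists, Lemma~\ref{lem:decoupling_function} (via Lemma~\ref{lem:GE}, which identifies $E(s)$ with $G(y(s))$ on $(-\infty,0)$) shows that $F$ is an $m$-decoupling function and produces the companion $L$; so the ``if'' direction and the last sentence of the statement reduce to exhibiting the explicit $F$ in~\eqref{first-choice} and~\eqref{second-choice}.

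So the real content is the verification of the two explicit formulas. For the simple angle case~\eqref{eq:CNS0}, with $r$ defined by $s_1/s_2=q^r$ as in~\eqref{r-def}, I would compute directly
\[
  \frac{F_{r,s_1}(y(s))}{F_{r,s_1}(y(qs))}
\]
using the definition~\eqref{Fr-def}. Since $y(\sigma)=y(q/\sigma)$, the factor $y(s)-y(\sigma q^{-j})$ in the numerator (as a function of $s$) has zeros at $s=\sigma q^{-j}$ and $s=q^{1+j}/\sigma$; shifting $s\mapsto qs$ shifts these. Writing everything in terms of the elementary factor $(s-a)(s-q/a)$ and telescoping the product over $j=0,\dots,r-1$, all interior factors cancel and one is left with a ratio of four linear factors in $s$, which by the choice $s_1/s_2=q^r$ and $s_1 s_2 = q\cdot(\text{something})$... more precisely, using $s_1=s_2 q^r$ one checks that the surviving factors reproduce exactly
\[
  \frac{s_2}{s_1}\,\frac{(s-s_1)\bigl(s-\tfrac1{s_2}\bigr)}{(s-s_2)\bigl(s-\tfrac1{s_1}\bigr)}=E(s),
\]
using $y(s)-y(s_i) = \text{(const)}\cdot (s-s_i)(s-q/s_i)/s$ and $q/s_1 = 1/s_2$ when $s_1 s_2 = q$ — which holds here because $s_0 s_1 = e^{2i(\beta-\varepsilon)}$, $s_0 s_2 = e^{2i\delta}$ give $s_1 s_2 = e^{2i(\beta-\varepsilon+\delta)}\cdot s_0^{-2}\cdot s_0^2$... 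I would in fact bypass such bookkeeping and simply verify the claimed identity $E(s)=F_{r,s_1}(y(s))/F_{r,s_1}(y(1/s))$ at the level of divisors on $\mathbb P^1$: both sides are rational functions of $s$, their divisors are supported on $\{s_1,s_2,1/s_1,1/s_2\}$ and their $q$-shifts, and matching the coefficients of $E^m$ from~\eqref{eq:definition_E} against the telescoped product pins down $r$ as in~\eqref{r-def}. This is essentially the ``cocycle'' computation of Lemma~\ref{lem:Ecocyclecaracterization-v0} made explicit.

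For the double angle case~\eqref{eq:CNS0double}, the same method applies but now $E(s)$ itself is generally not of the form $H(s)/H(qs)$; only $E^2(s)$ is, which is why $m=2$. Writing $s_i=(-1)^{\eps_i}\sqrt q^{\,e_i}q^{r_i}$ as in~\eqref{ri-def}, the squared product $(F_{r_1,s_1}/F_{r_2,s_2})^2$ handles the ``integer part'' of the exponents by the same telescoping as above, while the correction $f_{e_1,\eps_1}/f_{e_2,\eps_2}$ (whose zeros are among $y^{\pm}=y(\pm\sqrt q)$, the fixed points of $s\mapsto q/s$ on the relevant component) accounts for the half-integer shift: one checks $f_{e,\eps}(y(s))/f_{e,\eps}(y(qs))$ equals the ratio of linear factors $(s-(-1)^\eps\sqrt q)(\cdots)$ needed to complete $E^2(s)$. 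The key bookkeeping identity is that $y(s)-y^{\pm}$ vanishes doubly at $s=\pm\sqrt q$ up to a factor $(s\mp\sqrt q)^2/s$, so $f_{e,\eps}$ contributes a ``square root'' of an ordinary linear factor in the $q$-shift quotient, exactly matching the parity of $E$ versus $E^2$. I would then conclude by invoking Lemma~\ref{lem:decoupling_function} to upgrade $F$ to a full $m$-decoupling pair.

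\textbf{Main obstacle.} The conceptual content is light: everything follows from Lemmas~\ref{lem:angles}, \ref{lem:GE}, \ref{lem:decoupling_function} and~\ref{lem:Ecocyclecaracterization-v0}. The real difficulty is the \emph{explicit} verification that the products in~\eqref{first-choice} and~\eqref{second-choice} telescope to exactly $E(s)$ and $E^2(s)$ respectively — in particular getting the shift index $r$ (resp.\ $r_1,r_2,e_i,\eps_i$) and the normalizing constants right, and correctly tracking which of $y(\sigma q^{-j})$ versus $y(\sigma q^{j})$ appears for $r\ge0$ versus $r<0$ so that the poles and zeros land where~\eqref{eq:definition_E} demands. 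This is precisely the computation the authors delegate to their {\sc Maple} session, and I would present it as a divisor-matching argument on $\mathbb P^1_s$ with the routine cancellations checked by machine.
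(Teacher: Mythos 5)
Your proposal follows essentially the same route as the paper: reduce everything to Lemmas~\ref{lem:decoupling_function}, \ref{lem:GE} and~\ref{lem:Ecocyclecaracterization-v0}, then check by telescoping that the explicit candidates~\eqref{first-choice} and~\eqref{second-choice} satisfy $E^m(s)=F(y(s))/F(y(qs))$, which is exactly the computation~\eqref{F-ratio}--\eqref{F-ratio-double} carried out by hand in the paper (including the square-root bookkeeping for $f_{e,\eps}$ that you describe, where the needed relation in the simple case is $q^r/s_1=1/s_2$ coming from $s_1=q^rs_2$, not $s_1s_2=q$). One caveat: the equivalence \ref{it:xi_inv}$\Leftrightarrow$\ref{it:delta_inv} is only ``immediate'' in the direction \ref{it:xi_inv}$\Rightarrow$\ref{it:delta_inv}, since an $H$ as in \ref{it:delta_inv} need not be invariant under $s\mapsto q/s$ and hence need not descend to a rational function of $y$; this is harmless because, exactly as in the paper, the cycle \ref{it:xi_inv}$\Rightarrow$\ref{it:delta_inv}$\Rightarrow$\ref{it:angle_condition}$\Rightarrow$\ref{it:xi_inv} closes once the explicit $F$ is verified, but the argument should be phrased that way rather than as a direct equivalence.
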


Notice that the roots and poles of these decoupling functions  are 
real, since $q$, $s_1$  and $s_2$ have modulus $1$ and $s\mapsto y(s)$ sends the unit circle on the real line (see Remark~\ref{lem:realpointsmathcalS}). In particular, the only possible root or pole of these decoupling functions lying in $\R$ is $y(-1)$, the only point of $\R \cap \RR$.

Also, since all roots and poles of the function $F(y)$ are of the form $y(\sigma)$, for some complex number $\sigma$, the function $H(s)$ involved in Assertion~\ref{it:delta_inv}, which coincides with $F(y(s))$, has  explicit roots and poles. For instance, when $m=1$ and $r>0$, we have, up to a multiplicative factor,
\beq\label{H-explicit}
H(s)= \prod_{j=0}^{r-1} \left(q^j s-s_1\right)\left( s_1 -\frac {q^{j+1}} s\right).
\eeq

\begin{proof}[Proof of Theorem~\ref{thm:decoupling}]
  We have already explained in the previous subsection that  if the model is $m$-decoupled in the sense of Definition~\ref{def:decoupling_function}, then Assertion~\ref{it:xi_inv} holds (see~\eqref{H-dec-strong}). 
  Conversely, if this assertion   holds,  we can work out the same argument backwards to conclude that~\eqref{eq:G->F_rational} holds (because
  $\R= y( (-\infty,0))$ by Lemma~\ref{lem:R-param}), so that the model is decoupled by the second point of Lemma~\ref{lem:decoupling_function}.

  \medskip We now focus on the assertions~\ref{it:xi_inv},~\ref{it:delta_inv},~\ref{it:angle_condition}, and prove  that~\ref{it:xi_inv}$\Rightarrow$\ref{it:delta_inv}$\Rightarrow$\ref{it:angle_condition}$\Rightarrow$\ref{it:xi_inv}.
  
  Assume that~\ref{it:xi_inv} holds, and define  $H(s)=F(y(s))$. Then~\ref{it:delta_inv} follows from the fact that $y({1}/{s})=y(qs)$.

  Now assume that~\ref{it:delta_inv} holds.
  By Lemma~\ref{lem:Ecocyclecaracterization-v0},  one of the angle conditions~\eqref{eq:CNS0} or~\eqref{eq:CNS0double} holds. This gives~\ref{it:angle_condition}.
  
  It remains to check that if one of the angle conditions holds, the functions $F$ given by~\eqref{first-choice} and~\eqref{second-choice} are indeed $m$-decoupling functions, for $m=1$ and $m=2$ respectively. We first observe that $y(s)-y(\sigma q^{-j})$ is a Laurent polynomial in $s$, of degree $1$ and valuation $-1$, that vanishes for $s= \sigma q^{-j}$ and $s= q^{j+1}/\sigma$ (because $y(s)=y(q/s)$). Hence there exists a constant $\kappa$ (depending on $\sigma$ and $j$) such that
  \[
    y(s)-y(\sigma q^{-j}) = \kappa \left( sq^j -\sigma\right) \left( \frac{q^{j+1}} s -\sigma\right).
  \]
  It follows that, for any $r\ge 0$,
  \beq\label{F-ratio}
  \frac{F_{r,\sigma}(y(s))}{F_{r,\sigma}(y(sq))}=\prod_{j=0}^{r-1} \frac{(sq^j-\sigma) \left( \frac{q^{j+1}}s-\sigma\right)}
  {(sq^{j+1}-\sigma) \left( \frac{q^{j}}s-\sigma\right)}=
  \frac 1 {q^r} \cdot \frac{s-\sigma}{s-1/\sigma} \cdot \frac { s -q^r/\sigma}{ s-\sigma/q^r}.
  \eeq
  A similar calculation, or more directly the identity~\eqref{F-sym}, proves that this still holds for $r<0$.
  Given that
  \[
    E(s)=\frac{s_2}{s_1}\frac{(s-s_1)(s-\frac{1}{s_2})}{(s-s_2)(s-\frac{1}{s_1})},
  \]
  this already proves that~\eqref{first-choice} gives a 1-decoupling function when $s_1=q^r s_2$.

  Furthermore, if $\sigma=(-1)^\eps {\sqrt q }^{\,e} q^r$, with $r\in \Z$ and $e, \eps \in \{0,1\}$,  we derive from~\eqref{F-ratio} that 
  \beq\label{F-ratio-double}
  \frac{F_{r,\sigma}(y(s))}{F_{r,\sigma}(y(sq))}=
  \frac 1 {\sigma} \cdot \frac{s-\sigma}{s-1/\sigma} \cdot
  \frac {(-1)^\eps {\sqrt q}^{\,e} s -1}{ s-(-1)^\eps{\sqrt q}^{\,e}}.
  \eeq
  The rightmost ratio reduces to $(-1)^\eps$ if $e=0$, and its square is thus $1$. If $e=1$, its square is
  \[
    \left( \frac {(-1)^\eps {\sqrt q} s -1}{ s-(-1)^\eps{\sqrt q}} \right)^2=
    \frac{qs+ \frac 1 s -2 (-1)^\eps \sqrt q}{s+ \frac q s -2 (-1)^\eps \sqrt q}
    =\frac{y(qs)-y((-1)^\eps \sqrt q)}{y(s)-y((-1)^\eps \sqrt q)}=
    \frac{f_{e, \eps}(y(qs))}{f_{e, \eps}(y(s))}, 
  \]
  where $f_{e,\eps}$ is defined by~\eqref{f_e_eps-def}. Hence, whether $e=0$ or $e=1$, we obtain, by squaring~\eqref{F-ratio-double}:
  \[
    \left( \frac 1 {\sigma} \cdot \frac{s-\sigma}{s-1/\sigma}\right)^2=
    \left( \frac{F_{r,\sigma}(y(s))}{F_{r,\sigma}(y(sq))}\right)^2
    \frac{f_{e, \eps}(y(s))}{f_{e, \eps}(y(qs))}.
  \]
  Returning to the above expression of $E(s)$, this implies that the function $F$ given by~\eqref{second-choice} is indeed a $2$-decoupling function when the double angle condition~\eqref{ri-def} holds.
\end{proof}

We can now prove some parts of Theorems~\ref{thm:main_diffalg} and~\ref{thm:main_alg}.

\begin{cor}\label{cor:phi1m-expr}
  Assume that one of the angle conditions~\eqref{eq:CNS0} or~\eqref{eq:CNS0double} holds. Then there exist  $m\in \{1, 2\}$, a rational function $F$ and polynomials $S$ and $R$ such that
  \[
    F(y) \phi_1 ^m(y)= \frac S R \circ w(y),
  \]
  where $w$ is the canonical invariant of Section~\ref{sec:gluing}. In particular, $\phi_1$ is D-algebraic, and algebraic if $\beta/\pi \in \Q$.
\end{cor}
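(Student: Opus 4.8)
The plan is to combine Theorem~\ref{thm:decoupling} with Lemma~\ref{lem:Fphi1-invariant} and the canonical-invariant property of $w$ from Proposition~\ref{prop:inv}. First I would invoke Theorem~\ref{thm:decoupling}: since one of the angle conditions~\eqref{eq:CNS0} or~\eqref{eq:CNS0double} holds, there exists an $m$-decoupling function $F$ (with $m=1$ in the first case, $m=2$ in the second), given explicitly by~\eqref{first-choice} or~\eqref{second-choice}. By Lemma~\ref{lem:Fphi1-invariant}, the product $I:=F\phi_1^m$ is then an invariant in the sense of Definition~\ref{def:inv}, i.e.\ it is meromorphic on a neighbourhood of $\bG$ and satisfies $I(y)=I(\bar y)$ on $\R$.

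Next I would check that $I$ satisfies the hypotheses of Proposition~\ref{prop:inv}, namely that it has only finitely many poles in $\bG$ and grows at most polynomially at infinity. For the poles: $\phi_1$ is meromorphic on a neighbourhood of $\bG$ with at most one (simple) pole there by Proposition~\ref{prop:BVP_Carleman_sketch}, and $F$ is a rational function, so $F\phi_1^m$ has finitely many poles in $\bG$. For the growth at infinity: by~\eqref{phi-asympt}, $\phi_1(y)\sim \kappa y^{\alpha-1}$ as $y\to\infty$ in $\G$, and $F$ is rational, hence of polynomial growth; so $I=F\phi_1^m$ grows polynomially. Proposition~\ref{prop:inv} then yields polynomials $R,S\in\C[z]$ with $I=(S/R)\circ w$, i.e.
\[
  F(y)\,\phi_1^m(y)=\frac{S}{R}\circ w(y).
\]

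From this identity the differential nature of $\phi_1$ follows from closure properties. Since $w$ is D-finite (Proposition~\ref{prop:algebraic_nature_w}), any rational function of $w$ is D-algebraic (a rational function composed with a D-finite function is D-algebraic, being in the differential field generated by a D-finite function); dividing by the rational function $F$ and extracting an $m$-th root ($m\le 2$) preserves D-algebraicity, since algebraic functions of D-algebraic functions are D-algebraic. Hence $\phi_1$ is D-algebraic. If moreover $\beta/\pi\in\Q$, then $\pi/\beta\in\Q$, so $w=T_{\pi/\beta}(\cdots)$ is algebraic by Proposition~\ref{prop:algebraic_nature_w}; then $(S/R)\circ w$ is algebraic, and so is $F(y)^{-1}(S/R)\circ w$, whence $\phi_1^m$ and therefore $\phi_1$ are algebraic.

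**Main obstacle.** The substantive content has essentially all been pushed into the results already proved — Theorem~\ref{thm:decoupling} (existence and explicit form of the decoupling function), Lemma~\ref{lem:Fphi1-invariant} (decoupling yields an invariant), Propositions~\ref{prop:BVP_Carleman_sketch} and~\ref{prop:algebraic_nature_w} (meromorphy, pole count, asymptotics, nature of $w$), and Proposition~\ref{prop:inv} (canonicity of $w$). So the only real care needed is bookkeeping: verifying the finitely-many-poles and polynomial-growth hypotheses of Proposition~\ref{prop:inv} for $I=F\phi_1^m$, which is immediate from Proposition~\ref{prop:BVP_Carleman_sketch} and~\eqref{phi-asympt}, and then checking that the final manipulations (multiplicative inverse of a rational function, $m$-th root with $m\in\{1,2\}$) stay within the class of D-algebraic functions, and within the class of algebraic functions when $\beta/\pi\in\Q$. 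There is no genuine difficulty; the corollary is a clean assembly of the machinery built in Sections~\ref{sec:invariants}--\ref{sec:existence-decoupling}.
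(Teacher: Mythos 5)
Your proposal is correct and follows essentially the same route as the paper: take the decoupling function $F$ from Theorem~\ref{thm:decoupling}, note that $F\phi_1^m$ is an invariant by Lemma~\ref{lem:Fphi1-invariant}, verify the polynomial-growth (and finite-pole) hypotheses via Proposition~\ref{prop:BVP_Carleman_sketch} and~\eqref{phi-asympt}, apply Proposition~\ref{prop:inv} to get $(S/R)\circ w$, and conclude by closure properties together with Proposition~\ref{prop:algebraic_nature_w}. Your write-up just spells out the bookkeeping that the paper's proof states more tersely.
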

\begin{proof}
  Let $F$ be the decoupling function of Theorem~\ref{thm:decoupling}.  The function $F \phi_1^m$  is an invariant (Lemma~\ref{lem:Fphi1-invariant}).
  By Proposition~\ref{prop:BVP_Carleman_sketch},
  the function $\phi_1$ grows at most polynomially at infinity,  and  the same thus holds for $F\phi_1^m$.
  Proposition~\ref{prop:inv} thus applies to $F\phi_1^m$, and gives the expression of $\phi_1^m$. The algebraic/differential properties of $\phi_1$ come from those of $w$ (Proposition~\ref{prop:algebraic_nature_w}) and classical closure properties.
\end{proof}

In the next two sections, we will make the expression of $\phi_1^m$ completely explicit, by describing the roots of $S$ and $R$ in terms of the parameters of the model. We will derive from these expressions necessary and sufficient conditions for D-finiteness, algebraicity and rationality of~$\phi_1$. Since every pole or root of $S/R$ comes from a pole or root of $F$ or $\phi_1$ \emm lying in $\bG$,, we need to clarify how many  roots or poles of the function $F_{r,\sigma}$ defined by~\eqref{Fr-def} lie in $\bG$. In the following lemma, we focus on those that lie in $\G$. Indeed, since all roots and poles of  $F_{r,\sigma}$ are real, lying on the curve $\R$ simply means being equal to $y(-1)$. For $\sigma\in \C$ and $a, b \in \Z$, it will be convenient to define the numbers $m_+(\sigma; a,b)$ and $m_-(\sigma; a,b)$ by:
\beq\label{def:mult}
m_\pm(\sigma; a,b) =\sharp \left\{j : a\le j \le b \text{ and } \sigma q^{\pm j}=-1\right\}. 
\eeq
Note that $  m_\pm(\sigma; a,b)=0$ if $b<a$.

\begin{lem}\label{lem:count}
  Let $\sigma=e^{i\om}$ be a complex number of modulus $1$. For $r\ge 0$, the number of roots of the polynomial $F_{r,\sigma}(y)=P_{r,\sigma}(y)$ that lie in the open domain $\G$ is
  \[
    \left\lfloor \frac \om{2\pi} - \frac 1 2 \right \rfloor -
    \left\lfloor \frac \om{2\pi} - \frac 1 2 - r \frac \beta \pi\right \rfloor
    - m_-(\sigma;0, r-1).
  \]
  For $r\le 0$, the function  $F_{r,\sigma}(y)=1/Q_{r,\sigma}(y)$ is the reciprocal of a polynomial, and  the number of poles of $F_{r,\sigma}(y)$ that lie in the open domain $\G$ is
  \[
    \left\lfloor \frac \om{2\pi} - \frac 1 2 - r \frac \beta \pi\right \rfloor
    - \left\lfloor \frac \om{2\pi} - \frac 1 2 \right \rfloor  
    - m_+(\sigma; 1, |r|).
  \]
\end{lem}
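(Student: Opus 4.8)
The plan is to parametrize the roots/poles of $F_{r,\sigma}$ explicitly and then count how many of them fall inside $\G$ using the bijection $w^{-1}$ between $\C\setminus(-\infty,-1]$ and $\G$ established in Lemma~\ref{lem:w}. Since $y(s)=y(q/s)$, the roots of $P_{r,\sigma}$ (for $r\ge 0$) are the values $y(\sigma q^{-j})$ for $0\le j\le r-1$, which equal the values $y(s)$ with $s\in\{\sigma q^{-j},\ q^{j+1}/\sigma\}$. I would write $\sigma=e^{i\om}$ and $q=e^{2i\beta}$, so these two preimages are $e^{i(\om-2j\beta)}$ and $e^{i(2(j+1)\beta-\om)}$, both on the unit circle. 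The key point, from Lemma~\ref{lem:w} points (3)--(5) together with~\eqref{wys}, is that a real point $y(e^{it})$ of $\cS$ lies in $\G$ (rather than on $\R$ or outside) precisely according to where $e^{it}$ sits relative to the arc $e^{i\beta}\RR_+$: recall $w(y(s))=-\tfrac12((-s)^{\pi/\beta}+(-s)^{-\pi/\beta})$ with the principal branch on $\C\setminus e^{i\beta}\RR_-$, so $w(y(e^{it}))\in(-\infty,-1)$ exactly when $(-e^{it})^{\pi/\beta}$ is a negative real, i.e. when $t-\pi\equiv\pm\pi\pmod{2\beta}$ appropriately, and $w(y(e^{it}))\in(-1,\infty)$, meaning $y\in\G$, otherwise. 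Concretely $y(e^{it})\in\G$ iff $t/\beta\notin\pi+2\beta\Z$ after reduction, and one preimage $s=e^{it}$ lands in $\G$ while the conjugate preimage lands on $\R$ exactly when $|s|=1$; the count reduces to counting integers $j$ in $\{0,\dots,r-1\}$ for which $\sigma q^{-j}$, viewed through the logarithm, has argument in a certain half-open interval.

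More precisely, I would fix the branch of $\arg$ used in~\eqref{wys} (principal value on $\C\setminus e^{i\beta}\RR_-$, so $\arg$ takes values in $(\beta-2\pi,\beta]$ say) and observe that $y(e^{it})$ with $t$ in this range lies in $\G$ iff $-e^{it}$ raised to $\pi/\beta$ is not a negative real and not of modulus giving $w\le -1$; since $|-e^{it}|=1$ this is automatic and $y(e^{it})\in\G\iff \frac{\pi}{\beta}\arg(-e^{it})\notin \pi\Z$, while $y(e^{it})=y(-1)\in\R\iff \arg(-e^{it})=0$, i.e. $t=-\pi$ (mod the branch), i.e. $\sigma q^{-j}=-1$, which is exactly the $m_-(\sigma;0,r-1)$ correction term. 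The genuine roots of $P_{r,\sigma}$ inside $\G$ are thus in bijection with the $j\in\{0,\dots,r-1\}$ such that the chosen-branch argument of $\sigma q^{-j}$, i.e. $\om-2j\beta$ reduced modulo $2\pi$ into the fundamental interval, lies strictly on the $\G$-side of the cut — equivalently such that $\lfloor\frac{\om-2j\beta}{2\pi}-\text{const}\rfloor$ jumps. Summing the indicator over $j$ telescopes: $\sum_{j=0}^{r-1}\big(\lfloor \tfrac{\om}{2\pi}-\tfrac12-j\tfrac{\beta}{\pi}\rfloor-\lfloor\tfrac{\om}{2\pi}-\tfrac12-(j+1)\tfrac{\beta}{\pi}\rfloor\big)=\lfloor\tfrac{\om}{2\pi}-\tfrac12\rfloor-\lfloor\tfrac{\om}{2\pi}-\tfrac12-r\tfrac{\beta}{\pi}\rfloor$, and subtracting the boundary contributions $m_-(\sigma;0,r-1)$ gives the claimed formula. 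The case $r<0$ is entirely analogous: the poles of $Q_{r,\sigma}$ are the $y(\sigma q^{j})$ for $1\le j\le|r|$, one shifts $j\mapsto -j$, and the roles of the two floor terms swap, producing $m_+$ instead of $m_-$; I would either redo the telescoping or invoke the symmetry~\eqref{F-sym}, $F_{r,\sigma q^r}=1/F_{-r,\sigma}$, to reduce to the $r\ge 0$ case directly.

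The main obstacle, I expect, is getting the floor functions and the half-open interval conventions exactly right: one must pin down which branch of $(-s)^{\pi/\beta}$ is in force in~\eqref{wys} (the paper switches the cut from $\C\setminus\RR_-$ to $\C\setminus e^{i\beta}\RR_-$ between~\eqref{wys0} and~\eqref{wys}), decide whether a root landing on the ray $e^{i\beta}\RR_+$ (mapped to $[y^+,\infty)$, outside $\bG$) versus on $\RR_-$-type rays is counted, and ensure the endpoint $y(-1)\in\R$ is consistently excluded from $\G$ — this is precisely what the $m_\pm$ terms bookkeep. The ``$-\tfrac12$'' inside the floors encodes that the relevant fundamental domain for the argument is centered so that $\sigma q^{-j}=-1$ sits at a half-integer, which is why the correction is exactly a count of $j$ with $\sigma q^{\pm j}=-1$. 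Once the convention is fixed, the argument is a routine telescoping sum as above; I would present it by first treating a single root $y(\sigma q^{-j})$, stating the clean criterion ``$y(\sigma q^{-j})\in\G$ iff $\lfloor\tfrac{\om}{2\pi}-\tfrac12-j\tfrac\beta\pi\rfloor\ne\lfloor\tfrac{\om}{2\pi}-\tfrac12-(j+1)\tfrac\beta\pi\rfloor$ and $\sigma q^{-j}\ne -1$'', then summing over $j$.
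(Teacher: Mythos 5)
Your overall architecture is the same as the paper's: reduce membership of a root $y(\sigma q^{-j})$ in $\G$ to an argument condition on $\sigma q^{-j}$, count via the telescoping identity for the floors, subtract the boundary cases $\sigma q^{\pm j}=-1$ (the $m_\pm$ terms), and handle $r<0$ through the symmetry~\eqref{F-sym}. However, the step on which everything rests — the criterion for when $y(\sigma q^{-j})$ lies in $\G$ — is not correctly justified in your write-up, and the route you propose through $w$ does not work. For $|s|=1$ one has $|(-s)^{\pi/\beta}|=1$, so $w(y(s))=-\cos\bigl(\tfrac\pi\beta\arg(-s)\bigr)\in[-1,1]$; it is never in $(-\infty,-1)$, and it equals $+1$ (not a value $\le -1$) precisely when $(-s)^{\pi/\beta}=-1$. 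More importantly, the dichotomy ``$w(y)>-1$ means $y\in\G$'' is false for these real points: the roots of $P_{r,\sigma}$ all lie in the real segment $[y^-,y^+]$, and $\G\cap\RR=(-\infty,y(-1))$, so the real points of $(y(-1),y^+]$ are \emph{outside} $\bG$ even though their $w$-values also lie in $[-1,1)$ (when $\beta<\pi/2$ the function $w$ oscillates on $[y^-,y^+]$ and takes the value $-1$ at several real points besides $y(-1)$). Since $w$ is injective only on $\G$, not on the whole segment $[y^-,y^+]$, membership in $\G$ cannot be read off from the value of $w$; your intermediate assertion ``$y(e^{it})\in\G$ iff $t/\beta\notin\pi+2\beta\Z$'' (a codimension-one exclusion) is a symptom of this: it would give a count of $r$ minus corrections rather than the Sturmian count in the statement.

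The correct criterion, which the paper derives directly from the parametrization~\eqref{eq:uniformization} rather than from Lemma~\ref{lem:w}, is $y(s)\in\G\iff\arg s\in(\pi,\pi+2\beta)\bmod 2\pi$ and $y(s)\in\R\iff\arg s\in\{\pi,\pi+2\beta\}\bmod 2\pi$; for $|s|=e^{i t}$ this is just the inequality $\yn(e^{it})=\cos(\beta-\theta)+\cos(t-\beta)<\yn(-1)$, i.e.\ $\cos(t-\beta)<-\cos\beta$. Once this is in place, the ``clean criterion'' you state (floor difference equals $1$ and $\sigma q^{-j}\ne-1$), the telescoping sum, and the reduction of the case $r<0$ via $F_{r,\sigma}=1/F_{-r,\sigma q^{-r}}$ together with $m_-(\sigma q^{-r};0,|r|-1)=m_+(\sigma;1,|r|)$ are exactly the paper's argument, and your bookkeeping of the endpoints (only $\arg s=\pi$, not $\arg s=\pi+2\beta$, is subtracted, since the fractional part $\beta/\pi$ is excluded from $[0,\beta/\pi)$) is consistent with the stated formula. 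So the gap is localized: replace the $w$-based detection of $\G$ by the elementary argument-interval characterization, and the rest of your proof goes through.
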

\begin{proof}
  First, we note that, by definition of the parametrization~\eqref{eq:uniformization},
  \begin{align}
    y(s) \in \mathcal{R} & \iff  \arg(s) \in \{\pi, \pi+2\beta \}{\mod 2\pi}\label{eq:caracR}\\
    y(s) \in \G & \iff \arg(s) \in (\pi, \pi +2\beta ) {\mod 2\pi},
                  \label{eq:caracG}
  \end{align}
  where the second equation uses the fact that $y(s)$ is a negative real when $s\in e^{i(\pi+\beta)}\RR_+$. In other words,  the preimage by $y$ of ${\mathcal R}$ is $\RR_-\cup e^{2i\beta}\RR_-$ and  the preimage by $y$ of $\G$ is the green/shaded area   in Figure~\ref{fig:ellipse_uniformization} (in particular, $y(-e^{i\beta})=y^- \in \G$).

  Hence, when $r\ge 0$,  the question is to determine how many of the
  points $\sigma q^{-j}$, for $0\le j \le r-1$, have their argument in $(\pi,\pi + 2\beta)$ modulo $2\pi$. This argument is $\om-2j\beta$.

  This kind of counting problem is standard in the study of \emm Sturmian, or \emm mecanical, sequences~\cite[Chap.~2]{lothaire}.  Denoting by $\{x\}:= x-\lfloor x \rfloor$ the fractional part of $x$, we  want to determine
  \[
    \sharp \left\{ j: 0\le j< r : \left\{ \frac\om{2\pi} -\frac 1 2  -j  \frac\beta \pi \right\} \in (0,  \beta/ \pi) \right\}.
  \]
  Let us begin by counting those values of $j$ for which
  \beq\label{sturm}
  \left\{ \frac\om{2\pi} -\frac 1 2  -j  \frac\beta \pi \right\} \in [0,  \beta/ \pi).
  \eeq
  Observing that the difference
  \[
    \left  \lfloor \frac\om{2\pi} -\frac 1 2  -j  \frac\beta \pi \right\rfloor -
    \left\lfloor\frac\om{2\pi} -\frac 1 2  -(j+1)  \frac\beta \pi \right \rfloor
  \]
  takes values in $\{0,1\}$, and equals $1$ if and only if~\eqref{sturm} holds,  we conclude that
  \begin{align*}
    \sharp \left\{ j:  0\le j < r :  \left\{ \frac\om{2\pi} -\frac 1 2  -j  \frac\beta \pi \right\}  \in [0,  \beta/ \pi) \right\}
    &  =
      \sum_{j=0} ^{r-1} \left( \left\lfloor\frac\om{2\pi} -\frac 1 2  -j  \frac\beta \pi  \right\rfloor -   \left\lfloor \frac\om{2\pi} -\frac 1 2  -(j+1)  \frac\beta \pi\right\rfloor \right)\\
    & =   \left\lfloor \frac \om{2\pi} - \frac 1 2 \right \rfloor -
      \left\lfloor \frac \om{2\pi} - \frac 1 2 - r \frac \beta \pi\right \rfloor.
  \end{align*}
  We need to subtract the number of $j$ for which the fractional part shown in~\eqref{sturm} takes the value~$0$, which is equivalent to saying that $\sigma q^{-j}=-1$. This proves the lemma when $r\ge 0$.

  When $r<0$, the result follows by observing that the poles of $F_{r,\sigma}$ are the zeroes of $F_{-r,\sigma q^{-r}}$ (see~\eqref{F-sym}) and applying the above result, together with  $m_{-}(\sigma q^{-r},0,r-1)=m_{+}(\sigma,1,|r|)$.
\end{proof}

\section{Expression of the Laplace transform when $\boldsymbol{\alpha\in \Z + {\pi}\Z/{\beta}}$}
\label{sec:expression}

In this section, we assume that the simple angle  Condition~\eqref{eq:CNS0} holds. We choose an integer $r$ such that $s_1=q^r s_2$, and denote by $F$ the $1$-decoupling function of Theorem~\ref{thm:decoupling}. By Corollary~\ref{cor:phi1m-expr}, the function $F\phi_1$ can be written $(S/R) \circ w$  for some relatively prime  polynomials $S$ and $R$. In this section we determine the degrees and roots of these polynomials.

Returning to Lemma~\ref{lem:angles}, we see that the choice of $r$ defines an integer $k$ such that
\[
  \delta+\vareps= (1-r)\beta +(1+k)\pi.
\]
Equivalently, given the definition~\eqref{eq:def_alpha} of $\alpha$, 
\beq\label{eq:def_r_k}
\delta+\vareps-\pi - \beta  = \beta(\alpha-1) = k\pi- r\beta.
\eeq
If $q$ is not a root of unity, that is, if $\beta/\pi\not \in \Q$, then the choice of $r$ and $k$ is unique. 
Otherwise, write $\beta= n\pi/d$, with $n$ and $d$ relatively prime and $0<n<d$. Then if $(r,k)$ is a solution, all other solutions are of the form $(r+jd,k+jn)$, for $j\in \Z$. In particular, there always exist solutions such that $|r|<d$. What follows holds for any choice of $r$, but if we impose that $|r|<d$ when  $\beta= n\pi/d$, then all numbers $m_\pm$ that occur in the results of this section will be $0$ or $1$.

\subsection{Preliminaries}

We begin with a simple lemma that describes the behaviour of $F\phi_1$ at infinity.

\begin{lem}
  \label{lem:phi-infini}
  There exists a constant $\kappa \neq 0$ such that  
  \[
    (\phi_1F)(y) \underset{y\to\infty}{\sim} \kappa y^{k\frac{\pi}{\beta}}.
  \]
\end{lem}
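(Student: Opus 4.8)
The plan is to combine the known asymptotic behaviour of $\phi_1$ from Proposition~\ref{prop:BVP_Carleman_sketch} with an explicit computation of the asymptotic behaviour of the decoupling function $F=F_{r,s_1}$ at infinity. By~\eqref{phi-asympt}, there is a constant $\kappa_1\neq 0$ such that $\phi_1(y)\sim \kappa_1 y^{\alpha-1}$ as $y\to\infty$ in $\G$. So it suffices to show that $F(y)\sim \kappa_2 y^{N}$ for some constant $\kappa_2\neq 0$ and some integer $N$, and then check that $N+\alpha-1 = k\pi/\beta$. Given~\eqref{eq:def_r_k}, namely $\beta(\alpha-1)=k\pi-r\beta$, i.e. $\alpha-1 = k\pi/\beta - r$, this identity becomes $N = r$. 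So the whole statement reduces to: $F_{r,s_1}(y)$ behaves like a nonzero constant times $y^r$ at infinity.

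This last fact is immediate from the definition~\eqref{Fr-def}. When $r\ge 0$, $F_{r,s_1}(y)=P_{r,s_1}(y)=\prod_{j=0}^{r-1}(y-y(s_1 q^{-j}))$ is a monic polynomial of degree exactly $r$, hence $F_{r,s_1}(y)\sim y^r$ as $y\to\infty$. When $r<0$, $F_{r,s_1}(y)=1/Q_{r,s_1}(y)$ with $Q_{r,s_1}(y)=\prod_{j=1}^{|r|}(y-y(s_1 q^{j}))$ monic of degree $|r|=-r$, so $F_{r,s_1}(y)\sim y^{-|r|}=y^{r}$. In both cases $F_{r,s_1}(y)\sim y^r$, and multiplying by the asymptotics of $\phi_1$ gives
\[
  (\phi_1 F)(y)\underset{y\to\infty}{\sim} \kappa_1\, y^{\alpha-1+r} = \kappa_1\, y^{k\pi/\beta},
\]
using $\alpha-1+r=k\pi/\beta$ from~\eqref{eq:def_r_k}. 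Setting $\kappa=\kappa_1$ finishes the proof.

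There is essentially no obstacle here; the only points requiring a word of care are that $F$ has no zero or pole at infinity (clear, since it is a ratio of two monic polynomials in $y$, and the exponent of $y$ is exactly $r$ whatever the sign of $r$), and that the limit $y\to\infty$ may be taken inside $\G$ where~\eqref{phi-asympt} is valid — which is exactly the regime in which we want the statement. One should also note in passing that the integer $k$ (hence the exponent $k\pi/\beta$) is the one attached to the chosen $r$ via~\eqref{eq:def_r_k}, so the statement is consistent with the freedom in the choice of $(r,k)$ when $\beta/\pi\in\Q$: changing $(r,k)$ to $(r+jd,k+jn)$ changes $F$ by extra monic factors contributing $y^{jd}$, and $k\pi/\beta$ by $jn\pi/\beta = jd$, so both sides shift consistently.
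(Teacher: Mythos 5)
Your proof is correct and is essentially the paper's own argument: combine the growth $\phi_1(y)\sim \kappa y^{\alpha-1}$ from~\eqref{phi-asympt} with $F_{r,s_1}(y)\sim y^{r}$ read off from~\eqref{Fr-def}, and use $\alpha-1=k\pi/\beta-r$ from~\eqref{eq:def_r_k}. The extra remark about the consistency under the change $(r,k)\mapsto(r+jd,k+jn)$ when $\beta/\pi\in\Q$ is a nice sanity check but not needed.
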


\begin{proof}
  Recall from~\eqref{phi-asympt} that $\phi_1(y)$ grows like $y^{\alpha-1}$ at infinity.
  Since $\alpha-1=k\frac{\pi}{\beta}-r$ (see~\eqref{eq:def_r_k}) and $F(y)=F_{r, s_1}(y) \underset{y\to\infty}{\sim} y^{r}$   (see~\eqref{Fr-def}), the result follows.
\end{proof}

We now apply Lemma~\ref{lem:count} to the decoupling function $F=F_{r, s_1}$, to determine how many poles and roots of $F$ lie in $\G$. When $r\ge 0$ (resp.\ $r< 0$), we write $F=P$ (resp.\ $F=1/Q$) to lighten the notation $P_{r, s_1}$ (resp.\ $1/Q_{r,s_1}$). Recall the notation $m_\pm(\sigma;a,b)$ defined in~\eqref{def:mult}.

\begin{lem}
  \label{lem:poles_roots_F}
  Let us write as before $ \delta+\varepsilon=(1-r)\beta+(1+k)\pi$.
  Then either $r<0$ and $k<0$, or $r>0$ and $k\ge 0$.
  The $1$-decoupling function $F$ given in~\eqref{first-choice}  is rational with real roots and poles.
  \begin{enumerate}
  \item If $r>0$, so that  $F(y)=P(y)$, the number of roots of $F$ (counted with multiplicity)   lying in the   open region $\G$ (or equivalently in $(-\infty, y(-1))=(-\infty, Y^\pm(x^-))$) is:
    \[
      r_F:=k + \mathbbm 1_{2\beta -2\vareps -\theta\ge 0} 
      -  m_-(s_1; 0, r-1).
    \]

  \item If  $r<0$, so that $F(y)={1}/{Q(y)}$,  the number of poles  of $F$  lying in $\G$ is
    \[ 
      p_F:=-k - \mathbbm 1_{2\beta -2\vareps -\theta\ge 0} 
      -m_+(s_1;1, |r|).
    \] 
  \end{enumerate}
\end{lem}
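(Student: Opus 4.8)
The plan is to apply Lemma~\ref{lem:count} directly, after computing the relevant argument $\omega$ of $\sigma=s_1$ and simplifying the floor-function expression that appears there.

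First I would establish the dichotomy ``either $r<0$ and $k<0$, or $r>0$ and $k\ge 0$.'' From~\eqref{eq:def_r_k} we have $k\pi-r\beta=\beta(\alpha-1)$, and since we assume $\alpha<1$ (the semimartingale condition~\eqref{eq:conditionRsemimart1}), the right-hand side is negative, so $k\pi<r\beta<r\pi$ when $r>0$ would force... more carefully: $k\pi = r\beta+\beta(\alpha-1)<r\beta<r\pi$ if $r>0$, giving $k<r$; and if $r>0$ then since $0<\beta<\pi$ and $\alpha-1<0$, we get $k\pi=\beta(r+\alpha-1)$; using $\alpha-1>-\infty$ is not enough, so instead I would use the tighter bound coming from~\eqref{assumptions}, namely $\alpha-1 = (\delta+\varepsilon-\pi-\beta)/\beta$ with $\delta+\varepsilon-\pi<\theta<\beta$ (combining $\beta-\varepsilon<\theta$, i.e. $\varepsilon>\beta-\theta$, doesn't immediately give it, but $\delta<\theta+\pi\le \beta+\pi$ and $\varepsilon<\pi$...). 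The cleanest route: $s_1=e^{i\beta(1-\alpha_1)}$ and $s_2=e^{i\beta\alpha_2}$ from Lemma~\ref{lem:angles}, and $s_1/s_2=q^r=e^{2ir\beta}$ means $\beta(1-\alpha_1-\alpha_2)=2r\beta \pmod{2\pi}$, i.e. $2-2\alpha=2r \pmod{2\pi/\beta}$, consistent with~\eqref{eq:def_r_k}. Then $k\ge 0 \iff r\beta\le \beta(\alpha-1)+(\text{something})$... I will simply verify the sign statement case by case from $\delta+\varepsilon-\pi-\beta=k\pi-r\beta$ together with the a priori bounds $-\pi<\delta+\varepsilon-\pi-\beta<\beta-\pi<0$ on the left (using $\delta+\varepsilon-\pi<\beta$) hence $k\pi-r\beta\in(-\pi,0)$, which forces $k$ and $r$ to have the same sign and, when both vanish is impossible, and when $r>0$ gives $k\ge 0$ while $r<0$ gives $k<0$; the boundary $r=0$ forces $k=0$ and $\delta+\varepsilon=\beta+\pi$, which I should check falls into the $r\ge0,k\ge0$ branch.

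Next, for the counting: by Lemma~\ref{lem:angles}, $s_1=-e^{i(2\beta-2\varepsilon-\theta)}$, so writing $s_1=e^{i\omega}$ I can take $\omega=\pi+2\beta-2\varepsilon-\theta$. Plugging into Lemma~\ref{lem:count} with this $\sigma=s_1$ and the given $r$, the number of roots of $P_{r,s_1}$ in $\G$ (for $r>0$) is
\[
  \left\lfloor \frac{\omega}{2\pi}-\frac12\right\rfloor - \left\lfloor \frac{\omega}{2\pi}-\frac12 - r\frac\beta\pi\right\rfloor - m_-(s_1;0,r-1).
\]
Now $\frac{\omega}{2\pi}-\frac12 = \frac{2\beta-2\varepsilon-\theta}{2\pi}$, and using~\eqref{eq:def_r_k} in the form $r\beta/\pi = k - (\delta+\varepsilon-\pi-\beta)/\pi$, the second argument becomes $\frac{2\beta-2\varepsilon-\theta}{2\pi} - k + \frac{\delta+\varepsilon-\pi-\beta}{\pi} = \frac{\theta-2\delta}{2\pi} - k$ (after combining the fractions; note $\frac{\theta-2\delta}{2\pi}=-\frac12 - \frac{2\delta-\theta-\pi}{2\pi}=-\frac12-\frac{\beta\alpha_2}{2\pi}$, matching $s_2$). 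Since $k\in\Z$, the floor of the second argument is $\lfloor \frac{\theta-2\delta}{2\pi}\rfloor - k$, so the difference of floors equals $k + \bigl(\lfloor \frac{2\beta-2\varepsilon-\theta}{2\pi}\rfloor - \lfloor\frac{\theta-2\delta}{2\pi}\rfloor\bigr)$. It remains to show the bracketed quantity equals $\mathbbm 1_{2\beta-2\varepsilon-\theta\ge 0}$: this follows because, under~\eqref{assumptions}, $2\beta-2\varepsilon-\theta\in(-2\pi,2\pi)$ (so its $\frac{\cdot}{2\pi}$-floor is $0$ if the quantity is $\ge0$ and $-1$ if $<0$ — using $=0$ handling via the $m_-$ correction term, consistent with Lemma~\ref{lem:pole}) and similarly $\theta-2\delta\in(-2\pi,0)$ so $\lfloor\frac{\theta-2\delta}{2\pi}\rfloor=-1$ (here I use $\theta<\delta<\pi$ so $\theta-2\delta>-2\pi$ and $\theta-2\delta<0$; the case $\theta-2\delta=-2\pi$ is excluded). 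That gives $k+(\mathbbm 1_{2\beta-2\varepsilon-\theta\ge0}-1)-(-1)=k+\mathbbm 1_{2\beta-2\varepsilon-\theta\ge0}$, hence $r_F$ as stated. The case $r<0$ is entirely symmetric: apply the second formula of Lemma~\ref{lem:count}, the same algebra yields $p_F=-k-\mathbbm 1_{2\beta-2\varepsilon-\theta\ge0}-m_+(s_1;1,|r|)$.

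The main obstacle I anticipate is bookkeeping at the boundary cases — precisely, making sure the ``$\ge$'' in $\mathbbm 1_{2\beta-2\varepsilon-\theta\ge0}$ versus ``$>$'' is correctly paired with the $m_-(s_1;0,r-1)$ correction. When $2\beta-2\varepsilon-\theta=0$ we have $s_1=-1\in\R$ (Lemma~\ref{lem:angles}), so this root lies on $\R$, not in the open domain $\G$; the floor-difference computation naively counts it, but it is then removed by the $m_-$ term (since $s_1 q^0 = s_1 = -1$), so the net count is consistent with ``roots in $\G$'' — I will spell this reconciliation out carefully, cross-referencing~\eqref{eq:caracR}–\eqref{eq:caracG}. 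A secondary check is the edge value $\theta-2\delta=-2\pi$, impossible since $\delta<\pi$, $\theta>0$; and $r=0$, where $P_{0,s_1}=1$ has no roots and indeed $k=0$, $m_-(s_1;0,-1)=0$, and the indicator must vanish — which holds because $r=0$ forces $\delta+\varepsilon=\beta+\pi$, whence $2\beta-2\varepsilon-\theta = 2\beta-2(\beta+\pi-\delta)-\theta=2\delta-2\pi-\theta<0$ using $\delta<\pi$, $\theta>0$. With these edge cases dispatched, the proof is a direct specialization of Lemma~\ref{lem:count}.
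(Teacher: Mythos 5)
Your proposal is correct and takes essentially the same route as the paper's proof: apply Lemma~\ref{lem:count} to $\sigma=s_1$ with $\om=\pi+2\beta-2\vareps-\theta$, evaluate the two floors using $\delta+\vareps=(1-r)\beta+(1+k)\pi$ together with the bounds coming from~\eqref{assumptions}, and obtain the sign dichotomy for $(r,k)$ from $k\pi-r\beta=\delta+\vareps-\pi-\beta\in(-\pi,0)$, which is exactly how the paper argues. Two small slips, neither affecting the result: the second floor argument actually simplifies to $\frac{2\delta-\theta-2\pi}{2\pi}-k$, not $\frac{\theta-2\delta}{2\pi}-k$ (the sign of the $\beta\alpha_2$ term is flipped), but since $2\delta-\theta\in(0,2\pi)$ both quantities lie in $(-1,0)$ up to the integer shift $-k$, so the floor is $-k-1$ either way; and the intermediate inequality ``$\delta+\vareps-\pi-\beta<\beta-\pi$'' in your dichotomy chain is not implied by the assumptions (only $\delta+\vareps-\pi-\beta<0$ is, which is all you need), while your later $r=0$ edge case is vacuous since that same bound already excludes $r=0$.
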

If $q$ is not a root of unity, the numbers $m_\pm$ occurring in this lemma are $0$ or $1$. Otherwise, as discussed above, we can always choose $|r|<d$ if $\beta=n\pi/d$, and then this property still holds. 

\begin{proof}
  Recall from~\eqref{assumptions} that $0<\delta+\varepsilon-\beta<\pi$. This implies that $r$ cannot be $0$,  and that  $r<0$ implies $k<0$, while $r>0$ implies $k\ge 0$.

  Now recall that $F= F_{r, s_1}$, and let us apply Lemma~\ref{lem:count} with     $\sigma=s_1$, or equivalently
  \[
    \om = \pi + 2\beta-2\vareps-\theta= -\pi +2\delta -\theta + 2r\beta -2k\pi.
  \]
  From the first expression of $\om$, we derive 
  \beq\label{int-part1}
  \left\lfloor \frac \om{2\pi} - \frac 1 2 \right \rfloor =
  \left\lfloor \frac{2\beta -2\vareps -\theta}{2\pi}\right\rfloor
  =-\mathbbm 1_{2\beta -2\vareps -\theta< 0}.
  \eeq
  Indeed, the first  assumption in~\eqref{assumptions}, together with the fact that $\delta>0$ and $\theta<\pi$,  implies:
  \[
    -1 <  \frac{\delta -\theta -2 \pi}{2\pi}<\frac{2\delta -\theta -2 \pi}{2\pi}<\frac{2(\beta- \varepsilon)-\theta}{2\pi} < \frac{\theta}{2 \pi}< \frac{1}{2}.
  \]
  Moreover, the second expression of $\om$ given above leads to
  \beq\label{int-part2}
  \left\lfloor \frac \om{2\pi} - \frac 1 2 - r \frac \beta \pi\right \rfloor
  =\left\lfloor -k-1 + \frac {2\delta-\theta}{2\pi}  \right \rfloor 
  =-k-1,
  \eeq
  since $2\delta >\theta$ by~\eqref{assumptions}, and $\delta<\pi$.
  Then Lemma~\ref{lem:count}  gives the announced expressions for the number of roots of $Q$ and $P$ lying in $\G$.
\end{proof}

\subsection{Expression of $\boldsymbol{\phi_1}$}
\label{subsec:proofmainthm} 
We can now describe precisely the polynomials $S$ and $R$ such that $F \phi_1 = \frac S R \circ w$. Recall that $w$ is  the canonical invariant defined in~\eqref{eq:definition_w-bis}.

\begin{thm} 
  \label{thm:main}
  Let us assume that Condition~\eqref{eq:CNS0} holds, that is, $\alpha \in \Z+ \pi \Z/\beta$,  and let $k,r \in \Z$ be chosen so that $ \delta+\vareps = (1-r)\beta+(1+k)\pi$. Recall that $r\not = 0$.
  Let $F=F_{r, s_1}$ be the decoupling function of Theorem~\ref{thm:decoupling}, defined by~\eqref{Fr-def}. Depending on the sign of $r$, we have  $F=P$ (when $r>0$) or $F=1/Q$ (when $r<0$), where $P$ and $Q$ are (real-rooted) polynomials of degree $\vert r\vert$.

  The Laplace transform $\phi_1$ defined   by~\eqref{eq:Laplace_transform_boundary} can be meromorphically continued to $\C\setminus [y^+,\infty)$. Moreover,
  \begin{enumerate}[label={\rm(\arabic*)},ref={\rm(\arabic*)}] 
  \item\label{case1}
    if $r<0$, then $k< 0$ and 
    \[ 
      \phi_1(y)=\frac{Q(y)}{R(w(y))},
    \] 
    where $R$ is a polynomial of degree $|k|$ whose roots (taken with multiplicity) are
    \begin{itemize}
    \item 
      the $     w(y(s_1q^{j})) $ for   $j=1,\ldots , |r|$
      such that  $y(s_1q^{j}) \in  \G$, 
    \item    plus $w(y(s_1))$ if $2\beta-2\varepsilon-\theta> 0$,
    \item and finally $w(y(-1))=-1$, with multiplicity $m_+(s_1; 0, |r|-1)$.
    \end{itemize}

  \item\label{case2} if $r>0$, then $k\geqslant 0$ and
    \[
      \phi_1(y)=\frac{S(w(y))}{P(y)},
    \] 
    where $S$ is a polynomial of degree $k$ whose roots (taken with multiplicity) are
    \begin{itemize}
    \item 
      the $     w(y(s_1q^{-j}) )$,  for  $j=1,\ldots , r-1$
      such that  $y(s_1q^{-j})\in \G$,
    \item  plus $w(y(s_1))$ if $y(s_1)<y(-1)$ and  $2\beta-2\varepsilon-\theta\le  0$,
    \item and finally $w(y(-1))=-1$, with multiplicity $m_-(s_1;1, r-1)$.
    \end{itemize}
  \end{enumerate}
  In particular, $\phi_1$ is always D-algebraic. It is D-finite if and only if either $\beta/\pi\in \Q$ or $\alpha\in -\N_0 +\frac \pi \beta \Z$.
  It is   algebraic if and only if  $\beta/\pi \in \Q$  or  $\alpha\in-\mathbb{N}_0$. It is rational if and only if $\alpha\in-\mathbb{N}_0$. Finally, $1/\phi_1$ is D-finite if and only if $\beta/\pi \in \Q$  or  $\alpha\in -\N_0\cup (\mathbb{N} +\frac \pi \beta \Z)$.
\end{thm}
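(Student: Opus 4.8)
The plan is to combine Corollary~\ref{cor:phi1m-expr} (which already yields $F\phi_1 = (S/R)\circ w$ for coprime polynomials $S,R$, and hence the meromorphic continuation to $\C\setminus[y^+,\infty)$ from the analyticity of $w$ there, Lemma~\ref{lem:w}(1)) with a careful bookkeeping of the poles and zeros of $F\phi_1$ in $\G$. Since $w$ is a bijection from $\G$ onto $\C\setminus(-\infty,-1]$ by Lemma~\ref{lem:w}(3), each pole of $R\circ w$ in $\G$ corresponds to exactly one root of $R$, and similarly for $S$; so the roots of $R$ and $S$ are precisely the images under $w$ of the poles (resp.\ zeros) of $F\phi_1$ lying in $\G$, with the same multiplicities, together with a possible contribution at $w=-1$ coming from the boundary point $y(-1)$ (the unique point of $\R\cap\RR$). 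Concretely, I would:

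First, use Proposition~\ref{prop:BVP_Carleman_sketch} to locate the poles and zeros of $\phi_1$ in $\bG$: $\phi_1$ has \emph{no zero} in $\bG$ (it is a Laplace transform of a positive measure, finite and nonzero on $D_1$, and this persists on $\bG$ after the meromorphic continuation — or one argues via the boundary condition), and it has at most one pole, which is $p=y(s_1)\in\G$ and exists iff $2\beta-2\varepsilon-\theta\ge 0$; moreover $p=y(-1)$ iff $2\beta-2\varepsilon-\theta=0$, in which case the pole is simple and sits on $\R$. Second, apply Lemma~\ref{lem:poles_roots_F} with $\sigma=s_1$: when $r>0$, $F=P$ is a polynomial of degree $r$ with $r_F=k+\mathbbm 1_{2\beta-2\varepsilon-\theta\ge 0}-m_-(s_1;0,r-1)$ roots in $\G$, all among the $y(s_1 q^{-j})$, $0\le j\le r-1$, and no poles; when $r<0$, $F=1/Q$ with $p_F=-k-\mathbbm 1_{2\beta-2\varepsilon-\theta\ge 0}-m_+(s_1;1,|r|)$ poles in $\G$, at the points $y(s_1 q^{j})$, $1\le j\le |r|$, and no zeros. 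Third, combine: in case $r<0$, $F\phi_1 = \phi_1/Q$ has, in $\G$, the poles of $\phi_1$ minus the zeros of $Q$ — i.e.\ after cancellation of the single pole $y(s_1)$ of $\phi_1$ (present iff $2\beta-2\varepsilon-\theta\ge 0$) against one root of $Q$, we are left with exactly $p_F - \mathbbm 1_{2\beta-2\varepsilon-\theta\ge 0}$ poles, giving $\deg R = -k - 1 + \mathbbm 1_{\dots} - m_+ + \cdots$; the arithmetic must be massaged to land on $\deg R=|k|$, with $y(-1)$ entering with multiplicity $m_+(s_1;0,|r|-1)$ and the root $w(y(s_1))$ appearing precisely when $2\beta-2\varepsilon-\theta>0$ (the strict inequality because equality puts the cancellation on the boundary). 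Case $r>0$ is dual: $F\phi_1=P\phi_1$ has in $\G$ the zeros of $P$ (none of which can be cancelled by $\phi_1$ since $\phi_1$ has no zero), minus nothing, so $\deg S = r_F$ possibly adjusted by whether the point $y(s_1)$ where $P$ vanishes actually lies in $\G$ versus on $\R$ — this is where the condition ``$y(s_1)<y(-1)$ and $2\beta-2\varepsilon-\theta\le 0$'' comes from. Fourth, pin down $\deg S$ and $\deg R$ independently via the asymptotics: by Lemma~\ref{lem:phi-infini}, $(\phi_1 F)(y)\sim \kappa y^{k\pi/\beta}$, while $w(y)\sim \kappa' y^{\pi/\beta}$ by Lemma~\ref{lem:w}(2), so $(S/R)(z)\sim \mathrm{const}\cdot z^{k}$ as $z\to\infty$, forcing $\deg S-\deg R = k$; combined with the pole/zero count this forces $\deg S=k,\deg R=0$ when $k\ge 0$ and $\deg S=0,\deg R=|k|$ when $k<0$, consistent with the case split $r>0\Leftrightarrow k\ge 0$ from Lemma~\ref{lem:poles_roots_F}.

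Finally, for the differential/algebraic classification: $\phi_1 = Q(y)/(R\circ w)$ (or $S\circ w/P$) is a product of a rational function of $y$ with a rational function of $w$. By Proposition~\ref{prop:algebraic_nature_w}, such a thing is always D-algebraic; it is D-finite iff $\beta/\pi\in\Q$ (so $w$ is algebraic) or the rational function of $w$ is a polynomial, i.e.\ $R$ is constant ($\deg R=0$), i.e.\ $k\ge 0$, i.e.\ $\alpha-1=k\pi/\beta-r$ with $k\ge 0$ — rewriting, $\alpha\in -\N_0+\frac\pi\beta\Z$ exactly covers $k\le 0$... here I must be careful with signs: D-finiteness of a rational-in-$w$ factor $S/R$ needs $R$ constant, which happens precisely when $r<0$ fails, i.e.\ when $r>0$, but \emph{also} $\phi_1 = Q/R\circ w$ with $\deg R=|k|$ needs $R$ to be a power of $(z-(-1))$ times constant to possibly still be... no: $S/R$ is a polynomial iff $R$ divides $S$, impossible for coprime nonconstant $R$, so D-finiteness (for $\beta/\pi\notin\Q$) forces $\deg R=0$, i.e.\ $k\ge0$, i.e.\ $\delta+\varepsilon-\pi-\beta = k\pi-r\beta\ge -r\beta$, which together with $r>0$ gives $\alpha\le 1-r+\text{(stuff)}$; the clean statement $\alpha\in-\N_0+\frac\pi\beta\Z$ is obtained by observing $\alpha = 1+k\pi/\beta - r \in \Z + (k)\pi/\beta$ and D-finiteness needs $k\le 0$, hence $\alpha\in -\N_0 + \frac\pi\beta\Z$ only if additionally... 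I will reconcile this via $\alpha<1$. Algebraicity similarly needs $\beta/\pi\in\Q$ or ($\deg R=0$ \emph{and} $\deg S=0$), i.e.\ $k=0$ and $r>0$, forcing $S$ constant and $\phi_1=\mathrm{const}/P(y)$ rational — but that is rational, so algebraic-but-not-rational can only come from $\beta/\pi\in\Q$ making $w$ algebraic irrational; hence $\phi_1$ algebraic $\iff \beta/\pi\in\Q$ or $\alpha\in -\N_0$, and rational $\iff \alpha\in-\N_0$ (then $F$ is trivial, $w$ drops out). For $1/\phi_1$: $1/\phi_1 = (R\circ w)/Q$ or $P/(S\circ w)$, D-finite iff $\beta/\pi\in\Q$ or the $w$-factor is polynomial, i.e.\ $S$ constant ($k=0$, giving $\alpha\in\Z$, i.e.\ $\alpha\in-\N_0$ since $\alpha<1$) or we are in the $r<0$ branch where $1/\phi_1 = R(w)/Q$ with $R$ polynomial — always polynomial in $w$! — so D-finite; $r<0$ means $\delta+\varepsilon-\beta>\pi$... no, $r<0\iff k<0\iff \alpha<1-r+k\pi/\beta$, and one checks $r<0$ corresponds to $\alpha\in\N+\frac\pi\beta\Z$ (using $\alpha=1-r+k\pi/\beta$ with $r\ge 1$, $k$ arbitrary — actually $r\le -1$, so $1-r\ge 2$...).

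\medskip

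\noindent\textbf{The main obstacle.} The genuinely delicate part is the second and third steps: getting the \emph{exact} pole/zero count right at the boundary point $y(-1)$ and deciding, in each of the finitely many border cases ($2\beta-2\varepsilon-\theta$ equal to, less than, or greater than $0$; $y(s_1)$ inside $\G$ versus equal to $y(-1)$; the multiplicities $m_\pm$ possibly exceeding $1$ when $\beta/\pi\in\Q$ and we do not restrict $|r|<d$), whether a given $w(y(s_1 q^{\mp j}))$ actually contributes a root of $S$ or $R$, is cancelled, or coincides with $-1=w(y(-1))$ and must be counted with the multiplicity from Lemma~\ref{lem:w}(6) (where $w+1$ vanishes to order $2$, not $1$, near $y(-1)$ — this factor-of-two subtlety in the local order of $w$ at $y(-1)$ is exactly what forces the ``$m_\pm$'' bookkeeping and the strict-versus-non-strict inequalities). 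The asymptotic argument fixes the \emph{total} degrees $\deg S=k$, $\deg R=|k|$ cleanly; the real work is certifying that the explicit list of roots given in \ref{case1}--\ref{case2} has the right \emph{cardinality} matching these degrees, which amounts to a finite but fiddly case analysis driven by \eqref{assumptions} and Lemma~\ref{lem:poles_roots_F}. Once the two expressions for $\phi_1$ are established, the classification is a routine application of Proposition~\ref{prop:algebraic_nature_w} and the closure properties of the function classes in \eqref{hierarchy}, with the only care being the translation between the conditions ``$r\gtrless 0$'', ``$k\gtrless 0$'', ``$\deg R=0$'', ``$\deg S=0$'' and the stated conditions on $\alpha$ via $\alpha = 1+k\tfrac\pi\beta-r$ and $\alpha<1$.
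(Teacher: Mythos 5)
Your architecture is the paper's: canonical invariant $w$, Corollary~\ref{cor:phi1m-expr}, the pole/root counts of Lemma~\ref{lem:poles_roots_F}, the asymptotics of Lemma~\ref{lem:phi-infini}, and the order-two vanishing of $w+1$ at $y(-1)$. But there is a genuine gap in how you identify the roots of $S$ and $R$. Your first step asserts that $\phi_1$ has no zero in $\bG$, justified by positivity of the underlying measure; this is not a valid argument (Laplace transforms of positive measures can vanish at non-real points of the left half-plane, and on $\G\cap\{\Re y>0\}$ the function is only a meromorphic continuation, not a Laplace integral), and Proposition~\ref{prop:BVP_Carleman_sketch} controls poles only, not zeros. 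Without this, your dictionary ``roots of $S$ $=$ zeros of $F\phi_1$ in $\G$'' is not available, and in the case $r<0$ a hypothetical zero of $\phi_1$ at some $y(s_1q^j)$ would delete a root from the stated list for $R$. The paper sidesteps zeros of $\phi_1$ entirely by arguing in one direction: it builds the candidate polynomial from the known poles (or known roots of $F$ minus the one possible cancellation), forms the invariant $F\phi_1\cdot R(w)$ (resp.\ $F\phi_1/S(w)$), checks it has no pole in $\bG$ and is bounded at infinity, and invokes Proposition~\ref{prop:inv}. Your own fourth step can in fact be upgraded to repair the gap — the listed roots give $\deg R\ge |k|$ (resp.\ $\deg S\ge k$) while the possible poles/zeros give the matching upper bound, and $\deg S-\deg R=k$ then forces equality, which \emph{retroactively} excludes extra zeros of $\phi_1$ and cancellations — but as written you use ``no zeros'' as an input rather than deriving it, and you also conflate cancellation by zeros with cancellation by the pole of $\phi_1$ at $y(s_1)$ (it is the pole that eats one root of $P$, which is exactly why $\deg S=k$ rather than $r_F$).

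The classification paragraph is not yet a proof. It contains sign errors left unresolved: D-finiteness (for $\beta/\pi\notin\Q$) requires $R$ constant, i.e.\ $r>0$, equivalently $k\ge 0$ — not ``$k\le 0$'' — and this translates into $\alpha\in-\N_0+\frac{\pi}{\beta}\Z$ via $\alpha=1-r+k\pi/\beta$; similarly the $1/\phi_1$ case ($r<0$, giving $\alpha=1-r+k\pi/\beta$ with $1-r\ge 2$, hence $\alpha\in\N+\frac{\pi}{\beta}\Z$, together with $k=0$, i.e.\ $\alpha\in-\N_0$) is left as ``one checks''. Finally, the ``only if'' direction of rationality needs the growth estimate~\eqref{phi-asympt} (rational $\Rightarrow\alpha\in\Z\Rightarrow\alpha\in-\N_0$ since $\alpha<1$), which is also what rules out rationality when $\beta/\pi\in\Q$ and $R$ is nonconstant; your sketch does not supply this argument.
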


\noindent {\bf Remarks}\\
1. As discussed earlier, there is always a choice of  $r$ that gives the value $0$ or $1$ to the multiplicities $m_\pm(s_1;a,b)$ that occur in the theorem.\\
2. The above theorem characterizes  the polynomials $R$ and $S$ 
up to  multiplicative constants that can be adjusted thanks to the value $\phi_1(0)$ given in~\eqref{eq:valuephi1O}. Thus we can compute $\phi_1(y)$ explicitly, using the simple characterization of points $s$ such that $y(s)\in \G$ given by~\eqref{eq:caracG}. Several examples are worked out in Section~\ref{sec:examples}.

\begin{proof}[Proof of Theorem~\ref{thm:main}]
  By Corollary~\ref{cor:phi1m-expr}, the function $F\phi_1$ is an invariant.
  We  will construct  two   polynomials $R$ and $S$ such that $F\phi_1\times (R/S)( w)$, which is still an invariant,  has no pole in $\bG$ and has a finite limit at infinity.  Proposition~\ref{prop:inv} will then allow us to conclude that it   is a constant.

  \noindent
  {\bf An observation.} We begin with a useful observation, which relies on the properties of the map $w$ listed in Lemma~\ref{lem:w}. Consider a rational function $H(z)$, 
  and the function $H(w(y))$, which is well defined on $\bG$. Then $w$ induces a bijection between the 
  roots (resp.\ poles)    of $H\circ w$ lying in $\G$ and the 
  roots (resp.\ poles)    of $H$ lying away from the cut $(-\infty, -1]$.  This bijection preserves the multiplicity. Moreover, $z_0=-1$ is a 
  root (resp.\ pole)    of $H$ if and only if $y_0:=w^{-1}(-1)=y(-1)$ is a 
  root (resp.\ pole)    of $H\circ w$, and the multiplicity of $y_0$ in  $H\circ w$ is twice the multiplicity of $z_0$ in $H$. Finally, if~$H$ has no root (resp.\ pole)     in $(-\infty, -1)$, then $H\circ w$ has no root (resp.\ pole) 
  on $\R\setminus\{y(-1)\}$.

  \smallskip
  \noindent
  {\bf First case: $\boldsymbol{r<0}$}. Then  $k<0$ and $F=\frac{1}{Q}$. As explained above, we will list the poles of $F\phi_1$ lying in $\bG$ to construct    the polynomial $R$. Recall that all these poles are real. We refer to~\eqref{Fr-def} for the expression of $F$, and to Proposition~\ref{prop:BVP_Carleman_sketch} for the properties of $\phi_1$.
  First,   $F$ has $p_F$ poles in $\G$, where $p_F$ is given by Lemma~\ref{lem:poles_roots_F}. Moreover, $\phi_1$ has a pole in $\G$ if and only if $2\beta-2\vareps-\theta>0$. This pole is then simple, and located  at $y(s_1)$. Now, the multiplicity of $y(-1)=Y^\pm(x^-)$ as a pole of $F$ is 
  \begin{multline*}
    \sharp\left \{j: 1\le j\le |r|, s_1q^{j}\in \{-1,-q\}\right\}
    \\
    \begin{split}
      &  =\sharp\left \{j: 1\le j\le |r|, s_1q^{j}=-1\right\}
      +  \sharp\left \{j: 0\le j< |r|, s_1q^{j}=-1\right\}
      \\& = 2 \sharp\left \{j: 0\le j< |r|, s_1q^{j}=-1\right\} +\mathbbm 1_{s_1q^{{-r}}=-1} {-} \mathbbm 1_{s_1=-1}
      \\& = 2 m_+(s_1; 0, |r|-1) - \mathbbm 1_{s_1=-1},
    \end{split}
  \end{multline*}
  as $s_1q^{|r|}=s_2$ is never equal to $-1$ (see Lemma~\ref{lem:angles}). This multiplicity is not always even, but we should remember that $\phi_1$ has a (simple) pole at $y(-1)$ if and only if $2\beta-2\vareps-\theta=0$, that is,
  if $s_1=-1$.  Consequently,  the multiplicity of $y(-1)$ as a pole of $F\phi_1$ is $2 m_+(s_1; 0, |r|-1)$.
  
  These considerations lead us to introduce the polynomial $R$ defined (up to a multiplicative constant) in the theorem. Its degree is
  \[
    p_F + \mathbbm 1_{2\beta-2\vareps-\theta>0}+ m_+(s_1; 0, |r|-1)
    =-k.
  \]
  We have used again the fact that $s_1q^{|r|}\not = -1$ and that $s_1=-1 \Leftrightarrow 2\beta-2\vareps-\theta=0$.  Now consider the invariant $I:=(F \phi_1)\times  R(w)$. By the above observation, it has no pole in $\bG$. Now recall that $w(y)$ behaves like $y^{\pi/\beta}$ at infinity, and $F\phi_1(y)$ as $y^{k\pi/\beta}$ (Lemma~\ref{lem:phi-infini}).  Given that $R$ has degree $-k$, we conclude that $I$  is bounded    at infinity.  By Proposition~\ref{prop:inv}, it is constant, and we can take $S=1$ upon adjusting the multiplicative constant in $R$.

  \medskip \noindent {\bf Second case: $\boldsymbol{r>0}$.} Then $k\geqslant 0$ and $F=P$. This time we will construct a candidate for $S$ by examining the roots (rather than the poles) of $F\phi_1$ lying in $\G\cup\{y(-1)\}$.
  The decoupling function $F$ has $r_F$ roots in $\G$, where $r_F$ is given by Lemma~\ref{lem:poles_roots_F}. Only one of these roots, namely $y(s_1)$, is likely to be cancelled by a pole of $\phi_1$ in the product $F\phi_1$. This happens if and only if     $2\beta-2\vareps-\theta>0$. Now, the multiplicity of $y(-1)$ as a root of $F$ is
  \begin{multline*}
    \sharp\left \{j: 0\le j<r, s_1q^{-j}\in \{-1,-q\}\right\}
    \\
    \begin{split}
      &  =\sharp\left \{j: 0\le j<r, s_1q^{-j}=-1\right\}  +  \sharp\left \{j: 1\le j\le r, s_1q^{-j}=-1\right\}
      \\& = 2 \sharp\left \{j: 1\le j< r, s_1q^{-j}=-1\right\} + \mathbbm 1_{s_1=-1}
      + \mathbbm 1_{s_1q^{-r}=-1} 
      \\& = 2 m_-(s_1; 1, r-1)      + \mathbbm 1_{s_1=-1},
    \end{split}
  \end{multline*}
  by the same arguments as in the case $r<0$. This multiplicity is not always even, but we should remember that $\phi_1$ has a (simple) pole at $y(-1)$
  if and only if  $s_1=-1$.  Consequently,  the multiplicity of $y(-1)$ as a root of $F\phi_1$ is $  2 m_-(s_1; 1, r-1)$.
  These considerations lead us to introduce the polynomial $S$ described in the theorem. Its degree is
  \[
    r_F - \mathbbm 1_{2\beta-2\vareps-\theta>0}+
    m_-(s_1; 1, r-1)
    =k.
  \]
  Now consider the invariant $F \phi_1/ S(w)$. By construction,  it has no pole in $\bG$. We can argue as in the previous case to prove that it is bounded at infinity.
  Hence, it is constant by Proposition~\ref{prop:inv}.

  \medskip
  Now that we have given expressions for $\phi_1$, its meromorphicity on $\C\setminus [y^+,\infty)$ follows from the fact that the  canonical invariant $w$ is analytic on this domain. Moreover, $\phi_1$ is D-algebraic because $w$ is D-finite.

  \medskip
  
  Let us now discuss the other differential/algebraic properties of $\phi_1$, starting from rational cases. If  $k=0$, that is, $\alpha \in {-}\N_0$, then $\phi_1$ is  the reciprocal of a polynomial, hence a rational function. Conversely, if $\phi_1$ is rational, then~\eqref{phi-asympt} implies that $\alpha$ is an integer, and thus belongs to $-\N_0$ since we have assumed $\alpha<1$. This concludes the characterization of rational cases, and we  now assume that $k\not = 0$.  It then follows from the expressions of $\phi_1$  that if $\phi_1$ is algebraic then so is~$w$, which forces  $\beta/\pi \in \Q$ by Proposition~\ref{prop:algebraic_nature_w}. Conversely, if $\beta/\pi \in \Q$ then $w$ is algebraic and so is $\phi_1$. Finally, the characterization of D-finite cases stems from  Proposition~\ref{prop:algebraic_nature_w}. Indeed, $\phi_1$ is D-finite if and only if $(S/R)(w)$ is D-finite, and $R$ is non-trivial as soon as $r<0$. Hence $\phi_1$ is D-finite if and only if either $\beta/\pi \in \Q$ or $r>0$. The latter condition translates into $\alpha\in -\N_0 +\frac \pi \beta \Z$.

  A similar argument proves that $1/\phi_1$ is D-finite if and only if $\beta/\pi \in \Q$, or $r<0$, or $k=0$, which translates into the conditions stated in the theorem.
\end{proof}

\subsection{Examples}\label{sec:examples}

We now give five applications of Theorem~\ref{thm:main}.
We start with the three already known cases mentioned in Section~\ref{sec:main} and illustrated in Figure~\ref{fig:three_examples}. Then we detail an algebraic case, and finally a D-finite one.
Recall that we choose integers  $r$ and $k$ such that
\[
  \delta+\varepsilon =(1-r)\beta+(1+k)\pi.
\]

\subsubsection{The skew symmetric case}
The  model is said to be skew symmetric if $\delta+\varepsilon=\pi$, that is, $\alpha=0$. It can be shown thanks to~\eqref{eq:beta} and~\eqref{eq:expression_delta_epsilon} that this is equivalent to 
\[
  2\sigma_{12}=\frac{r_{21}}{r_{11}}\sigma_{11}+\frac{r_{12}}{r_{22}}\sigma_{22}.
\]
One can  take in  this case $r=1$ and $k=0$.
Then Theorem~\ref{thm:decoupling} gives the decoupling function $P(y)=y-y(s_1)$. Theorem~\ref{thm:main}\ref{case2} implies that for some constant $\kappa $,
\[
  \phi_1(y)=\frac{\kappa }{P(y)}=\frac{y(s_1)\phi_1(0)}{y(s_1)-y},
\]
where $\phi_1(0)$ is given by~\eqref{eq:valuephi1O} and $y(s_1)$ by Proposition~\ref{prop:BVP_Carleman_sketch}. 

If we invert the Laplace transform, we find that the density of the invariant measure $\nu_1$ is exponential. This result is very well known and can be found for instance in~\cite{HaRe-81,harrison_multidimensional_1987,DaMi-13}. 
Note that~\cite{harrison_multidimensional_1987} actually contains a higher dimensional version of this result.

\subsubsection{The Dieker and Moriarty condition~\cite{DiMo-09}}
\label{sec:DM}
It reads $\alpha=\frac{\delta+\varepsilon-\pi}{\beta}\in -\mathbb{N}_0$ and generalizes the previous case. We can take $r=1-\alpha>0$ and $k=0$. The decoupling function  $F(y)=F_{r,s_1}$ is given by~\eqref{Fr-def}, and it is a polynomial $P(y)$. Theorem~\ref{thm:main}\ref{case2} implies that for some constant $\kappa $,
\[
  \phi_1(y)=\frac{\kappa }{P(y)}.
\]
When  all roots of $P$ are simple, we obtain by inverting the Laplace transform  that the density of~$\nu_1$ is a sum of exponentials. In fact, we can show using~\eqref{Fr-def} and the expression of  $s_1$ given in Lemma~\ref{lem:angles} that $P$ has a multiple root if and only if $2\varepsilon +\theta+j\beta =0 \mod \pi$ for some $j\in \llbracket 0, 2r-4\rrbracket$.  Equivalently, since $\alpha=-r+1$, this is equivalent to saying that $\theta -2\delta-j\beta=0 \mod \pi$ for some $j\in \llbracket 2, -2\alpha \rrbracket$. Note that Dieker and Moriarty prove that   the density of $\nu_1$ is a sum of exponentials under the (slightly stronger) assumption that $\theta -2\delta-j\beta\not =0 \mod \pi$ for all $j\in \llbracket 0, -2\alpha \rrbracket$ (this is equivalent to their condition $``\theta \in \Theta_l "$ occurring in~\cite[Thm.~1]{DiMo-09}).

A double pole occurs for instance when $\delta=\pi-\varepsilon -\beta$ (so that $\alpha=-1$ and $r=2$) and $ \theta=\pi-2\varepsilon$. Then $s_1=q$, $y(s_1)=y(q)=y(1)=y(s_1q^{-1})$,  and
\[
  \phi_1(y)= \frac{y(1)^2\phi_1(0)}{(y(1)-y)^2}.
\]
Hence the density of the invariant measure $\nu_1$ is, up to a multiplicative constant, $p_1(z)=z e^{-y(1)z}$ and $\nu_1$ is a Gamma/Erlang distribution.
An example satisfying the angle conditions~\eqref{assumptions} of the paper is $(\beta, \delta, \varepsilon, \theta)= (5/16,5/16,3/8,1/4)\pi$.

\subsubsection{The orthogonal case}
In this case $R$ is a diagonal matrix, or equivalently $\delta=\varepsilon=\beta$. Thus we can take $r=-1$ and $k=-1$.  The decoupling function $F_{r,s_1}=1/Q$ of Theorem~\ref{thm:decoupling} reads $F=1/(y-y(s_1q))=1/(y-y(s_2))$. Since $\gamma_2(x,y)= r_{22}y$ in the orthogonal case, we have  $y(s_2)=0$ by definition of $s_2$.
Hence  $Q(y)$ is simply $y$.
Theorem~\ref{thm:main}\ref{case1}  gives $R(w(y))=\kappa (w(y)-w(y(s_2)))=\kappa (w(y)-w(0))$ for some constant $\kappa $. Using the identity $\phi_1(0)=-\mu_1/r_{11}$ derived from~\eqref{eq:valuephi1O}, we obtain
\[
  \phi_1(y)=\frac{Q(y)}{R(w(y))}=-\frac{\mu_1 }{{r_{11}}}\frac {w'(0) y}{w(y)-w(0)} ,
\]
which is the main result of~\cite{franceschi_tuttes_2016}. Note that the term $r_{11}$ does not appear in~\cite{franceschi_tuttes_2016}, because the reflection matrix is taken to be the identity therein.

\subsubsection{An algebraic case}  As stated in Theorem~\ref{thm:main}, under the angle condition $\alpha\in \Z+ \pi\Z/\beta$, the function~$\phi_1$ is algebraic if and only if either $\alpha$ is an integer (this is the rational case, already discussed above) or $\beta/\pi$ is rational. So let us assume that $\beta=n\pi/d$, with $n$ and $d$ coprime and $0<n<d$. In this case $T_{\pi/\beta}$ and $w$ are algebraic of degree (at most) $n$.  Observe that if $n=1$, the angle condition simply reads $\alpha\in \Z$, so that we are again in a rational case. So let us assume that $n=2$ and $d=3$, so that $\beta=2\pi/3$. Then the angle condition requires  $\alpha$ to be a half-integer, say $\alpha=1/2$, that is, $\delta+\vareps=4\pi/3$. Then we can take $k=r=-1$, and $\phi_1(y)$ has the following form:
\[
  \phi_1(y)=\kappa\, \frac{y-y(s_1q)}{w(y)-w(y(s_1q^e) )},
\]
where $e=1$ if $2\beta-2\vareps-\theta <0$ and $e=0$ otherwise. It is a quadratic function of $y$, since $T_{\pi/\beta}= T_{3/2}$ is itself quadratic:
\[
  T_{3/2}(x)= (2x-1)\sqrt{\frac{1+x}2}.
\]
One particularly interesting case is $\theta=2\beta-2\vareps$. Then $s_1=-1$ (see Lemma~\ref{lem:angles}), $y(-1)=y(-q)$ and $w(y(-1))=-1$. It is convenient to introduce $u=\sqrt{\frac{y^+-y}{y^+-y^-}}$, which yields
  \[
    w(y)= T_{3/2}(2u^2-1)=u(4u^2-3) \quad \text{and} \quad w(y)+1=(1+u)(1-2u)^2.
  \]
  When $y=y(-1)$, we have $w(y)=-1$, which gives $2u^2-1=-1/2$. Hence the numerator of $\phi_1(y)$, namely $y-y(-q)=y-y(-1)$, is $1-4u^2$ (up to a multiplicative constant). Finally,
\[
  \phi_1(y) = \kappa' \frac{1-4u^2}{(1+u)(1-2u)^2}= \kappa' \frac{1+2u}{(1+u)(1-2u)}.
\]

In Section~\ref{sec:examples-double}, under the double angle condition, we will detail another case where $\phi_1$ is quadratic while $\beta/\pi \not \in \Q$, with an even simpler expression of $\phi_1$ (see~\eqref{phi1-very-simple}). In that case, we will give explicit expressions for  the density of $\nu_1$ (and in fact of the whole stationary distribution~$\nu$).

\subsubsection{A D-finite example: recurrence for the moments}
\label{sec:DF-example}
Suppose now that
\beq\label{example4}
\delta+\varepsilon+\beta=2\pi,
\eeq
that is, $\alpha=\pi/\beta-1$. Then we can take  $r=2$ et $k=1$.
Applying Theorems~\ref{thm:decoupling} and~\ref{thm:main}, we obtain
\[
  \phi_1(y)=\kappa\,\frac{w(y)-w_0}{(y-y(s_1))(y-y(s_1 /q))},
\]
where the constant $\kappa$ can be derived from the normalisation~\eqref{eq:valuephi1O}, and 
\[
  w_0=
  \begin{cases}
    w(y(s_1/q)) & \text{if } y(s_1/q) \in \G,\\
    w(y(s_1)) & \text{if } y(s_1)<y(-1) \text{ and } 2\beta-2\varepsilon -\theta \le 0,\\
    w(y(-1))=-1 & \text{if } s_1 =-q.
  \end{cases}
\]
These three cases are those of Theorem~\ref{thm:main}. Using~\eqref{eq:caracG} and the expression of $s_1$ in Lemma \ref{lem:angles}, as well as~\eqref{example4} and the basic conditions~\eqref{assumptions}, they can be rewritten respectively as
\[
  2\varepsilon +\theta < 2\pi, \qquad 2\pi < 2\varepsilon + \theta, \quad \text{and} \quad 2\varepsilon +\theta = 2\pi.
\]
%
%
These three cases actually occur, for instance with the three following values of $(\beta, \delta, \varepsilon, \theta)$:
\[
  (2/3, 5/6, /2, 1/4)\pi, \qquad (2/3, 4/9, 8/9, 1/4)\pi, \qquad (2/3, 7/12, 3/4, 1/2)\pi.
\]

Starting from the expression~\eqref{eq:definition_w-bis} of $w$ in terms of $T_a$ (with $a=\pi/\beta$), the differential equation~\eqref{de-Ta} satisfied by $T_a$ leads to a (non-homogeneous) second order linear differential equation with polynomial coefficients in $y$ satisfied by $\phi_1(y)$. Upon expanding it in $y$, it gives a linear recurrence relation between the moments
\[
  M_n= \int _{\RR_+}t^n \nu_1({\rm d}t) = n! [y^n] \phi_1(y).
\]
This recurrence is found to be of fourth order. Its coefficients   are polynomials in $n$, of degree $4$. We refer to our {\sc Maple} worksheet
for details. We have used the {\sc Gfun} package~\cite{gfun} to derive the recurrence relation from the differential equation.

To give an explicit example, let us focus on the simplest case, that is, $2\varepsilon +\theta = 2\pi$, where $s_1=-q$. Note that~$\delta$ and $\vareps$ are now completely determined in terms of the two remaining angles, $\beta$ and $\theta$. Then 
\[
  \phi_1(y)=\kappa\,\frac{w(y)+1}{(y-y(-1))^2},
\]
since  $y(-1)=y(-q)$ as $y(s)=y(q/s)$. Equivalently, denoting $z:={2y}/(y^+-y^-)$ and using~\eqref{eq:uniformization} and~\eqref{eq:definition_w-bis}, we find
\beq\label{phi1-simple}
\phi_1(y)=\kappa'\, \frac{T_{a}(c_2-z)+1}{(c_2-c_1-z)^2},
\eeq
with
\[
  a=\pi/\beta, \qquad   c_2=\frac{y^++y^-}{y^+-y^-}=\cos(\beta-\theta), \qquad c_1=\cos \beta.
\]
The second expression of $c_2$ follows from~\eqref{xyn-pm}.
Setting $z=0$ gives
\[
  \kappa'=\phi_1(0) \frac{(c_2-c_1)^2}{T_a(c_2)+1}= \phi_1(0) \frac{(c_2-c_1)^2}{1-\cos(\pi \theta/\beta)},
\]
where $\phi_1(0)$ is given by~\eqref{eq:valuephi1O}. Then, writing
\[
  \phi_1(y)=\sum_{n\ge 0} \frac{M_n} {n!} y^n = \sum_{n\ge 0} \frac{\widetilde {M}_n} {n!} z^n,
\]
that is,
\[
  M_n = \frac {2^n}{(y^+-y^-)^n} \widetilde M_n,
\]
we obtain the first two coefficients $\widetilde M_n$ from~\eqref{phi1-simple}:
\[
  \widetilde M_0=\phi_1(0), \qquad 
  \widetilde M_1=\phi_1(0) \left( \frac 2{c_2-c_1} - \frac{a \sin(\pi\theta/\beta)}{\sin(\beta-\theta)(1-\cos(\pi\theta/\beta))}\right),
\]
and then  the sequence $\widetilde M_n$ satisfies the following recurrence relation, valid for $n\ge 0$:
\begin{multline*}
  \left(1- c_2^2 \right)  \left( c_1-c_2 \right) ^{2}\widetilde M_{n+2}= a^2 \kappa' \mathbbm 1_{n=0}+
  \left( c_2-c_1 \right)  \left(2\left( c_1c_2-2\,c_2^2+1 \right) n+c_1c_2-
    5\,c_2^2+4\right)\widetilde M_{n+1} 
  \\ + \left( \left( {c_1}^{2}-6\,c_1c_2+6\,c_2^2-1 \right) {n}^{2}
    - 3\left( 2\,c_1c_2-3\,c_2^2+1 \right) n
    - \left( c_{{1}}-c_2 \right) ^{2}{a}^{2}-2\,c_1c_2+4\,c_2^2-
    2
  \right) \widetilde M_n
  \\
  -n \left(2\left( 2\,c_2 -c_1\right) {n}^{2}+3\,c_2n+ 2\left( 
      c_1        -c_2 \right) {a}^{2}+c_2    
  \right) \widetilde M _{n-1}
  + n\left( n-1 \right)  \left( n^2-a^2 \right) \widetilde M_{ n-2}.
\end{multline*}

\section{Expression of the Laplace transform when $\boldsymbol{\alpha_1, \alpha_2\in \Z + \pi\Z/\beta}$}
\label{sec:expression-double}

We now assume that the double angle  Condition~\eqref{eq:CNS0double} holds. Then by Theorem~\ref{thm:decoupling}, there exists a $2$-decoupling function $F$, and by Corollary~\ref{cor:phi1m-expr}, the function $F\phi_1^2$ 
can be written $(S/R) \circ w$ for some polynomials $S$ and $R$. In this section we determine the rational function $S/R$. The arguments are the same as in the previous section, but all expressions are a bit heavier, as can be foreseen from the expression~\eqref{second-choice} of  $F$. Recall from~\eqref{ri-def}  that we have chosen integers $r_1$ and~$r_2$, and $e_1$, $e_2$, $\eps_1$, $\eps_2$ in $\{0,1\}$, such that $s_i=(-1)^{\eps_i} {\sqrt q} ^{\,e_i}q^{r_i}$ for $i=1,2$, with $\sqrt q =e^{i\beta}$. Returning to Lemma~\ref{lem:angles}, this defines two integers $k_1$ and $k_2$ such that the arguments of $s_1$ and~$s_2$ are respectively:
\begin{alignat}{3}
  \label{eq:def_r_k-double1}
  \om_1&:= \pi + 2\beta -2\varepsilon -\theta \ &= (2r_1+e_1)\beta -(2k_1+\eps_1) \pi, \\
  \label{eq:def_r_k-double2}
  \om_2&:= - \pi +2\delta-\theta& = (2r_2+e_2)\beta - (2k_2+\eps_2) \pi.
\end{alignat}
Equivalently, given the definition~\eqref{eq:def_alpha_i} of $\alpha_1$ and $\alpha_2$,
\beq \label{alpha12-expr}
\begin{array}{rllll}
  1-\alpha_1&=&  {\om_1}/\beta &= &2r_1+e_1 -(2k_1+\eps_1) \frac \pi \beta,       \\
  \alpha_2 &=&  {\om_2}/\beta& =& 2r_2+e_2 -(2k_2+\eps_2) \frac \pi \beta.
\end{array}
\eeq
Note that we now have
\[
  \delta+\vareps =\left(1-r_1  + r_2 - \frac{e_1-e_2} 2\right) \beta +
  \left(1+ k_1 -k_2  + \frac{\eps_1-\eps_2} 2\right) \pi,
\]
which should be compared to the condition $\delta+\vareps =(1-r)\beta+(1+k)\pi$ of the previous section.

As in the previous section, the numbers $r_i$ (and $e_i$, and $k_i$, and $\eps_i$) are uniquely defined when~$q$ is not a root of unity. Otherwise, if $\beta =\pi n/d$ with $0<n<d$, we may always choose each $r_i$ such that ${2}|r_i|<d$.
Such a choice of $r_i$ will sometimes simplify certain expressions, but what follows holds for any choice.

In the previous section, we had either ($r<0$ and $k<0$), or ($r>0$ and $k\ge 0$). The counterpart of these properties reads as follows.

\begin{lem}\label{lem:ineq-double}
  If $r_1\le 0$ then $2k_1+\eps_1\le 0$, and in particular $k_1\le 0$. 
  If $r_2\le 0$ then     $k_2\le 0$.

  If $2r_1+e_1 >0$ (and in particular if $r_1>0$), then $k_1 \geq 0$. If $r_2\ge 0$ then $k_2\ge 0$. 
\end{lem}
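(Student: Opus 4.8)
The plan is to read off the inequalities directly from~\eqref{alpha12-expr}, combined with the basic angle conditions~\eqref{assumptions}, exactly as was done for the single angle condition in the proof of Lemma~\ref{lem:poles_roots_F}. The key point is that~\eqref{assumptions} gives tight localisations of $\om_1 = \pi + 2\beta - 2\vareps - \theta$ and $\om_2 = -\pi + 2\delta - \theta$, which then force the signs of $k_1$ and $k_2$ once $r_1$, $r_2$ (and $e_1$, $e_2$) are fixed. Concretely, I would first record the estimates
\[
  \delta - \pi < \beta - \vareps < \theta < \delta, \qquad 0 < \theta < \beta, \qquad 0 < \delta, \vareps < \pi,
\]
and deduce, as in~\eqref{int-part1}--\eqref{int-part2}, that
\[
  \frac{\om_1}{2\pi} = \frac{\pi + 2\beta - 2\vareps - \theta}{2\pi} \in \Bigl(\tfrac12 \cdot \tfrac{?}{?}, 1\Bigr)
\]
— more precisely I would show $0 < \om_1 < 2\pi$ and $-2\pi < \om_2 < 0$ from~\eqref{assumptions} (note $2\beta - 2\vareps - \theta \in (-2\pi, 2\pi)$ and $2\delta - \theta \in (-\pi, 2\pi)$, and the inequalities $\theta > \beta - \vareps$, $\delta > \theta$ sharpen these). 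The cleanest route is to bound $\om_1/\beta = 1 - \alpha_1$ and $\om_2/\beta = \alpha_2$: from $\beta - \vareps < \theta < \delta < \pi$ one gets $\om_1 = \pi + 2\beta - 2\vareps - \theta > \pi + 2\beta - 2\vareps - \delta > \pi + 2(\beta - \vareps) - \pi - (\beta-\vareps)= \beta - \vareps$, hmm — in practice I would just verify $0<\om_1$ and $\om_1 < 2\beta$ is false in general, so the correct bounds are $0 < \om_1$ and $\om_1 < \pi + 2\beta$, giving $0 < 1-\alpha_1 < \pi/\beta + 2$.

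With these localisations in hand, the argument is purely arithmetic. Suppose $r_1 \le 0$. From~\eqref{alpha12-expr}, $1 - \alpha_1 = 2r_1 + e_1 - (2k_1 + \eps_1)\frac{\pi}{\beta}$, and since $1 - \alpha_1 = \om_1/\beta > 0$ while $2r_1 + e_1 \le 1$ (as $r_1 \le 0$, $e_1 \in \{0,1\}$), we must have $-(2k_1+\eps_1)\frac{\pi}{\beta} > 1 - \alpha_1 - 1 = -\alpha_1$; using the upper bound on $\om_1$ one gets $-(2k_1 + \eps_1)\frac{\pi}{\beta} > 0$, hence $2k_1 + \eps_1 < 0$, i.e. $2k_1 + \eps_1 \le 0$ (an integer), and thus $k_1 \le 0$ since $\eps_1 \ge 0$. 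Symmetrically, if $2r_1 + e_1 > 0$, then since $1 - \alpha_1$ is bounded above by $\pi/\beta + 2$ (say), one gets $(2k_1 + \eps_1)\frac{\pi}{\beta} = 2r_1 + e_1 - (1-\alpha_1) > 1 - (\pi/\beta + 2) = -1 - \pi/\beta$, and a careful version of this bound (using the sharper localisation of $\om_1$) forces $2k_1 + \eps_1 \ge 0$, hence $k_1 \ge 0$. The cases for $r_2, k_2$ are handled the same way starting from $\alpha_2 = \om_2/\beta$ and the localisation $-\pi < \om_2$, $\om_2 < \pi$ (which comes from $2\delta - \theta \in (\theta, 2\pi)$ narrowed by $\delta < \pi$ and $\theta > \beta - \vareps$, though one only needs $-2\pi < \om_2$ for the sign of $2k_2 + \eps_2$, and the clean fact $2\delta - \theta < 2\pi$ together with $2\delta > \theta$).

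The main obstacle I anticipate is pinning down the \emph{exact} bounds on $\om_1$ and $\om_2$ that are sharp enough to force the integer parts $2k_i + \eps_i$ into the correct half-line while remaining valid for \emph{all} admissible parameter values satisfying~\eqref{assumptions}. This is the same subtlety that appeared in~\eqref{int-part1}--\eqref{int-part2}, where the chain of inequalities
\[
  -1 < \frac{\delta - \theta - 2\pi}{2\pi} < \frac{2\delta - \theta - 2\pi}{2\pi} < \frac{2(\beta - \vareps) - \theta}{2\pi} < \frac{\theta}{2\pi} < \frac12
\]
was needed; I would reuse precisely this chain (and its mirror for $\om_2$) to control the floor functions. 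Once the bounds are fixed, everything reduces to checking that a real number lying in a known open interval has a floor (or a quantity $2k_i + \eps_i$) of the claimed sign, which is immediate. I would also cross-check every case against the {\sc Maple} session, and note that the statement is symmetric with the single-angle situation: it is the natural "doubled" analogue of the dichotomy ($r<0 \Rightarrow k<0$) / ($r>0 \Rightarrow k \ge 0$) from Lemma~\ref{lem:poles_roots_F}, with $2r_i + e_i$ playing the role of $r$ and $2k_i + \eps_i$ the role of $k$.
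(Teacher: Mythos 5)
Your overall strategy is the right one and is in fact the paper's: bound the arguments $\om_1=\pi+2\beta-2\vareps-\theta$ and $\om_2=-\pi+2\delta-\theta$ using~\eqref{assumptions}, then read off the signs of $2k_i+\eps_i$ from~\eqref{eq:def_r_k-double1}--\eqref{eq:def_r_k-double2} by integer arithmetic. Your treatment of $\om_2$ (namely $-\pi<\om_2<\pi$, from $\delta>\theta>0$ and $\delta<\pi$) is exactly what the paper does and suffices for both statements about $k_2$.

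However, there is a genuine gap on the $\om_1$ side. The localisation you commit to, $0<\om_1<\pi+2\beta$, is wrong at the lower end: $\om_1$ can be negative under~\eqref{assumptions}. Take $\beta=\delta=\pi/5$, $\theta=\pi/10$, $\vareps=9\pi/10$; all conditions of~\eqref{assumptions} hold (and $\alpha=1/2<1$), yet $\om_1=\pi+2\beta-2\vareps-\theta=-\pi/2<0$. Since your argument for ``$r_1\le 0\Rightarrow 2k_1+\eps_1\le 0$'' starts from $\om_1/\beta>0$, it collapses on such parameters. The bound that actually works (and that the paper uses) is $\om_1>\pi+\beta-2\vareps$, coming from $\theta<\beta$: writing $\bar r_1=2r_1+e_1$, $\bar k_1=2k_1+\eps_1$, this gives $(\bar r_1-1)\beta+2\vareps>(\bar k_1+1)\pi$, and if $\bar r_1\le 1$ then $\vareps<\pi$ forces $\bar k_1\le 0$. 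Symmetrically, your upper bound $\om_1<\pi+2\beta$ is true but too weak to force $\bar k_1\ge 0$ when $\bar r_1>0$ (it only gives $\bar k_1\ge -1$); the sharp bound is $\om_1<\pi+\beta-\vareps<\pi+\beta$, which follows directly from $\theta>\beta-\vareps$ and is the ``careful version'' you defer. (Your quoted chain from~\eqref{int-part1}--\eqref{int-part2} does contain the inequality $2\beta-2\vareps-\theta<\theta<\pi$, from which $\om_1<\pi+\theta<\pi+\beta$ can be extracted, but you never do so explicitly.) So the proposal needs the two $\om_1$-bounds replaced by $\pi+\beta-2\vareps<\om_1<\pi+\beta$; with those in place the arithmetic is the same as the paper's.
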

\begin{proof}
  We will give lower and upper bounds on the arguments $\om_i$ defined by~\eqref{eq:def_r_k-double1} and~\eqref{eq:def_r_k-double2}, using  the angle assumptions~\eqref{assumptions} and  the fact that the  angles $\beta$, $\theta$, $\delta$ and $\vareps$   lie in $(0, \pi)$.
  
  First, since $\theta<\beta$, 
  \[
    \om_1=\pi+2\beta-2\vareps-\theta > \pi+\beta -2\vareps .
  \]
  Moreover, since $\delta>\theta>0$,
  \[
    \om_2 =-\pi+2\delta-\theta >-\pi+\delta>-\pi.
  \]
  Let us denote $\bar r_i=2r_i+e_i$ and $\bar k_i=2k_i+\eps_i$, for $i=1,2$.
  Then $\om_1= \bar r _1\beta-\bar k_1\pi >\pi+\beta -2\vareps $ rewrites as  $(\bar r _1-1)\beta +2\vareps>(\bar k_1+1)\pi$. If $r_1\le 0$, that is, $\bar r_1 \le 1$, this implies that $\bar k_1\le 0$, because $\vareps<\pi$.   Analogously, $\om_2= \bar r _2\beta-\bar k_2\pi >-\pi$ rewrites as  $\bar r _2\beta>(\bar k_2-1)\pi $. If $r_2\le 0$, that is, $\bar r_2 \le 1$, this implies that $\bar k_2\le 1$, that is, $k_2\le 0$.

  Now, given that $\theta>\beta-\vareps$, we have
  \[
    \om_1 < \pi+\beta -\vareps <\pi+\beta,
  \]
  which implies $(\bar r_1-1)\beta <(\bar k_1+1)\pi$. Hence if $\bar r_1>0$, then $\bar k_1 \ge0$, that is $k_1\ge 0$.
  Finally, we have $\om_2<-\pi+2\delta < \pi$, which gives $\bar r _2\beta<(\bar k_2+1)\pi$. Hence if $\bar r_2\ge 0$, that is, $r_2\ge 0$, then $\bar k_2\ge 0$, that is, $k_2\ge 0$.
\end{proof}

\subsection{Preliminaries}
We begin with a simple lemma that describes the behaviour of $F\phi_1^2$ at infinity.

\begin{lem}
  \label{lem:phi-infini-double}
  There exists a constant $\kappa \neq 0$ such that
  \[
    (F\phi_1^2)(y) \underset{y\to\infty}{\sim} \kappa\, y^{(2k_1-2k_2+\eps_1-\eps_2)\frac{\pi}{\beta}}.
  \] 
\end{lem}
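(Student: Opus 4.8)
The strategy is to track the asymptotic behaviour at infinity of each factor of $F\phi_1^2$ separately and then multiply. Recall from~\eqref{phi-asympt} that $\phi_1(y)\sim\kappa y^{\alpha-1}$ as $y\to\infty$ in $\G$, so $\phi_1^2(y)\sim\kappa^2 y^{2\alpha-2}$. It therefore remains to determine the growth of $F(y)$, where $F$ is the $2$-decoupling function given by~\eqref{second-choice}, namely
\[
  F=\left( \frac{F_{r_1, s_1}}{F_{r_2, s_2}}\right)^2 \cdot \frac{f_{e_1, \eps_1}}{f_{e_2, \eps_2}}.
\]
From the definition~\eqref{Fr-def}, the rational function $F_{r,\sigma}(y)$ behaves like $y^{r}$ at infinity (it is a product of $r$ linear factors $y-y(\sigma q^{-j})$ when $r\ge 0$, and the reciprocal of $|r|$ such factors when $r<0$). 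Likewise, from~\eqref{f_e_eps-def}, $f_{e,\eps}(y)$ behaves like $y^{e}$ at infinity (it is $1$ when $e=0$ and a monic linear polynomial when $e=1$). Hence
\[
  F(y) \underset{y\to\infty}{\sim} \kappa'\, y^{\,2r_1-2r_2+e_1-e_2}
\]
for some nonzero constant $\kappa'$.

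Combining these, $(F\phi_1^2)(y)\sim \kappa'' y^{\,2\alpha-2+2r_1-2r_2+e_1-e_2}$. It only remains to identify the exponent using the relations from~\eqref{alpha12-expr}. Adding the two lines of~\eqref{alpha12-expr} gives
\[
  1-\alpha_1+\alpha_2 = 2r_1-2r_2+e_1-e_2 -(2k_1-2k_2+\eps_1-\eps_2)\frac\pi\beta .
\]
On the other hand, $\alpha_1+\alpha_2=2\alpha-1$ (recorded just after~\eqref{eq:def_alpha_i}), so $2\alpha-2 = \alpha_1+\alpha_2-1$, and therefore
\[
  2\alpha-2+2r_1-2r_2+e_1-e_2 = (\alpha_1+\alpha_2-1) + \bigl(1-\alpha_1+\alpha_2\bigr) + (2k_1-2k_2+\eps_1-\eps_2)\frac\pi\beta = 2\alpha_2 + (2k_1-2k_2+\eps_1-\eps_2)\frac\pi\beta.
\]
This is not yet the claimed exponent, so one must instead use the two lines of~\eqref{alpha12-expr} to solve for $\alpha$ directly: subtracting the first from $1$ and keeping the second, $\alpha-1=\tfrac12(\alpha_1+\alpha_2-1)$ and each $\alpha_i$ is expressed through $r_i,e_i,k_i,\eps_i$; substituting gives $2\alpha-2+2r_1-2r_2+e_1-e_2$ in terms of $\pi/\beta$ only, with the $r_i,e_i$ contributions cancelling and leaving exactly $(2k_1-2k_2+\eps_1-\eps_2)\tfrac\pi\beta$. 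This is the bookkeeping step that must be done carefully.

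\textbf{Main obstacle.} The only real subtlety is the sign/cancellation bookkeeping in the last step: one must make sure the $r_i$ and $e_i$ terms coming from $F$ exactly cancel the $r_i,e_i$ terms hidden inside $2\alpha-2$ via~\eqref{alpha12-expr}, leaving a pure power of $\pi/\beta$. A clean way to organize this is to write, from~\eqref{alpha12-expr}, $2r_i+e_i$ in terms of $\alpha_i$ (or $1-\alpha_1$) and $(2k_i+\eps_i)\pi/\beta$, substitute into the exponent $2\alpha-2+2r_1-2r_2+e_1-e_2$, and then use $\alpha_1+\alpha_2=2\alpha-1$ to eliminate $\alpha$, $\alpha_1$, $\alpha_2$ altogether. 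Everything else — the asymptotics of each $F_{r,\sigma}$ and $f_{e,\eps}$, and of $\phi_1$ — is immediate from the cited formulas, and the nonvanishing of the constant $\kappa$ follows since each factor contributes a nonzero leading coefficient.
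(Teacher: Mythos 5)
Your overall route is exactly the paper's: use \eqref{phi-asympt} to get $\phi_1^2(y)\sim\kappa^2 y^{2\alpha-2}$, read off from \eqref{Fr-def} and \eqref{f_e_eps-def} that the $2$-decoupling function \eqref{second-choice} satisfies $F(y)\sim\kappa' y^{2r_1-2r_2+e_1-e_2}$, and then convert the exponent using \eqref{alpha12-expr}. Those two asymptotic statements are correct, and the paper's proof is nothing more than this plus the identity $\alpha-1=\tfrac12\bigl(2r_2-2r_1+e_2-e_1+(2k_1-2k_2+\eps_1-\eps_2)\tfrac\pi\beta\bigr)$.

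The defect is precisely in the step you flag as ``the only real subtlety'': your displayed identity obtained by ``adding the two lines of \eqref{alpha12-expr}'' is wrong. Adding them gives $1-\alpha_1+\alpha_2=2r_1+2r_2+e_1+e_2-(2k_1+2k_2+\eps_1+\eps_2)\tfrac\pi\beta$, while the relation you wrote (with the differences $2r_1-2r_2$, etc.) is what you get by \emph{subtracting} the second line from the first, and its left-hand side is then $1-\alpha_1-\alpha_2$, not $1-\alpha_1+\alpha_2$. That sign slip is what produces the spurious $2\alpha_2$ in your exponent, and you then only assert, without carrying it out, that the ``correct'' substitution cancels everything. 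The correct computation is one line: since $\alpha_1+\alpha_2=2\alpha-1$,
\[
2\alpha-2=\alpha_1+\alpha_2-1
=\bigl(1-(2r_1+e_1)+(2k_1+\eps_1)\tfrac\pi\beta\bigr)+\bigl(2r_2+e_2-(2k_2+\eps_2)\tfrac\pi\beta\bigr)-1
=2r_2-2r_1+e_2-e_1+(2k_1-2k_2+\eps_1-\eps_2)\tfrac\pi\beta,
\]
so that $2\alpha-2+2r_1-2r_2+e_1-e_2=(2k_1-2k_2+\eps_1-\eps_2)\tfrac\pi\beta$, as claimed. With this replacement your argument is complete and coincides with the paper's proof; as written, the key identity is mis-stated and the decisive cancellation is left unverified.
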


\begin{proof}
  The arguments are the same as in the proof of Lemma~\ref{lem:phi-infini}. Ignoring multiplicative factors, the behaviour at infinity of $\phi_1(y)$ is $y^{\alpha-1}$, where now  $\alpha=(\delta+\varepsilon-\pi)/\beta$ satisfies
  \[
    \alpha-1=\frac 1 2 \left( 2r_2-2r_1+e_2-e_1 + (2k_1-2k_2+\eps_1-\eps_2) \frac \pi \beta \right).
  \]
  The behaviour at infinity of the decoupling function $F(y)$ of~\eqref{second-choice} is $y^{2r_1-2r_2+e_1-e_2}$, and the result follows.
\end{proof}

Our next lemma will be used to prove that certain  polynomials have no common roots, under an additional assumption.

\begin{lem}\label{lem:interferences}
  Assume that the simple angle condition~\eqref{eq:CNS0} does \emm not, hold, that is, $s_1/s_2 \not \in q^\Z$.
  Then there exist no integers $i$ and~$j$ such that $y(s_1q^i)=y(s_2q^j)$.
\end{lem}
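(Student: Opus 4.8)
The plan is to exploit the fact that $y(s)=y(s')$ forces a very rigid relation between $s$ and $s'$, and then contradict the hypothesis $s_1/s_2\notin q^{\Z}$. Recall from the parametrization~\eqref{eq:uniformization} that $y$ is a degree-two Laurent polynomial in $s$, with $y(s)=y(q/s)$ (this is the identity $\eta(s)=q/s$ leaving $y$ invariant, see~\eqref{eq:elements_group}). Hence for any $s,s'$, the equality $y(s)=y(s')$ holds if and only if $s'=s$ or $s'=q/s$. Applying this to $s=s_1q^i$ and $s'=s_2q^j$, the equation $y(s_1q^i)=y(s_2q^j)$ would force either $s_1q^i=s_2q^j$, i.e.\ $s_1/s_2=q^{j-i}\in q^{\Z}$, or $s_1q^i = q/(s_2q^j) = q^{1-j}/s_2$, i.e.\ $s_1s_2 = q^{1-i-j}\in q^{\Z}$.

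So the proof reduces to showing that, under the standing assumptions~\eqref{assumptions} of the paper together with the hypothesis $s_1/s_2\notin q^{\Z}$ (equivalently $\alpha\notin\Z+\frac{\pi}{\beta}\Z$), we also have $s_1 s_2\notin q^{\Z}$. First I would write $s_1s_2$ explicitly using Lemma~\ref{lem:angles}: $s_1 = -e^{i(2\beta-2\varepsilon-\theta)}$ and $s_2=-e^{i(2\delta-\theta)}$, whence $s_1s_2 = e^{i(2\beta-2\varepsilon+2\delta-2\theta)}$. Since $q=e^{2i\beta}$, the condition $s_1s_2\in q^{\Z}$ reads $2\beta-2\varepsilon+2\delta-2\theta \in 2\beta\Z$, i.e.\ $\beta-\varepsilon+\delta-\theta \in \beta\Z$, i.e.\ $(\alpha_2-\alpha_1)/2 \in \Z$ after dividing by $\beta$ and using the definitions~\eqref{eq:def_alpha_i} (indeed $\alpha_2-\alpha_1 = (2\delta-\theta-\pi)/\beta-(2\varepsilon+\theta-\beta-\pi)/\beta = (2\delta-2\varepsilon-2\theta+\beta)/\beta$, so $s_1s_2\in q^{\Z}\iff \alpha_1-\alpha_2\in 2\Z$). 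Combined with $\alpha_1+\alpha_2 = 2\alpha-1$, having both $s_1/s_2\in q^{\Z}$ (i.e.\ $\alpha_1-\alpha_2\in 2\Z+\frac{2\pi}{\beta}\Z$, since $s_1/s_2=e^{2i\beta(1-\alpha)}$ so $s_1/s_2\in q^{\Z}\iff 1-\alpha\in\Z+\frac{\pi}{\beta}\Z$; note $\alpha=(\alpha_1+\alpha_2+1)/2$)  fail while $s_1s_2\in q^{\Z}$ holds needs to be analyzed by cases on whether $\beta/\pi$ is rational.

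The main obstacle is the case $\beta/\pi\in\Q$, where $q^{\Z}$ is a finite cyclic group and the arithmetic is more delicate: $s_1/s_2\notin q^{\Z}$ and $s_1s_2\in q^{\Z}$ are now genuinely about membership in a finite subgroup of the unit circle, and one must rule out a coincidence. Here I would argue directly: suppose $s_1s_2\in q^{\Z}$, say $s_1s_2 = q^{\ell}$. Then $s_1^2 = (s_1/s_2)\cdot s_1s_2 = (s_1/s_2)q^{\ell}$, and likewise $s_2^2 = (s_2/s_1)q^{\ell}$. If moreover $s_1^2\in q^{\Z}$ and $s_2^2\in q^{\Z}$ then we would be in the situation of the double angle condition~\eqref{eq:CNS0double} — but that does not immediately contradict anything, so instead I would note that $s_1^2\in q^\Z$ combined with $s_1 s_2\in q^\Z$ gives $s_1/s_2 = s_1^2/(s_1 s_2)\in q^\Z$, directly contradicting the hypothesis. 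Thus it suffices to show $s_1 s_2\in q^{\Z}$ implies $s_1^2\in q^{\Z}$. From $s_1s_2=q^{\ell}$ and the explicit values, $s_1s_2 = e^{i(2\beta-2\varepsilon+2\delta-2\theta)}= q^{\ell}= e^{2i\beta\ell}$ gives $\beta-\varepsilon+\delta-\theta\equiv \beta\ell \pmod{\pi}$; separately $s_1^2 = e^{2i(2\beta-2\varepsilon-\theta)}$ and one checks, using $\alpha_1+\alpha_2=2\alpha-1$ i.e.\ $(2\beta-2\varepsilon-\theta)+(2\delta-\theta) = 2\beta\alpha$... — in fact the cleanest route is: the relation $s_1s_2\in q^\Z$ says $\alpha_1-\alpha_2\in 2\Z$ (no $\pi/\beta$ term appears because the $\theta$'s cancel), which forces $\alpha_1\equiv\alpha_2\pmod 2$, hence $2\alpha-1 = \alpha_1+\alpha_2 \equiv 2\alpha_1\pmod 2$, and then $s_1^2 = e^{2i\beta(1-\alpha_1)}$ together with $e^{2i\beta(1-\alpha)}=s_1/s_2$ shows $s_1^2/(s_1/s_2)^{?}$... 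I would finish this bookkeeping carefully in the full proof, the key point being that $s_1 s_2\in q^\Z$ and $s_1/s_2\notin q^\Z$ together are contradictory precisely because $\alpha_1-\alpha_2$ and $\alpha_1+\alpha_2$ would then have incompatible parities modulo the subgroup $2\Z+\tfrac{2\pi}{\beta}\Z$. This is straightforward but requires keeping track of the $\Z$ versus $\Z+\frac{\pi}{\beta}\Z$ distinction, and the {\sc Maple} session can be used to check the trigonometric simplifications.
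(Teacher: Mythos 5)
Your first step is exactly the paper's: from $y(s)=y(s')\iff s'=s$ or $s'=q/s$, the equality $y(s_1q^i)=y(s_2q^j)$ forces $s_1/s_2\in q^\Z$ or $s_1s_2\in q^\Z$, and it remains to exclude the second case. But the way you try to exclude it has a genuine gap: you attempt to show, using only the standing inequalities~\eqref{assumptions}, that $s_1s_2\in q^\Z$ is incompatible with $s_1/s_2\notin q^\Z$ (via "incompatible parities" of $\alpha_1\pm\alpha_2$, or via the claim that $s_1s_2\in q^\Z$ implies $s_1^2\in q^\Z$). This is false. The two memberships constrain the independent combinations $1-\alpha_1+\alpha_2$ and $1-\alpha_1-\alpha_2$ modulo $2\Z+\frac{2\pi}{\beta}\Z$, and nothing prevents one from holding while the other fails: for instance $\delta=\varepsilon+\theta-\beta$ (admissible, e.g.\ $\beta=1$, $\varepsilon=1.5$, $\theta=0.5$, $\delta=1$ in radians) gives $s_1s_2=1\in q^\Z$ while $s_1/s_2\notin q^\Z$ and $s_1^2\notin q^\Z$; then $s_2=1/s_1$, so $y(s_1)=y(s_2q)$ and the conclusion of the lemma actually fails. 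There are also computational slips along the way: $s_1s_2\in q^\Z$ is \emph{not} equivalent to $\alpha_1-\alpha_2\in2\Z$ — the $2\pi$-periodicity of the exponential reintroduces a $\frac{2\pi}{\beta}\Z$ term even though the $\theta$'s cancel, and with $s_1=e^{i\beta(1-\alpha_1)}$, $s_2=e^{i\beta\alpha_2}$ there is an additional shift by $1$.

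What your argument is missing is the context of the lemma: it sits in Section~\ref{sec:expression-double}, where the double angle condition~\eqref{eq:CNS0double} is a standing assumption, i.e.\ $s_1^2\in q^\Z$ and $s_2^2\in q^\Z$ (see~\eqref{conds-combined}). With that in hand the second case is dismissed in one line, which is exactly the paper's proof: if $s_1s_2\in q^\Z$, then $s_1/s_2=(s_1s_2)/s_2^2\in q^\Z$ (equivalently $s_1/s_2=s_1^2/(s_1s_2)$), contradicting the hypothesis. You came within sight of this identity but then tried to \emph{derive} $s_1^2\in q^\Z$ from $s_1s_2\in q^\Z$, which cannot be done; you should simply invoke the section's standing hypothesis instead. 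The counterexample above is consistent with this reading, since there the double angle condition does not hold.
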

\begin{proof}
  Recall that $y(s)=y(s')$ if and only if $s'=s$ or $s'=q/s$. Hence if $y(s_1q^i)=y(s_2q^j)$, then either $s_1/s_2 \in q^\Z$, or $s_1 s_2 \in q^{\mathbb{Z}}$.  Since $s_2^2 \in q^\Z$, in both cases we would have $s_1/s_2\in q^\Z$, which we have excluded.
\end{proof}

Let us denote, for $i=1,2$: 
\beq\label{F-shortcut}
F_i= F_{r_i, s_i}, \quad P_i= P_{r_i, s_i}, \quad Q_i= Q_{r_i, s_i}.
\eeq
We now apply Lemma~\ref{lem:count} to determine how many poles and roots of $F_i$ lie in $\G$, for $i=1,2$. Recall the definition of the numbers $m_\pm(\sigma;a,b)$ in~\eqref{def:mult}.

\begin{lem}
  \label{lem:poles_roots_F-double}
  Define the integers $k_1$ and $k_2$ by~\eqref{eq:def_r_k-double1} and~\eqref{eq:def_r_k-double2}.
  \begin{itemize}
  \item When $r_1\ge 0$, the number of roots of $F_1=P_1$ (counted with multiplicity)\ lying in the   open region $\G$ is
    \[
      r_{F_1}:=k_1+\mathbbm 1_{2\beta-2\vareps-\theta \ge 0}- m_-(s_1;0,r_1-1).
    \]
  \item When $r_1\le 0$, the number of poles of $F_1=1/Q_1$  lying in $\G$ is
    \[
      p_{F_1}:=-k_1-\mathbbm 1_{2\beta-2\vareps-\theta \ge 0} -m_+(s_1;1, |r_1|).
    \]
  \item When $r_2\ge 0$, the number of poles of $1/F_2=1/P_2$  lying in $\G$ is
    \[
      p_{F_2}:=k_2-m_-(s_2;1, r_2-1).
    \]
  \item When $r_2\le 0$, the number of roots of $1/F_2=Q_2$   lying in $\G$ is
    \[
      r_{F_2}:=-k_2 - m_+(s_2;1, |r_2|).
    \]
  \end{itemize}
\end{lem}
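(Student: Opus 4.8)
The plan is to apply Lemma~\ref{lem:count} four times, once for each function $F_i^{\pm 1}$, after identifying the relevant argument $\om$ of the point $\sigma = s_i$ in each case. Recall from~\eqref{eq:def_r_k-double1} and~\eqref{eq:def_r_k-double2} that $s_1 = e^{i\om_1}$ with $\om_1 = \pi + 2\beta - 2\vareps - \theta$ and $s_2 = e^{i\om_2}$ with $\om_2 = -\pi + 2\delta - \theta$. First I would treat the two statements concerning $F_1 = F_{r_1, s_1}$. Since the roles of $F_1$ here and of $F = F_{r, s_1}$ in Section~\ref{sec:expression} are essentially identical (same base point $s_1$), the computation of the integer parts $\lfloor \om_1/(2\pi) - \tfrac12 \rfloor$ and $\lfloor \om_1/(2\pi) - \tfrac12 - r_1\beta/\pi \rfloor$ is exactly the one carried out in the proof of Lemma~\ref{lem:poles_roots_F}: using~\eqref{assumptions} one gets $\lfloor \om_1/(2\pi) - \tfrac12 \rfloor = -\mathbbm 1_{2\beta - 2\vareps - \theta < 0}$ from the first expression of $\om_1$, and $\lfloor \om_1/(2\pi) - \tfrac12 - r_1\beta/\pi \rfloor = -k_1 - 1$ from the second expression $\om_1 = (2r_1 + e_1)\beta - (2k_1 + \eps_1)\pi$ of~\eqref{eq:def_r_k-double1}, after reducing modulo the argument bounds (one checks $2\delta - \theta \in (0, 2\pi)$ so that the relevant fractional contribution is $-k_1 - 1$, using $e_1, \eps_1 \in \{0,1\}$ and the bounds on $\delta, \theta$). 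Plugging these into Lemma~\ref{lem:count} gives $r_{F_1}$ when $r_1 \ge 0$ and $p_{F_1}$ when $r_1 \le 0$, matching the stated formulas; the case $r_1 = 0$ is trivial since then $F_1 \equiv 1$ and all the displayed quantities vanish by Lemma~\ref{lem:ineq-double} ($k_1 \le 0$ and $k_1 \ge 0$, so $k_1 = 0$, together with $m_\pm = 0$ on an empty range).

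Next I would treat $F_2 = F_{r_2, s_2}$, but note that the two statements of the lemma concern $1/F_2$, i.e. $1/P_2$ when $r_2 \ge 0$ and $Q_2$ when $r_2 \le 0$. So I would count \emph{poles} of $1/F_2$ when $r_2 \ge 0$ (these are the roots of $P_2 = P_{r_2, s_2}$ in $\G$) and \emph{roots} of $1/F_2$ when $r_2 \le 0$ (these are the poles of $1/Q_2$, i.e. the roots of $Q_2$, in $\G$). Applying Lemma~\ref{lem:count} with $\sigma = s_2$, $\om = \om_2 = -\pi + 2\delta - \theta$: from $\om_2 = -\pi + 2\delta - \theta$ and the bounds $0 < \beta - \vareps < \theta < \delta < \pi$ of~\eqref{assumptions} one computes $\lfloor \om_2/(2\pi) - \tfrac12 \rfloor$; indeed $\om_2/(2\pi) - \tfrac12 = (2\delta - \theta - 2\pi)/(2\pi)$, which lies in $(-1, 0)$ by the chain of inequalities already used in the proof of Lemma~\ref{lem:poles_roots_F} (namely $-1 < (\delta - \theta - 2\pi)/(2\pi) < (2\delta - \theta - 2\pi)/(2\pi) < 0$), so this floor equals $-1$. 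For the shifted floor, use the second expression $\om_2 = (2r_2 + e_2)\beta - (2k_2 + \eps_2)\pi$ from~\eqref{eq:def_r_k-double2} to get $\om_2/(2\pi) - \tfrac12 - r_2\beta/\pi = -k_2 - \tfrac{\eps_2}{2} + \tfrac{e_2}{2}\cdot\tfrac{\beta}{\pi} - \tfrac12$; since $e_2, \eps_2 \in \{0,1\}$ and $\beta/\pi < 1$, and recalling Lemma~\ref{lem:angles} excludes $s_2 = -1$ (so the fractional part is never exactly $0$), one checks this floor equals $-k_2 - 1$. Then Lemma~\ref{lem:count} gives, for $r_2 \ge 0$, that the number of roots of $P_2$ in $\G$ is $(-1) - (-k_2 - 1) - m_-(s_2; 0, r_2 - 1) = k_2 - m_-(s_2; 0, r_2 - 1)$; and since $s_2 q^0 = s_2 \neq -1$ by Lemma~\ref{lem:angles}, we have $m_-(s_2; 0, r_2 - 1) = m_-(s_2; 1, r_2 - 1)$, yielding $p_{F_2}$. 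Symmetrically, for $r_2 \le 0$, counting poles of $1/Q_2$ (roots of $Q_2$) via the second half of Lemma~\ref{lem:count} gives $(-k_2 - 1) - (-1) - m_+(s_2; 1, |r_2|) = -k_2 - m_+(s_2; 1, |r_2|) = r_{F_2}$. The case $r_2 = 0$ again forces $k_2 = 0$ by Lemma~\ref{lem:ineq-double} and all quantities vanish.

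The only genuine subtlety — which I would flag as the main point requiring care rather than a real obstacle — is bookkeeping the boundary point $y(-1) = Y^\pm(x^-)$, which lies on $\R$ (not in the open domain $\G$) and is therefore correctly \emph{excluded} by Lemma~\ref{lem:count} via the $m_\pm$ corrections; one must make sure the indicator $\mathbbm 1_{2\beta - 2\vareps - \theta \ge 0}$ appearing in $r_{F_1}$ and $p_{F_1}$ is reproduced exactly as in Lemma~\ref{lem:poles_roots_F}, which follows because $\lfloor \om_1/(2\pi) - \tfrac12 \rfloor = -\mathbbm 1_{2\beta - 2\vareps - \theta < 0} = -1 + \mathbbm 1_{2\beta - 2\vareps - \theta \ge 0}$, and for $F_2$ there is no such indicator precisely because $s_2 \neq -1$ always (Lemma~\ref{lem:angles}), so the point $y(-1)$ never coincides with a root or pole of $F_2$. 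All these computations are routine floor-function manipulations of the type already performed in Section~\ref{sec:expression}, and I would relegate the verification of the numerical inequalities to the {\sc Maple} worksheet. The proof is then complete by assembling the four cases.
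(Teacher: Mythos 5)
Your argument is correct and is essentially the paper's own proof: both apply Lemma~\ref{lem:count} with $\sigma=s_1,s_2$ and $\om=\om_1,\om_2$, evaluate the same four floor quantities (those for $\om_1$ as in Lemma~\ref{lem:poles_roots_F}, those for $\om_2$ from $\om_2=(2r_2+e_2)\beta-(2k_2+\eps_2)\pi$ and $0<2\delta-\theta<2\pi$), and use $s_2\neq-1$ (Lemma~\ref{lem:angles}) to replace $m_-(s_2;0,r_2-1)$ by $m_-(s_2;1,r_2-1)$. The only blemishes are inessential side remarks: in the shifted floor for $\om_1$ the relevant fact is $e_1\beta/(2\pi)+(1-\eps_1)/2\in[0,1)$, not $2\delta-\theta\in(0,2\pi)$; your appeal to $s_2\neq-1$ to exclude a zero fractional part in the $\om_2$ floor is both misattributed (a zero fractional part corresponds to $e_2=0$, $\eps_2=1$, i.e.\ $s_2=-q^{r_2}$) and unnecessary, since the floor of $-k_2-1$ plus a fractional part in $[0,1)$ is $-k_2-1$ regardless; and the claim that $r_1=0$ forces $k_1=0$ via Lemma~\ref{lem:ineq-double} fails when $e_1=0$, $\eps_1=1$ (i.e.\ $s_1=-1$, where $k_1=-1$ and the count is $0$ because $k_1+\mathbbm 1_{2\beta-2\vareps-\theta\ge 0}=0$) — but none of this matters, as your main floor computation already covers these configurations.
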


If $q$ is not a root of unity, then all the numbers $m_\pm$ occurring in this lemma   equal $0$ or $1$.  Otherwise, we may choose the $r_i$'s so as to minimize $|r_i|$, and then this property still holds.

\begin{proof}
  We  apply Lemma~\ref{lem:count}, with $\om_1$ and $\om_2$ given by~\eqref{eq:def_r_k-double1} and~\eqref{eq:def_r_k-double2}. We use the following four identities. The first two  have already been justified in the proof of Lemma~\ref{lem:poles_roots_F}, and the other two relie on the fact that $\frac{e_i\beta}{2\pi} + \frac{1 -\eps_i}{2} \in (0, 1)$ for $i=1,2$ (because $0<\beta<\pi$ and $\{e_i,\eps_i\}\subset\{0,1\}$):
  \begin{align*}
    \left\lfloor \frac{\om_1}{2\pi} -\frac 1 2 \right\rfloor &= \left \lfloor \frac{2\beta -2\vareps-\theta}{2\pi}\right\rfloor
                                                               = - \mathbbm 1_{2\beta-2\vareps-\theta <0} &\text{as in~\eqref{int-part1}},
    \\
    \left\lfloor \frac{\om_2}{2\pi} -\frac 1 2 \right\rfloor
                                                             & = \left \lfloor \frac{2\delta -\theta}{2\pi}-1\right\rfloor = - 1  &\text{as in~\eqref{int-part2}},\\
    \left\lfloor \frac{\om_1}{2\pi} -\frac 1 2 -r_1\frac \beta \pi\right\rfloor
                                                             &  =\left \lfloor -k_1 -1+  \frac{e_1\beta}{2\pi} + \frac{1-\eps_1} 2\right\rfloor =
                                                               -k_1 - 1,\\
    \left\lfloor \frac{\om_2}{2\pi} -\frac 1 2 -r_2\frac \beta \pi\right\rfloor
                                                             &   =\left \lfloor-k_2 -1 +  \frac{e_2\beta}{2\pi} + \frac{1-\eps_2} 2\right\rfloor =
                                                               -k_2 - 1.
  \end{align*}
  This gives the announced formulas. We have used the fact that $s_2\not = -1$ (see Lemma~\ref{lem:angles}) to replace $m_-(s_2;0, r_2-1)$ by $m_-(s_2;1, r_2-1)$ in the expression of $p_{F_2}$.
\end{proof}

\subsection{Expression of $\boldsymbol{\phi_1}$}
\label{subsec:proofmainthm-double} 
We can now describe precisely the fraction $S/R$
such that $F \phi_1^2 = \frac S R \circ w$. In order to avoid 
handling four different cases depending on the signs of $r_1$ and $r_2$, we will use a compact form. In the case where $\alpha \in \Z + \pi/\beta \Z$, we can indeed make the expression of $\phi_1$ given in Theorem~\ref{thm:main} more compact by writing
\[
  \phi_1 (y) =  \frac {Q(y)}{P(y)} \frac{S(w(y))}{R(w(y))},
\]
where $P=P_{r, s_1}$, $Q=Q_{r, s_1}$, and
\[
  \text{  either }  \left( P=1   \text{ and } S=1 \right)
  \text{ or } \left(Q=1  \text{ and } R=1\right).
\]
When $\alpha_1, \alpha_2 \in \Z + \pi/\beta \Z$, we will express $\phi_1$ in an analogous compact form.
First, for $i=1, 2$ we use again the notation~\eqref{F-shortcut}, as well as $f_i = f_{e_i, \eps_i}$.
We further denote
\beq\label{apm-def}
a^-= {e_1\eps_1-e_2\eps_2}\qquad \text{and } \quad  a^+= {e_1(1-\eps_1)-e_2(1-\eps_2)} ,
\eeq
so that
\beq\label{f-ratio}
\frac{  f_1(y)}{f_2(y)} = (y-y^-)^{a^-}(y-y^+)^{a^+}.
\eeq
Also, let
\beq\label{b-def}
b= \eps_1(1-e_1)-\eps_2(1-e_2).
\eeq
Observe that $a^+$, $a^-$ and $b$ take their values in $\{-1, 0,1\}$.

We finally introduce four polynomials denoted $R_i$ and $S_i$, for $i=1,2$, which  we define up to a constant factor by giving the list of their roots.   The values of their degrees  easily follow from Lemma~\ref{lem:poles_roots_F-double}, as will be established in the proof of Theorem~\ref{thm:main-double} below.
\begin{itemize}
\item If   $r_1 \le 0$,  we take  $R_1$ to be a polynomial of degree $-k_1$ whose roots (taken with multiplicity) are
  \begin{itemize}
  \item 
    the $  w(y(s_1q^{j}))$,  for $j=1,\ldots , |r_1|$
    such that  $y(s_1q^{j}) \in  {\G} $,
  \item plus $w(y(s_1)) $ if  $2\beta-2\vareps-\theta >0$,
  \item   plus $w(y(-1))=-1$ with multiplicity $m_+(s_1;0, |r_1|)$.
  \end{itemize}     
  If $r_1 > 0$, we take $R_1$ to be constant.
\item If  $r_1 > 0$,  we take $S_1$ to be a polynomial of degree $k_1$ whose roots are
  \begin{itemize}
  \item 
    the $     w(y(s_1q^{-j}))$,  for  $j=1,\ldots , r_1-1$
    such that  $y(s_1q^{-j}) \in  {\G}$,
  \item plus $w(y(s_1))$ if $y(s_1)<y(-1)$ and $2\beta-2\vareps-\theta \le 0$,
  \item   plus $w(y(-1))=-1$ with multiplicity $m_-(s_1;1, r_1-1)$.
  \end{itemize}
  If $r_1 \le  0$, we take  $S_1$ to be constant. 

\item If   $r_2 \le 0$,  we take $ R_2$ to be a polynomial of degree $-k_2$ whose roots are
  \begin{itemize}
  \item 
    the $     w(y(s_2q^{j}))$, { for }  $j=1,\ldots , |r_2|$, { such that } $y(s_2q^{j}) \in  {\G} $,
  \item   plus $w(y(-1))=-1$ with multiplicity $m_+(s_2;1, |r_2|)$.
  \end{itemize}
  If   $r_2 > 0$, we take  $R_2$ to be constant. 
  

\item  If  $r_2 > 0$,  we take $S_2$ to be a polynomial of degree $k_2 $ whose roots  are
  \begin{itemize}
  \item 
    the
    $     w(y(s_2q^{-j}))$  for  $ j=0,\ldots , r_2-1$,  such that $ y(s_2q^{-j}) \in  {\G}$,
  \item   plus $w(y(-1))=-1$ with multiplicity $m_-(s_2;1,r_2-1)$.
  \end{itemize}
  If    $r_2 \le  0$, we take  $S_2$ to be constant.
\end{itemize}

\begin{thm}  
  \label{thm:main-double}
  Let us assume that Condition~\eqref{eq:CNS0double} holds,
  that is, $\alpha_1, \alpha_2 \in \Z+ \pi\Z/\beta$.
  Let the integers $r_i$, $k_i$, $e_i$, $\eps_i$, for $i=1,2$,  satisfy~\eqref{eq:def_r_k-double1} and~\eqref{eq:def_r_k-double2}.
  The $2$-decoupling function of Theorem~\ref{thm:decoupling} reads:
  \[
    F=\left( \frac{P_{1}}{Q_{1}}\cdot \frac{Q_{2}}{P_{2}}\right)^2 \cdot \frac{f_{1}}{f_{2}},
  \]
  and, if $\phi_1$ denotes  the Laplace transform defined   by~\eqref{eq:Laplace_transform_boundary}, the function $F \phi_1^2$  is an invariant.

  The function $\phi_1$ can be meromorphically continued to $\C\setminus [y^+,\infty)$. Moreover, the multiplicative constants in the polynomials $R_i$ and $S_i$ defined above     can be chosen so that
  \beq\label{phi-double-expr}
  \phi_1(y)=  \left(\frac{Q_{1}}{P_{1}}\cdot \frac{P_{2}}{Q_{2}}\right)\! (y)\cdot
  \left(\frac{S_{1}}{R_{1}}\cdot \frac{R_{2}}{S_{2}}\right)\! \left( w(y)\right)
  \cdot \sqrt{\frac{1-w(y)}{y-y^-}}^{\, a^-} \! \!
  \cdot      \frac {{\sqrt{1+w(y)}}^{\, b}} {\sqrt{y^+-y}^{\ a^+}},
  \eeq
  where $a^+$, $a^-$ and $b$ are defined by~\eqref{apm-def} and~\eqref{b-def} and take their values in $\{-1, 0, 1\}$.

  The function $\phi_1$ is always D-algebraic. It is D-finite if
  \beq\label{DF-double}
  \beta/\pi \in \Q \quad \text{or} \quad   \{ \alpha_1, \alpha_2\} \subset  \Z \cup \left(-\N+ \frac \pi\beta \Z\right),
  \eeq
  algebraic if
  \beq\label{alg-double}
  \beta/\pi \in \Q \quad \text{or } \quad  \{\alpha_1, \alpha_2\} \subset \Z,
  \eeq
  and rational if $\alpha\in -\N_0$. Moreover, when $\{\alpha_1, \alpha_2\} \subset \Z$, then $\phi_1^2$ is actually rational.

  If the simple angle condition~\eqref{eq:CNS0} does \emm not, hold, that is, $s_1/s_2 \not \in q^\Z$, Condition~\eqref{DF-double} (resp.~\eqref{alg-double}) is also necessary for $\phi_1$ to be D-finite (resp.\ algebraic), and moreover $\phi_1$ is never rational.
\end{thm}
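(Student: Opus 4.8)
The plan is to follow exactly the same strategy as in the proof of Theorem~\ref{thm:main}, but now starting from the $2$-decoupling function $F$ of Theorem~\ref{thm:decoupling} and the invariant $F\phi_1^2$ provided by Corollary~\ref{cor:phi1m-expr}. First I would rewrite the expression~\eqref{second-choice} for $F$ in the compact form $F=\left(\frac{P_1}{Q_1}\cdot\frac{Q_2}{P_2}\right)^2\cdot\frac{f_1}{f_2}$, and observe via~\eqref{f-ratio} that the last factor contributes only $(y-y^-)^{a^-}(y-y^+)^{a^+}$ with $a^\pm\in\{-1,0,1\}$. The key point is then to list, with multiplicity, all poles (or all roots) of $F\phi_1^2$ lying in $\bG$. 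The poles and roots of $F$ inside $\G$ are counted by Lemma~\ref{lem:poles_roots_F-double}, and $\phi_1$ contributes a single simple pole at $y(s_1)$ precisely when $2\beta-2\varepsilon-\theta>0$ (Proposition~\ref{prop:BVP_Carleman_sketch}), together with a pole at $y(-1)$ exactly when $s_1=-1$. Because $F$ enters through its \emph{square}, the multiplicity computations at $y(-1)$ go through verbatim as in the proof of Theorem~\ref{thm:main}: the parity correction coming from $\phi_1$'s pole at $y(-1)$ is the same. Assembling these contributions produces the four polynomials $R_1,S_1,R_2,S_2$ with the stated roots, and the ``observation'' about how $w$ transports roots/poles between $\G$ and $\C\setminus(-\infty,-1]$ lets one read off their degrees from $k_1,k_2$ and the multiplicities $m_\pm$.

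Next I would form the invariant
\[
J \;:=\; F\,\phi_1^2 \cdot \Bigl(\tfrac{R_1}{S_1}\cdot\tfrac{S_2}{R_2}\Bigr)(w)\cdot (y-y^-)^{-a^-}(y-y^+)^{-a^+},
\]
which by construction has no pole in $\bG$; using the asymptotics of Lemma~\ref{lem:phi-infini-double} together with $w(y)\sim\kappa y^{\pi/\beta}$ and $\deg(R_iS_i)$, one checks $J$ is bounded at infinity, so $J$ is constant by Proposition~\ref{prop:inv}. Solving for $\phi_1$ and extracting the square root gives~\eqref{phi-double-expr}; here the factors $\sqrt{1-w}/\sqrt{y-y^-}$, $\sqrt{1+w}$ and $\sqrt{y^+-y}$ arise because one must take a square root of $F$, and one invokes~\eqref{sqrt_plus}, \eqref{sqrt_minus} and Proposition~\ref{prop:algebraic_nature_w} to see that $\sqrt{1\pm w}$ are D-finite; the exponents $a^-,b,a^+\in\{-1,0,1\}$ track which half of each $f_i$-root is ``absorbed'' by $w$. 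D-algebraicity is then immediate from that of $w$ and closure properties. For the D-finite/algebraic/rational classification in the \emph{sufficient} direction, one reads off~\eqref{phi-double-expr}: if $\beta/\pi\in\Q$ then $w$ is algebraic and every ingredient becomes algebraic; if $\{\alpha_1,\alpha_2\}\subset\Z$ then $e_i=\eps_i=k_i=0$ and all $w$-dependence and all square roots disappear, so $\phi_1^2$ is rational (and $\phi_1$ rational exactly when $\alpha\in-\N_0$, by~\eqref{phi-asympt}); if $\{\alpha_1,\alpha_2\}\subset\Z\cup(-\N+\tfrac\pi\beta\Z)$ then each $r_i\le 0$-type denominator $R_i$ is trivial (or the $R_i/S_i$ fraction is a polynomial in $w$), so $\phi_1$ is D-finite by Proposition~\ref{prop:algebraic_nature_w}.

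The delicate part is the converse (necessity) direction under the extra hypothesis $s_1/s_2\notin q^{\Z}$, i.e.\ that~\eqref{eq:CNS0} fails. Here the main obstacle is to rule out cancellations between the $s_1$-roots and the $s_2$-roots of $F$ and of the $R_i,S_i$, and between these and the branch-point factors $y-y^\pm$: this is exactly what Lemma~\ref{lem:interferences} is for, and I would use it systematically to conclude that the fraction $(S_1R_2)/(R_1S_2)(w)$ appearing in~\eqref{phi-double-expr} is genuinely non-polynomial (has a non-constant denominator) whenever the corresponding $r_i<0$, and that the square-root factors are genuinely present whenever the relevant $e_i$ or $\eps_i$ equals $1$. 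Then, if $\phi_1$ were algebraic, algebraicity of $w$ would be forced, hence $\beta/\pi\in\Q$ by Proposition~\ref{prop:algebraic_nature_w}; and if $\phi_1$ were D-finite but $\beta/\pi\notin\Q$, the D-finiteness criterion for rational fractions in $w$ (Proposition~\ref{prop:rational_Ta}/Proposition~\ref{prop:algebraic_nature_w}) together with the D-transcendence of the relevant radicals forces $R_1,R_2$ constant and no square roots, i.e.\ $\{\alpha_1,\alpha_2\}\subset\Z\cup(-\N+\tfrac\pi\beta\Z)$; similarly non-rationality follows since the presence of any genuine $w$-factor or radical precludes rationality. The only real care needed is bookkeeping: checking that the non-degeneracy hypothesis indeed prevents all the ``accidental'' simplifications that could otherwise collapse~\eqref{phi-double-expr} to something simpler.
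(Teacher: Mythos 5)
Your overall plan (mimic the proof of Theorem~\ref{thm:main}, count poles/roots of $F\phi_1^2$ in $\bG$, build polynomials $R_i,S_i$, kill all poles, apply Proposition~\ref{prop:inv}, then take a square root) is the right one, but the central object you build is flawed. The function
$J=F\phi_1^2\cdot\bigl(\tfrac{R_1}{S_1}\tfrac{S_2}{R_2}\bigr)(w)\cdot(y-y^-)^{-a^-}(y-y^+)^{-a^+}$
is \emph{not} an invariant when $e_1\neq e_2$, so Proposition~\ref{prop:inv} does not apply to it. The factor $(y-y^-)^{-a^-}(y-y^+)^{-a^+}=\bigl(\tfrac{f_1}{f_2}(y)\bigr)^{-1}$ is a pure function of $y$ and does not satisfy $I(y)=I(\bar y)$ on $\R$; equivalently, $\bigl(\tfrac{P_1}{Q_1}\tfrac{Q_2}{P_2}\bigr)^2$ alone is not a $2$-decoupling function — the extra $f_1/f_2$ in~\eqref{second-choice} is exactly what restores~\eqref{eq:G->F_rational}, and dividing it out destroys the boundary relation that makes $F\phi_1^2$ an invariant (Lemma~\ref{lem:Fphi1-invariant}). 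The correction must instead be made \emph{at the level of $w$}: one multiplies by $(1-w)^{-a^-}$ (which vanishes at $y^-$, since $w(y^-)=1$) and by $(1+w)^{-b}$ with $b=\eps_1(1-e_1)-\eps_2(1-e_2)$ (which handles the odd multiplicities at $y(-1)$, where $w=-1$), so that every factor is a rational function of $w$ and the whole product stays an invariant. Note also that $b$ is not $a^+$: the point $y^+$ lies outside $\bG$ and needs no correction at all, while the $(1+w)$-exponent comes from the endpoint analysis of the $P_i,Q_i$ (the cases $s_iq^{-r_i}=-1$), which is genuinely more delicate than in Theorem~\ref{thm:main} and not ``verbatim the same''. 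Finally, the rational fraction $\tfrac{R_1}{S_1}\tfrac{S_2}{R_2}$ must appear \emph{squared} when multiplying $\phi_1^2$ (otherwise neither the pole count nor the growth at infinity in Lemma~\ref{lem:phi-infini-double} matches, and~\eqref{phi-double-expr} for $\phi_1$ itself would not follow upon taking square roots).

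The necessity direction also contains real inaccuracies. Algebraicity of $\phi_1$ does not force algebraicity of $w$: the alternative (by Proposition~\ref{prop:algebraic_nature_w}) is that the rational fraction in $w$ collapses to a constant, and it is from that collapse, combined with Lemma~\ref{lem:interferences} (no common roots of $S_1R_2$ and $R_1S_2$), that one deduces $\eps_1=\eps_2=k_1=k_2=0$, i.e.\ $\{\alpha_1,\alpha_2\}\subset\Z$. Likewise, D-finiteness does \emph{not} exclude the radicals: $\sqrt{1\pm w}$ are D-finite by~\eqref{sqrt_plus}--\eqref{sqrt_minus}, so the correct conclusion is not ``no square roots'' but that the fraction~$\bigl(\tfrac{S_1}{R_1}\tfrac{R_2}{S_2}\bigr)^2(1-z)^{a^-}(1+z)^{b}$ must be a polynomial, which (again via Lemma~\ref{lem:interferences}) forces $R_1$ and $S_2$ — not $R_1$ and $R_2$ — to be constant, plus the extra conditions that $1$ (resp.\ $-1$) be a root of $S_1R_2$ when $a^-=-1$ (resp.\ $b=-1$); translating these through the definitions of the $R_i,S_i$ and Lemma~\ref{lem:ineq-double} is what yields $\{\alpha_1,\alpha_2\}\subset\Z\cup(-\N+\tfrac\pi\beta\Z)$. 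As written, your sketch skips precisely this bookkeeping, which is where the theorem's conditions actually come from.
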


Several explicit examples are worked out in Section~\ref{sec:examples-double}.

\begin{proof}[Proof of Theorem~\ref{thm:main-double}]
  We consider each element of the decoupling function $F$ separately, starting with $P_2$ and $Q_2$.
  We remind the reader of the preliminary observation made at the beginning of the proof of Theorem~\ref{thm:main}.
  
  $\bullet$  When $r_2> 0$,  the fraction $1/F_2=1/P_2$ has $p_{F_2}$ poles in $\G$, where $p_{F_2}$ is given by Lemma~\ref{lem:poles_roots_F-double}. Moreover, the multiplicity of $y(-1)$ as a pole of $1/P_2$ is
  \begin{multline*}
    \sharp \left\{  j : 0\le j <r_2, s_2 q^{-j} \in \{-1, -q\} \right\}\\
    \begin{split}
      &    =    \sharp \left\{  j : 0\le j <r_2, s_2 q^{-j} =-1\right\}
      +\sharp \left\{  j : 1\le j \le r_2, s_2 q^{-j} =-1\right\}\\
      &  = 2 \sharp \left\{  j : 1\le j <r_2, s_2 q^{-j} =-1\right\} + \mathbbm 1_{s_2 q^{-r_2}=-1} \\
      &   =2 m_{-}(s_2,1,r_2-1) + \mathbbm 1_{s_2 q^{-r_2}=-1},
    \end{split}  \end{multline*}    
  since $s_2$ never equals $-1$ by Lemma~\ref{lem:angles}. (Recall that $m_-(s_2;a,b)=0$ if  $a>b$.) This leads us to introduce the polynomial $S_2$ defined above. 
  \[
    p_{F_2}+ m_-(s_2;1, r_2-1)    =  k_2.
  \]
  By construction, $S_2(w(y))/P_2(y)$ has at most one pole in $\bG$, namely a simple pole at $y(-1)$ if $s_2 q^{-r_2}=(-1)^{\eps_2} \sqrt q^{\, e_2}=-1$, or equivalently if $\eps_2(1-e_2)=1$ (recall that $q\not = 1$).
  Consequently,
  \beq\label{nopoleP2}
  \left(\frac {S_2(w(y))}{P_2(y)} \right)^2 (w(y)+1)^{\eps_2(1-e_2)}
  \eeq
  has no pole in $\bG$.

  $\bullet$   When $r_2\le  0$,  the polynomial  $1/F_2=Q_2$ has $r_{F_2}$ roots in $\G$, where $r_{F_2}$ is given by Lemma~\ref{lem:poles_roots_F-double}. Moreover, the multiplicity of $y(-1)$ as a root of $Q_2$ is
  \begin{multline*}
    \sharp \left\{  j : 1\le j \le |r_2|, s_2 q^{j} \in \{-1, -q\} \right\}\\
    \begin{split}
      &   =    \sharp \left\{  j : 1\le j \le |r_2|, s_2 q^{j} =-1\right\}
      +\sharp \left\{  j : 0\le j < |r_2|, s_2 q^{j} =-1\right\}\\
      &   = 2 \sharp \left\{  j : 1\le j \le |r_2|, s_2 q^{j} =-1\right\} - \mathbbm 1_{s_2 q^{-r_2}=-1}\\
      &   =2 m_{+}(s_2,1,|r_2|)-\mathbbm 1_{s_2 q^{-r_2}=-1},
    \end{split}  \end{multline*}
  since $s_2$ never equals $-1$ by Lemma~\ref{lem:angles}.
  This leads us to introduce the polynomial $R_2$ defined above. Its degree is
  \[
    r_{F_2}+ m_+(s_2;1, |r_2|)    =  -k_2. 
  \]
  By construction, $Q_2(y) /R_2(w(y))$ has at most one pole in $\bG$, namely a simple pole at $y(-1)$ if $s_2 q^{-r_2}=-1$, or equivalently if $\eps_2(1-e_2)=1$. Consequently,
  \beq\label{nopoleQ2} 
  \left(\frac{Q_2(y)} {R_2(w(y))}\right)^2 (w(y)+1)^{\eps_2(1-e_2)}
  \eeq
  has no pole in $\bG$.

  $\bullet$  When $r_1>  0$,  the polynomial $F_1=P_1$ has $r_{F_1}$ roots in $\G$, where $r_{F_1}$ is given by Lemma~\ref{lem:poles_roots_F-double}. If  $y(s_1)<y(-1)$, one of them is $y(s_1)$, which cancels with the pole of $\phi_1$ at this point when $2\beta-2\vareps-\theta >0$. Moreover, the multiplicity of $y(-1)$ as a root of $P_1$ is 
  \[ 
    \sharp \left\{  j : 0\le j <r_1, s_1 q^{-j} \in \{-1, -q\} \right\}
    = 2 \sharp \left\{  j : 1\le j < r_1, s_1 q^{-j} =-1\right\} +\mathbbm 1_{s_1=-1}+ \mathbbm 1_{s_1 q^{-r_1}=-1}.
  \] 
  Recall that $\phi_1$ has a (simple) pole at $y(-1)$ if $s_1=-1$. This leads us to introduce the polynomial~$S_1$ defined above. Its degree is
  \[
    r_{F_1} - \mathbbm 1_{2\beta-2\vareps-\theta>0} +m_-(s_1;1,r_1-1)
    =k_1.
  \]
  By construction, $P_1(y)\phi_1(y)/S_1(w(y))$ has no pole in $\bG$, but has  a simple root  at $y(-1)$ if $s_1 q^{-r_1}=-1$, or equivalently if $\eps_1(1-e_1)=1$.   Consequently,
  \beq\label{nopoleP1} 
  \left(\frac{P_1(y)\phi_1(y)} {S_1(w(y))} \right)^2 \frac 1 {(w(y)+1)^{\eps_1(1-e_1)}}
  \eeq
  has no pole in $\bG$.

  $\bullet$  When $r_1\le  0$,  the fraction  $F_1=1/Q_1$ has $p_{F_1}$ poles in $\G$, where $p_{F_1}$ is given by Lemma~\ref{lem:poles_roots_F-double}. Recall that $\phi_1$ also has a pole in $\G$, located at $y(s_1)$,  if $2\beta-2\vareps-
  \theta >0$. Moreover, the multiplicity of $y(-1)$ as a pole  of $1/Q_1$ is
  \[ 
    \sharp \left\{  j : 1\le j \le |r_1|, s_1 q^{j} \in \{-1, -q\} \right\}
    = 2 \sharp \left\{  j : 0\le j \le |r_1|, s_1 q^{j} =-1\right\}-\mathbbm 1_{s_1=-1} - \mathbbm 1_{s_1 q^{-r_1}=-1}.
  \] 
  Recall that $\phi_1$ has a pole at $y(-1)$ if $s_1=-1$.  This leads
  us to introduce the polynomial $R_1$ defined above. Its degree is 
  \[
    p_{F_1}+\mathbbm 1_{2\beta-2\vareps-\theta >0}+ m_+(s_1;0,|r_1|)
    =-k_1.
  \]
  By construction, $\phi_1(y) R_1(w(y))/Q_1(y) $ has no pole in $\bG$, but has a  simple zero  at $y(-1)$ if $s_1 q^{-r_1}=-1$, or equivalently if $\eps_1(1-e_1)=1$. Consequently,
  \beq\label{nopoleQ1}  
  \left(\frac {\phi_1 (y) R_1(w(y))}{Q_1(y)}\right)^2 \frac 1 {(w(y)+1)^{\eps_1(1-e_1)}}
  \eeq
  has no pole in $\bG$.

  We have now constructed polynomials $R_i$ and $S_i$ from $\phi_1$, the $P_i$'s and the $Q_i$'s. We still need to investigate the term $f_1/f_2$, given by~\eqref{f-ratio}. Recall that $y^-$ lies in $\G$, but not $y^+$. Indeed, $y(-1)\le y^+$ by Lemma~\ref{lem:Ypositive}, and we cannot have $y(-1)=y^+=y(\sqrt q)$ because the only values~$s$ such that $y(s)=y(-1)$ are $-1$ and $-q$, which are both distinct from $\sqrt q$.
  Moreover, it follows from~\eqref{wys0} that $w(y^-)=w(y(-e^{i\beta}))=1$. This leads us to include a factor $(1-z)^{a^-}$ in $S/R$. By construction,
  \beq\label{nopolef}
  \frac{f_1}{f_2}(y)\cdot \frac 1 {(1-w(y))^{a^-}}
  \eeq
  has no pole in $\bG$.

  So let us now define the rational function
  \[
    \frac S R (z) := \left( \frac {S_1}{R_1} (z) \frac {R_2}{S_2}(z) \right)^2
    (1-z)^{a^-} (1+z)^b
  \]
  where $b= \eps_1(1-e_1)-\eps_2(1-e_2)$.
  It follows from  the fact that the functions~\eqref{nopoleP2},~\eqref{nopoleQ2},~\eqref{nopoleP1},~\eqref{nopoleQ1} and~\eqref{nopolef} have no pole in $\bG$ that
  $F\phi_1^2 \times (R/S)(w)$, which is an invariant, has no pole in $\bG$ either.
  But the behaviour at infinity of  $(S/R)(z)$   is in $z^d$, where 
  \[
    d=2k_1 -2k_2 +a^-+b= 2k_1-2k_2+\eps_1-\eps_2.
  \]
  Since $w(y)$ behaves at infinity in $y^{\pi/\beta}$,  Lemma~\ref{lem:phi-infini-double} implies that  $F\phi_1^2 \times (R/S)(w)$    has  a finite limit at infinity. It is thus constant by Proposition~\ref{prop:inv}, and we have obtained an explicit expression for $\phi_1^2$.
  Recall finally that both $\sqrt{1+w(y)}$ and $\sqrt{(1-w(y))/(y-y^-)}$ are defined analytically on $\C\setminus [y^+,\infty)$ (see~\eqref{sqrt_plus},~\eqref{sqrt_minus} and the definition~\eqref{eq:definition_w-bis} of $w$ in terms of $T_{\pi/\beta}$). The announced expression of $\phi_1$ follows, and  $\phi_1$ is  meromorphic on $\C\setminus [y^+,\infty)$.

  \medskip

  Let us now discuss the differential/algebraic nature of $\phi_1$.
  First, $\phi_{1}$ is D-algebraic, as compositions of D-algebraic functions are D-algebraic.

  \medskip
  
  \noindent {\bf D-finiteness.} Let us first assume that $\phi_1$ is D-finite.
  Then its square is D-finite as well, or equivalently, 
  \beq\label{phi1-w}
  \left(\frac {S_1}{R_1} \frac {R_2} {S_2}\right)^2\!( w) \cdot
  \left({1-w}\right)^{a^-} \left({1+w}\right)^b
  \eeq
  is D-finite.  By Proposition~\ref{prop:algebraic_nature_w},  either $\beta/\pi\in \Q$ (in which case $\phi_1$ is in fact algebraic), or  the rational function 
  \beq\label{polynomial}
  \left(\frac {S_1}{R_1} \frac {R_2} {S_2}\right)^2\!( z) \cdot
  \left({1-z}\right)^{a^-} \left({1+z}\right)^b
  \eeq
  is a polynomial.

  Let us now assume that $s_1/s_2\not \in q^\Z$, and prove that the latter condition implies the second part of~\eqref{DF-double}. Recall that one of $R_1$ and $S_1$ (resp.\ $R_2$ and $S_2$) is always a constant, and observe that all roots of $R_1$ and $S_1$ (resp.\ $R_2$ and $S_2$) are of the form $w(y(s_1 q^j))$ (resp.\ $w(y(s_2 q^j))$) for some integer $j$ such that $y(s_1q^j)\in \G \cup\{y(-1)\}$. By  Lemma~\ref{lem:interferences}, $y(s_1q^i) \not = y(s_2q^j)$ for all integers $i,j$, and since  $w$ is injective on   $\G \cup\{y(-1)\}$ (Lemma~\ref{lem:w}),  
  we conclude that $S_1 R_2$ and $R_1 S_2$ have no common root. Since $a^-$ and $b$ are at most~$1$, while $R_1 S_2$ is squared in~\eqref{polynomial}, we conclude that if~\eqref{polynomial} is a polynomial, then
  \begin{enumerate}[label={\rm(\roman*)},ref={\rm(\roman*)}] 
  \item\label{it1:cond}$R_1$ and $S_2$ are  constants,
  \item\label{it2:cond}if $a^-=-1$    then $1$ is    a root of $S_1 R_2$,
  \item\label{it3:cond}if $b=-1$    then $-1$ is     a root of $S_1 R_2$.
  \end{enumerate}
  By definition of $R_1$ and $S_2$,  the  conditions~\ref{it1:cond} read respectively:
  \beq\label{cond-DF-1}
  r_1>0 \text{ or }\left( r_1 \le 0 \text{ and } k_1=0 \right),
  \eeq
  and
  \beq\label{cond-DF-2}
  r_2\le 0      \text{ or }\left( r_2 >0   \text{ and } k_2=0 \right).
  \eeq
  By Lemma~\ref{lem:ineq-double}, if $r_1\le 0$ and $k_1=0$ then $\eps_1=0$. 
  Thus Condition~\eqref{cond-DF-1} can be rewritten as $ r_1>0 \text{ or }\left( r_1 \le 0 \text{ and } k_1=\eps_1=0 \right)$, or in simpler terms $ (r_1>0 \text{ or } k_1=\eps_1=0 )$,  which, according to~\eqref{alpha12-expr}, translates into $\alpha_1\in -\N+ \pi/\beta \Z$ or $\alpha_1\in \Z$. 

  We will now combine~\eqref{cond-DF-2} with the conditions~\ref{it2:cond} and~\ref{it3:cond} to prove that the same holds for~$\alpha_2$, which means that $(r_2<0 \text{ or } k_2=\eps_2=0)$.

  If $a^-=-1$, so that $\eps_2=e_2=1$ or equivalently $s_2=-q^{r_2}\sqrt q $, we want one of the roots of $S_1 R_2$ to be $1=w(y^-)=w(y(-\sqrt q))=w(y(s_2q^{-r_2}))$. By Lemma~\ref{lem:interferences}, and the injectivity of $w$ on $\G$, this value cannot be a  root of $S_1$.  The description of $R_2$ shows that it admits $1$ as a root   if and only if $r_2<0$.

  Similarly, if $b=-1$, so that $\eps_2=1$ and $e_2=0$ or equivalently $s_2=-q^{r_2}$, we want $-1=w(y(-1))=w(y(s_2 q^{-r_2}))$ to be a root of $S_1 R_2$. Again, Lemma~\ref{lem:interferences} and the injectivity of $w$ on $\G\cup\{y(-1)\}$ prevent $-1$ to be a root of $S_1$. Moreover, $-1$ will be a root of $R_2$ if and only if $r_2<0$.

  Hence, it follows from Conditions~\ref{it2:cond} and~\ref{it3:cond} that if $\eps_2=1$, then  $r_2<0$. Thus we can now complete~\eqref{cond-DF-2} into
  \[
    r_2< 0    \text{ or }\left(r_2=0 \text{ and }  \eps_2=0\right)     \text{ or }\left( r_2 >0   \text{ and } k_2=\eps_2=0 \right).
  \]
  By Lemma~\ref{lem:ineq-double}, if $r_2=0$ then $k_2=0$.
  Hence we can summarize the above conditions into $(r_2<0 \text{ or } k_2=\eps_2=0)$, that is, $\alpha_2\in -\N+ \pi/\beta \Z$ or $\alpha_2\in \Z$, as claimed in the theorem.

  \medskip
  Conversely,  assume that  Condition~\eqref{DF-double} holds. If $\beta/\pi \in \Q$, then $\phi_1$ is algebraic. Otherwise,
  \[
    \left( r_1 >0 \text{ or } k_1=\eps_1=0\right) \text{ and }
    \left( r_2 < 0 \text{ or } k_2=\eps_2=0\right).
  \]
  Then we can check that the above conditions~\ref{it1:cond},~\ref{it2:cond} and~\ref{it3:cond}  hold, so that~\eqref{polynomial} is a polynomial. Moreover, all its roots have an even multiplicity, with the possible exceptions of $-1$ and $1$. But $\sqrt{1\pm w}$ is D-finite (Proposition~\ref{prop:algebraic_nature_w}).
  Hence
  \[
    \left(\frac {S_1}{R_1} \frac {R_2} {S_2}\right) \!(w) \sqrt{1-w}^{\,a^-} \sqrt{1+w}^{\,b}
  \]
  is D-finite, and $\phi_1$ is D-finite as well. 

  \medskip   
  \noindent{\bf Algebraicity.} Let us first assume that $\phi_1$, or equivalently $\phi_1^2$, or Expression~\eqref{phi1-w}, is algebraic. Then either $w$ is algebraic, which means that $\beta/\pi\in \Q$ by  Proposition~\ref{prop:algebraic_nature_w}, or the fraction~\eqref{polynomial}    is in fact a constant. 

  Let us now assume that $s_1/s_2\not \in q^\Z$, and prove that the latter condition implies the second part of~\eqref{alg-double}.  As already argued,  $S_1R_2$ and $R_1S_2$  have no common root. This  forces the $R_i$'s and $S_i$'s to be constants. That is, $k_1=k_2=0$. Moreover,   $a^-$ and  $b$ must be zero. By definition of $a^-$ and $b$, this implies in particular that    $\eps_1=\eps_2$. If  $\eps_1=\eps_2=1$, this forces moreover $e_1=e_2$, but then $s_1/s_2 =q^{r_1-r_2} \in q^\Z$, a contradiction.  Hence $\eps_1=\eps_2=k_1=k_2=0$, so that $\alpha_1$ and $\alpha_2$ are both in $\Z$.

  Conversely, assume that Condition~\eqref{alg-double} holds. If $\beta/\pi\in \Q$, then $w$ is algebraic and so is $\phi_1$. If $\alpha_1$ and $\alpha_2$ are integers, that is,  $\eps_1=\eps_2=k_1=k_2=0$, then the $R_i$ and $S_i$ are constants and  $a^-=b=0$. Hence~\eqref{polynomial} is a constant, $\phi_1^2(y)$ is a rational function in $y$, and  $\phi_1$ is algebraic (of degree $2$ at most).

  \medskip
  \noindent{\bf Rationality.} If $\phi_1$ is rational, then~\eqref{phi-asympt} implies that $\alpha$ is an integer.
  But then the simple angle condition~\eqref{eq:CNS0} holds. Conversely, we have already seen that if $\alpha$ is an integer then $\phi_1$ is rational.
\end{proof}

\subsection{Examples}\label{sec:examples-double}

We now give three applications of Theorem~\ref{thm:main-double}, focussing on cases where $\phi_1$ is algebraic, even if $\beta$ is not a rational multiple of $\pi$.  
Indeed, we have seen that $\phi_1$ is algebraic if $\alpha_1$ and $\alpha_2$ are both integers, that is, if we can choose $k_i=\eps_i=0$ for $i=1,2$. We focus here on this case, where, according to~\eqref{alpha12-expr}:
\beq\label{alpha-alg-double}
1-\alpha_1=   2r_1+e_1, \qquad   \alpha_2=2r_2+e_2. 
\eeq
We assume that $\alpha_1$ and $\alpha_2$ are equal modulo $2$, that is, $e_1\not=e_2$, otherwise $\alpha= (\alpha_1+\alpha_2+1)/2$ is an integer and we are in the rational case studied in Section~\ref{sec:expression}. Hence $\alpha$ is here a half-integer. The condition $\alpha<1$ translates into $2r_2+e_2<2r_1+e_1$. As seen in the proof of Theorem~\ref{thm:main-double}, the expression of $\phi_1$ does not involve $w(y)$ and reduces to
\beq\label{phi1-alg}
\phi_1(y)=  \left(\frac{Q_{1}}{P_{1}}\cdot \frac{P_{2}}{Q_{2}}\right)\! (y)
\cdot   \sqrt{y^+-y}^{\, e_2-e_1},
\eeq
because $a^-=b=0$ and $a^+=e_1-e_2$. Recall that one of the polynomials $P_1$ and $Q_1$ (resp.~$P_2$ and $Q_2$) is always a constant, and that the degree of $P_i$ (resp.\ $Q_i$) is $\max(0, r_i)$ (resp.~$\max(0, -r_i)$). Observe that if $r_i<0$, then $Q_i$ contains a factor $(y-y(s_iq^{-r_i}))$, which equals $(y-y(\sqrt q))=(y-y^+)$ if $e_i=1$ (see~\eqref{eq:uniformization}). 

\smallskip
Let us be more explicit in four cases where the $|\alpha_i|$ and $|r_i|$ are small, so that the polynomials $P_i$ and $Q_i$ have small degrees. We first study in detail the case $\alpha_1=\alpha_2=0$, for which we obtain
an explicit expression of the density of the two-dimensional stationary distribution of the SRBM in the quadrant. The other three  cases are less detailed. By definition of $\alpha_1$ and $\alpha_2$, fixing these two values means prescribing two relations between the four angles $\beta$, $\theta$, $\delta$ and~$\vareps$; see~\eqref{eq:def_alpha_i}.

\subsubsection{The case $\alpha_1=\alpha_2=0$}
In this case $\alpha=1/2$, and the four angles are related by
\beq\label{angles-00}
\theta= 2\delta-\pi, \qquad \beta-\theta=2\vareps-\pi.
\eeq
Hence we can express all quantities either in terms of $\delta$ and $\vareps$, or in terms of $\theta$ and $\beta$. Moreover, the assumptions $\{\theta, \beta-\theta\} \subset (0, \pi)$ imply that
\beq\label{conds-new}
\{ \delta, \vareps\} \subset (\pi/2, \pi), \quad \text{with}\quad  \pi< \delta+\vareps <3\pi/2.
\eeq
In terms of the original parameters $\Sigma$, $R$, and $\mu$, we have
\[
  \mu_2\,   \frac{{r_{12}}}{{r_{22}}}={\mu_1}+{\sqrt{\frac{\Delta}{{\sigma_{22}}}}}
  \qquad \text{and} \qquad
  \mu_1\,  \frac{{r_{21}}}{{r_{11}}}={\mu_2}+{\sqrt{\frac{\Delta}{{\sigma_{11}}}}}.
\]
Indeed, let us explain (for instance) why the first identity is equivalent to $\theta=2\delta-\pi$. Using~\eqref{normal-sigma},~\eqref{normal-mu-sigma}, and~\eqref{normal-R-sigma}, this identity reads
\[
  -\sin \theta \sin (\beta-\delta) + \sin \delta \sin (\beta-\theta) = \sin \beta \sin \delta,
\]
or equivalently,
\[
  \sin \beta\left(\sin (\delta-\theta) -\sin \delta \right) =0,
\]
or finally $\sin (\delta-\theta) =\sin \delta=\sin (\pi-\delta)$. Given that $0<\theta <\delta <\pi$, this implies that either $\delta-\theta= \pi -\delta$ (which is the desired identity), or that $\delta-\theta=
\delta$, which is impossible.

In sight of~\eqref{alpha-alg-double}, we have $r_1=r_2=0$, $e_1=1$ and $e_2=0$. That is, $s_1=\sqrt q$ and $s_2=1$.  Then $P_1=Q_1=P_2=Q_2=1$ (see~\eqref{Fr-def}), and   there exists a constant $\kappa$ such that
\beq\label{phi1-very-simple}
\phi_1(y)=  \frac {\kappa} {\sqrt{y^+-y}} = \frac{\phi_1(0)}{\sqrt{1-y/y^+}}.
\eeq
It follows that the measure $\nu_1/\phi_1(0)$ is a Gamma distribution of parameters $1/2$ and $1/y^+$, with density
\[
  \frac{p_1(z_2)}{\phi_1(0)}=   \sqrt{  \frac {y^+} { \pi}}\cdot \frac{e^{-z_2 y^+}}{\sqrt {z_2}}
\]
for $z_2\ge 0$.
Since an $x/y$-symmetry in the quadrant model exchanges $\alpha_1$ and $\alpha_2$, we also have
\beq\label{phi2-very-simple}
\phi_2(x)=  \frac {\kappa'} {\sqrt{x^+-x}} = \frac{\phi_2(0)}{\sqrt{1-x/x^+}}.
\eeq
Using the functional equation~\eqref{eq:functional_equation},  one can obtain
the following algebraic expression for the bivariate Laplace transform:
\beq\label{phi-alg}
\phi(x,y)=\frac{\kappa_0\,(\tx +\ty )}{\tx \ty 
  (\tx ^2\sin^2\delta+\ty ^2\sin^2\vareps-2\tx \ty \sin\delta\sin \vareps \cos(\delta+\vareps)-\sin^2(\delta+\vareps))},
\eeq
where $\tx=\sqrt{1-x/x^+}$ and $\ty =\sqrt{1-y/y^+}$, and
\[
  \kappa_0=-2\sin \delta \sin \vareps \cos(\delta+\vareps) >0.
\]
This Laplace transform can be inverted explicitly to obtain the
density $p_0(z_1,z_2)$ of the stationary distribution.  The form of
$\phi(x,y)$, plus the use of normal variables, leads us to consider
\beq\label{tilde-p0}
\widetilde p_0(z_1,z_2):= p_0\left( \frac{\det \Sigma}{\sqrt{\Delta
      \sigma_{22}}} \, z_1, \frac{\det \Sigma}{\sqrt{\Delta
      \sigma_{11}}} \, z_2\right)
=p_0\left( 2\sin^2\!\delta \, \frac{z_1}{x^+}, 2\sin^2\!\vareps
  \,  \frac{z_2}{y^+}\right),
\eeq
where we recall that $\Delta$ is defined by~\eqref{Delta-def}. We have used~\eqref{xyn-pm} and~\eqref{angles-00} to obtain the second expression above.

\begin{prop}
  \label{prop:specialcase}
  When $\alpha_1=\alpha_2=0$, the density   of the stationary distribution of the SRBM in the quadrant satisfies
  \begin{equation}
    \label{eq:pi}
    \widetilde p_0(z_1,z_2)=   \kappa \,
    \frac{\cos(\frac{\theta-a}2)}{\sqrt{|z|}}
    \exp\left(-2|z| \cos^2\left(\frac{\theta-a}2\right)\right),
  \end{equation}
  where $z= z_1 + z_2e^{i\beta}$ and $a=\arg z\in (0,\beta)$. The integration constant is
  \[
    \kappa = \frac{2\sqrt 2\Delta\sin \delta \sin \vareps}{\sqrt{\pi} (\det \Sigma)^{3/2} \sin(\beta/2)}.
  \]

  Equivalently, the stationary distribution of the SRBM in the corresponding $\beta$-wedge has density $q_0(u,v)$, where, for $\rho>0$ and $a\in (0, \beta)$,
  \beq\label{q0}
  q_0(\rho \cos a, \rho \sin a) = \kappa '  \,
  \frac{\cos(\frac{\theta-a}2)}{\sqrt{\rho}}
  \exp\left(-2
    |\widetilde{\mu}|
    \, \rho \cos^2\left(\frac{\theta-a}2\right)\right),
  \eeq
  with $|\tilde \mu|$  given by~\eqref{Delta-def} and
  \[
    \kappa'= \kappa \,\Delta^{-1/4} (\det \Sigma)^{3/4}
    =\frac{|2\widetilde{\mu}|^{3/2}\sin \delta \sin \vareps}{\sqrt{\pi}\sin\left({\beta}/{2}\right)}.
  \]
\end{prop}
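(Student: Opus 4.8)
The plan is to start from the algebraic expression~\eqref{phi-alg} for $\phi(x,y)$ and invert the Laplace transform explicitly, then translate the resulting density back to the $\beta$-wedge via~\eqref{phi-Phi}. Concretely, I would first record that, with $\tx=\sqrt{1-x/x^+}$ and $\ty=\sqrt{1-y/y^+}$, the denominator of~\eqref{phi-alg} is a quadratic form in $\tx,\ty$ whose discriminant (in, say, $\tx$) factors nicely because $\sin^2\delta\sin^2\vareps\cos^2(\delta+\vareps)-\sin^2\delta\sin^2\vareps<0$; this is what makes the bivariate Laplace inversion tractable. The key computational observation is that in the normalized variables of~\eqref{xy:normal}, using~\eqref{angles-00} to rewrite $\sin\delta=\cos(\theta/2)$ and $\sin\vareps=\cos((\beta-\theta)/2)$ up to sign conventions, the quadratic form becomes, after completing the square, proportional to $\big(\xn + \yn e^{i\beta}\big)$-type combinations that match the complex variable $z=z_1+z_2e^{i\beta}$ appearing in the statement. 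So I would introduce $z$ as the natural variable and show that $\phi$, re-expressed through $\tilde p_0$ of~\eqref{tilde-p0}, only depends on $z$ through $|z|$ and $\arg z$.

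The heart of the argument is a one-dimensional Laplace inversion in disguise. After the change of variables the bivariate transform should reduce to something of the form $\int_0^\infty \int_{\text{arc}} e^{-2|z|\cos^2((\theta-a)/2)}\,\cos(\tfrac{\theta-a}2)|z|^{-1/2}\,(\cdots)\,\dd|z|\,\dd a$, and one checks directly that the claimed density~\eqref{eq:pi} reproduces~\eqref{phi-alg} upon integration. A cleaner route, which I would prefer, is to verify the formula by computing the Laplace transform of the right-hand side of~\eqref{eq:pi} directly: substitute $z_1=\rho'\cos a$-like polar coordinates in the wedge $\{a\in(0,\beta)\}$, carry out the radial integral (which is an elementary Gamma integral of the form $\int_0^\infty \rho^{-1/2}e^{-c\rho}\dd\rho=\sqrt{\pi/c}$ after the substitution $c=2\cos^2(\tfrac{\theta-a}2)+$ linear terms in $a$ from the exponent $e^{xz_1+yz_2}$), and then recognize the remaining angular integral as producing exactly the rational-in-$\tx,\ty$ expression~\eqref{phi-alg}. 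Matching the prefactor $\kappa$ is then a bookkeeping exercise using $\phi_1(0)$ from~\eqref{masses} or equivalently~\eqref{phi1-very-simple} together with the Jacobian factors $\det\Sigma/\sqrt{\Delta\sigma_{jj}}$ from~\eqref{tilde-p0}.

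For the second, equivalent statement about $q_0$ on the $\beta$-wedge, I would simply apply the linear change of variables~\eqref{phi-Phi}: the map $T^{-1}$ sends the quadrant to the wedge and in doing so sends the complex coordinate $z=z_1+z_2e^{i\beta}$ to (a positive multiple of) the genuine complex coordinate $u+iv$ of the wedge, so that $|z|$ becomes proportional to the polar radius $\rho$ and $\arg z$ becomes the polar angle $a$. The exponent $2|z|\cos^2(\tfrac{\theta-a}2)$ then becomes $2|\widetilde\mu|\rho\cos^2(\tfrac{\theta-a}2)$ once one uses the normal forms~\eqref{normal-mu-sigma} relating $|\widetilde\mu|$, $\Delta$ and $\det\Sigma$, and the Jacobian of $T^{-1}$ (computed from~\eqref{eq:Tinverse}, with determinant $\sqrt{\det\Sigma}$) produces the stated ratio $\kappa'/\kappa=\Delta^{-1/4}(\det\Sigma)^{3/4}$. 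The final form $\kappa'=|2\widetilde\mu|^{3/2}\sin\delta\sin\vareps/(\sqrt\pi\sin(\beta/2))$ follows by substituting $\kappa$ and simplifying with $\Delta=|\widetilde\mu|^2\det\Sigma$.

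The main obstacle I anticipate is not conceptual but the algebraic identification: showing rigorously that the quadratic form in the denominator of~\eqref{phi-alg}, after the substitutions $\tx,\ty\mapsto$ normalized variables and $\sin\delta,\sin\vareps\mapsto$ half-angle cosines of $\theta$ and $\beta-\theta$, is exactly the one whose Laplace-inverse is the single complex-exponential $\exp(-2|z|\cos^2(\tfrac{\theta-a}2))$ — i.e., pinning down that the relevant quadratic form is $|z - z_0|^2$-like with the right center, so that the inversion collapses to a Gamma integral rather than something involving Bessel functions. Concretely one must check that $\tx^2\sin^2\delta+\ty^2\sin^2\vareps-2\tx\ty\sin\delta\sin\vareps\cos(\delta+\vareps)-\sin^2(\delta+\vareps)$ vanishes precisely on the image of the wedge boundary under the relevant map, which should follow from~\eqref{conds-new} and trigonometric identities; this is the step I would verify most carefully (and it is exactly the kind of identity that the paper's {\sc Maple} worksheet is designed to support).
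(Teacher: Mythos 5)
Your preferred route---verifying the claimed density by computing its Laplace transform in polar coordinates over the wedge $a\in(0,\beta)$, matching it against the known expression for $\phi$ built from $\phi_1,\phi_2$, and then transporting to the $\beta$-wedge via the linear map $T$ with Jacobian $\sqrt{\det\Sigma}$---is exactly the paper's proof (Appendix~\ref{app:alg}); the direct inversion of~\eqref{phi-alg} that you mention first is carried out only in the arXiv-only appendix. One small correction to your sketch: the polar Jacobian contributes a factor $\rho$, so the radial integral is $\int_0^\infty\sqrt{\rho}\,e^{-c\rho}\,\dd\rho=\tfrac{\sqrt{\pi}}{2}\,c^{-3/2}$ rather than $\sqrt{\pi/c}$, and it is this $3/2$-power denominator that the angular substitution ($a=\theta-2s$, $t=\tan s$) integrates in closed form, the boundary terms being precisely $\gamma_1\phi_1$ and $\gamma_2\phi_2$, which is how the match with~\eqref{phi-alg} (equivalently with the functional equation~\eqref{eq:functional_equation}) is completed.
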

In more explicit terms, the quantities occurring in~\eqref{eq:pi} are
\[  |z|=\sqrt{z_1^2+z_2^2 +2z_1 z_2 \cos \beta},  \quad\text{  and  }\quad   \cos\left(\frac{\theta-a}2\right)=\sqrt{\frac{z_1\cos\theta+z_2\cos(\beta-\theta) + |z|}{2|z|}}.
\]

The expression of $q_0$ follows from the expression of $p_0$ (and $ \widetilde p_0$) via an elementary calculation starting from~\eqref{phi-Phi}, or equivalently,
\beq
\label{eq:change_variable_q0_p0}
q_0(\rho \cos a, \rho \sin a)=\frac 1 {\det T} \ p_0\!\left((\rho \cos a, \rho \sin a)\, ^t(T^{-1})\right).
\eeq
In particular, in this calculation we evaluate $ \widetilde p_0$ at a point $(z_1, z_2)$ such that $z:=z_1+e^{i\beta }z_2= \sqrt{\frac \Delta{\det \Sigma}}\, \rho\, e^{ia}$.

In Appendix~\ref{app:alg}, we prove the above proposition by \emm checking, that the  Laplace transform of   density $p_0$ is indeed $\phi$. In a final appendix (Appendix~\ref{app:alg-inverse}), only available on the arXiv version of this paper~\cite{BmEFHR-arxiv}, we perform the (much longer) inverse computation: that is, we   explain  how to \emm derive, $p_0$ from $\phi$ by
inverse Laplace transformation. The same approach might lead to
explicit densities for the other algebraic cases that we briefly
describe below in Sections~\ref{sec:alg2} to~\ref{sec:alg4}.

\medskip 
\noindent{\bf Remarks}\\
1. The expression~\eqref{eq:pi} of the density has already been established by Harrison~\cite[Sec.~9]{harrison_78_diffusion} in the limit case  where the covariance matrix is the identity {(so that $\beta=\pi/2$), $\mu_2=0$ (so that $\theta=0$), and the reflection angles are $\delta=\pi/2$ and $\varepsilon= 3\pi/4$.
  Observe that $\alpha_1=\alpha_2=0$ indeed. Since we have assumed that $\mu_2<0$,  this limit case is not  covered by our paper, but Harrison's result shows that Proposition~\ref{prop:specialcase} still holds (there seems however to be a typo in~\cite[Prop.~8]{harrison_78_diffusion}, as the constant denoted $K$ therein should probably be divided by $2$).

\medskip

\noindent 2. When $(z_1,z_2)=(r\cos \om, r\sin\om)$, the above proposition gives
\[
  p_0(z_1,z_2)= \frac{c(\om)}{\sqrt r} \exp(-rb(\om)),
\]
for values $b(\om)$ and $c(\om)$ that depend on $\om$ only. This exact expression matches the \emm asymptotic, expression of $p_0(r \cos\om, r\sin \om)$ established, for $\om$ fixed and $r\rightarrow\infty$, in~\cite[Thm.~4(1)]{franceschi_asymptotic_2016}. For this reason, we may call the case $\alpha_1=\alpha_2=0$ a \emm pure asymptotic case,. In fact, we have originally guessed the value of $p_0$ in this case by starting from its asymptotic expression.

Let us now clarify the link between our expression of  the function $b(\om)$ and its characterization given in~\cite[Thm.~4(1)]{franceschi_asymptotic_2016}. It follows from the latter theorem that
\beq\label{rb-FK}
rb(\om)= \max_{u\in \RR}\left(  (z_1, z_2) \cdot \left(x(e^{iu}), y(e^{iu})\right)\right),
\eeq
where we recall that $x(\cdot)$ and $y(\cdot)$ are the coordinates of the uniformization of the kernel curve, given by~\eqref{eq:uniformization}. 
On the other hand, if we return to our expression of $p_0(z_1,z_2)$, written for $(z_1,z_2)= (\rho \cos a,\rho \sin a)\, ^t(T^{-1})$ as in~\eqref{eq:change_variable_q0_p0}, we find
\[
  r b(\om)=   2   |\widetilde{\mu}| \rho
  \, \cos^2\left(\frac{\theta-a}2\right)
  =\rho \sqrt{\frac{\Delta}{\det \Sigma}} (1+\cos(\theta-a)).
\]
But, using the definition~\eqref{eq:uniformization} of the parametrization of the kernel curve (written using the normal form~\eqref{param:normal}), and the expression~\eqref{sin-beta} of $\sin \beta$, we can rewrite the above expression as
\begin{align*}
  r b(\om)
  &= x(e^{i a}) \rho  \sqrt{\sigma_{11}}\sin (\beta-a)+ y(e^{i a}) \rho  \sqrt{\sigma_{22}}\sin a \\
  &=  (z_1, z_2) \cdot \left(x(e^{ia}), y(e^{ia})\right),
\end{align*}
because $(z_1,z_2)= (\rho \cos a,\rho \sin a)\, ^t(T^{-1})$ reads $(z_1,z_2)= \rho ( \sqrt{\sigma_{11}}\sin (\beta-a), \sqrt{\sigma_{22}}\sin a)$.  This means that the maximum in~\eqref{rb-FK} is reached at $u=a$. In~\cite[Sec.~4.2]{franceschi_asymptotic_2016}, the maximizing point  $(x(e^{i u}),y(e^{i u}))$ appears as a \textit{saddle point} in the  asymptotic estimates of two integrals.

\medskip

\noindent3. When $\mu\rightarrow (0,0)$, that is, $\Delta\rightarrow 0$, it follows from~\eqref{q0} that the density in the $\beta$-wedge satisfies
\[
  q_0(\rho \cos a, \rho \sin a) \sim \kappa '  \,
  \frac{\cos(\frac{\theta-a}2)}{\sqrt{\rho}}.
\]
This is in adequation with a result of Williams~\cite[Sec.~6]{williams_recurrence_1985}, which gives the density in the case $\mu=0$, $0\le \alpha <2$ in the form $\rho^{-\alpha} \cos(\theta_1-\alpha a )$, up to a multiplicative constant, with $\theta_1=\delta-\pi/2$. In our case, $\theta_1= \theta/2$ and $\alpha=1/2$. But the right-hand side of the above estimate is also, for $\mu\not = 0$, the behaviour of  $q_0(\rho \cos a, \rho \sin a)$ as $\rho\rightarrow 0$. Heuristically, this means that on small scales, the drift of the Brownian part of the process is negligible.

\subsubsection{The case $\alpha_1=-2$ and $\alpha_2=0$}
\label{sec:alg2}
In this case $\alpha=-1/2$, $r_1=e_1=1$ and $r_2=e_2=0$. That is, $s_1=q\sqrt q$ and $s_2=1$. Then $P_1(y)= (y-y(s_1))$ and $Q_1=P_2=Q_2=1$.  There exists a constant $\kappa$ such that 
\beq\label{cas-m2-0}
\phi_1(y)=  \frac \kappa {y-y(q\sqrt q)} \cdot  \frac 1{\sqrt{y^+-y}}.
\eeq
The value $\phi_2(x)$ can be obtained using  Theorem~\ref{thm:main-double}, applied to the symmetric case $\alpha_1=0$ and $\alpha_2=-2$. One finds:    
\beq\label{cas-0-m2}
\phi_2(x)=  \frac {\kappa'} {x-x(\sqrt q)} \cdot  \frac 1{\sqrt{x^+-x}}.
\eeq
(Note that $y(s)$ becomes $x(s/\sqrt q )$ under a diagonal reflection; see~\eqref{eq:uniformization}.)                       

The densities of $\nu_1$ and $\nu_2$ can be expressed in terms of the error function $\erf$, since, for $b>0$ and $y<a$,
\[
  \int_{\RR_+} \erf(\sqrt{bz}) e^{-az} e^{zy} dz= \frac{\sqrt b}{(a-y) \sqrt{a+b-y}}.
\]
For instance, it follows from~\eqref{cas-m2-0} that the density of $\nu_1$ is of the form
\beq\label{density-erf}
p_1(z_2)=\kappa \erf(\sqrt{bz_2}) e^{-az_2}
\eeq
with $a=y(q\sqrt q)$ and $b=y^+-y(q\sqrt q)= (y^+-y^-) \sin^2\beta$. 

\subsubsection{The case $\alpha_1=-1$ and $\alpha_2=1$}
\label{sec:alg3}
In this case, $\alpha=1/2$ again, $r_1=1$, $r_2=0$, $e_1=0$ and $e_2=1$. That is, $s_1=q$ and $s_2=\sqrt q$. Again $P_1(y)= (y-y(s_1))$ and $Q_1=P_2=Q_2=1$.  There exists a constant $\kappa$ such that

\[
  \phi_1(y)= \kappa \,\frac {\sqrt{y^+-y}} {y-y(q)}.
\]
Theorem~\ref{thm:main-double}, applied to the symmetric case $\alpha_1=1$ and $\alpha_2=-1$, gives
\[
  \phi_2(x)=  \frac {\kappa'} {\sqrt{x^+-x}}.
\]
As in the first case,  the measure $\nu_2/\phi_2(0)$ is a Gamma distribution of parameters $1/2$ and $1/x^+$.

\subsubsection{The case $\alpha_1=\alpha_2=-1$}
\label{sec:alg4}
We finish with an $x/y$-symmetric case where two of the polynomials $P_1$, $P_2$, $Q_1$, $Q_2$ are not trivial -- instead of one in the above examples. However, a simplification occurs between $Q_2(y)$ and the term $\sqrt{y^+-y}$, and the Laplace transforms of~$\nu_1$ and $\nu_2$ end up being very close to those of Section~\ref{sec:alg2}.

With $\alpha_1=\alpha_2=-1$, we have $\alpha=-1/2$, $r_1=1$, $r_2=-1$, $e_1=0$ and $e_2=1$. That is, $s_1=q$ and $s_2=1/\sqrt q$. Now $P_1(y)= (y-y(s_1))$, $Q_2(y)=(y-y(s_2 q))=(y-y^+)$ and $Q_1=P_2=1$.  Given that $e_2-e_1=1$, the general formula~\eqref{phi1-alg} specializes to
\[
  \phi_1(y)=\frac{\kappa}{y-y(q)} \cdot \frac 1 {\sqrt{y^+-y}}.
\]
By symmetry,
\[
  \phi_2(x)=\frac{\kappa '}{x-x(\sqrt{q})}\cdot \frac 1 {\sqrt{x^+-x}}.
\]
Observe the analogy with~\eqref{cas-m2-0} and~\eqref{cas-0-m2}. In particular, the densities of $\nu_1$ and $\nu_2$ are of the form~\eqref{density-erf} again.

\section{Differential transcendence}
\label{sec:DT}

The aim of this section is to complete the proof of Theorem~\ref{thm:main_diffalg}, which deals with the case $\beta/\pi \not \in \Q$. Comparing its statement with the conclusions of Theorems~\ref{thm:main} and~\ref{thm:main-double}, we see that the only  point that remains to be proven is the following:
\begin{quote}
  if neither of the angle conditions~\eqref{eq:CNS0} or~\eqref{eq:CNS0double} holds, the Laplace transform~$\phi_1$ is not D-algebraic.
\end{quote}
To prove this, a key tool is difference Galois theory.
This theory builds a dictionary between the algebraic   relations satisfied by the solutions of a linear difference equation and the algebraic dependencies among the coefficients of the difference equation.
We refer to~\cite{vdps} for a complete introduction.
This theory also  has applications to the study of the \emm
differential, properties of the solutions, which is what we use here.

Our strategy  will  be first to  transform  the  boundary value condition~\eqref{eq:bound_cond_gen} into  a  finite difference equation (Section~\ref{subsec:funcequ}; see~\eqref{eq:funceqomegaplane}) and then to apply a Galoisian criterion for differential transcendence (Section~\ref{subsec:difftrans}). 
This criterion is presented in Section~\ref{subsec:difftranscriteria}.

\subsection{Galoisian criteria for differential transcendence  }
\label{subsec:difftranscriteria}
The classical difference Galois theory studies algebraic relations
between solutions  $g_0,\ldots,g_n$ of linear difference equations of the form 
\beq\label{eq:shift}
\sigma(g_i)=g_i +b_i,
\eeq
for $0\le i \le n$, where the coefficients $b_i$ lie in a  field $K$ endowed with an automorphism $\sigma$.
In particular, a theorem due to  Ostrowski (in the context of
differential equations rather than difference equations~\cite{Ostrowski}) gives a necessary and sufficient condition for the algebraic independence  of $g_0,\ldots,g_n$ over $K$ in terms of algebraic relations satisfied by the coefficients~$b_i$.

This setting allows one to study as well the \emm differential,
algebraicity of a function $g$ satisfying $\sigma(g)=g+b$, provided
that the derivation (denoted $\partial$) commutes with
$\sigma$. Indeed, the functions $g_i=\partial ^i g$ then satisfy a
system of the form~\eqref{eq:shift}, with $b_i=\partial ^i b$, and are
algebraically related if and only if $g$ satisfies a differential
equation of order at most $n$. We then say that $g$ is \emm
$\partial$-algebraic,.

We will use as a black box the following theorem, proved in~\cite{DreyfHardtderiv}. It  relaxes some assumptions of~\cite[Prop.~ 2.1 and~2.3]{hardouinCompo} (in particular, it does not require that the solution   $g$ of the linear difference equation belongs to a difference field). 

\begin{thm}[Thm.~C.8 in~\cite{DreyfHardtderiv}, case $\Delta=0$]
  \label{thm:abstractdiffalgcriterianonhomog}
  Let $K$ be a field endowed with a field automorphism~$\sigma$ and a
  derivation $\partial$ commuting with $\sigma$. We assume that the
  field $K^\sigma=\{f \in K: \sigma(f)=f \}$, called the field of \emm constants,, is relatively algebraically  closed in $K$; that is, there is no proper algebraic extension of  $K^\sigma$ in~$K$.

  Let $L$ be a ring extension of $K$ endowed with an automorphism $\sigma_L$ extending $\sigma$ and a derivation $\partial_L$ extending $\partial$. 
  Let $g \in L$ satisfy $\sigma_{L}(g)= g +b$ for some $b \in K$. If $g$
  is $\partial_L$-algebraic over $K$ then there exist $N \in \N_0$,
  constants $c_0,\dots,c_N \in K^\sigma$, not all zero, and
  finally $h \in K$ such that
  \[
    c_0 b +c_1 \partial b +\dots +c_N \partial^N(b)= \sigma(h)-h.
  \] 
\end{thm}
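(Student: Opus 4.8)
The statement is exactly Theorem~C.8 of~\cite{DreyfHardtderiv} in the case $\Delta=0$, so the proof consists in recalling its Galoisian derivation rather than in inventing something new. The plan is to build the Picard--Vessiot ring (or parametrized Picard--Vessiot ring) for the inhomogeneous rank-one equation $\sigma_L(g)=g+b$ together with all of its $\partial$-derivatives, to compute the corresponding differential Galois group as an algebraic subgroup of the additive group of a suitable vector space, and to translate the existence of a differential algebraic relation among $g,\partial g,\dots,\partial^N g$ into the existence of the claimed telescoping relation for $b$.

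Concretely, first I would fix an integer $N$ and consider the functions $g_i:=\partial_L^i g$ for $0\le i\le N$. Since $\partial$ commutes with $\sigma$, applying $\partial^i$ to $\sigma_L(g)=g+b$ gives the system $\sigma_L(g_i)=g_i+b_i$ with $b_i=\partial^i b\in K$. Thus $(g_0,\dots,g_N)$ is a solution vector of a linear difference system whose ``companion'' part is the identity and whose inhomogeneous part is $(b_0,\dots,b_N)$; equivalently, the column vector $(1,g_0,\dots,g_N)^{t}$ is a fundamental solution of a unipotent difference system over $K$. I would then invoke the difference Galois correspondence: the Galois group $G$ of this system is an algebraic subgroup of the unipotent group $\mathbb{G}_a^{N+1}$ (the group of matrices differing from the identity only in the first column below the diagonal entry), and the transcendence degree over $K$ of the Picard--Vessiot ring generated by $g_0,\dots,g_N$ equals $\dim G$. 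The key input, due to Ostrowski in the differential setting and transposed to the difference setting in~\cite{hardouinCompo} (and in the generality needed here in~\cite{DreyfHardtderiv}), is the precise description of proper algebraic subgroups of $\mathbb{G}_a^{N+1}$ stable under the relevant action: such a subgroup is cut out by a nontrivial $K^\sigma$-linear equation $\sum c_i X_i=0$, and this subgroup is the full Galois group if and only if $\sum_{i=0}^N c_i b_i=\sum_i c_i\partial^i b$ is of the form $\sigma(h)-h$ for some $h\in K$. Here the hypothesis that $K^\sigma$ is relatively algebraically closed in $K$ is exactly what guarantees that the constants of the Picard--Vessiot extension do not grow, so that the Galois correspondence is an equivalence and the linear algebraic description of subgroups applies over $K^\sigma$ itself.

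Putting these together: if $g$ is $\partial_L$-algebraic over $K$, then there is some $N$ such that $g_0,\dots,g_N$ are algebraically dependent over $K$, hence the Picard--Vessiot ring they generate has transcendence degree at most $N$ over $K$, hence $\dim G\le N<N+1$, so $G$ is a \emph{proper} algebraic subgroup of $\mathbb{G}_a^{N+1}$. By the subgroup classification this forces a nontrivial relation $\sum_{i=0}^N c_i X_i=0$ with $c_i\in K^\sigma$ not all zero, and by the ``telescoping'' characterization of which such subgroup is actually realized, $\sum_{i=0}^N c_i\partial^i b=\sigma(h)-h$ for some $h\in K$. Renaming $\partial^i b$ gives the asserted identity $c_0 b+c_1\partial b+\cdots+c_N\partial^N b=\sigma(h)-h$, which completes the proof.

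\textbf{Main obstacle.} The only genuinely delicate point is the classification of the $\partial$-stable (or parametrized-Galois-stable) proper algebraic subgroups of the unipotent group $\mathbb{G}_a^{N+1}$ and the identification of when such a subgroup equals the whole Galois group --- i.e.\ the passage from ``$g_0,\dots,g_N$ algebraically dependent'' to ``the specific $K^\sigma$-linear combination of the $\partial^i b$ is a coboundary $\sigma(h)-h$''. This is precisely the content that is proved in~\cite{DreyfHardtderiv} (generalizing~\cite[Prop.~2.1 and~2.3]{hardouinCompo}) and which the present paper is entitled to cite as a black box; I would not reprove it, but would take care to check that our field $K$ here (a field of functions on the relevant $q$-difference curve, with $\sigma$ the $q$-dilation and $\partial$ the usual derivation) indeed satisfies the hypothesis that $K^\sigma$ is relatively algebraically closed in $K$, which is the one nontrivial assumption to verify before applying the theorem.
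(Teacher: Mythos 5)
There is nothing in the paper to compare against here: the statement is imported verbatim from \cite{DreyfHardtderiv} (Theorem~C.8, case $\Delta=0$) and is explicitly used as a black box, so the paper contains no internal proof of it. Your bottom line — that the result should be cited, with the only task being to verify the hypotheses (in particular that $K^\sigma$ is relatively algebraically closed in $K$) at the point of application — matches exactly how the paper proceeds in Section~\ref{subsec:difftrans}.

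However, the Galoisian sketch you give is essentially the proof of the \emph{older} statements \cite[Prop.~2.1 and~2.3]{hardouinCompo}, and it does not go through in the generality in which the theorem is stated — which is precisely why the paper invokes Theorem~C.8 rather than those propositions (the paper remarks that C.8 ``does not require that the solution $g$ of the linear difference equation belongs to a difference field''). In your argument, the step ``the Picard--Vessiot ring generated by $g_0,\dots,g_N$ has transcendence degree $\dim G$ over $K$, hence algebraic dependence of the $g_i$ forces $G$ proper'' presupposes that $g$ sits inside a Picard--Vessiot (or at least a difference field) extension with no new constants. Here $L$ is merely a ring extension with an automorphism and a derivation: its constants are uncontrolled, $g$ need not lie in any difference field over $K$, and $K^\sigma$ is only assumed relatively algebraically closed in $K$ (not algebraically closed), so the classical correspondence of van der Put--Singer and the identification of $G$ inside $\mathbb{G}_a^{N+1}$ cannot be applied off the shelf; relating an algebraic relation among the $g_i$ \emph{inside $L$} to the dimension of the Galois group of an abstract Picard--Vessiot ring is exactly the delicate point that the new argument of \cite{DreyfHardtderiv} handles. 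So, as a citation-plus-hypothesis-check your proposal is fine and coincides with the paper's usage, but the sketched Picard--Vessiot argument should not be presented as a proof of the statement as written: in that generality it has a genuine gap at the step just described.
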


The difference equation~\eqref{eq:funceqomegaplane} that we will derive from the boundary condition~\eqref{eq:bound_cond_gen} is not additive as above, but multiplicative, of the form $\sigma (f)= af$.
But a simple logarithmic transformation yields an additive equation for  $g:= \frac{\partial f}f$:
\[
  \sigma(g)= \sigma \left(\frac{\partial f}f \right)=\frac{\partial (af)}{af} = g+b,
\]
with $b= \partial a/a$. Moreover, $g$ is $\partial$-algebraic if and only if $f$ is $\partial$-algebraic.

\begin{thm}\label{thm:abstractdiffalgcriteria}
  Under the same assumptions on $K$ and $L$ as in
  Theorem~\ref{thm:abstractdiffalgcriterianonhomog}, let $f$ be invertible in $L$ and 
  satisfy $\sigma_{L}(f)= af$ for some $a \in K$. If $f$ is
  $\partial_L$-algebraic over $K$ then there exist $N \in \N_0$,
  constants $c_0,\dots,c_N \in K^\sigma$, not all zero, and finally $h \in K$ such that
  \[
    c_0 \frac{\partial a }{a} +c_1 \partial \left(\frac{\partial a }{a}\right) +\dots +c_N \partial^N\left( \frac{\partial a}{a}\right)= \sigma(h)-h.
  \] 
\end{thm}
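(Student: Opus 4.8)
The plan is to derive Theorem~\ref{thm:abstractdiffalgcriteria} from Theorem~\ref{thm:abstractdiffalgcriterianonhomog} by the logarithmic-derivative substitution already sketched in the paragraph preceding the statement. First I would set $g:=\partial_L(f)/f$, which makes sense because $f$ is assumed invertible in $L$; since $\partial_L$ extends $\partial$ and $\sigma_L$ extends $\sigma$, and $\sigma_L,\partial_L$ commute (they commute on $L$ because $\sigma,\partial$ commute on $K$ and $L$ is a $\sigma_L$- and $\partial_L$-ring extension; this commutation on $L$ is part of the hypotheses inherited from Theorem~\ref{thm:abstractdiffalgcriterianonhomog}), we may compute
\[
  \sigma_L(g)=\sigma_L\!\left(\frac{\partial_L f}{f}\right)
  =\frac{\sigma_L(\partial_L f)}{\sigma_L(f)}
  =\frac{\partial_L(\sigma_L f)}{\sigma_L(f)}
  =\frac{\partial_L(af)}{af}
  =\frac{(\partial a) f + a\,\partial_L f}{af}
  =\frac{\partial a}{a}+g.
\]
So $g$ satisfies the additive equation $\sigma_L(g)=g+b$ with $b:=\partial a/a\in K$ (note $a$ is invertible in $K$: it equals $\sigma_L(f)/f=\sigma_L(f)\cdot f^{-1}\in L$ a priori, but $a\in K$ by hypothesis, and $a\neq 0$ since $f$ is invertible, so $\partial a/a\in K$).

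Second I would record that $g$ is $\partial_L$-algebraic over $K$ if and only if $f$ is. This is a standard closure fact: if $f$ satisfies a polynomial differential equation, then differentiating the relation $g f=\partial_L f$ repeatedly expresses all $\partial_L^i g$ as elements of the differential ring generated by $f$, $f^{-1}$ and finitely many $\partial_L^j f$, which has finite transcendence degree over $K$, forcing an algebraic relation among $g,\partial_L g,\dots$; conversely $\partial_L f = g f$ shows $f$ lies in the differential ring generated by $f$ and the $\partial_L^i g$, and one argues symmetrically using that $f$ is $\partial_L$-algebraic over $K(g,\partial_L g,\dots)$ via the order-one equation $\partial_L f=gf$ together with transcendence-degree additivity. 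I would phrase this cleanly in terms of differential transcendence degrees: $\mathrm{trdeg}$ of the differential field generated by $f$ over $K$ equals that generated by $g$ over $K$, because each of $f,g$ is differentially algebraic over $K\langle$the other$\rangle$ (for $g$ over $K\langle f\rangle$: $g=\partial_L f/f$ is even differentially rational in $f$; for $f$ over $K\langle g\rangle$: $f$ satisfies the order-one equation $\partial_L f=g f$).

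Third, having established that $g$ is $\partial_L$-algebraic over $K$ whenever $f$ is, I apply Theorem~\ref{thm:abstractdiffalgcriterianonhomog} to $g$: there exist $N\in\N_0$, constants $c_0,\dots,c_N\in K^\sigma$ not all zero, and $h\in K$ with
\[
  c_0 b + c_1\partial b + \dots + c_N\partial^N(b) = \sigma(h)-h,
\]
and substituting $b=\partial a/a$ gives exactly the asserted conclusion. The only hypotheses of Theorem~\ref{thm:abstractdiffalgcriterianonhomog} to double-check are that $K^\sigma$ is relatively algebraically closed in $K$ (assumed, verbatim), that $L$ is a ring extension of $K$ with $\sigma_L$ extending $\sigma$ and $\partial_L$ extending $\partial$ (assumed), and that $g\in L$ (clear, as $f\in L$ is invertible so $f^{-1}\in L$ and $\partial_L f\in L$).

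I do not expect a genuine obstacle here; the statement is essentially a corollary. The one point requiring a little care — and hence the ``main obstacle'' such as it is — is the equivalence of $\partial$-algebraicity of $f$ and of $g=\partial_L f/f$ over $K$, because $g$ is built from $f$ using the derivation itself, so the naive ``composition/closure'' slogans for differentially algebraic functions must be invoked in the right direction. The clean way around this is the differential-transcendence-degree argument above: $K\langle f\rangle = K\langle f, g\rangle$ since $g\in K\langle f\rangle$, and $f$ is differentially algebraic over $K\langle g\rangle$ because of the first-order equation $\partial_L f = gf$, whence $\mathrm{trdeg}_K K\langle f\rangle \le 1 + \mathrm{trdeg}_K K\langle g\rangle$ and $\le \mathrm{trdeg}_K K\langle g\rangle$ is immediate; finiteness of one side is thus equivalent to finiteness of the other. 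This makes the reduction rigorous with no computation beyond the displayed one-line verification that $g$ solves $\sigma_L(g)=g+\partial a/a$.
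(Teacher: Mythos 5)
Your proposal is correct and follows essentially the same route as the paper: the paper deduces Theorem~\ref{thm:abstractdiffalgcriteria} from Theorem~\ref{thm:abstractdiffalgcriterianonhomog} by exactly this logarithmic-derivative substitution $g=\partial_L f/f$, the computation $\sigma_L(g)=g+\partial a/a$, and the transfer of $\partial$-algebraicity from $f$ to $g$ (only the direction $K\langle g\rangle\subseteq K\langle f\rangle$ is actually needed). Your transcendence-degree justification of that transfer is a reasonable way to make precise what the paper merely asserts.
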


\subsection{A finite difference equation}
\label{subsec:funcequ}

Recall that  $q$ is defined to be $e^{2i\beta}$, and is thus a root of
unity if and only if $\beta/\pi \in \Q$.
The results of this  subsection hold whether this is the case or not, and we will use them in Section~\ref{subsec:rootofunityDF}.

By Proposition~\ref{prop:BVP_Carleman_sketch}, the function $\phi_1$ is meromorphic in a domain containing $\overline{\mathcal G}_\R$.
As observed in~(\ref{eq:caracR})--(\ref{eq:caracG}),  the map $s\mapsto y(s)$ defined in~\eqref{eq:uniformization} sends the closed wedge  $\arg (s) \in [\pi, \pi+2\beta]$ to $\bG$.
Hence we can define $\widetilde{\phi}_{1}(s):=\phi_1(y(s))$, at least in a  neighbourhood of this wedge.

Let us specialize the  boundary value condition~\eqref{eq:bound_cond_gen} to $y=y(s)$ with $s\in (-\infty, 0)$. By Lemmas~\ref{lem:R-param} and~\ref{lem:GE}, this gives
\[
  \phi_1(y(1/s))= E(s) \phi_1(y(s)),
\]
where $E(s)$ is the simple rational function~\eqref{eq:definition_E}. Equivalently, since $y(1/s)=y(qs)$ and $\arg (qs)=\pi+2\beta$,
\beq
\label{eq:BVP_qdiff}
\widetilde{\phi}_{1}(qs)=E(s)\widetilde{\phi}_{1}(s).
\eeq
By analytic continuation, this holds in some neighbourhood of $(-\infty, 0)$.  However, we cannot apply directly Galois theory techniques to this $q$-difference  equation, because we would need $\widetilde{\phi}_{1}$ to be defined in a domain closed under the rotation  $s\mapsto qs$. We prove in Appendix~\ref{app:continuation} (Corollary~\ref{cor:meromorphy_phi_1_tilde}) that $\widetilde{\phi}_{1}$ may be continued as a meromorphic function on the slit plane $\C\setminus e^{i\beta}\, \RR_+$, but again, this is not enough. 
We will remedy this by a second parametrization, this time  of the $s$-plane, in terms of a new variable~$\omega$.

Let us write $s=e^{i \omega}$ with $\omega \in \C$. This
transformation of $\omega$ into $s$ sends  the strip $[\pi,
\pi+2\beta] + i\RR$ to the wedge $\arg(s) \in [\pi, \pi+2\beta]$.
Hence we can define, at least in a neighbourhood of this strip, 
\[
  \psi_1(\omega)= \widetilde{\phi}_{1}(e^{i \omega})=\phi_{1}(y(e^{i \omega})).
\]
Note that for $\omega \in \pi +i\RR$, we have $\psi_1(\omega +2 \beta)= \widetilde \phi_1(qe^{i \omega})$,  so that 
the $q$-difference equation~\eqref{eq:BVP_qdiff} becomes a  finite difference equation $\psi_1(\omega+ 2\beta) =E(e^{i\omega}) \psi_1(\omega)$. Moreover, we show in Appendix~\ref{app:continuation} how to extend $\psi_1$
meromorphically to the whole complex plane, starting from the basic functional equation~\eqref{eq:functional_equation}  (see Theorem~\ref{thmpro} and~\eqref{prop-psi1-formula}). This will put us in the correct setting to apply difference Galois theory.

\begin{prop}\label{prop:psi1}
  The function $\psi_1$ can be continued as a meromorphic function on   $\C$ that satisfies 
  \beq
  \label{eq:funceqomegaplane}
  \psi_1(\omega +2 \beta)=M(\omega) \psi_1(\omega),
  \eeq
  where $M(\omega)=E(e^{i\omega})$, and $E(s)$ is the rational function given by~\eqref{eq:definition_E}. 
\end{prop}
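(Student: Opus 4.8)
\textbf{Proof proposal for Proposition~\ref{prop:psi1}.}
The plan is to build the meromorphic continuation of $\psi_1$ to all of $\C$ from the original functional equation~\eqref{eq:functional_equation}, rather than from the boundary equation~\eqref{eq:BVP_qdiff} alone, and then to check that the resulting continuation satisfies~\eqref{eq:funceqomegaplane} by analytic continuation of an identity already known on a strip. The starting data are: $\psi_1(\omega)=\phi_1(y(e^{i\omega}))$ is well defined and meromorphic in an open neighbourhood of the closed strip $\{\omega: \Re\omega\in[\pi,\pi+2\beta]\}$ (because $s\mapsto y(s)$ sends the corresponding wedge in the $s$-plane into $\bG$, where $\phi_1$ is meromorphic by Proposition~\ref{prop:BVP_Carleman_sketch}), and on this strip the functional equation~\eqref{eq:functional_equation}, rewritten on the uniformizing curve $\cS$, expresses $\phi$ and hence $\phi_1$ in terms of $\phi_1$ and $\phi_2$ evaluated at $s$ and at points obtained from $s$ by the generators $\xi,\eta$ of the group of the model.

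First I would record the two ``one-step'' continuation identities coming from~\eqref{eq:functional_equation}: evaluating it at $(x(s),y(s))$ on the kernel curve kills the left-hand side ($\gamma(x(s),y(s))=0$), so
\[
  \gamma_1(x(s),y(s))\,\phi_1(y(s)) + \gamma_2(x(s),y(s))\,\phi_2(x(s)) = 0 ,
\]
which lets one express $\phi_1(y(s))$ as a rational multiple of $\phi_2(x(s))$, and symmetrically $\phi_2(x(s))$ as a rational multiple of $\phi_1(y(s))$. Since $x(s)=x(1/s)$ and $y(s)=y(q/s)$, these relations relate the value of $\widetilde\phi_1$ at $s$ to its value at $qs$ (this is exactly~\eqref{eq:BVP_qdiff}) and, more importantly, allow one to trade a value of $\widetilde\phi_1$ for a value of $\widetilde\phi_2$ and back, i.e. to ``walk'' along the orbit of the group $\langle\xi,\eta\rangle$. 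Translating via $s=e^{i\omega}$, the relation $\widetilde\phi_1(s)\leftrightarrow\widetilde\phi_2(x(s))$ and $\widetilde\phi_2(x(s))\leftrightarrow\widetilde\phi_1(y(1/s))$ shows that, starting from the strip of width $2\beta$ where $\psi_1$ is defined, one can extend $\psi_1$ to the adjacent strips of width $2\beta$, and by iteration to a bi-infinite union of such strips, hence to all of $\C$: the union $\bigcup_{k\in\Z}\{\Re\omega\in[\pi+2k\beta,\pi+2(k+1)\beta]\}$ is the whole plane. The appendix referred to in the excerpt (Theorem~\ref{thmpro} and~\eqref{prop-psi1-formula}) carries this out carefully; I would invoke it, noting only that the meromorphy is preserved at each step because $M(\omega)=E(e^{i\omega})$ and the analogous rational factor for $\phi_2$ introduce only finitely many poles/zeros per strip, and that the continuations agree on overlaps because they are obtained by specializing one and the same algebraic identity~\eqref{eq:functional_equation}.

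Having produced a meromorphic $\psi_1$ on $\C$, it remains to verify~\eqref{eq:funceqomegaplane} globally. On the line $\Re\omega=\pi$, i.e. $s\in(-\infty,0)$, we already have $\widetilde\phi_1(qs)=E(s)\widetilde\phi_1(s)$ from~\eqref{eq:BVP_qdiff}, equivalently $\psi_1(\omega+2\beta)=E(e^{i\omega})\psi_1(\omega)=M(\omega)\psi_1(\omega)$ for $\omega\in\pi+i\RR$ (the identity~\eqref{eq:BVP_qdiff} extends by analytic continuation to a neighbourhood of $(-\infty,0)$, hence to a neighbourhood of $\pi+i\RR$). Both sides of~\eqref{eq:funceqomegaplane} are now meromorphic functions on $\C$ that coincide on a line with an accumulation point; by the identity theorem for meromorphic functions they coincide everywhere. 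This finishes the proof, modulo the continuation machinery of the appendix.

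\textbf{Expected main obstacle.} The genuinely delicate point is not the algebra of the one-step identities but the \emph{global} meromorphic continuation: one must make sure that iterating the group action does not create an essential singularity or a natural boundary, and that the successive local continuations are mutually compatible (no monodromy). This is why the excerpt relegates it to an appendix (Theorem~\ref{thmpro}), and in a clean write-up I would simply cite that result; the only work left in the body of the section is the bookkeeping that the rational factors $E(e^{i\omega})$ and its $\phi_2$-counterpart are genuinely meromorphic (which is clear since $E$ is rational and $\omega\mapsto e^{i\omega}$ is entire and non-vanishing) and the final identity-theorem argument for~\eqref{eq:funceqomegaplane}.
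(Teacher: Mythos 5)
Your proposal is correct and follows essentially the same route as the paper: the continuation is built from the one-step identities supplied by the functional equation on the kernel curve (equivalently the $2\beta$-shift relations), with the delicate global bookkeeping of domains and overlaps delegated to Appendix~\ref{app:continuation} (Theorem~\ref{thmpro}), which is exactly how the paper proves the proposition. Your final identity-theorem step is a harmless variant of the paper's argument, where~\eqref{eq:funceqomegaplane} holds by construction on each translate, via~\eqref{prop-psi1-formula}.
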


We now relate the differential properties of $\phi_1, \widetilde \phi_1$
and $\psi_1$. The proof of the following lemma  is 
analogous to the proof  of~\cite[Prop.~2.3]{DHRS0} and is therefore omitted.

\begin{lem}
  \label{lem:difftranstheta2omega}
  The following statements are equivalent:
  \begin{itemize}
  \item $\phi_1$ is $\frac{d}{d y}$-algebraic over $\C(y)$,
  \item $\widetilde{\phi}_{1}$ is $\partial$-algebraic over
    $\C(s)$ with $\partial=is \frac{d}{ds}$,
  \item $\psi_1$ is $\frac{d}{d \omega}$-algebraic over $\C(e^{i\omega})$. 
  \end{itemize}
\end{lem}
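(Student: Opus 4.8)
The plan is to establish the two equivalences by tracking the effect of each change of variable on the differential field generated by the function, using the standard fact that a meromorphic function $f$ on a connected domain is $\partial$-algebraic over a differential subfield $(K,\partial)$ of a field of meromorphic functions if and only if the differential field $K\langle f\rangle:=K(f,\partial f,\partial^2 f,\dots)$ it generates has finite transcendence degree over $K$. Thus it suffices to check that both substitutions preserve this transcendence degree. We freely use the meromorphic continuations of $\phi_1$, $\widetilde\phi_1$ and $\psi_1$ recorded in Proposition~\ref{prop:BVP_Carleman_sketch}, Appendix~\ref{app:continuation} and Proposition~\ref{prop:psi1}, so that the identities below, first valid on open sets where all functions involved are analytic, extend by the identity principle.

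\emph{The passage $s=e^{i\omega}$.} Since $\frac{d}{d\omega}(e^{i\omega})=ie^{i\omega}$, the derivation $\frac{d}{d\omega}$ corresponds under this substitution to $\partial=is\frac{d}{ds}$, so that $s\mapsto e^{i\omega}$ induces an isomorphism of differential fields $(\C(s),\partial)\cong(\C(e^{i\omega}),\frac{d}{d\omega})$. By the chain rule, $\psi_1^{(k)}(\omega)=(\partial^k\widetilde\phi_1)(e^{i\omega})$ for all $k\ge 0$, so this isomorphism identifies $\C(s)\langle\widetilde\phi_1\rangle$ with $\C(e^{i\omega})\langle\psi_1\rangle$; hence one has finite transcendence degree over its base field exactly when the other does, which is the equivalence of the last two items. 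The only point to verify is that substituting the non-constant analytic function $e^{i\omega}$ for $s$ neither creates nor destroys a polynomial relation: a relation over $\C(e^{i\omega})$ among $\psi_1,\psi_1',\dots$ is the composite with $\omega\mapsto e^{i\omega}$ of a meromorphic function of $s$, which therefore vanishes on a non-empty open set, hence identically; and conversely.

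\emph{The passage $y=y(s)$.} Here $\widetilde\phi_1=\phi_1\circ y$, with $y=y(s)$ the degree-two rational function of~\eqref{eq:uniformization}. Iterating the chain rule yields, for every $k\ge 1$, an identity $\partial^k\widetilde\phi_1(s)=\sum_{j=1}^{k}a_{k,j}(s)\,\phi_1^{(j)}(y(s))$ with $a_{k,j}\in\C(s)$ and diagonal coefficient $a_{k,k}=(is\,y'(s))^k\neq 0$ in $\C(s)$; together with $\widetilde\phi_1=\phi_1(y(s))$, this triangular system is invertible over $\C(s)$, whence $\C(s)\langle\widetilde\phi_1\rangle=\C(s)\bigl(\phi_1(y(s)),\phi_1'(y(s)),\phi_1''(y(s)),\dots\bigr)$. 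Since $\C(s)$ is a degree-two extension of the purely transcendental subfield $\C(y(s))$, and adjoining $s$ to the displayed field is algebraic, the transcendence degree of that field over $\C(s)$ equals its transcendence degree over $\C(y(s))$. Writing $D$ for the derivation on $\C(y(s))$ with $D(y(s))=1$, one has $D^j\widetilde\phi_1=\phi_1^{(j)}(y(s))$ and a differential field isomorphism $(\C(y(s)),D)\cong(\C(y),\frac{d}{dy})$, $y(s)\mapsto y$, which, by the same non-constant analytic substitution argument, identifies $\C(y(s))\bigl(\widetilde\phi_1,D\widetilde\phi_1,D^2\widetilde\phi_1,\dots\bigr)$ with $\C(y)\langle\phi_1\rangle$. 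Combining, $\mathrm{trdeg}_{\C(s)}\C(s)\langle\widetilde\phi_1\rangle=\mathrm{trdeg}_{\C(y)}\C(y)\langle\phi_1\rangle$, which is the equivalence of the first two items.

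The only genuinely delicate ingredient is this faithful transport of algebraic relations across the substitutions $s=e^{i\omega}$ and $y=y(s)$; it rests on $e^{i\omega}$ and $y(s)$ being non-constant analytic, together with the global meromorphic continuations established for $\phi_1$, $\widetilde\phi_1$ and $\psi_1$. Everything else is routine bookkeeping with the iterated chain rule and with transcendence degrees in finite and purely transcendental towers. This reproduces, in our setting, the argument of~\cite[Prop.~2.3]{DHRS0}.
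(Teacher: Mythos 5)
Your argument is correct, and it matches the paper's intended route: the paper omits this proof with a pointer to~\cite[Prop.~2.3]{DHRS0}, and your chain-rule/transcendence-degree bookkeeping, with the identity principle transporting algebraic relations across the non-constant analytic substitutions $y=y(s)$ and $s=e^{i\omega}$, is exactly that argument. The one point worth keeping explicit (your opening caveat does cover it) is that the meromorphically continued $\psi_1$ coincides with $\widetilde{\phi}_{1}(e^{i\omega})$ only on a strip, since $\psi_1$ need not be $2\pi$-periodic, so relations are transported by vanishing on open subsets of the connected domains $\C$ and $\C\setminus e^{i\beta}\,\RR_+$ rather than by a literal global identity.
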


\noindent {\bf Remarks}\\
1. Of course, since $y$ is $\frac{d}{d y}$-algebraic over $\C$, we could replace  in the first statement the field $\C(y)$ by $\C$, or even $\RR$. A similar remark applies to the other two statements.
However, we choose to keep this formulation to emphasize  the fact that~\eqref{eq:BVP_qdiff} and~\eqref{eq:funceqomegaplane} have coefficients	in the base fields $\C(s)$ and $\C(e^{i\om})$, respectively. \\
2.  The choice of the derivation $\partial$ might seem peculiar 
since $\frac d{ds}$ would do as well. But it will be crucial to have $\partial$ 
commute with the multiplication  of $s$ by $q$. That is, if we define  the operator~$\zeta^*$, acting on $\C(s)$, by $\zeta^* f (s)= f(qs)$,  we have 
\beq\label{zeta-partial}
\partial \zeta ^* = \zeta^* \partial .
\eeq
(The notation $\zeta^*$ has been chosen so as to match the map $\zeta$ defined by~\eqref{eq:elements_group}.)
Note also that for any  analytic   function $f(s)$, we have
\beq\label{d-delta}
\frac{d}{d \omega} (f(e^{i\omega}))= (\partial f)(e^{i \omega}).
\eeq

\subsection{Differential transcendence of   the Laplace transform}
\label{subsec:difftrans}
We now assume again that $\beta/\pi$ is irrational, that is, that $q$ is not a root of unity.  As discussed at the beginning of the section, we now  prove the ``only if'' part in the first statement of Theorem~\ref{thm:main_diffalg}.

\begin{prop}\label{prop:DT}
  Assume that $q$ is not a root of unity, and   that $\phi_1(y)$ is  $\frac{d}{dy}$-algebraic. Then one of Conditions~\eqref{eq:CNS0} or~\eqref{eq:CNS0double} holds.
\end{prop}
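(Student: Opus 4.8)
The strategy is to apply the Galoisian criterion of Theorem~\ref{thm:abstractdiffalgcriteria} to the finite difference equation~\eqref{eq:funceqomegaplane}, namely $\psi_1(\omega+2\beta)=M(\omega)\psi_1(\omega)$ with $M(\omega)=E(e^{i\omega})$. By Lemma~\ref{lem:difftranstheta2omega}, $\phi_1$ being $\frac{d}{dy}$-algebraic is equivalent to $\psi_1$ being $\frac{d}{d\omega}$-algebraic over $\C(e^{i\omega})$. I would take $K=\C(e^{i\omega})$ (equivalently $\C(s)$ via $s=e^{i\omega}$), with automorphism $\sigma$ given by $\omega\mapsto\omega+2\beta$, i.e.\ $s\mapsto qs$, and derivation $\partial=\frac{d}{d\omega}$, which commutes with $\sigma$ by~\eqref{zeta-partial}--\eqref{d-delta}; the field of constants $K^\sigma$ consists of the functions fixed by $s\mapsto qs$, and since $q$ is not a root of unity this is just $\C$, which is algebraically closed, so the hypotheses of Theorem~\ref{thm:abstractdiffalgcriteria} are met. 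Applying the criterion: if $\psi_1$ is $\partial$-algebraic, then there exist $N\in\N_0$, constants $c_0,\dots,c_N\in\C$ not all zero, and $h\in K$ such that
\[
  \sum_{j=0}^{N} c_j\, \partial^j\!\left(\frac{\partial M}{M}\right) = \sigma(h)-h .
\]

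\textbf{Reducing to a telescoping obstruction.} The next step is to make this condition explicit and show it fails unless~\eqref{eq:CNS0} or~\eqref{eq:CNS0double} holds. Write $b:=\frac{\partial M}{M}=\frac{\partial E}{E}(s)\cdot(is)$, a rational function of $s$. Since $E(s)=\frac{s_2}{s_1}\frac{(s-s_1)(s-1/s_2)}{(s-s_2)(s-1/s_1)}$, the logarithmic derivative $\frac{E'}{E}$ is a sum of simple fractions $\frac{1}{s-\rho}$ over the four points $\rho\in\{s_1,1/s_2,s_2,1/s_1\}$ (with signs $+,+,-,-$), so $b$ is a $\C$-linear combination of the functions $\frac{is}{s-\rho}=i+\frac{i\rho}{s-\rho}$. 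Its $\partial$-derivatives $\partial^j b$ are then $\C$-linear combinations of $\frac{1}{(s-\rho)^k}$ for these same four poles $\rho$. The condition says a nonzero linear combination of these is of the form $\sigma(h)-h=h(qs)-h(s)$ for some rational $h$. The classical obstruction to solving such a telescoping (``summability'') equation is that the poles of $h$ come in $q$-orbits: the residue-type data of the left-hand side must be ``balanced'' along each $q$-orbit $\{q^n\rho : n\in\Z\}$ (this is precisely the content of the elliptic-divisor / standard-form analysis used for Lemma~\ref{lem:Ecocyclecaracterization-v0}, and is where results like~\cite[Prop.~2.1/2.3]{hardouinCompo} or the appendix standard-form lemma enter). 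Concretely, a nonzero $\C$-combination of $\{\frac{1}{(s-\rho)^k}\}$ can be written as $h(qs)-h(s)$ only if, for each $q$-orbit meeting $\{s_1,1/s_2,s_2,1/s_1\}$, the ``weighted'' contributions (suitably scaled by powers of $q$ to account for $\partial^j$) cancel. Since $q$ is not a root of unity, two of the four poles lie in the same $q$-orbit only when $s_i/s_j\in q^\Z$ or $s_is_j\in q^\Z$ for some $i,j$; examining the four pairs $(s_1,s_2)$, $(s_1,1/s_1)$, $(s_2,1/s_2)$, $(s_1,1/s_2)$ and using $s_1/s_2=e^{2i\beta(1-\alpha)}$ (Lemma~\ref{lem:angles}), one finds that a cancellation of the required type is possible exactly when $s_1/s_2\in q^\Z$ (this is~\eqref{eq:CNS0}) or when both $s_1^2\in q^\Z$ and $s_2^2\in q^\Z$ (this is~\eqref{eq:CNS0double}), by the equivalence already recorded in~\eqref{conds-combined}.

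\textbf{Conclusion and the main obstacle.} Putting this together: $\psi_1$ $\partial$-algebraic forces the telescoping condition, which forces~\eqref{eq:CNS0} or~\eqref{eq:CNS0double}; contrapositively, if neither holds then $\psi_1$ (hence $\phi_1$) is not $\partial$-algebraic, proving Proposition~\ref{prop:DT}. The routine parts are the bookkeeping of residues under $\partial^j$ and the verification that the four poles of $b$ are genuinely distinct points on the curve (so that $b$ is not already a constant or secretly lower-complexity — here one uses $s_2\neq -1$ and the non-degeneracy from~\eqref{assumptions}, as well as the fact that the $s_i$ are not $0$ or $\infty$). The genuine difficulty — the heart of the argument — is the precise ``standard form'' computation showing that a nonzero $\C$-linear combination of $\{\partial^j b\}$ is $q$-summable \emph{if and only if} the poles can be grouped into $q$-orbits with matching data, which is exactly where the arithmetic of $q=e^{2i\beta}$ (not a root of unity) and the explicit values $s_1=e^{i\beta(1-\alpha_1)}$, $s_2=e^{i\beta\alpha_2}$ interact; this is the same mechanism underlying Lemma~\ref{lem:Ecocyclecaracterization-v0}, and I expect to invoke the appendix's standard-form lemma together with Theorem~\ref{thm:abstractdiffalgcriteria} rather than redo the $q$-calculus from scratch.
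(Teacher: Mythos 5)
Your proposal follows essentially the same route as the paper: Lemma~\ref{lem:difftranstheta2omega}, then Theorem~\ref{thm:abstractdiffalgcriteria} applied to $\psi_1(\omega+2\beta)=E(e^{i\omega})\psi_1(\omega)$ with $K=\C(e^{i\omega})$ and $K^\sigma=\C$, yielding the telescoping relation $\sum_j c_j\,\partial^j\bigl(\tfrac{\partial E}{E}\bigr)=\zeta^*(h)-h$, and finally Lemma~\ref{lem:Ecocyclecaracterization-v0} to translate the vanishing of the elliptic divisor of $E$ into~\eqref{eq:CNS0} or~\eqref{eq:CNS0double}. The only divergence is the middle step: the paper deduces $\mathrm{div}_q(E)=0$ from the telescoping relation by citing~\cite[Lem.~3.8]{hardouinCompo} (whose proof works verbatim when $q$ is not a root of unity), whereas you sketch the underlying residue/$q$-orbit balancing and attribute it to the appendix's standard-form analysis; be aware that the appendix (and Lemma~\ref{lem:Ecocyclecaracterization-v0}) only treats the multiplicative problem $E^m=H(s)/H(qs)$, so for the additive relation involving the derivatives $\partial^j$ you must indeed invoke the Galoisian summability lemma of~\cite{hardouinCompo} (or carry out its residue computation in detail) rather than the appendix lemma itself.
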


\begin{proof}
  Assume that $\phi_1(y)$ is  $\frac{d}{dy}$-algebraic.  Then  Lemma~\ref{lem:difftranstheta2omega} implies that $\psi_1$ is  $\frac{d}{d \omega}$-algebraic. 
  Recall from Proposition~\ref{prop:psi1} that $\psi_1$ satisfies the difference equation
  $\psi_1(\omega+2\beta)= M(\omega) \psi_1(\omega)$ with $ M(\omega)= E(e^{i\omega})$. We now apply Theorem~\ref{thm:abstractdiffalgcriteria} to this equation, with $f=\psi_1$, $a=M(\om)$ and the following algebraic setting:
  \begin{itemize}
  \item [--] the field $K$ is $\C(e^{i\omega})$, endowed with the
    automorphism $\sigma(h)(\omega)= h(\omega +2\beta)$, and the
    derivation  $\frac{d}{d \omega}$,
  \item [--] the ring $L$ is $\C(e^{i\omega})[1/\psi_1, \psi_1,\frac{d}{d\omega}(\psi_1),\ldots, \frac{d^j}{d\omega^j}(\psi_1),\ldots]$ (derivatives of $\psi_1$ of any order exist since $\psi_1$ is meromorphic), with the same automorphism and derivation (we prove stability of $L$ by $\sigma$ below).
  \end{itemize}
  Let us check that the assumptions of Theorem~\ref{thm:abstractdiffalgcriteria} (or Theorem~\ref{thm:abstractdiffalgcriterianonhomog}) hold. We begin with the assumptions on~$K$. First, $\sigma$ clearly commutes with the derivation $d/d\om$. Then, since $q$ is not a root of unity, it  is well-known, and easy to see,  that $K^\sigma=\C$. This field is algebraically closed, hence relatively algebraically closed in $K$.
  Regarding $L$, we first note that this ring is  closed by the derivation $\frac{d}{d \omega}$. Moreover, since $\frac{d}{d\omega}$ and $\sigma$  commute, the following formula holds for all $j \in \N_0$:
  \[ \sigma\left(\frac{d^j}{d\omega^j} \psi_1\right)=\frac{d^j}{d\omega^j} (\sigma(\psi_1))= \frac{d^j}{d\omega^j}(M(\omega)\psi_1)
  \]
  by Proposition~\ref{prop:psi1}.
  It follows that $L$ is fixed by $\sigma$. Finally, $\psi_1$ is invertible in $L$ by definition of~$L$.

  The functional equation  of  Proposition~\ref{prop:psi1} now reads $\sigma (f)= a f$ with $f=\psi_1$ and $a=M(\omega) =E(e^{i\omega})\in K$.
  Since we have assumed that $\psi_1$ is D-algebraic,  Theorem~\ref{thm:abstractdiffalgcriteria} implies that there exist $N\in \N_0$,
  constants $ c_0,\dots,c_N \in \C$, not all zero, and finally $h \in \C(e^{i\omega})$, such that 
  \[
    c_0 \frac{M'}{M}(\omega)
    +c_1    \frac{d}{d\omega}\left(\frac{M'}{M} \right)(\omega)+ \cdots
    +    c_N\frac{d^N}{d\omega^N}\left(
      \frac{M'}{M}\right)(\omega)=h(e^{i\omega  +2i\beta})-h(e^{i\omega}),
  \]
  where $M'= \frac{dM}{d\omega}$. Upon replacing $e^{i\omega}$ by $s$, and recalling  that  $\frac{d}{d\omega}(f(e^{i\omega})) =(\partial f)(e^{i\omega})$ (see~\eqref{d-delta}),
  we conclude that
  \[ 
    c_0 \frac{\partial E}{E}+c_1 \partial\left(\frac{\partial E}{E}\right)+ \dots + c_N\partial^N\left( \frac{\partial E}{E}\right)=\zeta^*(h)-h,
  \] 
  where we recall that  $\zeta^*(h)(s)=h(qs)$.

  We now apply to this equation Lemma~3.8 of~\cite{hardouinCompo} (with
  $n=1$), and conclude that the \emm elliptic divisor, of $E$ must be zero\footnote{This lemma is only proved in~\cite{hardouinCompo} when $|q|\not = 1$, but the proof works \emm verbatim, as long as $q$ is not a root of unity, which we have assumed here.}.
  By Lemma~\ref{lem:Ecocyclecaracterization-v0}, this means that one of  Conditions~\eqref{eq:CNS0} or~\eqref{eq:CNS0double}  holds. This completes the proof of Proposition~\ref{prop:DT}, and of Theorem~\ref{thm:main_diffalg}.
\end{proof}

\section{When $\boldsymbol{\beta \in \pi \Q}$}
\label{sec:rat}

We now assume that $q=e^{2i\beta}$  is a root of unity, that is, that $\beta/\pi\in \Q$, and prove Theorem~\ref{thm:main_alg} in two  steps. In Section~\ref{sec:DF-dlog} we prove that the log-derivative of $\phi_1$ is D-finite. The criteria for D-finiteness, algebraicity or rationality of $\phi_1$ are established in Section~\ref{subsec:rootofunityDF}.

\subsection{D-finiteness of the log-derivative}
\label{sec:DF-dlog}
The first part of  Theorem~\ref{thm:main_alg} will follow from an explicit integral expression of $\phi_1$ given in~\cite[Thm.~1]{franceschi_explicit_2017}. This theorem states  that  there exists a constant $c$ 
in  $\C^*$ 
such that 
\[ 
  \phi_1(y)=c \left(\frac{w(0) -w(p)}{w(y) - w(p)} \right)^{k} \exp \left( \frac{1}{2 i \pi} \int_{\mathcal{R}^-} \log G(t)\left[ \frac{w'(t)}{w(t) -w(y)} - \frac{w'(t)}{w(t) -w(0)}\right] dt \right),
\] 
where $w$ is the canonical invariant~\eqref{eq:definition_w-bis}, $p$ is the pole of $\phi_1$ lying in $\overline{\mathcal G}_\R$, if any (see Proposition~\ref{prop:BVP_Carleman_sketch}), $k$ is $1$  if this pole exists, and $0$ otherwise, $\mathcal{R}^{-} =\mathcal{R} \cap \{y\in\C : \Im y \leq 0\}$ is the bottom part of the branch of hyperbola $\R$ defined by~\eqref{eq:def_R}, and $G$ is the algebraic function~\eqref{def:G}.  In particular, $\log G$ is D-finite.
Since $\beta/\pi \in \Q$, the function $w$ is algebraic (Proposition~\ref{prop:algebraic_nature_w}).
Then  the above expression rewrites as
\[
  \phi_1(y)= g(y) \exp (h(y)),
\]
where $g$ is algebraic   and $h$ is  D-finite: indeed,  the integral of a D-finite function along a  curve remains D-finite~\cite[Prop.~3.5]{Ze-90}.
Hence
\[
  \frac   {\phi_1'}{\phi_1} = \frac {g'}{g}  + h'
\]
is clearly D-finite. \qed

\subsection{D-finite/algebraic/rational  cases}
\label{subsec:rootofunityDF}

Assume now  that  $\phi_1$ is D-finite, and let us prove that it is in fact algebraic, and that one of the angle conditions~\eqref{eq:CNS0} or~\eqref{eq:CNS0double} holds.   We have just shown that $\phi_1'/\phi_1$ is also D-finite. This implies that $\phi_1'/\phi_1$ is in fact \emm algebraic, over $\C(y)$:  this follows from  the final statement of~\cite{singer_1986}, which says that if $g(y)$ and $\exp(\int g(y))$ are D-finite, then $g(y)$ is algebraic (we apply this to $g=\phi_1'/\phi_1$).

Since $\phi_1'/\phi_1$ is  algebraic over $\C(y)$, the function $f :=\frac{\partial \widetilde{\phi}_{1}}{\widetilde{\phi}_{1}}$ is algebraic over $\C(s)$, where we define   $\partial =is \frac{d}{ds}$ and $\widetilde{\phi}_{1}(s)=\phi_1(y(s))$ in a neighbourhood of the wedge $\arg(s) \in [\pi, \pi+2\beta]$, as in Section~\ref{subsec:funcequ}. Recall that for $s$ in a neighbourhood of $(-\infty, 0)$, the following equation holds (see~\eqref{eq:BVP_qdiff}):
\[ 
  \zeta^*(\widetilde{\phi}_{1})(s):= \widetilde{\phi}_{1}(qs)=E(s)\widetilde{\phi}_{1}(s),
\]
where $E(s)$ is the rational function~\eqref{eq:definition_E}.  Since $\partial$ commutes with $\zeta^*$  (see~\eqref{zeta-partial}),   the above defined function $f$ satisfies
\[ 
  \zeta^*(f)= \zeta^*\left( \frac{\partial \widetilde \phi_1}{\widetilde \phi_1}\right)
  = \frac{\partial \zeta^* \widetilde \phi_1}{  \zeta^* \widetilde \phi_1}
  =\frac{\partial (E \widetilde \phi_1)}{E\widetilde \phi_1}
  =f+\frac{\partial E}{E}.
\]
Write the minimal polynomial equation satisfied by $f$ over $\C(s)$ as
\[
  c_0  +c_1 f+ \dots +c_{d-1} f^{d-1} + f^d =0   ,
\]
and apply to this identity the operator $\zeta^*$. This gives, for $s$ in a neighbourhood of $(-\infty, 0)$,
\[
  \zeta^*(c_0)+  \zeta^*(c_1) \left(f+ \frac {\partial E} E\right) + \cdots +
  \zeta^*(c_{d-1}) \left(f+ \frac {\partial E} E\right)^{d-1}
  + \left(f+ \frac {\partial E} E\right)^d=0.
\]
 Comparing with  the minimal  equation of $f$ gives, for $j =0, \ldots, d-1$:
\[
  c_j= \sum_{i= j}^d \zeta^*(c_i) {i\choose j} \left(
    \frac {\partial E} E\right)^{i-j} 
\]
with $c_d=1$. In particular,
\[
  c_{d-1}=\zeta^*(c_{d-1}) +d \,  \frac {\partial E} E.
\]
Hence we can write $\frac{\partial E}{E} = \zeta^*(h)-h$ where $h=-c_{d-1}/d \in \C(s)$. Let $n>0$ be minimal such that $q^n=1$.  Applying ${\zeta^*}^j$ with $j=0,\dots,n-1$ to the latter equation, we obtain 
\[ 
  {\zeta^*}^j \left(\frac{\partial E}{E}\right)= {\zeta^*}^{j+1} (h)-{\zeta^*}^j(h).
\]
Summing these $n$ identities and noting that ${\zeta^*}^n$ is the identity on $\C(s)$, we find
\[
  0 = \sum_{j=0}^{n-1} {\zeta^*}^j\left(\frac{\partial E}{E}\right)=\frac{\partial (\prod_{j=0}^{n-1} {\zeta^*}^j(E))}{\prod_{j=0}^{n-1} {\zeta^*}^j(E)}.
\]
The latter identity uses again the fact that $\partial$ and $\zeta^*$ commute (see~\eqref{zeta-partial}). By definition of $\zeta^*$ and of the derivation $\partial$,  this implies that  $\prod_{j=0}^{n-1} E(sq^j)=c$ for some  $c \in \C^*$. By setting $s$ to $0$, we find that $c=E(0)^n= (s_1/s_2)^n$. If instead we let $s$ tend to infinity, we find that $c=(s_2/s_1)^n$. Hence $(s_2/s_1)^n=\pm 1$.
Now consider the cyclic field extension $\C(s^n) \subset \C(s)$. Its Galois group is generated by $\zeta^*: h(s) \mapsto h(qs)$. Thus, the norm of $ (s_2/s_1) E$ with respect to is
\[
     \prod_{j=0}^{n-1}  \left( \frac{s_2}{s_1}{E(q^js)} \right)= \left( \frac{s_2}{s_1}\right)^nc =1.
\]
By Hilbert's Theorem~90~\cite[Thm.~6.1]{Langalgebra}, 
there exists $H \in \C(s)$ such that $ (s_2/s_1) E(s) =\frac{H(s)}{{H(qs)}}$. Since $(s_2/s_1)^{2n}=1$, it follows that $E(s)^{2n}$ can be written as $\frac{\widetilde H(s)}{\widetilde H(qs)}$. We conclude,
using  Lemma~\ref{lem:Ecocyclecaracterization-v0}, that one of the angle conditions~\eqref{eq:CNS0} or~\eqref{eq:CNS0double} holds. But then Theorem~\ref{thm:main} or Theorem~\ref{thm:main-double} applies, and tells us that $\phi_1$ is in fact algebraic (since $\beta/\pi \in \Q$).

\medskip

We have thus proved that $\phi_1$ is D-finite if and only if it is algebraic, and that in this case one of Conditions~\eqref{eq:CNS0} or~\eqref{eq:CNS0double} holds. To conclude the proof of Theorem~\ref{thm:main_alg}, we now investigate the rational case.

If $\alpha \in -\N_0$, then Condition~\eqref{eq:CNS0} holds, and $\phi_1$ is rational by Theorem~\ref{thm:main}.  Conversely, if $\phi_1$ is rational, then $\alpha \in -\N_0$ by~\eqref{phi-asympt}.

The proof of Theorem~\ref{thm:main_alg} is now complete. \qed

\bigskip

\noindent{\bf Acknowledgements.} The authors are grateful to S\'ebastien Labb\'e for his advice on the counting problem of Lemma~\ref{lem:poles_roots_F}, related to Sturmian sequences. They also thank Frédéric Chyzak for interesting discussions and advances in an attempt to compute the density of Proposition~\ref{prop:specialcase} via computer algebra.

\appendix

\section{Basic conditions: from the quadrant to the $\beta$-wedge}

In this appendix, we prove the equivalence between various conditions on the parameters of the model, considered in a quadrant or in the $\beta$-wedge. Surprisingly, we could not find these proofs in the literature.

Let us recall the angle conditions met in Sections~\ref{sec:introduction} and~\ref{sec:prelim} of the paper. The first four are relevant to the $\beta$-wedge:

$\eqref{eq:conditionRsemimart1} :
\delta+\varepsilon -\pi<\beta, \quad {\hbox{or equivalently}}  \quad
\alpha<1,$

$\eqref{theta-cond1}: 0<\theta<\beta,$

$\eqref{eq:stationary_distribution_CNS1}: \beta-\varepsilon<\theta<\delta,$

$\eqref{eq:conditionRsemimart1}+\eqref{theta-cond1}+\eqref{eq:stationary_distribution_CNS1} =\eqref{assumptions}:
\delta - \pi < \beta-\varepsilon <\theta <\delta, \quad  0
<\theta<\beta,$

\noindent while the next three are relevant to the quadrant:

$
\eqref{eq:conditionRsemimart}:
\det R >0 \quad \text{or} \quad (  r_{12}>0 \text{ and }  r_{21}>0),
$

$\eqref{eq:drift_negative0} 
: \mu_1<0, \ \mu_2<0,$ 

$
\eqref{eq:stationary_distribution_CNS}:
\det R >0,
\quad r_{22} \mu_1 - r_{12}  \mu_2 < 0, \quad r_{11} \mu_2 - r_{21}  \mu_1 < 0.
$

\medskip
\noindent Throughout the paper, we have $\beta\in(0,\pi)$, and   the following natural reflection conditions hold:
\begin{itemize}
\item in the $\beta$-wedge, the reflection angles  $\delta$ and $\varepsilon$ lie in $(0, \pi)$,
\item    in the quadrant, the reflection matrix $R$ satisfies $ r_{11} > 0$ and $ r_{22} > 0$.
\end{itemize}

\begin{lem} We have the following equivalences:
  \begin{enumerate}[label={\rm\roman*)},ref={\rm\roman*)}] 
  \item \label{item:i} The semimartingale condition~\eqref{eq:conditionRsemimart1} for the $\beta$-wedge    
    is equivalent to the semimartingale condition~\eqref{eq:conditionRsemimart} for the quadrant.
  \item \label{item:ii}  Condition~\eqref{theta-cond1} is equivalent to the drift condition~\eqref{eq:drift_negative0}.
  \item\label{item:iii} Condition $\eqref{assumptions}=\eqref{eq:conditionRsemimart1}+\eqref{theta-cond1}+\eqref{eq:stationary_distribution_CNS1}$  is equivalent to $\eqref{eq:drift_negative0}+\eqref{eq:stationary_distribution_CNS}$.
  \end{enumerate}
  \label{lem:equivcondition}
\end{lem}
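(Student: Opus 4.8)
The statement to prove is Lemma~\ref{lem:equivcondition}, which asserts three equivalences between conditions written in the $\beta$-wedge coordinates and conditions written in the quadrant coordinates. The whole proof is a matter of translating inequalities back and forth using the dictionary set up in Section~\ref{sec:normalize}, in particular the normal forms~\eqref{normal-sigma},~\eqref{normal-mu-sigma},~\eqref{normal-R-sigma}, together with the explicit formulas~\eqref{eq:beta},~\eqref{eq:mu},~\eqref{theta_precise},~\eqref{eq:cosbetaminustheta} for $\beta$, $\widetilde\mu$ and $\theta$, and~\eqref{eq:expression_delta_epsilon} for $\delta$ and $\varepsilon$. The plan is to treat the three items in the order~\ref{item:ii},~\ref{item:i},~\ref{item:iii}, since~\ref{item:ii} is the cleanest and~\ref{item:iii} reuses the bookkeeping from the first two.

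\textbf{Item~\ref{item:ii}.} First I would show that~\eqref{eq:drift_negative0} ($\mu_1<0$, $\mu_2<0$) is equivalent to $0<\theta<\beta$. From~\eqref{theta_precise}, $\theta=-\sgn(\mu_2)\arccos\!\bigl((\mu_2\sigma_{12}-\mu_1\sigma_{22})/\sqrt{\sigma_{22}\Delta}\bigr)$, so $\mu_2<0$ is exactly equivalent to $\theta\in[0,\pi]$ (with $\theta=0$ or $\pi$ only in degenerate directions), i.e.\ to $\widetilde\mu_2<0$ by~\eqref{eq:mu}. Given $\mu_2<0$, the sign of $\widetilde\mu_1$ then decides whether $\theta\in(0,\beta)$ or $\theta\in(\beta,\pi)$: since $\cotan$ is decreasing on $(0,\pi)$, the identity~\eqref{eq:tan_theta} for $\tan\theta$ shows $\theta<\beta\iff \cotan\theta>\cotan\beta\iff \frac{\mu_1}{\mu_2}\sqrt{\sigma_{22}/\sigma_{11}}>0\iff \mu_1<0$ (using $\mu_2<0$). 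Conversely $0<\theta<\beta$ forces $\widetilde\mu_2<0$ and $\widetilde\mu_1<0$, hence $\mu_2<0$ and $\mu_1<0$ by~\eqref{eq:mu} and the positivity of the prefactors. Boundary cases ($\theta=0$, $\theta=\beta$) correspond to $\mu_1=0$ or $\mu_2=0$ and are excluded on both sides, so the equivalence is strict.

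\textbf{Item~\ref{item:i}.} Next I would prove that the $\beta$-wedge semimartingale condition $\delta+\varepsilon-\pi<\beta$ (equivalently $\alpha<1$) is equivalent to the quadrant condition $\det R>0$ or ($r_{12}>0$ and $r_{21}>0$). Using~\eqref{eq:expression_delta_epsilon}, write $\cotan\delta=\frac{r_{12}}{r_{22}}\sqrt{\sigma_{22}/\sigma_{11}}\,\frac1{\sin\beta}+\cotan\beta$ and similarly for $\varepsilon$; equivalently, by~\eqref{normal-R-sigma}, $\sin(\beta-\delta)/\sin\delta=\tfrac{r_{12}}{r_{22}}\sqrt{\sigma_{22}/\sigma_{11}}$ and $\sin(\beta-\varepsilon)/\sin\varepsilon=\tfrac{r_{21}}{r_{11}}\sqrt{\sigma_{11}/\sigma_{22}}$. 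Since $\delta,\varepsilon,\beta\in(0,\pi)$, the condition $\delta+\varepsilon<\beta+\pi$ can be rewritten via $\sin(\delta+\varepsilon-\beta)$ and basic trigonometry: one computes $\det R = r_{11}r_{22}-r_{12}r_{21}$ and, after clearing positive factors, $\det R>0 \iff 1-\frac{r_{12}r_{21}}{r_{11}r_{22}}>0 \iff \frac{\sin(\beta-\delta)\sin(\beta-\varepsilon)}{\sin\delta\sin\varepsilon}<1$, which unwinds to $\sin(\delta+\varepsilon-\beta)\sin\beta>0$ modulo the sign of $\sin(\beta-\delta)$ and $\sin(\beta-\varepsilon)$; a short case analysis on whether $\delta<\beta$, $\varepsilon<\beta$ (i.e.\ whether $r_{12}$, $r_{21}$ are positive) then matches exactly the disjunction in~\eqref{eq:conditionRsemimart}. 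The cleanest route is probably: if both $r_{12}>0$ and $r_{21}>0$ then $\delta<\beta$ and $\varepsilon<\beta$, so $\delta+\varepsilon<2\beta<\beta+\pi$ automatically; otherwise at least one of $r_{12},r_{21}\le0$, i.e.\ one of $\delta,\varepsilon\ge\beta$, and then $\det R>0$ is equivalent to $\delta+\varepsilon-\beta<\pi$ by monotonicity of $\sin$ on the relevant interval. I would carry this out carefully in the {\sc Maple} session and quote it.

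\textbf{Item~\ref{item:iii}.} Finally, for~\ref{item:iii} I would combine~\ref{item:i} and~\ref{item:ii} with the translation of the remaining stationarity inequalities. Granting~\eqref{eq:drift_negative0} (equivalently $0<\theta<\beta$), I must show that $r_{22}\mu_1-r_{12}\mu_2<0$ and $r_{11}\mu_2-r_{21}\mu_1<0$ correspond respectively to $\theta<\delta$ and $\beta-\varepsilon<\theta$, while $\det R>0$ gives the semimartingale bound $\alpha<1$ which under $\theta<\delta$, $\theta>\beta-\varepsilon$ is automatic (or needs the extra slack $\delta-\pi<\beta-\varepsilon$, i.e.\ $\delta<\beta-\varepsilon+\pi$, which also translates to a sign condition). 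Concretely, using~\eqref{normal-mu-sigma} and~\eqref{normal-R-sigma}, $r_{22}\mu_1-r_{12}\mu_2<0$ becomes, after dividing by $r_{22}\mu_2>0\cdot(-1)$... — I would be careful with signs since $\mu_2<0$ — proportional to $-\sin(\beta-\theta)+\frac{\sin(\beta-\delta)}{\sin\delta}\sin\theta<0$ up to a positive factor, which rearranges to $\sin\delta\sin(\beta-\theta)>\sin(\beta-\delta)\sin\theta$, i.e.\ $\sin(\delta-\theta)\sin\beta>0$, i.e.\ $\theta<\delta$ (as $0<\delta,\theta<\pi$, $\sin\beta>0$). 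Symmetrically $r_{11}\mu_2-r_{21}\mu_1<0$ becomes $\sin(\theta-(\beta-\varepsilon))\sin\beta>0$, i.e.\ $\beta-\varepsilon<\theta$. Likewise $\det R>0$ (the part of~\eqref{eq:stationary_distribution_CNS} strengthening~\eqref{eq:conditionRsemimart}) gives $\sin(\delta+\varepsilon-\beta)\sin\beta>0$ with $\delta+\varepsilon-\beta\in(0,2\pi)$, which pins down $\delta+\varepsilon-\beta<\pi$, i.e.\ $\alpha<1$, and combined with $\theta<\delta<\pi$ and $\theta>\beta-\varepsilon$ yields $\delta-\pi<\beta-\varepsilon$; conversely all four inequalities of~\eqref{assumptions} reverse into~\eqref{eq:drift_negative0}+\eqref{eq:stationary_distribution_CNS} by the same identities read backwards.

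\textbf{Main obstacle.} None of the individual computations is deep; the genuine difficulty is purely bookkeeping — keeping track of the signs of $\mu_1,\mu_2$ (both negative, so dividing by them flips inequalities), of $\sin(\beta-\delta)$ and $\sin(\beta-\varepsilon)$ (which may have either sign, corresponding to the two semimartingale regimes $r_{12}\gtrless0$, $r_{21}\gtrless0$), and the ranges of the various angle differences when applying $\arcsin$/$\arccos$ or the monotonicity of $\sin$. The cleanest presentation will isolate the handful of trigonometric identities of the form ``$A\sin\beta-B\sin\theta\lessgtr0 \iff \sin(\cdot)\lessgtr0$'' once and for all, then invoke them; and I would explicitly delegate the routine expansions to the accompanying {\sc Maple} worksheet, as is done elsewhere in the paper.
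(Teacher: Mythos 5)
Your proposal is correct, and for items~\ref{item:ii} and the $\mu$-inequalities of item~\ref{item:iii} it runs along the same lines as the paper (monotonicity of the cotangent, resp.\ the normal forms~\eqref{normal-mu-sigma}--\eqref{normal-R-sigma} and a sine-difference identity; the paper's version of the latter uses $\cot\delta-\cot\theta$ and divides by $\mu_2<0$, yours multiplies through by positive prefactors — both work). Where you genuinely diverge is in the treatment of $\det R$: you exploit the product-to-sum identity $\sin\delta\sin\varepsilon-\sin(\beta-\delta)\sin(\beta-\varepsilon)=\sin\beta\,\sin(\delta+\varepsilon-\beta)$, which yields the clean equivalence $\det R>0\iff 0<\delta+\varepsilon-\beta<\pi$ (since $\delta+\varepsilon-\beta\in(-\pi,2\pi)$). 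This makes item~\ref{item:i} a two-case argument (both $r_{12},r_{21}>0$, i.e.\ $\delta,\varepsilon<\beta$, versus at least one $\le 0$), and, more notably, it simplifies the converse direction of item~\ref{item:iii}: from $\beta-\varepsilon<\theta<\delta$ you get $0<\delta+\varepsilon-\beta$, and together with $\delta+\varepsilon-\beta<\pi$ this gives $\det R>0$ directly. The paper instead derives the second and third inequalities of~\eqref{eq:stationary_distribution_CNS}, invokes item~\ref{item:i} to get ``$\det R>0$ or ($r_{12}>0$ and $r_{21}>0$)'', and in the second case recovers $\det R>0$ by multiplying $r_{22}/r_{12}>\mu_2/\mu_1$ and $r_{11}/r_{21}>\mu_1/\mu_2$; your identity buys you this step for free.

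One sentence of your item~\ref{item:ii} is wrong as stated: $0<\theta<\beta$ does \emph{not} force $\widetilde\mu_1<0$ when $\beta>\pi/2$ (take $\theta\in(\pi/2,\beta)$), and even granting $\widetilde\mu_1<0$ one cannot conclude $\mu_1<0$ by ``positivity of prefactors'', since $\mu_1=\sqrt{\det\Sigma/\sigma_{22}}\,\widetilde\mu_1+(\sigma_{12}/\sqrt{\sigma_{22}})\,\widetilde\mu_2$ and $\sigma_{12}$ may be negative. This does not damage the proof: your displayed chain ($\theta>0\iff\mu_2<0$ via~\eqref{theta_precise}, then, given $\mu_2<0$, $\theta<\beta\iff\mu_1/\mu_2>0\iff\mu_1<0$ via~\eqref{eq:tan_theta}) is already a chain of equivalences and covers both directions, exactly as in the paper; simply delete the superfluous ``conversely'' sentence.
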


\begin{proof}
  Recall from~\eqref{eq:expression_delta_epsilon} and~\eqref{eq:tan_theta} that:
  \beq
  \label{eq:expression_delta_epsilon_theta}
  \tan\delta=
  \frac{\sin\beta}{\frac{r_{12}}{r_{22}}\sqrt{\frac{\sigma_{22}}{\sigma_{11}}}+\cos\beta} ,
  \quad
  \tan\varepsilon=
  \frac{\sin\beta}{\frac{r_{21}}{r_{11}}\sqrt{\frac{\sigma_{11}}{\sigma_{22}}}+\cos\beta}  ,
  \quad
  \tan  \theta=\frac{\sin \beta}{\frac {\mu_1}{\mu_2} \sqrt{\frac{\sigma_{22}}{\sigma_{11}}}  +\cos \beta}.
  \eeq
  This implies that
  \beq
  \tan (\beta-\varepsilon)= \frac{\sin \beta}{\frac{r_{11}}{r_{21}}\sqrt{\frac{\sigma_{22}}{\sigma_{11}}}+\cos \beta},
  \qquad
  \tan (\beta-\theta)=\frac{\sin\beta}{\frac{\mu_2}{\mu_1}\sqrt{\frac{\sigma_{11}}{\sigma_{22}}}+\cos\beta} .
  \label{eq:tanbetaepsilon}
  \eeq
  We will repeatedly use the fact that  the cotangent function is $\pi$-periodic, and decreasing in $(0, \pi)$. For instance, since by~\eqref{eq:expression_delta_epsilon_theta},
  \[
    \cot \varepsilon - \cot \beta= \frac 1{\tan \varepsilon} - \frac 1{\tan \beta} 
    = \frac{r_{21}}{r_{11}}\sqrt{\frac{\sigma_{11}}{\sigma_{22}}}\frac{1}{\sin \beta},
  \]
  while $\vareps, \beta\in (0, \pi)$,   we have 
  \beq
  \varepsilon\leqslant \beta \Leftrightarrow {r_{21}}\ge 0.
  \label{eq:ebr21}
  \eeq

  We now begin with the proof of~\ref{item:i}. We have the following sequence of equivalences, starting from Condition~\eqref{eq:conditionRsemimart1}:
  \begin{align*}
    \delta+\varepsilon-\pi <\beta & \Leftrightarrow
                                    \left( 0<\delta<\beta-\varepsilon+\pi<\pi \right) \text{ or } (\beta-\varepsilon\geqslant 0)
    & & \text{as } \delta\in(0,\pi)
    \\
                                  & \Leftrightarrow
                                    \left(\frac{1}{\tan (\beta-\varepsilon)}<\frac{1}{\tan \delta} 
                                    \text{ and } {r_{21}}<0 
                                    \right) \text{ or } {r_{21}}\geqslant 0
    & & \text{by }~\eqref{eq:ebr21}
    \\
                                  & \Leftrightarrow
                                    \left(\frac{r_{11}}{r_{21}} <\frac{r_{12}}{r_{22}} \ \text{ and } r_{21}<0 
                                    \right)  \text{ or }r_{21}\geqslant 0 &&  \text{by \eqref{eq:expression_delta_epsilon_theta} and~\eqref{eq:tanbetaepsilon}}
    \\
                                  & \Leftrightarrow
                                    \left(r_{11}r_{22}-r_{12}r_{21}>0  \text{ and } r_{21}<0  \right) \text{ or } r_{21}\geqslant 0 .
  \end{align*}
  A case analysis reveals that this is equivalent to Condition~\eqref{eq:conditionRsemimart}, namely $ \det R >0
  \text{ or } (r_{21}> 0 \text{ and } r_{12}> 0)$.

  We go on with the proof of~\ref{item:ii}.
  First, it follows from~\eqref{theta_precise} and~\eqref{eq:mu} that $\theta>0$ is equivalent to $\mu_2<0$. Hence we will now assume that $\theta>0$ and $\mu_2<0$, and prove, under these assumptions, that $\theta<\beta$ is equivalent to $\mu_1<0$.  Given that $\theta, \beta\in(0,\pi)$, we have the following sequence of equivalences:
  \begin{align*}
    \theta<\beta 
    & \Leftrightarrow  \frac 1{\tan \beta} < \frac 1 {\tan \theta}&&\\
    &     \Leftrightarrow  \frac{\mu_1 }{\mu_2} >0 && \text{by~\eqref{eq:expression_delta_epsilon_theta}}\\
    &         \Leftrightarrow \mu_1 <0  && \text{since} \quad \mu_2<0.
  \end{align*}

  Let us now prove~\ref{item:iii}. We have already seen that Conditions~\eqref{theta-cond1} and~\eqref{eq:drift_negative0} are equivalent. We thus assume that they hold, that is, that $0<\theta<\beta$ and $\mu_1, \mu_2 <0$, and prove the equivalence of~$\eqref{eq:conditionRsemimart1}+\eqref{eq:stationary_distribution_CNS1}$ and~\eqref{eq:stationary_distribution_CNS} under this assumption. We begin  with the first part of~\eqref{eq:stationary_distribution_CNS1}.  We have the following sequence of equivalences:
  \begin{align*}
    \beta-\varepsilon<\theta \Leftrightarrow \beta-\theta < \varepsilon 
    & \Leftrightarrow
      \frac{1}{\tan\varepsilon} <\frac{1}{\tan (\beta-\theta)} && \text{as } \vareps, \beta-\theta \in(0,\pi)
    \\ 
    &\Leftrightarrow \frac{r_{21}}{r_{11}}< \frac{\mu_2}{\mu_1} &&  \text{by }   \eqref{eq:expression_delta_epsilon_theta} \text{ and }                          \eqref{eq:tanbetaepsilon}
    \\
    & \Leftrightarrow r_{11}\mu_2<r_{21}\mu_1& &  \text{as } \mu_1<0 \text{ and } r_{11}>0.
  \end{align*}
  We recognize the third part of~\eqref{eq:stationary_distribution_CNS}.
  In a similar fashion, we  show that the second part of~\eqref{eq:stationary_distribution_CNS1} is equivalent to the second part of~\eqref{eq:stationary_distribution_CNS}:
  \begin{align*}
    \theta<\delta
    & \Leftrightarrow
      \frac{1}{\tan\delta} <\frac{1}{\tan \theta} && \text{as } \theta, \delta \in (0,\pi)
    \\ 
    &\Leftrightarrow \frac{r_{12}}{r_{22}}< \frac{\mu_1}{\mu_2}&&  \text{by } \eqref{eq:expression_delta_epsilon_theta}
    \\
    & \Leftrightarrow r_{22}\mu_1<r_{12}\mu_2& &  \text{as } \mu_2<0 \text{ and } r_{22}>0.
  \end{align*}
  Assume now that~\eqref{eq:stationary_distribution_CNS} holds. The above two calculations show that~\eqref{eq:stationary_distribution_CNS1} holds as well. Moreover, the first part of~\eqref{eq:stationary_distribution_CNS} implies~\eqref{eq:conditionRsemimart}, which implies~\eqref{eq:conditionRsemimart1} by item~\ref{item:i}.

  Conversely, assume that Conditions~\eqref{eq:conditionRsemimart1} and~\eqref{eq:stationary_distribution_CNS1} hold. The above two calculations show that the second and third parts of~\eqref{eq:stationary_distribution_CNS} hold as well:
  \beq\label{CNS-part} 
  r_{22}\mu_1<r_{12}\mu_2 \quad \text{ and }  \quad r_{11}\mu_2<r_{21}\mu_1.
  \eeq
  Moreover, by item~\ref{item:i} we have $\det R>0$ or ($r_{12}>0$ and $r_{21}>0$). If $\det R>0$ we are done, since this is the first part of~\eqref{eq:stationary_distribution_CNS}.  If  ($r_{12}>0$ and $r_{21}>0$), it follows from~\eqref{CNS-part} and the fact that $\mu_1<0$ and $\mu_2<0$ that
  \[
    r_{22}/r_{12}>\mu_2 / \mu_1 \quad \text{  and } \quad r_{11}/r_{21}>\mu_1 /\mu_2.
  \]
  This implies that $r_{22} r_{11}/(r_{21}r_{12}) >1$, so that $\det R>0$ again. This concludes the proof.
\end{proof}

\section{Proof of Lemma~\ref{lem:Ecocyclecaracterization-v0}}
\label{sec:standardform}

Several  of  the results that we prove here were already proved  in~\cite{hardouinCompo}, under the assumption $|q| \neq 1$. In this article however, $q=e^{2i\beta}$, so we always have $|q|=1$. Nonetheless, the proofs of~\cite{hardouinCompo} apply \emm verbatim, as long as  $q$ is \emm not, a root of unity. For that reason we often separately consider the case where
$q$ is a root of unity, that is, when $\beta/\pi \in \Q$.

We choose an arbitrary system $\cS \subset \C^*$ of representatives of $\C^*/q^\Z$. In other words, $\cS$ is a subset of $\C^*$ such that for  any $z \in \C^*$, there exists a unique $z' \in \cS$
such that  $z=q^\ell z'$ for some $\ell \in \Z$. We denote by
$[z]$ the equivalence class of $z$ for the relation
defined by   $z\sim z'$ if $z/z'\in q^{\Z}$.

\begin{defn}
  Let  $a \in \C(s)^*$. We say that $a$ is \emph{standard} if for any $z\in \C^*$, at most one element of $[z]$ is a zero or a pole of $a$, possibly of multiple order.
\end{defn}

\begin{lem}
  \label{lem:standardform}
  Let $a \in \C(s)^*$. There exist   $f \in \C(s)^*$ and $\overline{a} \in \C(s)$ standard such that   $a =\overline{a}\, \frac{f(qs)}{f(s)}$.
\end{lem}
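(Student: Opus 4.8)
The statement is the classical fact that every element of $\C(s)^*$ can be brought, after a $q$-difference ``gauge transformation'' $a\mapsto a\,\frac{f(qs)}{f(s)}$, to a standard form in which no two points of the same $q^\Z$-orbit are simultaneously zeros or poles. The plan is to induct on a finite measure of ``non-standardness'' and, at each step, cancel a coincidence of zeros/poles within a single orbit by multiplying by a suitable factor $\frac{f(qs)}{f(s)}$ with $f$ a monomial in $(s-z)$.

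First I would record the elementary computation: for $z\in\C^*$ and $\ell\in\Z$, taking $f(s)=s-z$ one has
\[
  \frac{f(q^{\ell}s)}{f(q^{\ell-1}s)}=\frac{q^{\ell}s-z}{q^{\ell-1}s-z}
  =q\cdot\frac{s-q^{-\ell}z}{s-q^{-\ell+1}z},
\]
so that telescoping gives, for any $m\le n$ in $\Z$, a function $f\in\C(s)^*$ (a product of linear factors) with
\[
  \frac{f(qs)}{f(s)}=q^{\,n-m+1}\,\frac{s-q^{-n}z}{s-q^{-m+1}z}.
\]
In other words, by a gauge transformation one can replace a zero of $a$ at $q^{a}z$ by a zero at $q^{b}z$ for any other integer $b$ (up to a nonzero multiplicative constant and a compensating change at the point $q^{b}z$), and likewise for poles; moreover a zero at $q^{a}z$ and a pole at $q^{b}z$ in the same orbit cancel after moving them to a common representative. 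This is the only real computation needed, and it is routine.

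Next I would set up the induction. Since $a$ is a nonzero rational function, its divisor on $\P^1(\C)$ is supported on finitely many points, hence meets only finitely many orbits $[z_1],\dots,[z_r]\in\C^*/q^{\Z}$ (the points $0$ and $\infty$ are orbits of their own and are never an obstruction to being standard, since ``$[z]$'' in the definition ranges over $z\in\C^*$ — I would note this explicitly). For each orbit $[z_i]$ let $N_i$ be the number of distinct points of $[z_i]$ that are zeros or poles of $a$; put $N(a)=\sum_i (N_i-1)^+$, a nonnegative integer which is $0$ exactly when $a$ is standard. If $N(a)>0$, pick an orbit $[z_i]$ with $N_i\ge 2$, choose two distinct points $q^{a}z_i$ and $q^{b}z_i$ in it lying in the support of the divisor, and use the telescoping identity above with $f$ a product of linear factors $(s-q^{j}z_i)$ chosen so that multiplying $a$ by $\frac{f(qs)}{f(s)}$ transports the zero/pole order from $q^{a}z_i$ onto $q^{b}z_i$ (where it either reinforces, or partially cancels against, the order already there). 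A short bookkeeping check shows this introduces no new orbit into the support and strictly decreases $N_i$ by $1$, hence decreases $N(a)$; iterating finitely many times reaches a standard $\overline a$, and the total gauge factor is $\frac{F(qs)}{F(s)}$ for $F\in\C(s)^*$ the product of the finitely many $f$'s used. Writing $f=F^{-1}$ gives $a=\overline a\,\frac{f(qs)}{f(s)}$ as claimed.

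\textbf{Main obstacle.} The only delicate point is the orbit-arithmetic in the case $\beta/\pi\in\Q$, i.e.\ when $q$ is a root of unity of order $n$: then within an orbit $[z]$ the ``points'' $q^{a}z$ are periodic in $a$ modulo $n$, so moving a zero from $q^{a}z$ to $q^{b}z$ must be read modulo $n$, and one must make sure the gauge factor $\frac{f(qs)}{f(s)}$ is still a genuine rational function (it is — the telescoping product closes up over a full period since $q^{n}=1$) and that the induction still terminates. I would handle this by observing that in the root-of-unity case each orbit $[z]$ is a finite set of $n$ points, the definition of standard then just asks that at most one of these $n$ points be a zero-or-pole, and the same transport-and-count argument applies verbatim with indices taken in $\Z/n\Z$; the quantity $N(a)$ still strictly decreases at each step. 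Apart from this case distinction, which is exactly the reason the surrounding appendix keeps separating ``$q$ a root of unity'' from the generic case, the proof is a straightforward induction and I would keep it brief.
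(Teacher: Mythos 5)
Your proof is correct, and the underlying mechanism is the same as the paper's: gauge factors $\frac{f(qs)}{f(s)}$ built from telescoping products of linear factors, used to collect all zeros and poles of $a$ lying in a single $q^{\Z}$-orbit onto one representative point of that orbit. The difference is in the packaging: you run an induction on a finite non-standardness count $N(a)$, transporting one coincidence at a time, whereas the paper writes the gauge function $f$ in one explicit closed formula (in the root-of-unity case) and simply cites \cite[Lem.~3.3]{hardouinCompo} when $q$ is not a root of unity. Your version buys uniformity and self-containedness — both cases are handled by the same finite-termination argument, with no external citation — at the cost of not exhibiting $f$ explicitly; the paper's version buys an explicit $f$ and brevity. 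Your side remarks are also sound: $0$ and $\infty$ are irrelevant to standardness because the definition only quantifies over $z\in\C^*$, and in the root-of-unity case the telescoping factor is still a finite product of linear factors (its zero and pole stay distinct as long as the two orbit points chosen are distinct), so nothing beyond reading exponents modulo $n$ is actually needed there.
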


\begin{proof}
  We refer to~\cite[Lem.~3.3]{hardouinCompo} for a proof that holds when $q$ is not a root of unity. If $q$ is a root of unity of order $n$, let us write
  \beq
  \label{eq:exp_a}
  a=\kappa s^\ell \prod_{z \in \cS} \prod_{k=0}^{n-1}(q^{k}s-z)^{m_{k,z}},
  \eeq
  where $\kappa \in \C$, $\ell\in\Z$, and  only finitely many of the integers  $m_{k,z}$ are non-zero. Define
  \[
    \overline{a}=\kappa s^\ell \prod_{z \in    \cS}(s-z)^{\sum_{k=0}^{n-1} m_{k,z}}.
  \]
  Then $\overline{a}$ is clearly standard and one easily checks that $a
  =\overline{a}\, \frac{f(qs)}{f(s)}$, with
  \[
    f(s)= \prod_{z \in \cS}
    \prod_{k=0}^{n-1}\prod_{j=0}^{k-1}(q^{j}s-z)^{m_{k,z}}.\qedhere
  \]
\end{proof}

\begin{defn}
  \label{defn:elliptic_divisor}
  Let $a \in \C(s)^*$. If $q$ is a root of unity of order $n$, let us write $a$ as in~\eqref{eq:exp_a}.
  The \emph{elliptic divisor} of $a$ is defined as the formal sum
  \[
    \mathrm{div}_q(a)=\sum_{z \in \cS} \left(\sum_{k=0}^{n-1} m_{k,z}\right) [z].
  \]
  If $q$ is not a root of unity, let us write 
  \[
    a =\kappa s^\ell \prod_{z \in \cS} \prod_{k \in \Z} (q^ks -z)^{m_{k,z}},
  \]
  where $\ell \in \Z$, $\kappa \in \C$ and  finitely many of the $m_{k,z} \in \Z$ are non-zero. We define the \emph{elliptic divisor} of $a$ as the formal sum
  \[
    \mathrm{div}_q(a)=\sum_{z \in \cS} \left(\sum_{k \in \Z} m_{k,z}\right) [z].
  \]
\end{defn}

Let us observe that:
\begin{itemize} 
\item for $a$ and $b $ in $\C(s)^*$, we have $\mathrm{div}_q(ab)=\mathrm{div}_q(a) +\mathrm{div}_q(b)$,
\item for $f \in \C(s)^*$, we have $\mathrm{div}_q\bigl(\frac{f(qs)}{f(s)}\bigr)=0$.
\end{itemize}

\begin{lem}\label{lem:caracelliptidivzero}
  Let $a \in \C(s)^*$. The following statements are equivalent:
  \begin{itemize}
  \item there exist $\kappa \in \C$, $\ell \in \Z$ and $f \in \C(s)$ such that $a=\kappa s^\ell \frac{f(qs)}{f(s)}$,
  \item the elliptic divisor $\mathrm{div}_q(a)=\sum_{z \in \cS} n_z [z]$ is zero, that is, $n_z =0$ for all $z \in \cS$. 
  \end{itemize}
\end{lem}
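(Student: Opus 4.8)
The statement to prove is the equivalence, for $a\in\C(s)^*$, between (i) the existence of $\kappa\in\C$, $\ell\in\Z$ and $f\in\C(s)$ with $a=\kappa s^\ell\,\frac{f(qs)}{f(s)}$, and (ii) the vanishing of the elliptic divisor $\mathrm{div}_q(a)$. One direction is immediate from the two bullet observations preceding the lemma: if $a=\kappa s^\ell\,\frac{f(qs)}{f(s)}$, then $\mathrm{div}_q(a)=\mathrm{div}_q(\kappa s^\ell)+\mathrm{div}_q\!\bigl(\frac{f(qs)}{f(s)}\bigr)$, and both summands are zero — the first because $\kappa s^\ell$ has no zeros or poles away from $0$ and $\infty$ (so all $m_{k,z}$ with $z\in\cS$ vanish), the second by the second bullet. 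So $(i)\Rightarrow(ii)$ needs essentially nothing beyond what is already recorded.

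\textbf{The substantive direction is $(ii)\Rightarrow(i)$.} Here I would argue as follows. Given $a$ with $\mathrm{div}_q(a)=0$, first apply Lemma~\ref{lem:standardform} to write $a=\overline a\,\frac{f_0(qs)}{f_0(s)}$ with $f_0\in\C(s)^*$ and $\overline a$ standard; since $\mathrm{div}_q\bigl(\frac{f_0(qs)}{f_0(s)}\bigr)=0$, we get $\mathrm{div}_q(\overline a)=\mathrm{div}_q(a)=0$, so it suffices to treat the case $a=\overline a$ standard with zero elliptic divisor. Being standard means that in each class $[z]$ with $z\in\cS$ at most one representative is a zero or pole; having zero elliptic divisor means the (single, signed) multiplicity attached to that representative is $0$. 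Hence a standard $a$ with zero divisor has \emph{no} zeros or poles at all in $\C^*$, i.e.\ $a=\kappa s^\ell$ for some $\kappa\in\C^*$, $\ell\in\Z$. Combining, $a=\kappa s^\ell\,\frac{f_0(qs)}{f_0(s)}$, which is exactly (i) with $f=f_0$.

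\textbf{Where to be careful.} The only real subtlety is bookkeeping in the two regimes ($q$ a root of unity versus not), since the definition of $\mathrm{div}_q$ and of ``standard'' is stated separately in each case; the argument above is uniform but one should check that Lemma~\ref{lem:standardform} and the additivity/triviality facts for $\mathrm{div}_q$ hold in both regimes — which they do, the root-of-unity case being handled explicitly in the proof of Lemma~\ref{lem:standardform} and the non-root-of-unity case being~\cite[Lem.~3.3]{hardouinCompo}. A second small point: one must make sure the factorization~\eqref{eq:exp_a} is compatible with the choice of representative system $\cS$, so that ``at most one element of $[z]$ is a zero or pole'' genuinely forces the exponent sum over the class to equal that single exponent; this is immediate from the definitions but worth a sentence. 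I do not anticipate a genuine obstacle here — the lemma is a clean consequence of Lemma~\ref{lem:standardform} — so the write-up is mostly a matter of assembling these pieces cleanly and invoking the earlier lemma and the two bullet observations.
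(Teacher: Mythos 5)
Your proposal is correct and follows essentially the same route as the paper: the easy direction $(i)\Rightarrow(ii)$ from the two additivity/triviality observations, and the converse by reducing to a standard representative via Lemma~\ref{lem:standardform} and noting that a standard element with vanishing elliptic divisor has no zeros or poles in $\C^*$, hence equals $\kappa s^\ell$. The only cosmetic difference is that the paper delegates the non-root-of-unity case to a citation of~\cite{hardouinCompo}, whereas you phrase the argument uniformly in both regimes, which is legitimate since Lemma~\ref{lem:standardform} is available in both.
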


\begin{proof}
  It is clear from the above two observations  that the first condition
  implies the second, as $\mathrm{div}_q(s^\ell)=0$. We now assume that
  $\mathrm{div}_q(a)=0$, and prove that the first condition holds. As before, we refer to~\cite[Lem.~3.5]{hardouinCompo} when $q$ is not
  a root of unity, and assume that $q$ is a root of unity of order~$n$. Let us write $a =\overline a f(sq)/f(s)$  as in Lemma~\ref{lem:standardform}, with $\overline a$ standard.
  Using again the observations above, and the assumption $\mathrm{div}_q(a)=0$, we find that $\mathrm{div}_q(\overline{a})=0$.
  Let us write $\overline{a}= \kappa s^\ell \prod_{z \in
    \cS}(q^{k_z}s -z)^{m_z}$ where $k_z \in \Z$ and
  only finitely many of  the $m_z \in \Z$ are non-zero. Then
  \[
    0=   \mathrm{div}_q(\overline{a})= \sum_{z \in   \cS} {m_z}[z],
  \]
  which implies that all exponents  ${m_z}$ are zero, so that
  $\overline{a}=\kappa s^\ell$. This concludes the proof.
\end{proof}

We are now ready to prove Lemma~\ref{lem:Ecocyclecaracterization-v0}.

\begin{proof}[Proof of Lemma~\ref{lem:Ecocyclecaracterization-v0}]
  Assume that there exists $m \in \N$ and $H \in \C(s)$ such that
  $E^m(s)=\frac{H(s)}{H(qs)}$.  By Lemma~\ref{lem:caracelliptidivzero},
  one has $0=\mathrm{div}_q(E^m)=m\,\mathrm{div}_q(E)$, so that $\mathrm{div}_q(E)=0$. Hence~\ref{item:i-eco}$\Rightarrow $\ref{item:ii-eco}.

  Now let us prove the equivalence between~\ref{item:ii-eco} and~\ref{item:iii-eco}.
  Since $\mathrm{div}_q(E)= [s_1]+[\frac{1}{s_2}]- [s_2]-[\frac{1}{s_1}]$,
  Condition~\ref{item:ii-eco} means that  either $[s_1]=[s_2]$ (in which case $[\frac{1}{s_1}]=[\frac{1}{s_2}]$) or that  $[s_i]=[\frac{1}{s_i}]$ for
  $i=1,2$. The first case can be restated by saying that $s_1/s_2 \in q^\Z$, and  the second by saying that $s_1^2$ and $s_2^2$ belong to $q^\Z$.

  Finally, if~\ref{item:ii-eco} holds,  then Lemma~\ref{lem:caracelliptidivzero} implies that $E(s)=\kappa s^\ell f(qs)/f(s)$ for some $\kappa\in \C$, $\ell \in \Z$ and $f(s) \in \Q(s)$. If $f(s)$ grows like $s^e$ for some $e\in \Z$, the function $\widetilde f(s):= s^{-e} f(s)$ also satisfies $E(s)=\widetilde \kappa s^\ell \widetilde f(qs)/\widetilde f(s)$ for another constant $\widetilde \kappa$. Hence we can assume without loss of generality that $f(s)$ tends to a non-zero finite limit as $s$ tends to infinity.   By letting $s$ tend to infinity in $E(s)=\kappa s^\ell f(qs)/f(s)$, where $E(s)$ is given by~\eqref{eq:definition_E}, we see that $\ell=0$ and  $  {s_2}/{s_1}= \kappa$.
  By setting $s=0$ instead, we see that $ {s_1}/{s_2}= \kappa$. Hence $\kappa=1/\kappa=\pm 1$, and $E^2(s)=f^2(qs)/f^2(s)$.
\end{proof}

\section{The case $\alpha_1=\alpha_2=0$}
\label{app:alg}
In this section we prove Proposition~\ref{prop:specialcase}, which gives the expression of the density of the stationary distribution of the SRBM under the assumption $\alpha_1=\alpha_2=0$. We proceed as follows: we denote by $\hp_0(z_1,z_2)$ the density described in Proposition~\ref{prop:specialcase}, by $\hphi$ its Laplace transform:
\[
 \hphi(x,y)= \iint_{{\RR}_+^2} e^{xz_1+yz_2}\, \hp_0(z_1,z_2)\mathrm{d} z_1\mathrm{d} z_2,
\]
and we prove that $\hphi(x,y)$ is indeed given by~\eqref{eq:functional_equation}:
\beq\label{phi-hat-form}
  -\gamma (x,y) \hphi (x,y) =\gamma_1 (x,y) \varphi_1 (y) + \gamma_2 (x,y) \varphi_2 (x),
\eeq
where the densities $\phi_1(y)$ and $\phi_2(x)$ are those of~\eqref{phi1-very-simple} and~\eqref{phi2-very-simple}. We  work with the normal variables $\xn$ and $\yn$ defined by~\eqref{xy:normal}. We first perform the change of variables $(z_1, z_2) \mapsto\left( \frac{\det \Sigma}{\sqrt{\Delta
        \sigma_{22}}} \, z_1, \frac{\det \Sigma}{\sqrt{\Delta
        \sigma_{11}}} \, z_2\right)$, which yields
\[
  \hphi(x,y)= \frac {\det ^2\Sigma}{\Delta \sqrt{\sigma_{11}\sigma_{22}}}\iint_{{\RR}_+^2} e^{\xn z_1+\yn z_2}\widetilde p_0(z_1,z_2) \mathrm{d} z_1\mathrm{d} z_2,
\]
with $\widetilde p_0(z_1,z_2)$ given by~\eqref{eq:pi}. Then we introduce the variables $\rho:=|z|$ and $a$ involved in~\eqref{eq:pi}. That is,
$z_1=\rho \sin (\beta-a)/\sin \beta$ and $z_2= \rho \sin a/\sin \beta$.  The Jacobian is found to be $\rho/\sin \beta$, and $\sin \beta$ is given by~\eqref{sin-beta}. Hence:
\[
  \hphi(x,y)= \kappa_1 \int _0^ \beta \int_{{\RR}_+}  \sqrt\rho\, \cos\left( \frac{\theta-a}2\right) \exp\left(-\rho\left(2\cos^2\left(\frac{\theta-a}2\right) - \xn \frac{\sin{(\beta-a)}}{\sin \beta}- \yn \frac{\sin{a}}{\sin \beta}  \right) \right) \mathrm{d}\rho \,\mathrm{d}a,
\]
with $\kappa_1= \frac{2\sqrt 2\sin \delta \sin \vareps}{\sqrt \pi \sin \beta/2}$. The integration in $\rho$ is easily performed:
\[
  \hphi(x,y)= \frac{\kappa_1 \sqrt \pi}{2} \int _0^ \beta\cos\left( \frac{\theta-a}2\right)
  \frac {\mathrm{d}a} {\left(2\cos^2\left(\frac{\theta-a}2\right) - \xn \frac{\sin{(\beta-a)}}{\sin \beta}- \yn \frac{\sin{a}}{\sin \beta}  \right)  ^{3/2}}.
\]
The integration in $a$ looks more impressive, but can be performed as well. We write $a=\theta-2s$, with $s$ ranging from $-(\beta-\theta)/2$ to $\theta/2$, and then introduce $t=\tan s$. The integral becomes
\begin{align*}
  \hphi(x,y)&= \kappa_1\sqrt \pi\int_{-\frac{\beta-\theta}2}^{ \frac \theta2}
              \frac{\cos s\,  \mathrm{d} s} {\left(2\cos ^2 s - \xn \frac{\sin{(\beta-\theta+2s)}}{\sin \beta}- \yn \frac{\sin(\theta-2s)}{\sin \beta}  \right)  ^{3/2}}
\\
            &= \kappa_1\sqrt \pi \int_{-\tan \frac{\beta-\theta}2}^{\tan \frac \theta2}
              \frac{ (\sin \beta) ^{3/2}\, \mathrm{d} t}
              {\left( 2\sin \beta- (1-t^2)A -{2tB}\right)^{3/2}}
\end{align*}
with $A= ( \xn \sin (\beta-\theta) +\yn \sin \theta) $ and $B=  ( \xn\cos(\beta-\theta) -\yn \cos \theta)$. Now the integral in $t$ can be done explicitly:
\beq\label{phi-hat}
  \hphi(x,y)  =  \frac{\kappa_1\sqrt \pi (\sin \beta) ^{3/2}}{{A^2+B^2-2A\sin \beta}} \left[ \frac{B-t A }
                {\left( 2\sin \beta- (1-t^2)A -{2tB}\right)^{1/2}}\right]_{-\tan \frac{\beta-\theta}2}^{\tan \frac \theta2}.
\eeq
The rest of the calculation is tedious but straightforward.  One finds
\begin{align}
  A^2+B^2-2A\sin \beta&= \xn^2+\yn^2-2\xn \yn \cos \beta - 2\xn \sin \beta \sin(\beta-\theta) - 2\yn \sin \beta\sin \theta \nonumber \\
                      &= \frac{2\sin^2\beta \det \Sigma}\Delta \gamma(x,y),  \hskip 20mm \text{by}~\eqref{gamma-normal}.
                        \label{den}
\end{align}
 The function between square brackets in~\eqref{phi-hat}, denoted $I(t)$,  can be written back in trigonometric terms using the variable $s$ such that $t=\tan s$:
\[
  I(t)= \frac{\xn\, \cos(\beta-\theta+s) -\yn\, \cos(\theta-s)}{\left(2\cos ^2 s\sin \beta - \xn\, {\sin{(\beta-\theta+2s)}}- \yn\, {\sin(\theta-2s)} \right)  ^{1/2}}.
\]
At $t= \tan \frac \theta 2$, this takes the value
\begin{align}
  I\left(\tan \frac \theta 2\right)=
  \frac{\xn\, \cos(\beta-\theta/2) -\yn\, \cos(\theta/2)}
  {\left(2 \sin \beta \cos ^2 (\theta/2)- \xn\, {\sin{\beta}}\right)  ^{1/2}} \nonumber
  &= -\frac{\xn\, \sin(\beta-\delta) + \yn\, \sin \delta}
    {\sqrt{2\sin\beta} \sin \delta \sqrt{1-\xn/\xn^+}},
  \\
  &=\frac {\det \Sigma \sqrt{\sin \beta} \sin (\delta+\vareps)}
    {\sqrt 2 \Delta \sin \delta \sin \vareps} \, \gamma_2(x,y) \phi_2(x).
    \label{int-top}
\end{align}
In the first line, we have used the relations~\eqref{angles-00} between the angles $\beta$, $\theta$, $\delta$ and $\vareps$, and the expression~\eqref{xyn-pm} of $\xn^+$. In the second line, we have also used the expressions of $\gamma_2$, $\phi_2(x)$ and $\phi_2(0)$; see~\eqref{gamma_i:normal},~\eqref{phi2-very-simple}, and~\eqref{masses}. In an analogous fashion, we determine
\beq\label{int-bottom}
  I\left(-\tan \frac {\beta -\theta} 2\right)=-
   \frac {\det \Sigma \sqrt{\sin \beta} \sin (\delta+\vareps)}
  {\sqrt 2 \Delta \sin \delta \sin \vareps}\, \gamma_1(x,y) \phi_1(y).  
  \eeq
  We now get back to~\eqref{phi-hat}, and inject~\eqref{den},~\eqref{int-top},~\eqref{int-bottom}, and the value of $\kappa_1$. Using finally $\sin (\beta/2)=-\sin(\delta+\vareps)$, this gives  the desired expression of $\hphi(x,y)$,  namely~\eqref{phi-hat-form}.

\section{Lifting  of $\phi_1$ and $\phi_2$ to the universal covering of  $\mathcal S\setminus\{0, \infty\}$}
\label{app:continuation}
In this section, we explain how the function
\[
  \psi_1(\omega):= \phi_1(y(e^{i\omega}))
\]
that has been used in Section~\ref{sec:DT}, originally defined  analytically in
a neighbourhood of the line $\Re\om= \pi+\beta$ where $\Re y(e^{i\om})\le 0$,
can be extended to a meromorphic function on $\C$, together with its counterpart $\psi_2$ defined by $ \psi_2(\omega):= \phi_2(x(e^{i\omega}))$. The key idea is to use the relation
\[
  \gamma_1(x(s),y(s)) \phi_1(y(s))+  \gamma_2(x(s),y(s)) \phi_2(x(s)) =0,
\]
derived from the basic functional equation~\eqref{eq:functional_equation}, to construct $\psi_1$ and $\psi_2$ on larger and larger domains. Since $\phi(x,y)$ is, \emm a priori,, defined  when $\Re x\le 0$ and $\Re y\le 0$,  the above identity holds at least when $\Re x(s)\le 0$ and $\Re y(s)\le 0$. 

  Observe that we have already used in this paper a continuation of $\phi_1(y)$ beyond the half-plane $\{y:\Re y\le 0\}$, constructed in~\cite[Lem.~3]{franceschi_explicit_2017} to include the domain $\G$; see Proposition~\ref{prop:BVP_Carleman_sketch}. Moreover,  a continuation of the function $\widetilde \phi_1(s):= \phi_1(y(s))$ beyond the set $\{s: \Re y(s)\le 0\}$ is also constructed in~\cite[Sec.~3]{franceschi_asymptotic_2016}. However, we need to go one step higher and work with the variable $\om\in \C$ to apply Galois theoretic tools, as explained in Section~\ref{subsec:funcequ}. We work from scratch and do not use the earlier continuations.

Before we embark on the details of our construction, let us mention that in the discrete setting, where one considers reflected random lattice walks in the positive quadrant and their stationary distributions, a similar meromorphic continuation  of the stationary probability generating function is constructed  in~\cite[Chap.~3]{FIM17}. The details of the construction are however quite different, mostly because the counterpart of the curve $\{(x,y)\in(\C\cup\{\infty\})^2: \gamma(x,y)=0 \}$ has genus  $1$ in the discrete setting (requiring the use of elliptic functions), instead of~$0$ in the present paper.

\subsection{Universal covering of the doubly punctured sphere}
 The uniformization~\eqref{eq:uniformization} allows us to constructively and explicitly identify $\cS$, the Riemann surface of genus $0$ defined in~\eqref{eq:definition_Riemann_sphere_2} by the cancellation of the kernel, with $\C\cup\{\infty\}$.
Let us consider $\C^*\equiv\cS\setminus \{0,\infty\}$, which is mapped by~\eqref{eq:uniformization} to the finite points of the surface.
This surface is then homeomorphic to a doubly punctured sphere, or equivalently  a cylinder, and may be considered as an infinite vertical strip whose opposite edges are identified (Figure~\ref{fig:informaluniversalcovering}, left).
Informally, the universal covering $\widehat{\s}$ of $\cS\setminus \{0,\infty\}$ consists of infinitely many such strips glued together and covering the complex plane (Figure~\ref{fig:informaluniversalcovering}, right). 

\begin{figure}[hbtp]
  \includegraphics[height=3cm]{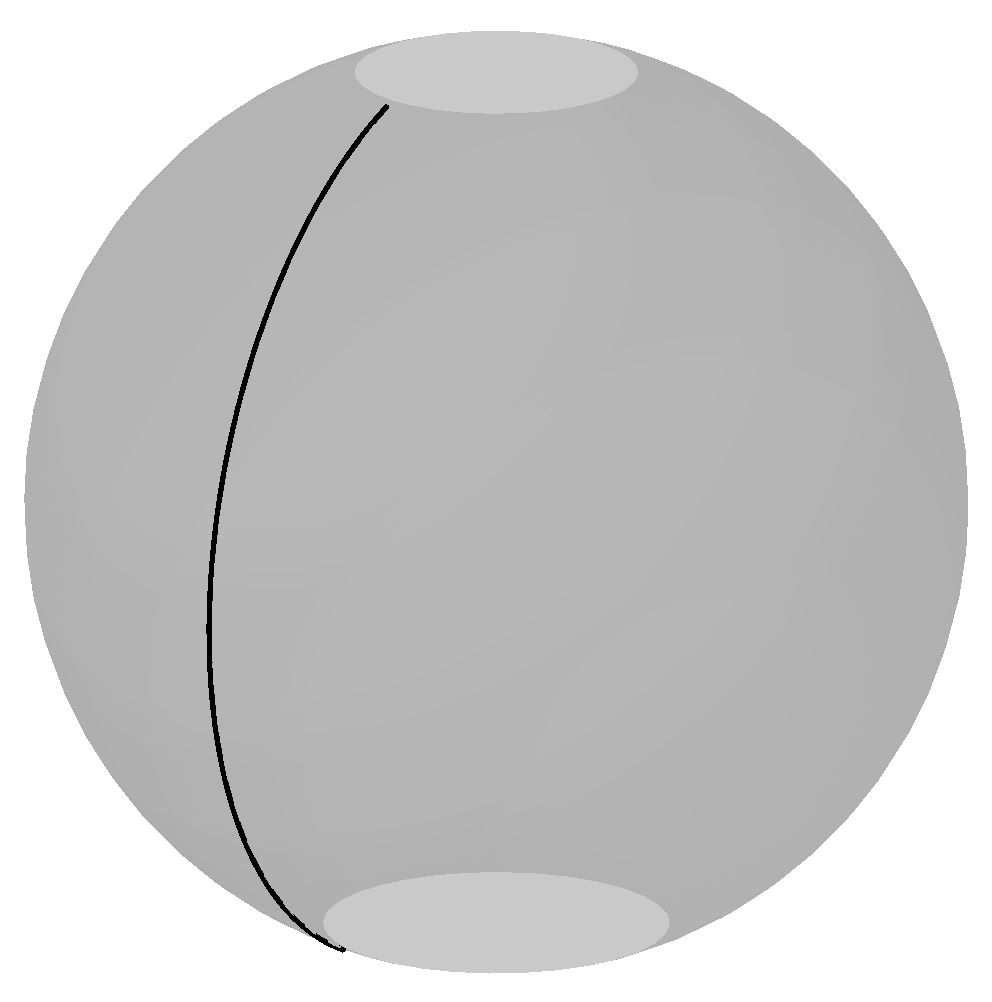}  \hspace{2mm}
  \includegraphics[height=3cm]{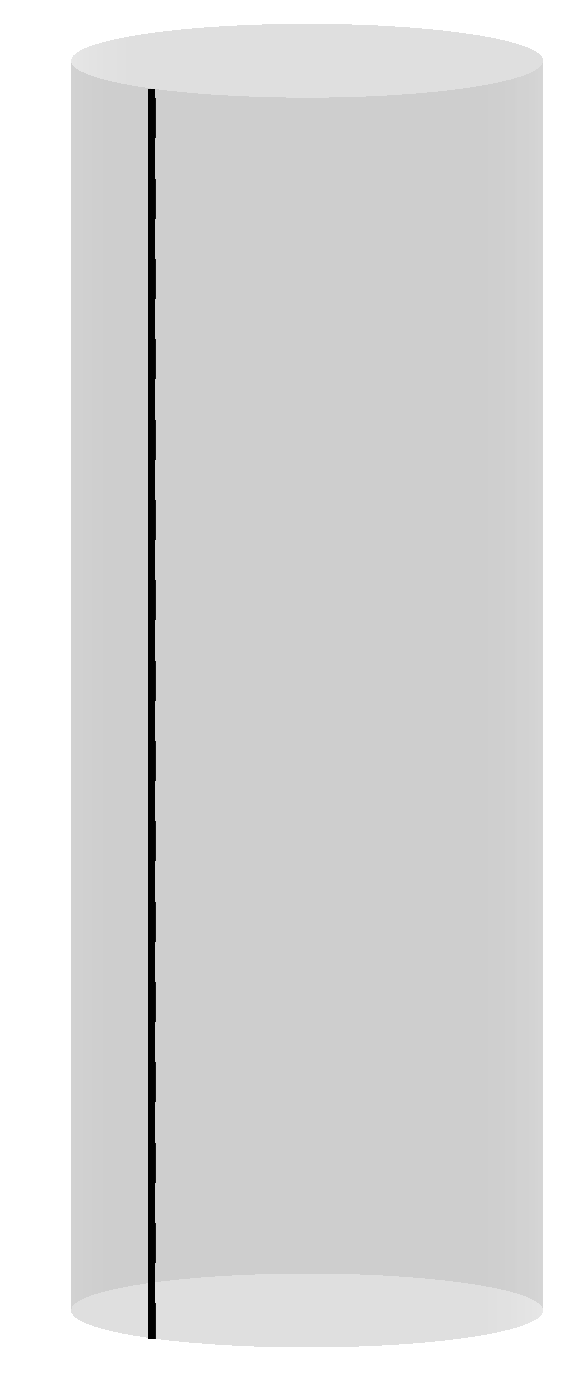} \hspace{2mm} 
  \includegraphics[height=4cm]{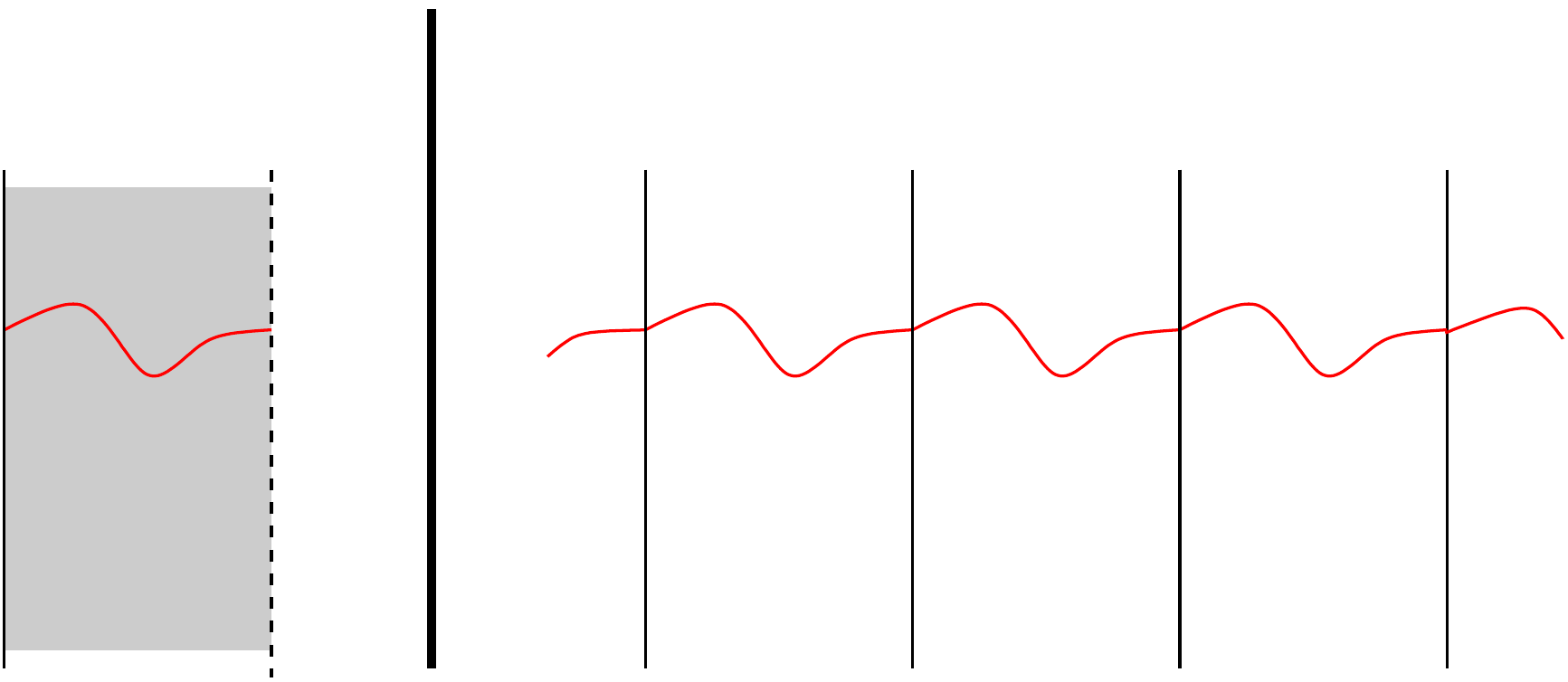}
  \caption{Left: Three representations of $\cS\setminus\{0, \infty\}$. Right:  The universal covering $\widehat{\s}$ of  $\mathcal S\setminus\{0, \infty\}$ is the complex plane.}
  \label{fig:informaluniversalcovering}
\end{figure}

More precisely, let us define the map $\lambda$ by
\[
  \begin{array}{lrcl}
    \lambda: & \widehat{\s}\equiv\C & \longrightarrow &   \C^* \equiv \s \setminus \{0,\infty\}
    \\
             & \omega & \longmapsto &  \lambda(\omega):=e^{i\omega}.
  \end{array}
\] 
This is a $2\pi$-periodic, non-branching covering map from $\C$ to $\mathcal S\setminus\{0,\infty\}$.
Every segment of the form $[a+ib, a+2\pi+ib]$, with $a$ and $b$ real,
is projected onto a closed curve of $\s \setminus \{0,\infty\}$ homologous to a curve going around the cylinder.

Given $s\in\s \setminus\{0, \infty\}$ and $S \subset \s\setminus\{0, \infty\}$, we will use the notation $\widehat{s}$ and $\widehat{S}$ for  their preimages by $\lambda$ in some prescribed vertical strip of width $2\pi$ (which is often taken to be  $\{ 0 \leqslant\Re \omega <2\pi\}$, but not always).
In particular, given that  $\lambda(\pi+\theta)= s_0=-e^{i\theta}$,
$\lambda(\pi)=s_1^-{=-1}$,
$\lambda(\pi+\beta)=s_2^-{=-e^{i\beta}}$ (see Lemma~\ref{lem:s0} and~\eqref{encombre}),  we will write:
\[
  \hat s_0=\pi+\theta , 
  \qquad\hat{s}_1^-=\pi, 
  \qquad\hat{s}_2^-=\pi+\beta.
\]

Every conformal automorphism $\chi$ of $\s \setminus \{0,\infty\}$ may be lifted
to a conformal automorphism $\widehat{\chi}=\lambda^{-1}\chi\lambda$ of the universal covering $\C$. The function $\lambda^{-1}$ being multivalued, this continuation is uniquely defined if we fix the image by $\widehat{\chi}$
of a given point $\omega_0\in\C$.

Recall the definitions~\eqref{eq:elements_group} of the maps $\xi$, $\eta$ and $\zeta$.  Recall in particular that $\xi$ fixes $1$ and $-1$, while $\eta$ fixes $\pm e^{i\beta}$. Let us define $\wxi$ (resp.\ $\weta$) by choosing its fixed point to be $\hat{s}_1^-=\pi$ (resp.\ $\hat{s}_2^-=\pi+\beta$). 
Using~\eqref{eq:elements_group} we have
\beq\label{wxi-def}
\wxi(\omega)=-\omega+2\pi \quad \text{and}\quad\weta(\omega)=-\omega+2(\pi+\beta).
\eeq
These are central symmetries of respective centers $\hat{s}_1^-$ and $\hat{s}_2^-$. It follows that $\weta\,\wxi$ and $\wxi\,\weta$ are just translations by $2\beta$ and $-2\beta$:
\beq\label{eta-xi-hat}
\weta\,\wxi(\omega)=\omega+2\beta\quad \text{and}\quad
\wxi\,\weta(\omega)=\omega-2\beta.
\eeq

\subsection{Where is $\Re x(e^{i\om})$ negative?}
\label{lifted-aut}

The initial domain of definition of the Laplace transform $\phi_2(x)$ is $\{x\in\C: \Re x\leqslant 0\}$.
Returning to the uniformization~\eqref{eq:uniformization} of the curve $\gamma(x,y)=0$ by the variable $s$, we 
define $\Delta_1 := \{s\in\s \setminus\{0, \infty\}:\Re x(s) \leqslant 0 \}$ and we introduce its lifting
\beq\label{Delta1-def}
\wD_1   := \{\omega\in\C : 
0\leqslant \Re \omega <2\pi  \text{ and } \Re x(\lambda\omega) \leqslant   0
\}.
\eeq
The goal of this subsection is to study  this lifted convergence domain.
We denote by $\wmI_{x}$ the curve where the value $x(e^{i\omega})$ is purely imaginary:
\[
  \wmI_{x}:=\{ \omega \in \C :  0 \leqslant\Re \omega <2\pi \text{ and } 
  \Re x(\lambda \omega) =0\}.
\]
The following lemma is illustrated in Figure~\ref{fig:universalcovering}, which we have completed by more examples in Figure~\ref{fig:curvesI-cases}.   

\begin{lem} \label{lem:Delta1}
  The curve $\wmI_{x}$ consists of two connected branches, with vertical asymptotes at $\Re \omega \in \{\pi/2, 3\pi/2\}$.
  Denoting  $\omega=u+iv$, with $u$ and $v$ real, these branches lie in two disjoint  vertical strips and are defined  by the equation:
  \beq\label{curveI-equation}
  \cosh v= -\frac{\cos \theta}{\cos u} ,
  \quad   \text {for }
  \begin{cases}
    u \in (\pi/2, \pi-\theta] \cup [\pi+\theta, 3\pi/2)& \text {if } \theta < \pi/2,\\
    u \in [\pi-\theta, \pi/2) \cup ( 3\pi/2, \pi+\theta]& \text {if } \theta > \pi/2.
  \end{cases}
  \eeq
  The case $\theta=\pi/2$ is degenerate, with $\wmI_{x}$ consisting of two vertical lines at abscissas $\pi/2$ and~$3\pi/2$.
  
  We denote by  $\wmI_{x}^-$ the rightmost branch, which  goes through $\hat s_0= \pi+\theta$, and by  $\wmI_{x}^+$ the leftmost one, which goes through $\hat s_0':=\wxi( \hat s_0)=\pi-\theta$.   The automorphism $\wxi$ defined in~\eqref{wxi-def} exchanges the branches $\wmI_{x}^+$ and~$\wmI_{x}^-$. 
  The notation $\wmI_{x}^\pm$ comes from the fact that $\Re y(\lambda\omega)$ is positive (resp.\ non-positive) on $\wmI_{x}^+$ (resp.\ $\wmI_{x}^-$).
  Finally,  the domain $\wD_1$   lies between the two branches of~$\wmI_x$.
  \label{lem:equationI-curve}
\end{lem}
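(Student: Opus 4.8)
The strategy is to analyze the locus where $\Re x(\lambda\omega)=0$ directly from the explicit uniformization. Using the normal form~\eqref{param:normal}, we have, up to a positive real factor, $x(\lambda\omega)=\xn(e^{i\omega}) = \frac12\bigl(2\cos\theta + e^{i\omega} + e^{-i\omega}\bigr)$. Writing $\omega = u+iv$ with $u,v$ real, a direct computation gives
\[
  e^{i\omega}+e^{-i\omega} = 2\cos u \cosh v - 2i\sin u \sinh v,
\]
so that $\Re \xn(e^{i\omega}) = \cos\theta + \cos u\cosh v$ (up to the positive factor, which does not affect the sign). Hence $\wmI_x$ is exactly the set of $\omega=u+iv$ with $0\le u<2\pi$ satisfying $\cos\theta + \cos u\cosh v = 0$, i.e. $\cosh v = -\cos\theta/\cos u$. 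Since $\cosh v \ge 1$, this forces $-\cos\theta/\cos u \ge 1$, which restricts $u$ to the intervals stated in~\eqref{curveI-equation} (one discusses the two cases $\theta<\pi/2$ and $\theta>\pi/2$, and the degenerate case $\theta=\pi/2$ where $\cos\theta=0$ forces $\cos u = 0$, giving the two vertical lines $u\in\{\pi/2,3\pi/2\}$). This already yields the shape of the curve: two branches, each confined to a vertical strip, with vertical asymptotes at $u=\pi/2$ and $u=3\pi/2$ (where $\cos u\to 0$ and hence $\cosh v\to\infty$).

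Next I would identify the two branches. The rightmost branch $\wmI_x^-$ is the one with $u$ near $\pi+\theta$; note $\xn(-e^{i\theta})=0$ (Lemma~\ref{lem:s0}) and $\hat s_0 = \pi+\theta$, so $\hat s_0$ lies on this branch (taking $v=0$: $\cosh 0 = 1 = -\cos\theta/\cos(\pi+\theta)$, which holds). Applying $\wxi(\omega)=-\omega+2\pi$ sends $u+iv$ to $(2\pi-u) - iv$; since $\cos(2\pi-u)=\cos u$ and $\cosh(-v)=\cosh v$, the defining equation is preserved, so $\wxi$ maps $\wmI_x$ to itself, and it sends $\hat s_0=\pi+\theta$ to $\pi-\theta=:\hat s_0'$, which therefore lies on the other (leftmost) branch $\wmI_x^+$. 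Thus $\wxi$ exchanges $\wmI_x^+$ and $\wmI_x^-$. For the sign of $\Re y(\lambda\omega)$ on each branch: using~\eqref{param:normal}, $\yn(e^{i\omega}) = \frac12(2\cos(\beta-\theta) + e^{-i\beta}e^{i\omega} + e^{i\beta}e^{-i\omega})$, whose real part is $\cos(\beta-\theta) + \cos(u-\beta)\cosh v$. On $\wmI_x^-$ one has $\cosh v = -\cos\theta/\cos u$; substituting and simplifying the resulting trigonometric expression (using $0<\theta<\beta<\pi$) shows $\Re\yn\le 0$ there, and the reverse on $\wmI_x^+$. This is the only computation of real substance, and it is the step I expect to need the most care — verifying the sign on each branch uniformly over the admissible range of $u$, likely by rewriting $\cos(u-\beta) = \cos u\cos\beta + \sin u\sin\beta$ and factoring, or by checking endpoints and monotonicity.

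Finally, to locate $\wD_1$: on the vertical line $u=\pi$ (which lies strictly between the two branches when $\theta\ne\pi/2$, and on which both branches' strips do not reach), $\Re\xn(e^{i\pi+iv\cdot?})$... more precisely, at $\omega=\pi$ we get $\Re\xn = \cos\theta + \cos\pi = \cos\theta-1 < 0$, so $\pi\in\wD_1$; since $\Re\xn(e^{i\omega})=\cos\theta+\cos u\cosh v$ is continuous and changes sign only across $\wmI_x$, the connected region containing $\omega=\pi$ and bounded by the two branches is precisely $\wD_1$ (the strip $0\le\Re\omega<2\pi$ between $\wmI_x^+$ and $\wmI_x^-$). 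Conversely, outside that region (near $u=0$ or $u=2\pi$, where $\Re\xn = \cos\theta + \cosh v > 0$) the real part is positive, so $\wD_1$ is exactly as claimed. I would also remark that $\wmI_x$ having ``two connected branches'' follows from the explicit parametrization $v\mapsto u(v)$ obtained by inverting $\cosh v = -\cos\theta/\cos u$ on each of the two $u$-intervals, each giving a single connected curve symmetric about $v=0$. The main obstacle, as noted, is the sign determination of $\Re y(\lambda\omega)$ on the two branches; everything else is bookkeeping with the explicit formulas of Section~\ref{subsec:kernel}.
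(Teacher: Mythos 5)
Your proposal is correct and follows essentially the same route as the paper: compute $\Re\,\xn(\lambda\omega)=\cos\theta+\cos u\cosh v$, read off the two branches and asymptotes, use the invariance of the defining equation under $\wxi$ to swap the branches, determine the sign of $\Re\,\yn$ on the curve, and locate $\wD_1$ via the point $\omega=\pi$. The one step you only sketch — the sign of $\Re\, y(\lambda\omega)$ on each branch — completes exactly as you anticipate: substituting $\cosh v=-\cos\theta/\cos u$ and expanding $\cos(u-\beta)$ gives that $\Re\,\yn$ has the sign of $\sin\beta\,\sin(\theta-u)/\cos u$, which is precisely the paper's computation.
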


\begin{proof}
  We work with the normal form~\eqref{param:normal} of the parametrization. It follows from the expression of $\xn(s)$ that, for  $\omega=u+iv$, 
  \beq\label{x-omega}
  \Re\, \xn(\lambda \om) = \cos \theta +\cos u \cosh v .
  \eeq
 An elementary study then   establishes the description~\eqref{curveI-equation}. 
 
 Since $x(\xi s)=x(s)$, the curve $\wmI_{x}$ is fixed by the automorphism $\wxi$. Since $\wxi$ swaps the two vertical strips that contain  the  branches of $\wmI_{x}$, we conclude that it exchanges these two branches.

  Let us now justify the notation $\wmI_{x}^\pm$. First, we derive for $\yn(\lambda \omega)$  the following counterpart of~\eqref{x-omega}: 
  \[
    \Re \,\yn(\lambda \omega)= \cos(\beta-\theta)+ \cos(u-\beta)\cosh v.
  \]
  By combining this equation with~\eqref{curveI-equation}, we see that
  on the curve $\wmI_x$, the value $\Re \yn(\lambda \omega)$ has the sign of
  \[
    \cos(\beta-\theta)- \cos(u-\beta) \frac {\cos\theta}{\cos u}=\sin \beta\, \frac{ \sin(\theta-u)}{\cos u},
  \]
  that is, the sign of   $\sin(\theta-u)/\cos u$. The result follows by considering separately the two vertical strips of~\eqref{curveI-equation} and the 
  three cases $\theta< \pi/2$,  $\theta> \pi/2$ and $\theta=\pi/2$.

 The point $\om=\pi$ always lies between the two branches of $\wmI_x$. Given that $x(\lambda\pi)=x(-1)=x^-<0$, we conclude that  the domain $ \wD_1 $ is the area lying between the two branches of $\wmI_x$.
\end{proof}

\begin{figure}[htbp]
  \centering
  \includegraphics[scale=2,trim= 8mm 5mm 8mm 5mm,clip]{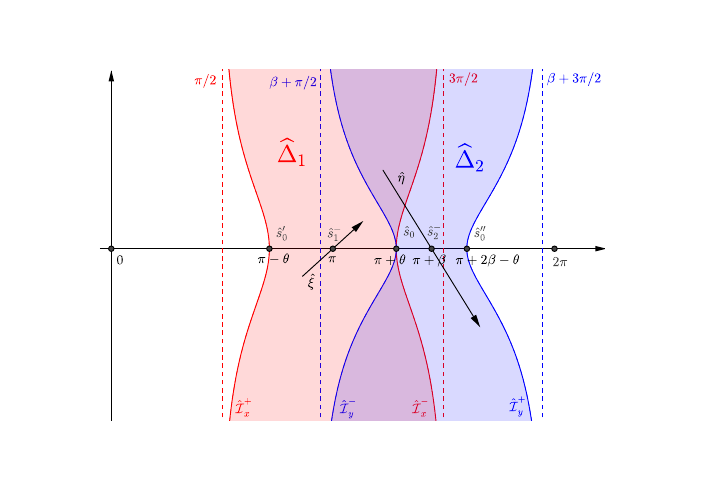}
  \caption{ In red (left), the domain $\wD_1$, and its boundary $\wmI_x$ shown with its asymptotes. In blue (right), $\wD_2$ and its boundary.
    In this example, $\theta<\pi/2$ and $ \beta-\theta<\pi/2$. The cases  ($\theta>\pi/2$, $\beta-\theta<\pi/2$) and  ($\theta<\pi/2$, $\beta-\theta>\pi/2$) are shown in Figure~\ref{fig:curvesI-cases}. Given that $\beta<\pi$, it is not possible to have $\theta\ge\pi/2$ and $\beta-\theta\ge\pi/2$.}
  \label{fig:universalcovering}
\end{figure}

\begin{figure}[htb]
  \centering
  \includegraphics[width=60mm,trim= 8mm 5mm 8mm 5mm,clip]{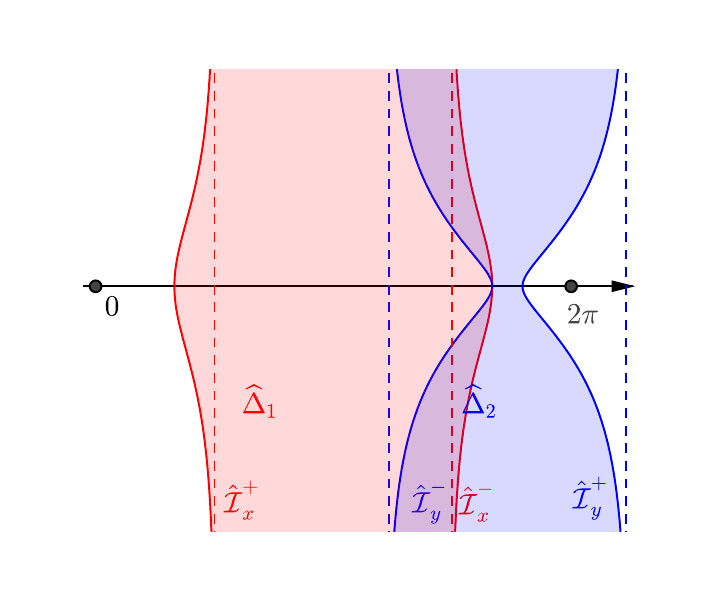} \hskip 10mm
  \includegraphics[width=60mm,trim= 8mm 5mm 8mm 5mm,clip]{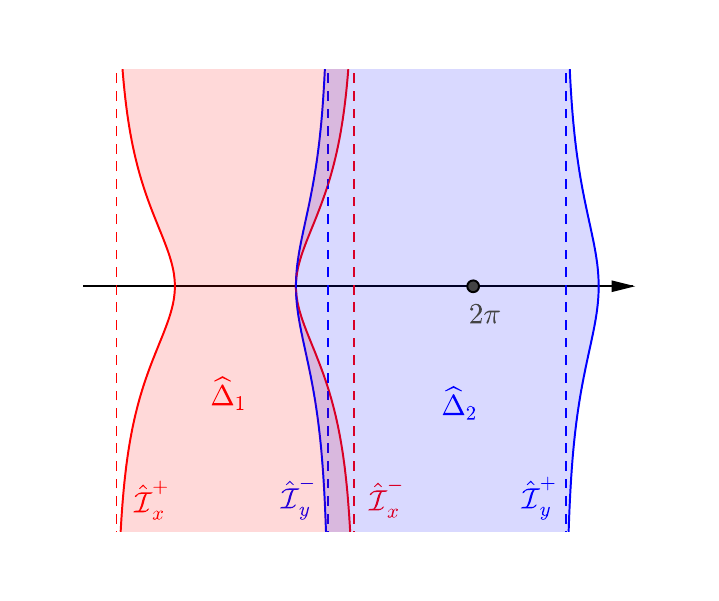}
  \caption{Some examples of the domains $\wD_1$ (red) and $\wD_2$ (blue),
    for various values of $\theta$ and $\beta$: left,    $\theta>\pi/2$ and $\beta-\theta<\pi/2$; right, $\theta<\pi/2$ and $\beta-\theta>\pi/2$. }
  \label{fig:curvesI-cases}
\end{figure}

We now want to determine where $\Re y(e^{i\om}) \le 0$. We first define $\Delta_2 = \{s\in\s\setminus\{0, \infty\}:\Re y(s) \leqslant 0 \}$. Given that $\yn(\lambda\om)$ is obtained from $\xn(\lambda (\om-\beta))$ by replacing $\theta$ by $\beta-\theta$ (see~\eqref{param:normal}), it makes sense to consider the following lifting of $\Delta_2$:
\beq\label{Delta2-def}
\wD_2
:= \{\omega\in\C : \beta\leqslant \Re \omega <\beta+2\pi  \text{ and } \Re y(\lambda\omega) \leqslant   0
\}.
\eeq
 We define $\wmI_{y}$ as~$\wmI_{x}$, but again in the translated strip:
\[
  {    \wmI_{y}:=\{ \omega \in \C :  \beta \leqslant\Re \omega <\beta +2 \pi \text{ and } \Re y(\lambda\omega) =0  \}.}
\]

The counterpart of Lemma~\ref{lem:equationI-curve} reads as  follows; see again Figures~\ref{fig:universalcovering} and~\ref{fig:curvesI-cases} for various illustrations.

\begin{lem}\label{lem:Delta2}
  The curve $\wmI_{y}$ consists of two branches, with vertical asymptotes at $\Re \omega \in \{\beta+\pi/2,\beta+3\pi/2\}$. Denoting $\omega=u+iv$, with $u$ and $v$ real, these two branches lie in two disjoint vertical strips and are defined by:
  \[
    \cosh v= -\frac{\cos (\beta-\theta)}{\cos( u-\beta)} ,
    \quad   \text {for }
    \begin{cases}
      u \in (\beta+ \pi/2, \pi+\theta] \cup [\pi+2\beta-\theta, \beta + {3\pi}/{2})& \text {if } \beta-\theta < \pi/2,\\
      u \in [\pi+\theta, \beta + \pi/2) \cup ( \beta+3\pi/2, \pi+2\beta-\theta]& \text {if } \beta-\theta > \pi/2.
    \end{cases}
  \]
  The case $\beta -\theta=\pi/2$ is degenerate, with $\wmI_{y}$ consisting of two vertical lines at abscissas $\beta+\pi/2$ and $\beta+3\pi/2$.
  The curve $\wmI_{y}$ is fixed by the automorphism $\weta$, which swaps its two branches. We denote by  $\wmI_{y}^-$ the leftmost branch, which  goes through $\hat s_0= \pi+\theta$, and by  $\wmI_{y}^+$ the rightmost one, which goes through $\hat s_0'':=\weta( \hat s_0)=\pi+2\beta-\theta$. 
  The value of  $\Re x(\lambda \omega)$ is positive on $\wmI_{y}^+$ and  non-positive on $\wmI_{y}^-$.
  The domain $\wD_2$ lies between $\wmI_{y}^-$  and $\wmI_{y}^+$. 
  \label{lem:equationI-curve2}
\end{lem}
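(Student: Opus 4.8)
\textbf{Plan of proof for Lemma~\ref{lem:equationI-curve2}.}
The statement is the exact analogue of Lemma~\ref{lem:equationI-curve}, with the roles of $x$ and $y$ exchanged, and with $\theta$ replaced by $\beta-\theta$ and the reference strip shifted by $\beta$. So my first step is to record the observation that, by the normal-form parametrization~\eqref{param:normal}, the function $\yn(\lambda\omega)=\yn(e^{i\omega})$ is obtained from $\xn(\lambda(\omega-\beta))$ by replacing $\theta$ by $\beta-\theta$. Concretely, writing $\omega=u+iv$ with $u,v\in\RR$, I would compute directly from~\eqref{param:normal} that
\[
\Re\,\yn(\lambda\omega)=\cos(\beta-\theta)+\cos(u-\beta)\cosh v,
\]
which is the translated-and-relabelled counterpart of~\eqref{x-omega}. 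From this single formula the whole lemma follows by an elementary analysis, exactly as in the proof of Lemma~\ref{lem:equationI-curve}: the equation $\Re\,\yn(\lambda\omega)=0$ reads $\cosh v=-\cos(\beta-\theta)/\cos(u-\beta)$, and since $\cosh v\ge 1$ this forces $\cos(u-\beta)$ and $-\cos(\beta-\theta)$ to have the same sign and $|\cos(u-\beta)|\le|\cos(\beta-\theta)|$; resolving this in the strip $\beta\le u<\beta+2\pi$ gives the two stated $u$-intervals (with the degenerate case $\beta-\theta=\pi/2$, where $\cos(\beta-\theta)=0$ and the branches become the vertical lines $u=\beta+\pi/2$ and $u=\beta+3\pi/2$), together with the vertical asymptotes where $\cos(u-\beta)=0$.

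Next I would identify the two branches and the symmetry. Since $y(\eta s)=y(s)$ by construction of $\eta$ in~\eqref{eq:elements_group}, the curve $\wmI_y$ is globally fixed by the lifted automorphism $\weta(\omega)=-\omega+2(\pi+\beta)$ of~\eqref{wxi-def}; as $\weta$ is the central symmetry with center $\hat s_2^-=\pi+\beta$, it interchanges the two vertical strips containing the branches and hence swaps the two branches. The point $\hat s_0=\pi+\theta$ lies on $\wmI_y$ because $x(s_0)=y(s_0)=0$ (Lemma~\ref{lem:s0}), so in particular $\Re\,y(\lambda\hat s_0)=0$; it lies in the left strip, so it is on the branch I call $\wmI_y^-$, and its image $\weta(\hat s_0)=\pi+2\beta-\theta=:\hat s_0''$ lies on $\wmI_y^+$.

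To justify the superscripts $\pm$, I would compute $\Re\,\xn(\lambda\omega)=\cos\theta+\cos u\cosh v$ (this is~\eqref{x-omega}) and substitute the branch equation $\cosh v=-\cos(\beta-\theta)/\cos(u-\beta)$. This yields that on $\wmI_y$ the sign of $\Re\,\xn(\lambda\omega)$ equals the sign of
\[
\cos\theta-\cos u\,\frac{\cos(\beta-\theta)}{\cos(u-\beta)}
=\frac{\cos\theta\cos(u-\beta)-\cos u\cos(\beta-\theta)}{\cos(u-\beta)}
=\sin\beta\,\frac{\sin(u-\beta)\cdots}{\cos(u-\beta)},
\]
i.e.\ the sign of a simple trigonometric expression of $u$ alone (a product-to-sum simplification gives $\sin\beta\,\sin(u-\theta)/\cos(u-\beta)$ or the like), which I then evaluate on each of the two $u$-intervals in the three cases $\beta-\theta<\pi/2$, $\beta-\theta>\pi/2$, $\beta-\theta=\pi/2$ to see that $\Re\,x(\lambda\omega)>0$ on $\wmI_y^+$ and $\le 0$ on $\wmI_y^-$. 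Finally, the point $\omega=\pi+\beta=\hat s_2^-$ always lies strictly between the two branches (it is the center of $\weta$), and there $y(\lambda\omega)=y(-e^{i\beta})=y^-<0$ by~\eqref{encombre} and the remark after~\eqref{xyn-pm}; hence $\wD_2$, being connected and containing $\hat s_2^-$, is precisely the region between $\wmI_y^-$ and $\wmI_y^+$.

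The proof is entirely mechanical; the only mild care needed is in the sign bookkeeping of the trigonometric simplification of $\Re\,\xn(\lambda\omega)$ restricted to $\wmI_y$, and in keeping the three cases $\beta-\theta\lessgtr\pi/2$ straight when reading off the $u$-intervals. There is no real obstacle: everything is the $x\leftrightarrow y$, $\theta\mapsto\beta-\theta$ mirror image of Lemma~\ref{lem:equationI-curve}, and one could even phrase the proof as ``apply Lemma~\ref{lem:equationI-curve} to the model obtained by the diagonal reflection of the quadrant,'' which exchanges $x$ and $y$ and sends $\theta$ to $\beta-\theta$ while leaving $\beta$ fixed (as recorded in Section~\ref{sec:normalize}), but I would prefer to give the direct computation for clarity.
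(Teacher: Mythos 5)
Your proposal is correct and follows exactly the route the paper intends: the paper's proof of this lemma is just the remark that it is a straightforward adaptation of Lemma~\ref{lem:equationI-curve}, and you carry out precisely that adaptation, computing $\Re\,\yn(\lambda\omega)=\cos(\beta-\theta)+\cos(u-\beta)\cosh v$, using the central symmetry $\weta$ about $\pi+\beta$, and checking signs (your hedged trigonometric simplification does come out as $\sin\beta\,\sin(u-\theta)/\cos(u-\beta)$, giving the stated signs of $\Re\,x(\lambda\omega)$ on the two branches). The final step locating $\wD_2$ via the sample point $\pi+\beta$ with $y(-e^{i\beta})=y^-<0$ matches the paper's argument for $\wD_1$ verbatim.
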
 
The proof is a straightforward adaptation of  the one of Lemma~\ref{lem:equationI-curve}.
 Finally, the following lemma  is illustrated in Figure~\ref{fig:universalcovering}, and can be checked further on  the examples of Figure~\ref{fig:curvesI-cases}.

\begin{lem}\label{lem:branches}
  The domain $\wD_2$ contains the branch $\wmI_{x}^-$, while the domain $\wD_1$ contains the branch $\wmI_{y}^-$. In particular, the set  $\wD_1 \cap \wD_2$, which is bounded on the left by $\wmI_{y}^-$ and on the right by $\wmI_{x}^-$, has a non-empty interior.
\end{lem}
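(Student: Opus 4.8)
The statement to be proven is Lemma~\ref{lem:branches}: the domain $\wD_2$ contains the branch $\wmI_x^-$, the domain $\wD_1$ contains the branch $\wmI_y^-$, and consequently $\wD_1\cap\wD_2$ has non-empty interior. The plan is to exploit the explicit descriptions of the curves $\wmI_x^\pm$ and $\wmI_y^\pm$ obtained in Lemmas~\ref{lem:equationI-curve} and~\ref{lem:equationI-curve2}, together with the sign information proved there. The first statement is almost immediate: Lemma~\ref{lem:equationI-curve} records that $\Re y(\lambda\omega)\le 0$ on the branch $\wmI_x^-$ (that is exactly the meaning of the superscript $-$ there, as explained in the last lines of that lemma's statement). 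Since $\wmI_x^-$ lies in the vertical strip $0\le\Re\omega<2\pi$ after a suitable choice of fundamental domain, we need to check that it actually lies in the strip $\beta\le\Re\omega<\beta+2\pi$ used to define $\wD_2$ in~\eqref{Delta2-def}; but $\wD_2$ is $2\pi$-periodic as a subset of $\widehat{\s}$ modulo $2\pi\Z$, so the point is simply that the curve $\{\omega: \Re y(\lambda\omega)\le 0\}$ contains $\wmI_x^-$, which is the content of the sign statement. Symmetrically, $\Re x(\lambda\omega)\le 0$ on $\wmI_y^-$ by Lemma~\ref{lem:equationI-curve2}, so $\wmI_y^-\subset\wD_1$.

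\textbf{Key steps.} First I would recall the two sign identities established inside the proofs of Lemmas~\ref{lem:equationI-curve} and~\ref{lem:equationI-curve2}: on $\wmI_x$ one has $\Re\yn(\lambda\omega)$ proportional to $\sin(\theta-u)/\cos u$, and on $\wmI_y$ one has $\Re\xn(\lambda\omega)$ proportional to an analogous expression (with $\theta$ replaced by $\beta-\theta$ and $u$ by $u-\beta$). On the branch $\wmI_x^-$, by definition $\Re\yn(\lambda\omega)\le 0$, hence $\wmI_x^-$ is contained in the closed set $\{\Re y(\lambda\omega)\le 0\}=\wD_2$ (interpreting $\wD_2$ as the relevant periodic subset of $\C$). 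Likewise $\wmI_y^-\subset\wD_1$. Second, I would observe that $\wmI_y^-$ is the \emph{left} boundary branch of $\wD_2$ and $\wmI_x^-$ is the \emph{right} boundary branch of $\wD_1$ (this is part of Lemmas~\ref{lem:equationI-curve} and~\ref{lem:equationI-curve2}: $\wD_1$ lies between its two branches $\wmI_x^\pm$, and $\wD_2$ lies between $\wmI_y^-$ and $\wmI_y^+$). Combining: $\wD_1$ is bounded on the right by $\wmI_x^-$, which is inside $\wD_2$; and $\wD_2$ is bounded on the left by $\wmI_y^-$, which is inside $\wD_1$. Therefore the region lying to the left of $\wmI_x^-$ and to the right of $\wmI_y^-$ is contained in $\wD_1\cap\wD_2$, and it is non-empty because the point $\hat s_0=\pi+\theta$, which lies on both $\wmI_x^-$ and $\wmI_y^-$ (both branches pass through $\hat s_0$ by the two lemmas), together with a neighbourhood of the segment of the real axis near $\hat s_0$ minus that point, exhibits interior points; alternatively, any point $\omega$ with $\Re\omega$ strictly between the abscissa of $\wmI_y^-$ and that of $\wmI_x^-$ at the same height $\Im\omega$ works, and such points exist because the two branches are distinct (they have different vertical asymptotes, $\Re\omega=\pi/2$ versus $\Re\omega=\beta+\pi/2$ up to the relevant translations, since $\beta\neq 0$) yet both bend toward $\hat s_0$.

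\textbf{Main obstacle.} The only genuinely delicate point is the bookkeeping of fundamental domains: $\wD_1$ is defined using the strip $0\le\Re\omega<2\pi$ and $\wD_2$ using $\beta\le\Re\omega<\beta+2\pi$, and the branches $\wmI_x^\pm$, $\wmI_y^\pm$ are defined in yet other strips (see the precise ranges of $u$ in the two lemmas). One must verify that, in a common window of the plane, the branch $\wmI_x^-$ through $\hat s_0=\pi+\theta$ genuinely sits inside the connected component of $\C\setminus\wmI_y$ that equals $\wD_2$ there, and symmetrically for $\wmI_y^-$. This is settled by tracking the single reference point $\hat s_0=\pi+\theta$: it lies on $\wmI_x^-$, on $\wmI_y^-$, and—since $x(\lambda\hat s_0)=x(s_0)=0$ and $y(\lambda\hat s_0)=y(s_0)=0$ by Lemma~\ref{lem:s0}—it is a common boundary point where both $\Re x$ and $\Re y$ vanish. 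A short continuity/connectedness argument, using that $\wD_1$ and $\wD_2$ are each connected and that $\hat s_0\in\partial\wD_1\cap\partial\wD_2$, then pins down the relative positions and yields that $\wD_1\cap\wD_2$ is a non-degenerate ``lens'' bounded on the left by $\wmI_y^-$ and on the right by $\wmI_x^-$, as asserted. I expect this fundamental-domain matching to be the part requiring the most care, though it is purely a matter of carefully reading off the inequalities already recorded in Lemmas~\ref{lem:equationI-curve} and~\ref{lem:equationI-curve2}.
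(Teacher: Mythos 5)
Your proof is correct and follows essentially the same route as the paper: the containments $\wmI_x^-\subset\wD_2$ and $\wmI_y^-\subset\wD_1$ are read off directly from the sign statements in Lemmas~\ref{lem:equationI-curve} and~\ref{lem:equationI-curve2} together with the definitions~\eqref{Delta1-def} and~\eqref{Delta2-def}, and the non-empty interior of the lens between $\wmI_y^-$ and $\wmI_x^-$ follows since the two branches are distinct curves (different vertical asymptotes) meeting only at $\hat s_0=\pi+\theta$. The fundamental-domain bookkeeping you flag as the main obstacle is in fact harmless: the branch $\wmI_x^-$ has real part between $\pi+\theta$ and $3\pi/2$ (in some order), hence already lies in the strip $\beta\le\Re\omega<\beta+2\pi$, and symmetrically $\wmI_y^-$ lies in $0\le\Re\omega<2\pi$, so no translation is needed.
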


\begin{proof}
  By Lemma~\ref{lem:equationI-curve},  the value of $\Re y(\lambda \omega)$ is {non-positive} in $\wmI_{x}^-$: hence this 
  branch is included in $\wD_2$ by definition~\eqref{Delta2-def} of this domain.
  The second property   follows similarly from Lemma~\ref{lem:equationI-curve2}.
\end{proof}

\subsection{Lifting and meromorphic continuation of \texorpdfstring{$\varphi_1$}{phi1} and \texorpdfstring{$\varphi_2$}{phi2} to the universal covering}

As the Laplace transform $x \mapsto \varphi_2(x)$ is analytic in the interior of  $\{x\in\C: \Re x\leqslant 0\}$ and continuous on its boundary,  we can lift it
to the set $\wD_1 $ defined by~\eqref{Delta1-def} by setting:
\[
  \psi_2(\omega) :=  \varphi_2(x(e^{i\omega})), \quad \forall\omega\in\wD_1.
\]
Analogously, we define the lifting of $\phi_1$ by
\beq \label{psi1} 
\psi_1(\omega)
:= \varphi_1(y(e^{i\omega})), \quad \forall\omega\in\wD_2. 
\eeq 
These maps are analytic in the interiors of their domains $\wD_1$ and $\wD_2$, and continuous on the boundaries of these domains. 
For $\omega$ in $\wD_1 \cap \wD_2$ (which is non-empty by Lemma~\ref{lem:branches}), the main functional equation~\eqref{eq:functional_equation} yields
\beq
\label{eq:func-eq-universal-cov}
\gamma_1(x(\lambda\omega),y(\lambda\omega))\psi_1(\omega)+
\gamma_2(x(\lambda\omega),y(\lambda\omega))\psi_2(\omega)=0,
\eeq
where we recall that $\lambda \om$ stands for $e^{i\om}$.

We can now  extend meromorphically $\psi_1$ and $\psi_2$ to the interior of $\wD:=\wD_1 \cup \wD_2$ by means of the formulas 
\begin{align}
  \psi_1(\omega)&=-\frac{\gamma_2}{\gamma_1}(x(\lambda\omega),y(\lambda\omega)) \psi_2(\omega)
                  \quad\text{if } \omega\in  \wD_1, \label{psi1-ext}\\ 
  \psi_2(\omega)&=-\frac{\gamma_1}{\gamma_2}(x(\lambda\omega),y(\lambda\omega)) \psi_1(\omega)
                  \quad\text{if } \omega\in  \wD_2, \label{psi2-ext}
\end{align} 
see~\cite[Lem.~3]{franceschi_explicit_2017} or~\cite[Lem.~6]{franceschi_asymptotic_2016}. Note that~\eqref{eq:func-eq-universal-cov} guarantees that the values of $\psi_1$ given by~\eqref{psi1} and~\eqref{psi1-ext} actually coincide on $\wD_1 \cap \wD_2$. A similar statement holds for $\psi_2$. We finally extend $\psi_1$ and $\psi_2$ to  the boundary of $\wD$ by continuity.  The fact that this extension is only meromorphic, rather than analytic, comes from the divisions by $\gamma_1$ and $\gamma_2$, which may create poles. The functional equation~\eqref{eq:func-eq-universal-cov} now  holds on the whole interior of $\wD$, and also on its boundary by continuity.

In order to extend $\psi_1$ and $\psi_2$ to $\C\equiv \widehat \s$, we will need the following  lemma, which states in particular that the complex plane is completely covered by translates of the set $\wD$ by shifts of~$2\beta$. We recall that a translation by $2\beta$ is precisely the effect of $\weta \,\wxi$ (see~\eqref{eta-xi-hat}).

\begin{lem} \label{lemrev}
  Recall that $\wD=\wD_1\cup\wD_2$, where $\wD_1$ and $\wD_2$ are defined by~\eqref{Delta1-def} and~\eqref{Delta2-def} respectively. The set $\wD$ is bounded by $\wmI_{x}^+$ (on the left) and $\wmI_{y}^+$ (on the right). Moreover, $\wmI_{x}^+ +2\beta \subset \wD_2$ and $\wmI_{y}^+-2\beta \subset \wD_1$. This implies that
  \[
    \C= \underset{n\in\Z}{\bigcup} (\wD +2n\beta)
    =\underset{n\in\Z }{\bigcup} (\weta\,\wxi)^n \wD.
  \]
  {Moreover, $\wD \cap \weta \wD= \wD_2$, and $\wD \cap \wxi                   \wD= \wD_1$.}
\end{lem}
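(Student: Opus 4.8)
The statement to be proved is Lemma~\ref{lemrev}, which describes the boundaries of $\wD=\wD_1\cup\wD_2$, asserts that translates of $\wD$ by multiples of $2\beta$ cover $\C$, and identifies the overlaps $\wD\cap\weta\wD=\wD_2$ and $\wD\cap\wxi\wD=\wD_1$. The approach is purely a matter of bookkeeping with the explicit curves $\wmI_x$ and $\wmI_y$ described in Lemmas~\ref{lem:Delta1} and~\ref{lem:Delta2}, combined with the action of $\wxi$, $\weta$ and $\weta\wxi$ computed in~\eqref{wxi-def} and~\eqref{eta-xi-hat}. I would first recall from Lemma~\ref{lem:branches} that $\wmI_x^-\subset\wD_2$ and $\wmI_y^-\subset\wD_1$, so each of $\wD_1$, $\wD_2$ contains the ``inner'' boundary branch of the other; hence $\wD=\wD_1\cup\wD_2$ is a connected strip-like region whose \emph{outer} boundary on the left is $\wmI_x^+$ (the left boundary branch of $\wD_1$) and on the right is $\wmI_y^+$ (the right boundary branch of $\wD_2$). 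This is the first assertion.

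Next I would establish the two key translation facts $\wmI_x^+ + 2\beta\subset\wD_2$ and $\wmI_y^+ - 2\beta\subset\wD_1$. For the first: $\wmI_x^+$ is characterized in Lemma~\ref{lem:Delta1} by $\Re x(\lambda\omega)=0$ together with $\Re y(\lambda\omega)>0$, i.e.\ it lies strictly inside $\wD_2^{\,c}$... no, rather inside the region where $y$ has positive real part, which by definition~\eqref{Delta2-def} is disjoint from $\wD_2$; but what matters is the \emph{translate}. Since $y(\lambda(\omega))$ is obtained from $x(\lambda(\omega-\beta))$ by replacing $\theta$ with $\beta-\theta$ (see~\eqref{param:normal}), a direct computation of $\Re x(\lambda(\omega+2\beta))$ for $\omega\in\wmI_x^+$, using the explicit formula~\eqref{x-omega} $\Re\xn(\lambda\omega)=\cos\theta+\cos u\cosh v$ and the defining equation~\eqref{curveI-equation}, shows that $\Re x(\lambda(\omega+2\beta))<0$; equivalently one checks that applying $\weta\wxi$ moves $\wmI_x^+$ into the interior of $\wD_2$. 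The same symmetry (exchanging the roles of $\wD_1$ and $\wD_2$, of $\theta$ and $\beta-\theta$, and of the shifts $+2\beta$ and $-2\beta$) gives $\wmI_y^+-2\beta\subset\wD_1$. From these two inclusions it follows that $\wD\cap(\wD+2\beta)$ has non-empty interior: indeed $\wmI_y^+\subset\partial\wD$ and $\wmI_y^+\subset\wmI_y^++2\beta-2\beta$... more cleanly, the right boundary $\wmI_y^+$ of $\wD$ sits in the interior of $\wD+2\beta$ (because $\wmI_y^+-2\beta$ is interior to $\wD_1\subset\wD$, hence $\wmI_y^+$ is interior to $\wD+2\beta$). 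So consecutive translates $\wD+2n\beta$ and $\wD+2(n+1)\beta$ genuinely overlap, and their union, being a nested chain of overlapping strips each of ``width'' comparable to $2\beta$ in the real direction, exhausts $\C$. I would make the ``exhaustion'' precise by noting that $\wD$ contains a full vertical line (e.g.\ $\Re\omega=\pi$, since $x(\lambda\pi)=x^-<0$ so $\pi\in\wD_1$, and in fact the whole vertical line $\Re\omega=\pi$ lies in $\wD_1$ by~\eqref{x-omega} because $\cos\pi<0$ forces $\Re\xn<0$ there for all $v$), so $\wD$ meets every horizontal line in a set containing a neighbourhood of abscissa $\pi$, and translating by $2n\beta$ sweeps the whole real axis of abscissas; combined with the overlap this yields $\C=\bigcup_n(\wD+2n\beta)=\bigcup_n(\weta\wxi)^n\wD$ by~\eqref{eta-xi-hat}.

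Finally, for the overlap identities: $\weta$ is the central symmetry~\eqref{wxi-def} with centre $\hat s_2^-=\pi+\beta$, the common centre of the two branches of $\wmI_y$, which it swaps (Lemma~\ref{lem:Delta2}); since $\wD_2$ is the region between these two branches, $\weta$ maps $\wD_2$ onto itself. Therefore $\wD_2=\weta\wD_2\subset\wD\cap\weta\wD$. For the reverse inclusion, $\weta$ sends $\wD_1$ to a region lying to the right of $\weta(\wmI_y^-)=\wmI_y^+$... I mean to the right of the right boundary branch of $\wD_2$, hence $\weta\wD_1$ is disjoint from the interior of $\wD_1$; so the only part of $\wD$ that $\weta$ can map back into $\wD$ is $\wD_2$ itself, giving $\wD\cap\weta\wD=\wD_2$. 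The identity $\wD\cap\wxi\wD=\wD_1$ is proved identically, $\wxi$ being the central symmetry with centre $\hat s_1^-=\pi$ that preserves $\wD_1$ and pushes $\wD_2$ out to the left.

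\textbf{Main obstacle.} The only genuine work is the sign computation showing $\wmI_x^+ + 2\beta\subset\wD_2$ (and its mirror image): one must verify, from~\eqref{x-omega} and the defining equation~\eqref{curveI-equation} of $\wmI_x^+$, that shifting by $2\beta$ strictly moves the curve into the open region $\{\Re y(\lambda\omega)<0\}$, and this requires care in handling the three regimes $\theta\lessgtr\pi/2$, $\beta-\theta\lessgtr\pi/2$ and the vertical-asymptote positions of the strips in Lemmas~\ref{lem:Delta1} and~\ref{lem:Delta2}. Once that inclusion is in hand, everything else is a direct consequence of the facts that $\wxi$, $\weta$ are central symmetries fixing the relevant curves and that $\weta\wxi$ is the translation by $2\beta$.
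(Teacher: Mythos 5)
Your overall skeleton matches the paper's proof: you identify the outer boundaries of $\wD$ from Lemma~\ref{lem:branches}, you isolate the two translation inclusions $\wmI_x^+ +2\beta\subset\wD_2$ and $\wmI_y^+-2\beta\subset\wD_1$ as the key step, you deduce the covering from the overlap of consecutive translates, and your treatment of $\wD\cap\weta\wD=\wD_2$ (take a point of $\wD_1\setminus\wD_2$, left of $\wmI_y^-$, and note that $\weta$ sends it to the right of $\wmI_y^+$, hence out of $\wD$) is exactly the paper's argument, just stated a little loosely (your blanket claim that $\weta$ sends all of $\wD_1$ to the right of $\wmI_y^+$ is false for $\wD_1\cap\wD_2$; only the restriction to $\wD_1\setminus\wD_2$ is needed and true).

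The one place where you diverge is precisely the step you call ``the only genuine work'': you propose to prove $\wmI_x^+ +2\beta\subset\wD_2$ by an explicit sign computation using~\eqref{x-omega} and~\eqref{curveI-equation}, with a case analysis over the regimes $\theta\lessgtr\pi/2$, and you leave that computation unexecuted. Two remarks. First, in the body of your argument you say the computation should show $\Re x(\lambda(\omega+2\beta))<0$; membership in $\wD_2$ is governed by~\eqref{Delta2-def}, i.e.\ by $\Re y(\lambda(\omega+2\beta))\le 0$ (plus the strip condition), so the quantity you name there is the wrong one — you do state the correct condition later in your ``main obstacle'' paragraph, but as written the central step is both mis-targeted and not carried out. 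Second, no computation and no case analysis are needed: the ingredients you already cite give the inclusion in three lines. Since $\omega+2\beta=\weta\wxi\omega$, and $\wxi$ exchanges the two branches of $\wmI_x$ (Lemma~\ref{lem:Delta1}), one has $\wmI_x^++2\beta=\weta\wxi\wmI_x^+=\weta\,\wmI_x^-\subset\weta\wD_2=\wD_2$, using $\wmI_x^-\subset\wD_2$ (Lemma~\ref{lem:branches}) and the $\weta$-invariance of $\wD_2$ (Lemma~\ref{lem:Delta2}); the mirror chain gives $\wmI_y^+-2\beta=\wxi\weta\,\wmI_y^+=\wxi\,\wmI_y^-\subset\wxi\wD_1=\wD_1$. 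This is how the paper proceeds, and it is worth internalizing: the invariances $x\circ\lambda\circ\wxi=x\circ\lambda$ and $y\circ\lambda\circ\weta=y\circ\lambda$ make the feared regime-by-regime verification collapse to facts already established.
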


\begin{proof}
  {The first statement follows from the discussion in Section~\ref{lifted-aut}}, see Figures~\ref{fig:universalcovering} and~\ref{fig:curvesI-cases}.
  Now
  \begin{align*}
    \wmI_{x}^+ +2\beta &= \weta \wxi \wmI_{x}^+\\
                       & = \weta \wmI_{x}^- \qquad \text {since } \wxi \text{ exchanges the two branches of } \wmI_x \text{ (Lemma~\ref{lem:Delta1})}\\
                       &\subset \weta \wD_2 \qquad \text {since } \wmI_{x}^- \subset \wD_2\text{ (Lemma~\ref{lem:branches})}\\
                       &\subset \wD_2  \qquad \hskip 2mm \text {since } \wD_2 \text { is left invariant by } \weta \text{ (Lemma~\ref{lem:Delta2})}.
  \end{align*}
  A similar argument proves that $\wxi\, \weta \wmI_{y}^+ = \wmI_{y}^+ -2\beta
  \subset \wD_1$.
  These two   properties are illustrated in Figures~\ref{fig:shiftdelta} and~\ref{fig:shift-cases}. They imply that $\C$ can be covered by translates of $\wD$ by multiples of $2\beta$.

  Let us now prove that $\wD \cap \weta \wD= \wD_2$. Since $\weta \wD_2= \wD_2$, we have $\wD_2\subset \wD \cap \weta \wD$. Now  take $\omega \in \wD_1 \setminus \wD_2$. This point thus lies on the left of the curve $\wmI_y^-$. Then $\weta \omega$ lies on the right of $\weta \wmI_y^-= \wmI_y^+$, and thus cannot be in $\wD$.  Equivalently, $ \om$ cannot be in $\weta \wD$, and thus $\wD \cap \weta \wD$ is reduced to $\wD_2$. The final statement of the lemma is proved  similarly.
\end{proof}

\begin{figure}[htbp]
  \centering
  \includegraphics[scale=2,trim= 8mm 5mm 8mm 5mm,clip]{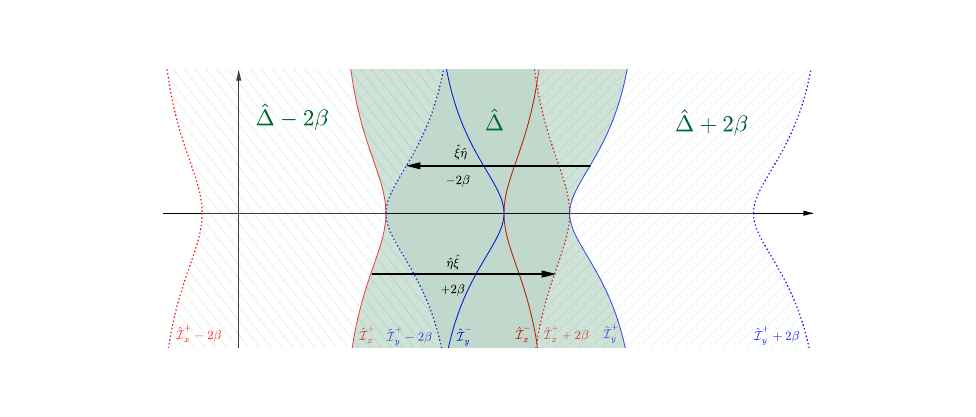}
  \caption{The set $\wD$, in grey/green, and its translates by $\pm 2\beta$ (dashed areas). Here $\theta<\pi/2$ and $\beta-\theta<\pi/2$, as in Figure~\ref{fig:universalcovering}. The other two cases are illustrated in Figure~\ref{fig:shift-cases}.}
    \label{fig:shiftdelta}
\end{figure}

\begin{figure}[htb]
  \centering
  \includegraphics[scale=1.2,trim= 6mm 5mm 6mm 5mm,clip]{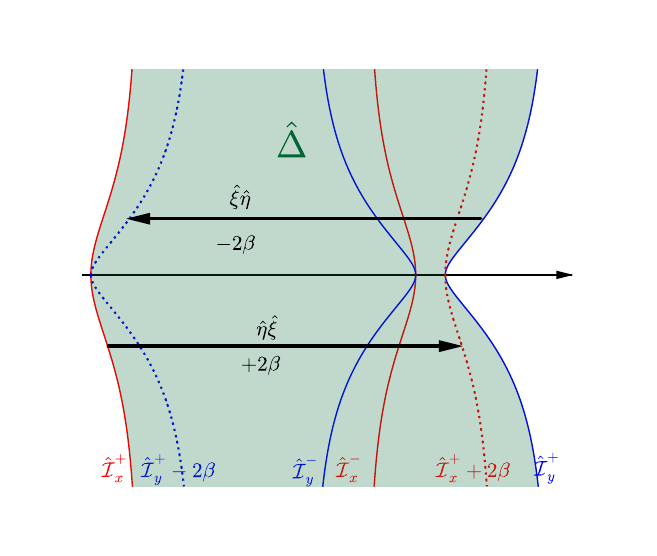}  \hskip 10mm
  \includegraphics[scale=1.2,trim= 6mm 5mm 6mm 5mm,clip]{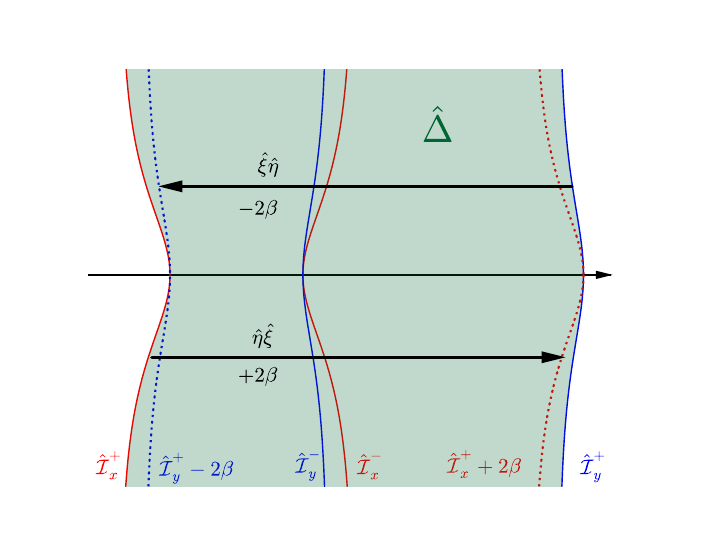}
  \caption{The set $\wD$ and, in dotted lines, the translates of the branches  $\wmI_x^+$ (red) and $\wmI_y^+$ (blue) by $2\beta$ and $-2\beta$, respectively. In all cases, these translated branches fit in the domain $\wD$. On the left,
      $\theta>\pi/2$ and $\beta-\theta<\pi/2$; on the right, $\theta<\pi/2$ and $\beta-\theta>\pi/2$. }
  \label{fig:shift-cases}
\end{figure}

 To lighten notation in the functional equation~\eqref{eq:func-eq-universal-cov}, we will denote 
\[
  \wgamma_1(\omega):=\gamma_1
  (x(\lambda\omega),y(\lambda\omega))\quad \text{and}\quad\wgamma_2(\omega):=\gamma_2
  (x(\lambda\omega),y(\lambda\omega)).
\]

\begin{thm} \label{thmpro}
  The functions $\psi_1$ and $\psi_2$, which are so far defined on the interior of $\wD$,  can be continued meromorphically to the whole of $\C$. For all $\omega\in\C$, these functions satisfy
  \allowdisplaybreaks \begin{numcases}{}
    \wgamma_1(\omega)\psi_1(\omega)+\wgamma_2(\omega)\psi_2(\omega)
    =0, & \text{(functional equation)} 
    \label{equa} \\
    \psi_1(\weta\omega)={\psi_1(2\pi+2\beta-\omega)}=\psi_1(\omega) , & \text{(invariance of 
      $\psi_1$ by $\weta$)} 
    \label{inveta} \\
    \psi_2(\wxi\omega)={\psi_2(2\pi-\omega)}=\psi_2(\omega),  & \text{(invariance of 
      {$\psi_2$} by $\wxi$)} 
    \label{invxi}\\
    \psi_1(\weta\wxi\omega) ={ \psi_1(\omega+2\beta)}
    =\dfrac{\wgamma_2}{\wgamma_1}(\wxi\omega) 
    \dfrac{\wgamma_1}{\wgamma_2}(\omega) \psi_1 (\omega), & \text{
      (shift of $2\beta$)} 
    \label{pro+trans}\\
    \psi_2 (\wxi\weta\omega) ={ \psi_2(\omega-2\beta)}
    =\dfrac{\wgamma_1}{\wgamma_2}(\weta\omega)
    \dfrac{\wgamma_2}{\wgamma_1}(\omega) \psi_2 (\omega), &  \text{
      (shift of $-2\beta$)}. \label{pro-trans}
  \end{numcases}
\end{thm}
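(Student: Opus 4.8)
\textbf{Plan of proof for Theorem~\ref{thmpro}.}
The idea is to build $\psi_1$ and $\psi_2$ on larger and larger strips by alternately applying~\eqref{psi1-ext} and~\eqref{psi2-ext}, using that consecutive translates of $\wD$ by $2\beta$ overlap (Lemma~\ref{lemrev}). First I would record the three basic identities that hold on the interior of $\wD$ where both $\psi_i$ are already defined: the functional equation~\eqref{eq:func-eq-universal-cov}, which is~\eqref{equa}; the invariance $\psi_2(\wxi\omega)=\psi_2(\omega)$ on $\wD_1$, which is immediate from $x(\xi s)=x(s)$ and the definition $\psi_2(\omega)=\varphi_2(x(\lambda\omega))$ together with $\wxi = \lambda^{-1}\xi\lambda$ (and similarly $\psi_1(\weta\omega)=\psi_1(\omega)$ on $\wD_2$ from $y(\eta s)=y(s)$); and then the shift formula~\eqref{pro+trans}, obtained on $\wD_1\cap\wxi\wD_1$ by combining~\eqref{psi1-ext} at $\omega$ and at $\wxi\omega$ with~\eqref{invxi} for $\psi_2$: indeed $\psi_1(\weta\wxi\omega) = -\frac{\wgamma_2}{\wgamma_1}(\wxi\omega)\psi_2(\wxi\omega) = -\frac{\wgamma_2}{\wgamma_1}(\wxi\omega)\psi_2(\omega) = \frac{\wgamma_2}{\wgamma_1}(\wxi\omega)\frac{\wgamma_1}{\wgamma_2}(\omega)\psi_1(\omega)$ using~\eqref{psi1-ext} once more. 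The shift by $-2\beta$ for $\psi_2$ is symmetric.

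Next I would run the continuation induction. By Lemma~\ref{lemrev}, $\C = \bigcup_{n\in\Z}(\weta\wxi)^n\wD$, and $(\weta\wxi)^n\wD \cap (\weta\wxi)^{n+1}\wD$ has nonempty interior (it contains a translate of $\wD_2$ or $\wD_1$, by the statements $\wmI_x^+ + 2\beta \subset \wD_2$ and $\wmI_y^+ - 2\beta \subset \wD_1$ and the fact that $\wD\cap\weta\wD=\wD_2$, $\wD\cap\wxi\wD=\wD_1$). Starting from $\wD$, I define $\psi_1$ on $(\weta\wxi)\wD$ by declaring $\psi_1((\weta\wxi)\omega) := \frac{\wgamma_2}{\wgamma_1}(\wxi\omega)\frac{\wgamma_1}{\wgamma_2}(\omega)\psi_1(\omega)$ for $\omega\in\wD$, and similarly extend $\psi_2$; on the overlap this agrees with the already-constructed values by the shift identities just derived, so the extension is well defined and meromorphic there. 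Iterating in both directions $n\to\pm\infty$ covers $\C$. One must check consistency of the extension on every overlap $(\weta\wxi)^n\wD\cap(\weta\wxi)^m\wD$, which reduces, after applying $(\weta\wxi)^{-n}$, to consistency of finitely many compositions of the $2\beta$-shift formula — automatic since the shift is defined by an \emph{identity} of meromorphic functions, not merely on a subdomain. Once $\psi_1$, $\psi_2$ are globally defined, equations~\eqref{equa}, \eqref{inveta}, \eqref{invxi}, \eqref{pro+trans}, \eqref{pro-trans} hold on the interior of $\wD$ and are identities between meromorphic functions on connected open subsets of $\C$ — in each case the two sides are meromorphic on all of $\C$ (after the global construction) and agree on an open set, so they agree everywhere by the identity theorem. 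For~\eqref{pro+trans}, \eqref{pro-trans} one additionally notes that they are precisely the recurrences used in the construction.

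The main obstacle is the bookkeeping of \emph{which} domain each formula is initially valid on, and verifying that the overlaps in Lemma~\ref{lemrev} are genuinely two-dimensional so that analytic/meromorphic continuation is unambiguous; the geometric input (Lemmas~\ref{lem:Delta1}, \ref{lem:Delta2}, \ref{lem:branches}, \ref{lemrev} and Figures~\ref{fig:universalcovering}--\ref{fig:shift-cases}) does all the real work here. A secondary subtlety is that one must confirm $\wgamma_1$ and $\wgamma_2$ are not identically zero, so the quotients $\wgamma_2/\wgamma_1$ are genuine meromorphic functions on $\C$; this follows since $\gamma_1(x,y)=r_{11}x+r_{21}y$ and $\gamma_2(x,y)=r_{12}x+r_{22}y$ are nonzero bilinear forms and $(x(\lambda\omega),y(\lambda\omega))$ traces the kernel curve, which is not contained in either line (e.g.\ evaluate at the branch points). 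With these points in hand, the five displayed relations follow by the identity theorem as described, completing the proof.
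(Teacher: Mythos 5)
Your proposal follows essentially the same route as the paper: establish \eqref{equa}--\eqref{invxi} on $\wD$ (resp.\ $\wD_1$, $\wD_2$), derive the shift relation \eqref{pro+trans} by the same four-step chain, then propagate $\psi_1,\psi_2$ to all of $\C$ along the translates $(\weta\wxi)^n\wD$ of Lemma~\ref{lemrev}, checking agreement on the overlaps and concluding by uniqueness of meromorphic continuation. The only corrections are bookkeeping of the kind you yourself flag: \eqref{pro+trans} is initially valid on $\wD\cap\wxi\weta\wD$ (so that $\omega+2\beta\in\wD$), not on $\wD_1\cap\wxi\wD_1$, and the first equality of your chain also invokes \eqref{inveta} (invariance of $\psi_1$ under $\weta$), not only \eqref{invxi}.
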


Note that~\eqref{pro+trans} can be rewritten as
\begin{align}
  \psi_1(\omega+ 2\beta)& = \frac{\gamma_2(x(e^{-i\omega}), y(e^{-i\omega}))}{\gamma_1(x(e^{-i\omega}), y(e^{-i\omega}))}
                          \frac{\gamma_1(x(e^{i\omega}), y(e^{i\omega}))}{\gamma_2(x(e^{i\omega}), y(e^{i\omega}))}  \psi_1(\omega),\nonumber
  \\
                        & = E(e^{{i\omega}}) \psi_1(\omega),\label{prop-psi1-formula}
\end{align}
where $E(s)$ is defined by~\eqref{eq:definition_E_V0}. This is the formula announced in Proposition~\ref{prop:psi1}. 

\begin{proof}[Proof of Theorem~\ref{thmpro}]
  We have constructed $\psi_1$ and $\psi_2$  meromorphically inside
  $\wD$. Our first task will be to prove the identities~\eqref{equa}--\eqref{pro-trans} where they are well defined. We have already seen  that the functional equation~\eqref{equa} holds in $\wD$, by construction of $\psi_1$ and $\psi_2$ (see~\eqref{psi1-ext} and~\eqref{psi2-ext}).
 
  Let us now prove that the invariance formula~\eqref{inveta} holds where it is well defined, that is, for $\omega \in\wD \cap \weta\wD =\wD_2$ (Lemma~\ref{lemrev}).   For $\omega \in \wD_2$, the value $\psi_1(\omega)$ only depends on $y(\lambda \omega)$ (see~\eqref{psi1}), and $y\circ \lambda$ is invariant by $\weta$. Hence~\eqref{inveta} holds. In the same way, we prove that~\eqref{invxi} holds for  $\omega \in\wD \cap \wxi\wD = { \wD}_1$.

  Let us now  establish the translation formula~\eqref{pro+trans} where it is well defined, that is,   for $\omega \in\wD\cap \wxi\weta\wD = \wD\cap (\wD -2 \beta)$. This is the area located between the branches $\wmI_x^+$ and $\wmI_y^+-2\beta$ (see Figures~\ref{fig:shiftdelta} and~\ref{fig:shift-cases}). In particular, $\omega\in \wD_1$ (Lemma~\ref{lemrev}), hence $\wxi\omega\in {\wxi \wD_1} = \wD_1$ as well. We have:
  \begin{align*}
    \psi_1(\weta\wxi\omega) &= \psi_1(\wxi\omega) & \text{by~\eqref{inveta}, given that } \weta\wxi\omega \in \wD \text{ and } \wxi \omega \in  \wD,\\
                            &=- \frac{\wgamma_2(\wxi\omega)}{\wgamma_1(\wxi\omega)} \psi_2 ({\wxi}\omega)& \text{by~\eqref{equa}, given that } \wxi \omega \in  \wD,\\
                            &=- \frac{\wgamma_2(\wxi\omega)}{\wgamma_1(\wxi\omega)} \psi_2 (\omega)& \text{by~\eqref{invxi}, given that } \wxi \omega \in  \wD\text{ and } \omega \in  \Delta,\\
                            &=\frac{\wgamma_2(\wxi\omega)}{\wgamma_1(\wxi\omega)} 
                              \frac{\wgamma_1(\omega)}{\wgamma_2(\omega)} \psi_1 (\omega) & \text{by~\eqref{equa}, given that } \omega \in  \wD.
  \end{align*}
  This completes the proof of~\eqref{pro+trans}.  We prove~\eqref{pro-trans} in a similar fashion for $\omega \in\wD\cap \weta\wxi\wD$.
  
  \medskip
  We will now continue  $\psi_1$ meromorphically on successive translates of $\wD$ by multiples of $\pm 2\beta$, using the translation formulas~\eqref{pro+trans} and~\eqref{pro-trans}. Let us first define $\psi_1$ on $\weta{\wxi} \wD= \wD +2\beta$. 
  For $\omega \in   \wD $   we set 
  \[
    \psi_1(\omega+2\beta) =\psi_1(\weta{\wxi}\omega) 
    :=
    \frac{\wgamma_2(\wxi\omega)}{\wgamma_1(\wxi\omega)} 
    \frac{\wgamma_1(\omega)}{\wgamma_2(\omega)} \psi_1 (\omega).
  \]
   Since~\eqref{pro+trans} holds on $\wD\cap \wxi \weta\wD$,  this is consistent with the already defined  values of $\psi_1$.  We thus obtained a  meromorphic extension of $\psi_1$ on $\wD\cup \weta\wxi\wD$ (recall that $\wD\cap \weta{\wxi}\wD=\wD \cap (\wD   +2\beta)$ has a non-empty interior).
  Let us now define $\psi_1$ on ${\wxi}\weta \wD=\wD-2\beta$.  For 
  $\omega \in  \wD-2\beta$, we set:
  \[
    \psi_1 (\omega):=
    \frac{\wgamma_1(\wxi\omega)}{\wgamma_2(\wxi     \omega)} 
    \frac{\wgamma_2(\omega)}{\wgamma _1(\omega)} 
    \psi_1(\weta{\wxi}\omega) .
  \]
  Again, the fact that~\eqref{pro+trans} holds on $\wD\cap \wxi\weta\wD$ guarantees that we have indeed an   extension of~$\psi_1$.

  With the same translation procedure we now propagate the construction of $\psi_1$ to 
  \[
    ({\wxi}\weta)^n \wD \cup\cdots \cup{\wxi}\weta \wD \cup \wD \cup \weta{\wxi} \wD \cup\cdots \cup (\weta {\wxi   })^n \wD
    =
    \underset{k\in \llbracket  -n, n  \rrbracket}{\bigcup} (\wD    +2k\beta)
  \]
  for all $n\in\mathbb{N}$.
  Lemma~\ref{lemrev} guarantees that we finally cover the whole of $\C$.

  We continue $\psi_2$ meromorphically to $\C$ using a similar procedure, based now on~\eqref{pro-trans}.
  The principle of analytic/meromorphic continuation implies that the equations~\eqref{equa}--\eqref{invxi}  are satisfied on the whole of~$\C$. 
\end{proof}

In Section~\ref{subsec:funcequ}, we defined a lifting of $\phi_1$ to the $s$-plane by $\widetilde{\phi}_{1}(s):=\phi_1(y(s))$.  first in the set $\Delta_2:=\{s: \Re y(s)\le 0\}$.
\begin{cor}
  \label{cor:meromorphy_phi_1_tilde}
  The function $\widetilde{\phi}_{1}$  may be continued meromorphically on the slit plane $\C\setminus  e^{i\beta} \RR_+$.
\end{cor}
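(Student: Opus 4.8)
The plan is to deduce Corollary~\ref{cor:meromorphy_phi_1_tilde} from Theorem~\ref{thmpro} by composing the meromorphic function $\psi_1$ on $\C$ with a suitable branch of the inverse of the covering map $\lambda(\omega)=e^{i\omega}$. Recall that $\widetilde{\phi}_1(s):=\phi_1(y(s))$ is initially defined, say, on the set $\Delta_2=\{s:\Re y(s)\le 0\}$, and that on a neighbourhood of the strip $\{\Re\omega=\pi+\beta\}$ we have $\psi_1(\omega)=\phi_1(y(e^{i\omega}))=\widetilde{\phi}_1(e^{i\omega})$. So, formally, $\widetilde{\phi}_1(s)=\psi_1(-i\log s)$ for an appropriate determination of the logarithm. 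The only obstruction to globalizing this identity is the multivaluedness of $\log$: we must choose the branch cut so that (i) $\widetilde{\phi}_1$ is well defined on $\C\setminus e^{i\beta}\RR_+$, and (ii) the definition does not depend on which sheet of the logarithm is used, on the overlap with the original domain.

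First I would fix the branch: cutting the $s$-plane along the ray $e^{i\beta}\RR_+$ corresponds, via $\omega=-i\log s$, to working in the open strip $S_\beta:=\{\omega\in\C:\beta-2\pi<\Re\omega<\beta\}$, since $\lambda$ maps this strip biholomorphically onto $\C\setminus e^{i\beta}\RR_+$ (this is exactly the ``prescribed vertical strip of width $2\pi$'' mentioned in Section~\ref{lifted-aut}, up to a translation, and it is the strip containing $\hat s_1^-=\pi$ and $\hat s_0=\pi+\theta$ — so in particular it contains the set $\widehat{\Delta}_2\cap S_\beta$ where the original $\widetilde{\phi}_1$ lives, once we recentre). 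Let $\ell_\beta:\C\setminus e^{i\beta}\RR_+\to S_\beta$ be the inverse of $\lambda|_{S_\beta}$; it is holomorphic. Then I would simply \emph{define} the continuation of $\widetilde{\phi}_1$ by
\[
\widetilde{\phi}_1(s):=\psi_1(\ell_\beta(s)),\qquad s\in\C\setminus e^{i\beta}\RR_+.
\]
By Theorem~\ref{thmpro}, $\psi_1$ is meromorphic on all of $\C$, hence on $S_\beta$, and $\ell_\beta$ is holomorphic, so the composite is meromorphic on $\C\setminus e^{i\beta}\RR_+$.

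It then remains to check that this genuinely extends the original $\widetilde{\phi}_1$, i.e.\ that the two functions agree on (an open subset of) their common domain of definition. On the part of $\Delta_2$ that lies in the image $\lambda(S_\beta)=\C\setminus e^{i\beta}\RR_+$, we have by construction $\psi_1(\omega)=\widetilde{\phi}_1(\lambda(\omega))=\phi_1(y(\lambda(\omega)))$ for $\omega$ in the appropriate connected piece of $\widehat{\Delta}_2\cap S_\beta$ (this is how $\psi_1$ was first defined in~\eqref{psi1}, and the recentring of the strip from $[\beta,\beta+2\pi)$ to $(\beta-2\pi,\beta]$ is harmless since $\psi_1$ is globally defined on $\C$ and the functional identities~\eqref{equa}--\eqref{pro-trans} are valid everywhere). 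Applying $\ell_\beta$ and using $\lambda\circ\ell_\beta=\mathrm{id}$ gives $\psi_1(\ell_\beta(s))=\widetilde{\phi}_1(s)$ there, so the new definition agrees with the old one on a nonempty open set; by the identity principle for meromorphic functions it agrees wherever both are defined. This completes the argument.

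The only genuinely delicate point — and the one I would be most careful about — is the bookkeeping of \emph{which} vertical strip of width $2\pi$ to use and of the orientation/recentring conventions: one has to make sure that the strip $S_\beta$ corresponding to the slit $e^{i\beta}\RR_+$ really does contain the connected component of $\widehat{\Delta}_2$ on which $\psi_1$ was originally constructed from $\phi_1\circ y$, so that the comparison on the overlap is with the \emph{same} branch and not a $2\pi$-shifted one. This is purely a matter of tracking the explicit images under $\lambda$ recorded in Section~\ref{lifted-aut} (e.g.\ $\lambda(\pi+\beta)=-e^{i\beta}=s_2^-\in\Delta_2$, so $\pi+\beta\in S_\beta$ and the strip is correctly placed); no new estimates are needed. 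Everything else is formal once Theorem~\ref{thmpro} is in hand.
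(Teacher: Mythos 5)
Your strategy is the same as the paper's: once Theorem~\ref{thmpro} gives the meromorphic continuation of $\psi_1$ to $\C$, set $\widetilde{\phi}_1(s)=\psi_1(-i\log s)$ for a determination of $\log$ cut along $e^{i\beta}\RR_+$, and check agreement with the original function on an open set. The gap is precisely in the point you flag as delicate and then get wrong: the placement of the strip. Since $\beta\in(0,\pi)$, the strip $S_\beta=\{\beta-2\pi<\Re\omega<\beta\}$ contains \emph{none} of the points $\pi$, $\pi+\theta$, $\pi+\beta$, so both of your membership checks (``$\pi\in S_\beta$'', ``$\pi+\beta\in S_\beta$'') are false. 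The lift of $\Delta_2$ on which $\psi_1$ was actually defined by~\eqref{psi1} is $\wD_2\subset\{\beta\le\Re\omega<\beta+2\pi\}$ (see~\eqref{Delta2-def} and Lemma~\ref{lem:Delta2}), i.e.\ it sits in the $2\pi$-translate of your strip. Consequently, for $s\in\Delta_2$ your point $\ell_\beta(s)$ equals $\omega-2\pi$ with $\omega\in\wD_2$, and your candidate continuation is $\psi_1(\omega-2\pi)$ rather than $\psi_1(\omega)=\widetilde{\phi}_1(s)$.

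The justification that the recentring is ``harmless because $\psi_1$ is globally defined'' is exactly where the argument breaks: $\psi_1$ is \emph{not} $2\pi$-periodic. Its symmetries are the $\weta$-invariance~\eqref{inveta} and the $2\beta$-shift relation~\eqref{pro+trans}, and when $\beta/\pi\notin\Q$ a translation by $2\pi$ is not generated by these; the whole reason a slit is needed is that the two sides of the cut need not match. Concretely, when $\alpha_1=\alpha_2=0$ one has $\phi_1(y)=\phi_1(0)/\sqrt{1-y/y^+}$ by~\eqref{phi1-very-simple}, and since $y^+-y(s)$ is a constant times $(s-e^{i\beta})^2/s$, the lift satisfies $\psi_1(\omega+2\pi)=-\psi_1(\omega)$; with your strip the ``extension'' would equal $-\widetilde{\phi}_1$ on $\Delta_2$, so the identity-principle step fails rather than being a formality. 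The repair is one line and reproduces the paper's proof: take the determination of $\log$ on $\C\setminus e^{i\beta}\RR_+$ normalized by $\log(e^{i\omega})=i\omega$ on the line $\Re\omega=\pi+\beta$ (equivalently, use the strip $\{\beta<\Re\omega<\beta+2\pi\}$); since $\wD_2$ contains a neighbourhood of that line, the identity $\widetilde{\phi}_1(s)=\psi_1(-i\log s)$ holds near $\arg s=\pi+\beta$, and this formula then continues $\widetilde{\phi}_1$ meromorphically to the slit plane.
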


\begin{proof}
  Let us observe  from~\eqref{psi1}  that $\widetilde{\phi}_{1} (e^{i\omega})=\psi_1(\omega)$ for $\omega\in \wD_2$. The domain $\wD_2$ contains in particular a neighbourhood of $\pi+\beta +i\RR$.  Let $\log$ be the determination of the complex logarithm in the slit plane $\C\setminus e^{i\beta}\,\RR_+$ 
  that satisfies $\log(e^{i\omega})=i\omega$ when $\Re \omega= \pi+\beta$.
  In a neighbourhood of $\arg s=\pi+\beta$, we now have
  \[
    \widetilde{\phi}_{1} (s)=\psi_1(- i \log s).
  \] 
  But    Theorem~\ref{thmpro} states that $\psi_1$ can be continued meromorphically to $\mathbb{C}$.  Then the above formula   allows us to continue $\widetilde{\phi}_{1}$  meromorphically to $\C\setminus e^{i\beta}\,\RR_+$.
\end{proof}

\section{The case $\alpha_1=\alpha_2=0$: the inverse Laplace transform (arXiv version only)}
\label{app:alg-inverse}

In this section we re-prove Proposition~\ref{prop:specialcase}, which gives the expression of the density of the stationary distribution of the SRBM when $\alpha_1=\alpha_2=0$, this time by inverting the Lapace transform $\phi(x,y)$.

\medskip\paragraph{{\bf Expression of $\phi(x,y)$.}} Our first task is to establish the expression~\eqref{phi-alg} for the bivariate transform $\phi(x,y)$. It is obtained by combining:
\begin{itemize}
\item the functional equation~\eqref{eq:functional_equation}, which expresses $\phi$ in terms of $\gamma_1 \phi_1 + \gamma_2 \phi_2$,
\item the values of $\phi_1(y)$ and $\phi_2(x)$ that we have obtained in~\eqref{phi1-very-simple} and~\eqref{phi2-very-simple},
\item the expressions~\eqref{masses} and~\eqref{gamma_i:normal} of $\phi_i(0)$ and $\gamma_i(x,y)$, for $i=1,2$,
\item the angle relations~\eqref{angles-00}, which will be used repeatedly in our calculations below. 
\end{itemize}
We work with the normal variables $\xn$ and $\yn$, observing that $\tx:=\sqrt{1-x/x^+}=\sqrt{1-\xn/\xn^+}$ and analogously for $\ty $. The kernel $\gamma(x,y)$, written  in terms of $\tx$ and $\ty $,  factors as:
\[
  \gamma(x,y)= \kappa\, D(\tx ,\ty ) \widetilde D(\tx, \ty ),
\]
where $\kappa$ does not involve $\tx$ and $\ty $, 
\[
  D(\tx,\ty ):=\tx^2\sin^2\delta+\ty ^2\sin^2\vareps-2\tx \ty \sin\delta\sin \vareps \cos(\delta+\vareps)-\sin^2(\delta+\vareps),
\]
and $\widetilde D(\tx, \ty )$ is obtained by changing the sign of the term in $\tx \ty $ in the above expression. Then the factor $\widetilde D(\tx, \ty )$ is seen  to simplify with the numerator of $\gamma_1 \phi_1 + \gamma_2 \phi_2$. The calculations can be done by hand, but we have used {\sc Maple} to be on the safe side, and to share our session with the readers; it is available on the first author's \href{https://www.labri.fr/perso/bousquet/publis.html}{webpage}~\cite{mbm-web}.

\medskip
\paragraph{{\bf Domain of analyticity.}} Clearly, $\phi(x,y)$ is meromorphic away from the cuts $x\in [x^+, +\infty)$ and $y\in [y^+, +\infty)$, but we need  to determine where it can be defined analytically. 
Observe that the denominator $D(\tx,\ty )$ satisfies $D(1,1)=-4\sin\delta \sin \vareps \cos(\delta+\vareps)>0$ (because of the angle conditions~\eqref{conds-new}), so that  $\phi$ is analytic in a neighborhood of $(x,y)=(0,0)$ (corresponding to $(\tx,\ty )=(1,1)$).  Let us first focus on real values of  $(x,y)$, and examine the domain
\beq\label{domain-def}
\mathcal D:=  \left\{(x,y )\in (-\infty, x^+) \times (-\infty, y^+): D\left(\sqrt{1-x/x^+}, \sqrt{1-y/y^+}\right) >0\right\},
\eeq
in which $\phi(x,y)$ can  be defined as a real analytic
function. Studying $\mathcal D$  boils down to studying (now in
variables $\tx$ and $\ty $) the intersection $\mathcal E$
of the quadrant $\RR_+^2$ with the exterior of the ellipse $D(\tx,\ty )=0$. This ellipse admits a rational parametrization (in fact inherited from the parametrization~\eqref{eq:uniformization} of $\gamma(x,y)=0$):
\beq\label{D-param}
\tx \sin \delta= \frac 1 2 \left(v+ \frac 1 v\right), \qquad
\ty  \sin \vareps =  \frac 1 2 \left( \frac v{e^{i(\delta+\vareps)}} +\frac{e^{i(\delta+\vareps)}}v\right).
\eeq
The real points are obtained when $v=e^{i\om}$ is on the unit circle, so that
\[
  \tx \sin \delta= \cos \om, \qquad  \ty  \sin \vareps = \cos(\delta+\vareps-\om).
\] 
The set $\mathcal E$, and its image $\mathcal D$ by the map $(\tx, \ty )\mapsto( x^+(1-\tx^2),y^+(1-\ty ^2))$, are shown in Figure ~\ref{fig:domain} for some specific values of the angles $\delta$ and $\vareps$ (to be more precise, the second figure uses normal variables and shows the image of $\mathcal E$ by $(\tx, \ty )\mapsto( \xn^+(1-\tx^2),\yn^+(1-\ty ^2))$).

\begin{figure}[htb]
  \centering
  \includegraphics[height=40mm]{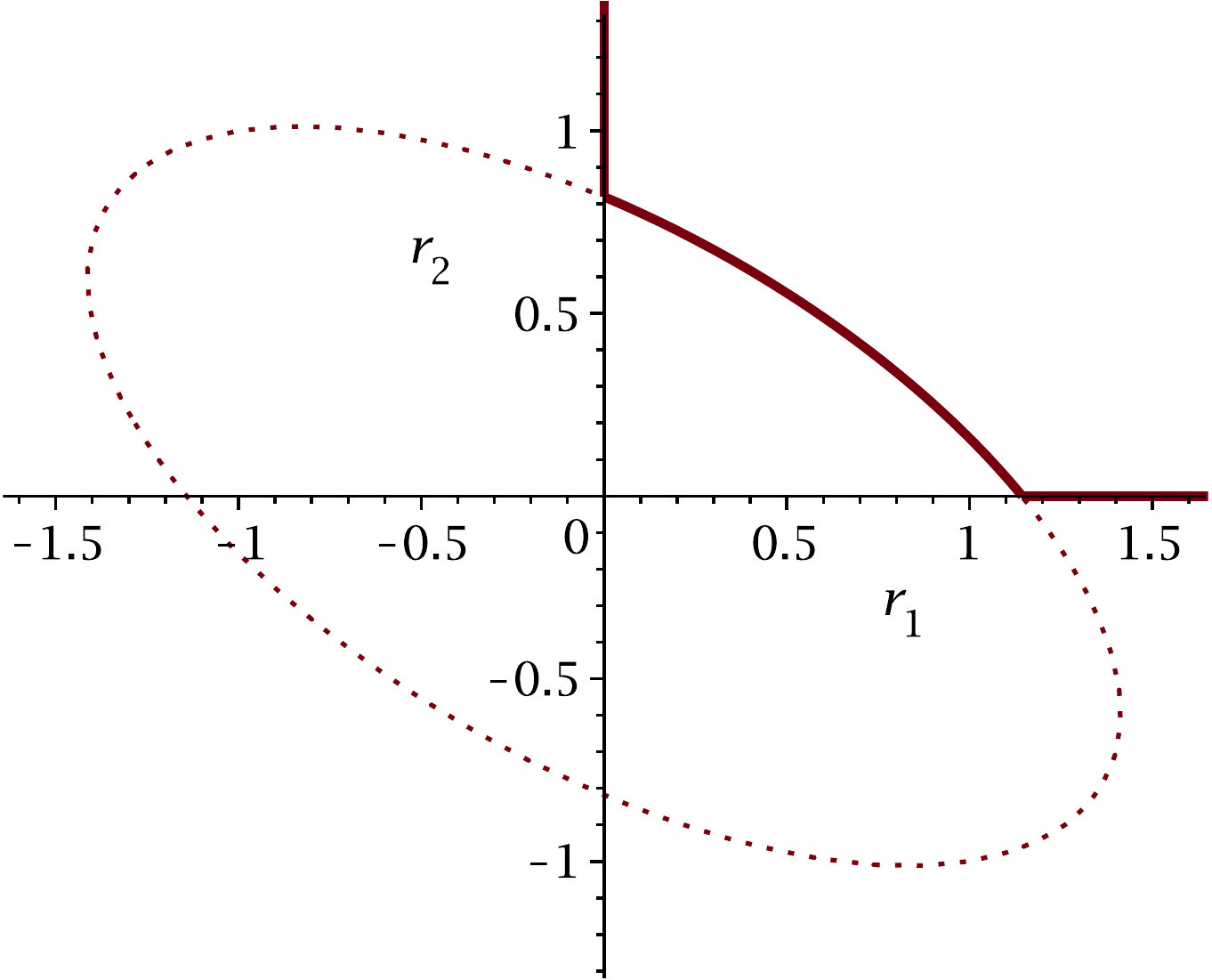}   \hskip 22mm
  \includegraphics[height=40mm]{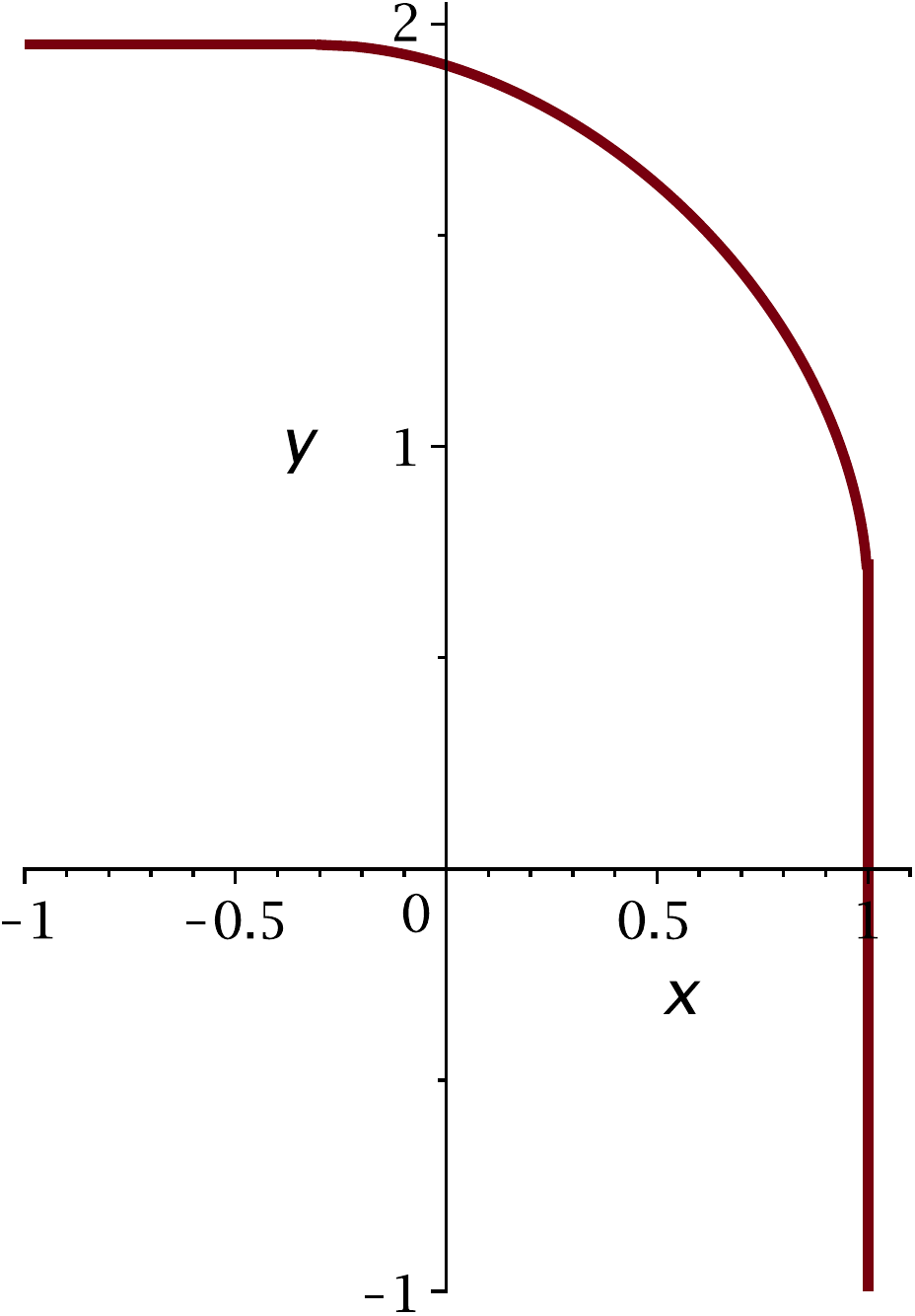}
  \caption{Left: above the thick line, the domain $\mathcal E= \{(\tx, \ty ) \in \RR_+^2: D(\tx,\ty )>0\}$, for $\delta=3\pi/4$ and $\vareps=11\pi/20$. The image of this domain by $(\tx, \ty ) \mapsto (\xn^+(1-\tx^2), \yn^+(1-\ty ^2))$ is shown on the right,  below the thick curve.}
  \label{fig:domain}
\end{figure}

\begin{lem}\label{lem:domain}
  The Laplace transform $\phi(x,y)$ is real analytic on the domain $\mathcal D$  defined by~\eqref{domain-def}, and  analytic on
  \[
    \mathcal D ^+:=
    \{(x,y) \in \C^2: (\Re(x) , \Re(y) ) \in   \mathcal D \}.
  \]
\end{lem}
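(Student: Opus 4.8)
\textbf{Plan of proof for Lemma~\ref{lem:domain}.}

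The plan is to start from the explicit closed form~\eqref{phi-alg} for $\phi(x,y)$, written in terms of $\tx=\sqrt{1-x/x^+}$ and $\ty=\sqrt{1-y/y^+}$, and to track exactly where the numerator and the denominator can vanish or become singular. First I would recall that $\tx$ (resp.\ $\ty$) is well defined and analytic as long as $x\notin[x^+,\infty)$ (resp.\ $y\notin[y^+,\infty)$), with the principal branch of the square root, and maps $(-\infty,x^+)$ bijectively onto $(0,\infty)$. Consequently $\phi(x,y)$, being a rational function of $\tx$, $\ty$, $x$ and $y$, is a priori meromorphic on $(\C\setminus[x^+,\infty))\times(\C\setminus[y^+,\infty))$; the only possible singularities come from the factors $\tx$, $\ty$ and $\tx^2\sin^2\delta+\ty^2\sin^2\varepsilon-2\tx\ty\sin\delta\sin\varepsilon\cos(\delta+\varepsilon)-\sin^2(\delta+\varepsilon)=D(\tx,\ty)$ in the denominator. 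The factor $\tx\ty$ is harmless: it vanishes only at $x=x^+$ or $y=y^+$, i.e.\ on the cut, and in any case the numerator $\tx+\ty$ and the quadratic factor do not simultaneously vanish there, so no pole of $\phi$ lies inside $(\C\setminus[x^+,\infty))\times(\C\setminus[y^+,\infty))$ from that factor (and one can even check the apparent singularity at $\tx=0$ or $\ty=0$ is removable, but this is not needed). Thus the whole question reduces to locating the zero set of $D(\tx,\ty)$.

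Next I would restrict to real arguments and analyze the real domain $\mathcal D$ of~\eqref{domain-def}. In the variables $(\tx,\ty)\in\RR_+^2$ the condition $D(\tx,\ty)>0$ describes the exterior of the ellipse $\{D=0\}$ intersected with the open first quadrant, which is exactly the set $\mathcal E$ depicted in Figure~\ref{fig:domain}; the rational parametrization~\eqref{D-param} shows $\{D=0\}\cap\RR_+^2$ is a genuine arc of ellipse, so $\mathcal E$ is an open connected set containing $(1,1)$ (since $D(1,1)=-4\sin\delta\sin\varepsilon\cos(\delta+\varepsilon)>0$ by~\eqref{conds-new}). Pulling back through $(\tx,\ty)\mapsto(x^+(1-\tx^2),y^+(1-\ty^2))$, which is a real-analytic diffeomorphism from $\RR_+^2$ onto $(-\infty,x^+)\times(-\infty,y^+)$, I obtain that $\mathcal D$ is open, connected, contains the origin, and that on $\mathcal D$ the denominator $\tx\ty\, D(\tx,\ty)$ of~\eqref{phi-alg} never vanishes. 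Hence $\phi(x,y)$ is real analytic on $\mathcal D$.

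Finally I would upgrade from real to complex arguments. For $(x,y)\in\mathcal D^+$, i.e.\ $(\Re x,\Re y)\in\mathcal D$, the point $(\Re x,\Re y)$ has a neighbourhood in $\mathcal D$ on which $D(\tx,\ty)\ne 0$; since $D(\tx,\ty)$ is a continuous (indeed holomorphic) function of $(x,y)$ on $(\C\setminus[x^+,\infty))\times(\C\setminus[y^+,\infty))$, and $\mathcal D^+$ can be covered by polydiscs centred at such real points, a continuity/open-mapping argument shows $D(\tx,\ty)$ stays nonzero on a full complex neighbourhood of each such point — more precisely, one checks directly that $\mathcal D^+$ is contained in the connected component of $\{(x,y):\tx\ty\,D(\tx,\ty)\ne0\}$ containing the origin, by noting that any $(x,y)\in\mathcal D^+$ is joined to $(\Re x,\Re y)$ by the straight segment $t\mapsto(\Re x+it\Im x,\Re y+it\Im y)$, $t\in[0,1]$, along which one can argue $D\ne0$ using that the vanishing of $D$ is a real-codimension-two phenomenon. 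I expect the main obstacle to be precisely this last point: verifying that the vertical segment from a complex point of $\mathcal D^+$ down to its real part avoids the zero set of $D(\tx,\ty)$, equivalently that no nonreal zero of $D$ projects into $\mathcal D$. This should follow by writing $D(\tx,\ty)=0$ and separating real and imaginary parts, or by invoking the parametrization~\eqref{D-param} (nonreal zeros correspond to $v$ off the unit circle, whose image has $(\Re x,\Re y)$ outside $\mathcal D$), but it requires a short dedicated computation rather than a one-line argument.
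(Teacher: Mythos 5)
Your first two steps (meromorphy of $\phi$ off the cuts and real analyticity on $\mathcal D$, using $D(1,1)>0$ and the pullback through $(\tx,\ty)\mapsto(x^+(1-\tx^2),y^+(1-\ty^2))$) are fine and agree with the paper. The genuine gap is the third step, which is the actual content of the lemma: you never prove that $D(\tx,\ty)\neq 0$ on $\mathcal D^+$, you only list two possible strategies and explicitly defer the "short dedicated computation". Moreover, the strategy you lean on is not a valid argument: the zero set of $(x,y)\mapsto D(\tx,\ty)$ is a complex hypersurface, hence of real codimension two in $\C^2\cong\RR^4$, and a real-codimension-two set can perfectly well meet a prescribed one-dimensional segment; "genericity" proves nothing here, and showing that the vertical segments from points of $\mathcal D^+$ to their real parts avoid this zero set is exactly equivalent to the statement you are trying to prove. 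Likewise, your appeal to the parametrization~\eqref{D-param} only describes the zero locus of $D$ in the $(\tx,\ty)$-plane; for a genuinely complex pair $(x,y)$ one still has to relate $(\Re x,\Re y)$ to the complex values $\tx,\ty$ and to the parameter $v$, which is not done.

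The paper closes this gap by a direct real-part estimate, which is the ingredient your proposal is missing. Writing $1-x/x^+=u_1^2+iv_1$ and $1-y/y^+=u_2^2+iv_2$ with $u_1=\sqrt{1-\Re(x)/x^+}$, $u_2=\sqrt{1-\Re(y)/y^+}$ and $v_1,v_2\in\RR$, one has
\[
\Re\, D(\tx,\ty)\;\ge\; u_1^2\sin^2\delta+u_2^2\sin^2\vareps-2u_1u_2\sin\delta\sin\vareps\cos(\delta+\vareps)-\sin^2(\delta+\vareps)\;=\;D(u_1,u_2)\;>\;0,
\]
where the inequality uses two facts: $\Re\bigl(\sqrt{u_1^2+iv_1}\,\sqrt{u_2^2+iv_2}\bigr)\ge u_1u_2$ (an elementary consequence of taking principal square roots of numbers with positive real part), and $\cos(\delta+\vareps)<0$, which follows from the angle constraints~\eqref{conds-new} in the case $\alpha_1=\alpha_2=0$. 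This reduces the non-vanishing of the denominator at a complex point of $\mathcal D^+$ to its positivity at the real projection $(\Re x,\Re y)\in\mathcal D$, and the lemma follows. Without this (or some equivalent) quantitative argument, your proof of the complex statement is incomplete.
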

\begin{proof}
  The part of the statement dealing with real values of $x$ and $y$ follows from the definition of $\mathcal D$. It remains to prove that $D(\sqrt{1-x/x^+}, \sqrt{1-y/y^+})$ does not vanish on $\mathcal D^+$. So assume that  $\Re(x)=a$ and $\Re(y)=b$, with  $(a,b)\in \mathcal D$. Write
  \[
    {1-x/x^+}  =u_1^2+iv_1,  \qquad  {1-y/y^+}=u_2^2+iv_2,
  \]
  where $u_1=\sqrt{1-a/x^+}$,  $u_2=\sqrt{1-b/y^+}$ and $v_1,v_2$ are real. Then
  \begin{multline*}
    D(\sqrt{1-x/x^+}, \sqrt{1-y/y^+})=\\
    ( u_1^2+iv_1)\sin^2\delta +(u_2^2+iv_2)\sin^2\vareps -2 \sqrt{u_1^2+iv_1}\sqrt{u_2^2+iv_2}\sin \delta \sin \vareps \cos(\delta+\vareps) -\sin^2(\delta+\vareps),
  \end{multline*}
  and has real part
  \begin{multline*}
    u_1^2\sin^2\delta +u_2^2\sin^2\vareps -2 \Re\left(\sqrt{u_1^2+iv_1}\sqrt{u_2^2+iv_2}\right)\sin \delta \sin \vareps \cos(\delta+\vareps) -\sin^2(\delta+\vareps) \\
    \ge u_1^2\sin^2\delta +u_2^2\sin^2\vareps -2 u_1u_2\sin \delta \sin \vareps \cos(\delta+\vareps) -\sin^2(\delta+\vareps)= D(u_1,u_2)>0.
  \end{multline*}
  The lower bound follows from the fact that   $ \cos(\delta+\vareps)<0$ and  $\Re\left(\sqrt{u_1^2+iv_1}\sqrt{u_2^2+iv_2}\right) \ge u_1 u_2$.
  Hence the denominator of $\phi(x,y)$ does not vanish on $\mathcal D^+$, and the lemma is proved.    
\end{proof}

\paragraph{\bf{Computation of the inverse Laplace transform by residues.}}
Let us now go back to the inverse Laplace transform. We have
\[
  p_0(z_1,z_2)= \lim_{T\rightarrow \infty}
  -  \frac 1{4\pi^2} \int_{a-iT}^{a+iT} \int_{b-iT}^{b+iT} e^{-xz_1
    -yz_2} \phi(x,y)\, dx \, dy,
\]
for $(a,b)$ in the domain $\mathcal D$.
We will  lighten notation by writing our integrals on full lines $\Re(x)=a$, $\Re(y)=b$, but this should be understood as a shortcut for the above limit. Also, in order to work with the normal variables $\xn$ and $\yn$ defined by~\eqref{xy:normal}, we consider instead the function $\widetilde p_0(z_1, z_2)$ already introduced in~\eqref{tilde-p0}:
\[ 
  \widetilde p_0(z_1, z_2)=p_0\left( \frac{\det \Sigma}{\sqrt{\Delta \sigma_{22}}} \, z_1, \frac{\det \Sigma}{\sqrt{\Delta \sigma_{11}}} \, z_2\right)
  = -   \frac{\Delta\sqrt{\sigma_{11}\sigma_{22}}}{4\pi^2 \det^2 \Sigma} \int_{\Re(\xn)=\an} \int_{\Re(\yn)=\bn} e^{-\xn  z_1 -\yn z_2} \phi_n(\xn,\yn)\, d\xn \, d\yn,
\]
where
\[
  \phi_n(\xn,\yn):=\phi(x,y)= \kappa_0 \frac {\tx+r_2}{\tx r_2D(\tx,r_2)} ,
\]
with $\tx=     \sqrt{1-\xn/\xn^+}$ and  $r_2=     \sqrt{1-\yn/\yn^+}$.  In sight of~\eqref{param:normal} and~\eqref{angles-00}, it is natural to choose
\[
  \an= {\cos \theta}   = -{\cos(2\delta)},   \qquad
  \bn= {\cos (\beta-\theta)}  =-{\cos(2\vareps)}, 
\]
but we need to check that $(\an,\bn)$ is in the domain of analyticity
of $\phi_n$.  By comparison with~\eqref{xyn-pm}, we see that  $\an<\xn^+$ and $\bn<\yn^+$, so that we are away from the cuts. Moreover, the numbers
\[
  \tx^{(0)}:=\sqrt{1-\an/\xn^+} = \frac 1 {\sqrt 2 \sin \delta} \quad \text{and } \quad 
  \ty^{(0)}:=\sqrt{1-\bn/\yn^+} = \frac 1 {\sqrt 2 \sin \vareps}
\]
satisfy $D(\tx^{(0)}, \ty^{(0)})=-\cos(\delta+\vareps)(1-\cos(\delta+\vareps)) >0$ (because of~\eqref{conds-new}). Hence the integration contour is, as it should,  in the domain of analyticity of $\phi_n$ (see Lemma~\ref{lem:domain}).

Let us now parametrize the lines $\Re(\xn)=\an$ and $\Re(\yn)=\bn$ by $\xn=\xn(iu_1)$ and $\yn=\yn(ie^{i\beta}u_2)$, where $\xn(s)$ and $\yn(s)$ are defined by~\eqref{param:normal} and  $u_1$ and $u_2$ range from $0$ to $+\infty$ (more precisely, from $1/T$ to $T$, with $T\rightarrow \infty$). That is,
\[
  \xn= \frac 1 2 \left( 2\cos \theta +i u_1 - \frac i {u_1}\right),
  \qquad
  \yn= \frac 1 2 \left( 2\cos (\beta-\theta) +i u_2 - \frac i {u_2}\right).
\]
Then, using~\eqref{xyn-pm}, we find 
\[
  1- \xn/\xn^+= \frac i{4\sin ^2\delta} \frac{(1-iu_1)^2 }{u_1},
\]
so that,  the following equation holds (recall that  $\delta \in (0,\pi)$),
\beq\label{r1-u1}
\tx:= \sqrt{1- \xn/\xn^+} = \frac 1{2\sin \delta} \left( e^{-i\pi/4} \sqrt {u_1} + e^{i\pi/4}/\sqrt {u_1}\right).
\eeq
Analogously,
\beq\label{r2-u2}
\ty:= \sqrt{1- \yn/\yn^+} = \frac 1{2\sin \vareps} \left( e^{-i\pi/4} \sqrt {u_2} + e^{i\pi/4}/\sqrt {u_2}\right).
\eeq
With the help  of~\eqref{D-param}, we can now write $D(\tx,\ty )$ in factorized form:
\beq\label{D-fact-u}
D(\tx,\ty )= \frac{(\sqrt{u_1}+\sqrt{u_2}e^{i\beta/2}) (\sqrt{u_1}+\sqrt{u_2}e^{-i\beta/2}) (\sqrt{u_1}\sqrt{u_2}+ie^{i\beta/2}) (\sqrt{u_1}\sqrt{u_2}+ie^{-i\beta/2}) }{4i{u_1}{u_2}},
\eeq
where we recall that $\beta/2=\delta+\vareps-\pi$. At this stage, due to the change of variables $(\xn,\yn ) \mapsto (u_1, u_2)$, we have
\begin{multline}\label{p0-tilde}
  \widetilde p_0(z_1,z_2)= \frac{\Delta \sqrt{\sigma_{11}\sigma_{22}}\kappa _0}{16\pi^2 \det^2 \Sigma}e^{-z_1 \xn^+ -z_2\yn^+} \times \\
  \iint_{\RR_+^2} \frac{e^{z_1 \xn^+ \tx^2+z_2 \yn^+  \ty ^2}}{D(\tx,\ty )} 
  \cdot \frac{\tx+\ty }{\tx \ty }  \left(1+ \frac 1 {u_1^2}\right) \left(1+ \frac 1 {u_2^2}\right) du_1 du_2,
\end{multline}
where $\tx$ and $\ty $ are given by~\eqref{r1-u1} and~\eqref{r2-u2}, and both integrals are in fact from $1/T$ to $T$, with $T\rightarrow \infty$.

\begin{figure}[htb]
  \centering
  \scalebox{0.55}{\input{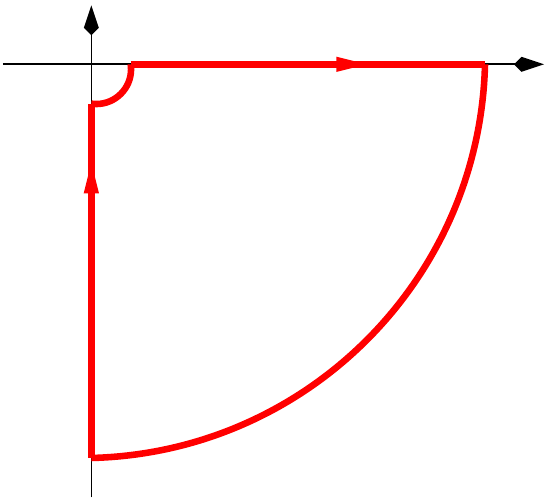_t}}
  \caption{The integration contour (in $u_1$, then $u_2$) used to compute the integral $\widetilde p_0(z_1, z_2)$ by residues.}
  \label{fig:contour}
\end{figure}

The idea is now to deform the integration contour so that $\xn$ surrounds the cut $[\xn^+, +\infty)$, and analogously for $\yn$. This means that $u_1$ (resp.\ $u_2$) should now be of the form $-i v_1$ (resp.\ $-iv_2$), for $v_1, v_2$ in $\RR_+$. We will see that the final deformed integral is zero, but during the deformation we will capture some  residues of $\phi$. Let us fix $u_2$ in $\RR_+$, and begin with the transformation of~$u_1$. More formally, we apply the residue theorem to the contour shown in Figure~\ref{fig:contour}. A standard argument shows that the two arcs do not contribute in the limit $T\rightarrow \infty$, and  we obtain:
\beq\label{int-res}
\int_{0}^\infty \frac{e^{z_1 \xn^+ \tx^2}}{D(\tx,\ty )}
  \cdot \frac{\tx +\ty }{\tx \ty }\left(1+ \frac 1 {u_1^2}\right) du_1
=
-i \int_{0}^\infty \frac{e^{z_1 \xn^+ \tx^2}}{D(\tx,\ty )}
  \cdot \frac{\tx +\ty }{\tx \ty } \left(1- \frac 1 {v_1^2}\right) dv_1 -2i\pi \res(u_2) ,
\eeq
where now, on the right-hand side,
\beq\label{r1-new}
\tx=-\frac i{2\sin \delta} \left( \sqrt {v_1} -1/\sqrt {v_1}\right),
\eeq
and $\res(u_2)$ is the sum of residues corresponding to poles of the integrand (of the first integral) lying in the region shown in Figure~\ref{fig:contour}. Given the factorization~\eqref{D-fact-u}, and the fact that $\beta\in(0, \pi)$, the only possible pole  in this region is obtained for $\sqrt{u_1}=-ie^{i\beta/2}/\sqrt{u_2}$, which is there only  when $\beta \ge \pi/2$.  Let us temporarily assume that $\beta< \pi/2$, so that $\res(u_2)=0$. Let $I$ denote  the double integral in~\eqref{p0-tilde}.   Then it follows from~\eqref{int-res} that
\[
  I= -i \int_0^\infty e^{z_1 \xn^+ \tx^2} \left(1- \frac 1 {v_1^2}\right)\left(
    \int_0^\infty  \frac{e^{z_2 \yn^+ \ty ^2}}{D(\tx,\ty )} \cdot\frac{\tx+\ty }{\tx \ty }\left(1+ \frac 1 {u_2^2}\right)   du_2 \right) dv_1,
\]
where  $\tx$ is given by~\eqref{r1-new} and $\ty $ by~\eqref{r2-u2}. We now proceed with the second deformation, where~$v_1$ is fixed in $\RR_+$ and $u_2$ becomes $-iv_2$. We use again the contour of Figure~\ref{fig:contour}, now in the variable~$u_2$.
The integrand of the inner integral has exactly one pole in the relevant region, obtained for $\sqrt{u_2}=-ie^{i\beta/2}/\sqrt{u_1}=e^{i(\beta/2-\pi/4)}/\sqrt{v_1}$, or $u_2=-ie^{i\beta}/v_1$. The deformed  integral
\[
  \iint_{\RR_+^2}\frac{e^{z_1 \xn^+ \tx^2+z_2 \yn^+ \ty ^2}}{D(\tx,\ty )} 
  \cdot  \frac{\tx+\ty }{\tx \ty }\left(1- \frac 1 {{v_1}^2}\right) \left(1- \frac 1 {{v_2}^2}\right) dv_1 dv_2,
\]
where now
\[
  \ty =-\frac i{2\sin \vareps} \left( \sqrt {v_2} -1/\sqrt {v_2}\right),
\]
is zero, because the transformation $(v_1, v_2)  \mapsto (1/v_1,1/v_2)$ replaces this integral by its opposite. Hence
\beq \label{I-expr2}
I=  -i (-2i\pi) \int_{0}^\infty {e^{z_1 \xn^+ \tx^2}}
\left(1- \frac 1 {v_1^2}\right)
\Res_{u_2} \left(\frac{e^{z_2 \yn^+ \ty ^2}}{D(\tx,\ty )} \cdot  \frac{\tx+\ty }{\tx \ty }   \left(1+ \frac 1 {u_2^2}\right) \right)dv_1,
\eeq
where the residue is taken at  $u_2=u_2^{(0)}:=-ie^{i\beta}/v_1$. In the term
\beq\label{expr-integrand}
\frac{e^{z_2 \yn^+ \ty ^2}}{D(\tx,\ty )} \cdot  \frac{\tx+\ty }{\tx \ty }   \left(1+ \frac 1 {u_2^2}\right),
\eeq
the polar part comes from the denominator $D(\tx,\ty )$, and more precisely from
the term
\[
  \frac 1 {\sqrt{u_1}\sqrt{u_2} + ie^{i\beta/2}}=\frac{\sqrt{u_1}\sqrt{u_2} - ie^{i\beta/2}}{u_1u_2-e^{i\beta}},
\]
with $u_1=-iv_1$ as before, which has residue $2e^{i\beta}/v_1$  at $u_2=u_2^{(0)}$.  The non-singular  parts of~\eqref{expr-integrand} simply need to be evaluated at $u_2=u_2^{(0)}$. We find that at this point:
\allowdisplaybreaks
\begin{align*}
  \ty &= \frac i{2\sin \vareps} (e^{-i\beta/2}  \sqrt{v_1}-e^{i\beta/2}/\sqrt{v_1}),\\
  \tx+\ty &= - \frac {i\sin(\delta+\vareps)}{2\sin  \delta \sin\vareps} (e^{-i\delta}\sqrt{v_1} - e^{i\delta}/\sqrt{v_1}),\\
  \sqrt{u_1}+ \sqrt{u_2} e^{i\beta/2}&= e^ {i\beta/2-i\pi/4} (e^{-i\beta/2}\sqrt{v_1}+e^{i\beta/2} /\sqrt{v_1}),\\
  \sqrt{u_1}+ \sqrt{u_2} e^{-i\beta/2}&=e^{-i\pi/4}( \sqrt{v_1}+1/\sqrt{v_1}),\\
  \sqrt{u_1} \sqrt{u_2}+i e^{-i\beta/2}&=2\sin (\beta/2),\\
  4iu_1u_2&= -4i e^{i\beta},\\
  \frac 1 {\tx}\left( 1-\frac 1 {v_1^2}\right) &= \frac{2i\sin \delta }{v_1} (\sqrt {v_1} + 1/\sqrt{v_1}),\\
  \frac 1 {\ty }\left( 1+\frac 1 {u_2^2}\right) &=2i e^{-i\beta} v_1 \sin \vareps  (e^{-i\beta/2} \sqrt{v_1} +e^{i\beta/2} /\sqrt{v_1}).
\end{align*} 
When putting together all these contributions in~\eqref{I-expr2},  several simplifications occur, and one ends up with the following remarkably compact expression:
\beq\label{I-final}
I=
16 \pi i e^{z_1+z_2} \int_0^\infty \frac{e^{-\frac 1 2 (\bar zv_1+ z/v_1)}}
{v_1^{3/2}} \left(v_1e^{-i\delta }- e^{i\delta}\right) dv_1,
\eeq
where $z= (z_1+z_2e^{i\beta})$ and $\bar z$ is the complex conjugate of $z$. This integral can finally be computed exactly:
\[
  I=   16 \pi i \sqrt {2\pi}\,  e^{z_1+z_2- |z|} \left( \frac {e^{-i\delta}}{\sqrt {\bar z}} - \frac{e^{i\delta}}{\sqrt z}\right)
  =   32 \sqrt {2}\pi^{3/2}  e^{z_1+z_2- |z|}\,  \frac{\sin \om}{\sqrt{|z|}},
\]
where $\om = \arg ({e^{i\delta}}/{\sqrt z})= \delta- (\arg z)/2>0$.  One finds
\[
  |z|= \sqrt{z_1^2+z_2^2+2z_1 z_2\cos \beta },
\]
\[
  \cos(2\om)= \Re\left( \frac{\bar z}{|z|}e^{2i\delta}\right)
  = \frac 1 {|z|} \left( z_1 \cos(2\delta)+z_2\cos(2\delta-\beta)\right).
\]
Recalling the angle identities~\eqref{angles-00}, this gives
\[
  \sin^2\om= \frac 1 2 \left( 1-\cos(2\om)\right)=
  \frac 1{2|z|}
  \left( |z|+z_1\cos\theta+z_2\cos(\beta-\theta)\right).
\]
We now return to~\eqref{p0-tilde}, and recall that $I$ is the double integral in this expression. The exponential term in $\widetilde p_0(z_1,z_2)$ is
\[
  e^{-z_1\xn^+ -z_2\yn^+ +z_1 +z_2 -|z|}
  =e^{-z_1\cos \theta -z_2\cos(\beta-\theta)-|z|}
  =e^{-2|z|\sin^2\om}.
\]
Upon noticing that
\[
  \sin \om = \sin (\delta- a/2) = \sin( (\theta+\pi-a)/2)=\cos((\theta-a)/2),
\]
we have at last obtained  the expression of Proposition~\ref{prop:specialcase}... but only in the case $\beta<\pi/2$!

Fortunately, the calculation is not harder when $\beta >\pi/2$. In sight of~\eqref{int-res}, the double integral in~\eqref{p0-tilde} splits into a double integral and a simple one:
\[
  I=  -i  \int_{\RR_+^2} \frac{e^{z_1 \xn^+ \tx^2+z_2  \yn^+ \ty ^2}}{D(\tx,\ty )}
  \cdot \frac{\tx +\ty }{\tx \ty }\left(1- \frac 1 {v_1^2}\right) \left(1+ \frac 1 {u_2^2}\right) dv_1 du_2
  -2i\pi 
  \int_0^\infty e^{z_2  \yn^+ \ty    ^2}\res(u_2) du_2.
\]
In each  integral, we  perform the deformation of $u_2$ into $-i v_2$, for $v_2\in \RR_+$. In the double integral, no residue arises during the deformation, and the resulting integral in $v_1$ and $v_2$ is zero as argued above. The second integral (in $u_2$ only), once deformed,  is a variant of the one occurring in~\eqref{I-final}. One recovers the same expression as in the case $\beta <\pi/2$.

We leave the case  $\beta=\pi/2$ to the enthusiasts.

\bibliographystyle{abbrv} 
\bibliography{bib-SRBM}

\end{document}